\DeclareFontFamily{U}{mathx}{\hyphenchar\font45}
\DeclareFontShape{U}{mathx}{m}{n}{
      <5> <6> <7> <8> <9> <10>
      <10.95> <12> <14.4> <17.28> <20.74> <24.88>
      mathx10
      }{}
\DeclareSymbolFont{mathx}{U}{mathx}{m}{n}
\DeclareMathAccent{\widecheck}{0}{mathx}{"71}
\DeclareMathAccent{\wideparen}{0}{mathx}{"75}
\numberwithin{equation}{section}
\theoremstyle{plain}
\newtheorem{theorem}{Theorem}[section]
\newtheorem{lemma}[theorem]{Lemma}
\theoremstyle{definition}
\newtheorem{definition}[theorem]{Definition}
\newtheorem{remark}[theorem]{Remark}
\newcommand{\sqcoversubset}{\raisebox{1.3ex}{\,\rotatebox{180}{$\sqsupseteq$}}\,}
\newcommand{\existunique}{\exists \textit{\bf !}}
\begin{document}


\title[A new perspective on the tensor product of semi-lattices]{A new perspective on the tensor product of semi-lattices}

\author[Eric Buffenoir]{Eric Buffenoir}
\address{Universit\'e de la C\^ote d'Azur, CNRS, InPhyNi, FRANCE}
\email{eric.buffenoir@cnrs.fr}



\subjclass{18C50, 18B35}

\keywords{Categorical semantics of formal languages / Preorders, orders, domains and lattices (viewed as categories)}

\begin{abstract}
We adopt a new perspective on the tensor product of arbitrary semi-lattices. Our basic construction exploits a description of semi-lattices in terms of bi-extensional Chu spaces associated to a target space defined to be the "boolean domain".  The comparison between our tensor products and the canonical tensor product, introduced by G.A. Fraser, is made in the distributive case and in the general case.  Some properties of our tensor products are also given. \textbf{Latest update: March 2024.}
\end{abstract}

\maketitle


\section{Inf semi-lattices and States/Effects Chu spaces}\label{sectiongeneral}

The set ${ \mathfrak{B}}:=\{\textit{\bf Y},\textit{\bf N},\bot\}$ will be equipped with the following poset structure : 
\begin{eqnarray}
\forall u,v\in { \mathfrak{B} },&& (u\leq v)\; :\Leftrightarrow\; (u={ \bot}\;\;\textit{\rm or}\;\; u=v).
\end{eqnarray} 
$({ \mathfrak{B}},\leq)$ is also an Inf semi-lattice which infima will be denoted $\bigwedge$. We have
\begin{eqnarray}
\forall x,y\in { \mathfrak{B} },&&  x \wedge y = \left\{\begin{array}{ll} x & \textit{\rm if}\;\;\; x=y\\
\bot & \textit{\rm if}\;\;\; x\not= y\end{array}\right.
\end{eqnarray}
We will also introduce a commutative monoid law denoted $\bullet$ and defined by
\begin{eqnarray}
\forall x\in { \mathfrak{B}},&& x \bullet \textit{\bf Y}=x, \;\;\;\;\; x \bullet \textit{\bf N}=\textit{\bf N}, \;\;\;\;\; \bot \bullet \bot = \bot. \label{expressionbullet}
\end{eqnarray}
This product law verifies the following properties
\begin{eqnarray}
\forall x\in { \mathfrak{B}},\forall B\subseteq { \mathfrak{B}} && x \bullet \bigwedge B=\bigwedge{}_{b\in B}(x\bullet b),\label{distributivitybullet}\\
\forall x\in { \mathfrak{B}},\forall C\subseteq_{Chain} { \mathfrak{B}} && x \bullet \bigvee C =\bigvee{}_{c\in C}(x\bullet c).\label{distributivitybullet2}
\end{eqnarray}
$({ \mathfrak{B} },\leq)$ will be also equipped with the following involution map :
\begin{eqnarray}
\overline{{ \bot}}:={ \bot} \;\;\;\;\;\;\;\; \overline{ \rm \bf Y}:={ \rm \bf N}\;\;\;\;\;\;\;\; \overline{ \rm \bf N}:={ \rm \bf Y}.
\end{eqnarray} 
$({ \mathfrak{B} },\leq)$ will be called {\em the boolean domain}.

\subsection{States/Effects Chu spaces}

We will say that a triple $({ \mathfrak{S}},{ \mathfrak{E}},{ \epsilon}^{ \mathfrak{S}})$ is {\em a States/Effects Chu space} iff 
\begin{itemize}
\item the set ${ \mathfrak{S}}$, called {\em space of states},  is a down-complete Inf semi-lattice (i.e.  $\forall S\subseteq {\mathfrak{S}}$ the infimum $(\bigsqcap{}^{{}^{ \mathfrak{S}}} S)$ exists in ${\mathfrak{S}}$), which admits a bottom element denoted $\bot_{{}_{ \mathfrak{S}}}$;
\item the set ${ \mathfrak{E}}$, called {\em space of effects},  is a down-complete Inf semi-lattice (i.e.  $\forall E\subseteq {\mathfrak{E}}$ the infimum $(\bigsqcap{}^{{}^{ \mathfrak{E}}} E)$ exists in ${\mathfrak{E}}$);
\item ${ \epsilon}^{ \mathfrak{S}}$ is a map from ${\mathfrak{E}}$ to ${\mathfrak{B}}^{\mathfrak{S}}$, called {\em evaluation map}, and satisfying
\begin{eqnarray}
\forall { \mathfrak{l}}\in { \mathfrak{E}},\forall S\subseteq { \mathfrak{S}}, && { \epsilon}^{ \mathfrak{S}}_{ { \mathfrak{l}}}(\bigsqcap{}^{{}^{ \mathfrak{S}}} S)=\bigwedge{}_{{}_{\sigma\in S}}\; { \epsilon}^{ \mathfrak{S}}_{ { \mathfrak{l}}}(\sigma),
 \label{axiomsigmainfsemilattice}\\
 \forall \sigma\in { \mathfrak{S}},\forall E\subseteq { \mathfrak{E}},&&{ \epsilon}^{ \mathfrak{S}}_{\bigsqcap{}^{{}^{ \mathfrak{E}}} E}(\sigma)=\bigwedge{}_{{}_{{ \mathfrak{l}}\in E}}\; { \epsilon}^{ \mathfrak{S}}_{ { \mathfrak{l}}}(\sigma),\label{axiomEinfsemilattice}
\end{eqnarray}
and 
\begin{eqnarray}
&& \forall { \mathfrak{l}},{ \mathfrak{l}}'\in { \mathfrak{E}},\;\;\;\;\;\;\;\;\;\;\;\;(\, \forall \sigma\in { \mathfrak{S}},\; { \epsilon}^{ \mathfrak{S}}_{ { \mathfrak{l}}}(\sigma)={ \epsilon}^{ \mathfrak{S}}_{ { \mathfrak{l}}'}(\sigma) \,) \Leftrightarrow  (\, { \mathfrak{l}}= { \mathfrak{l}}' \,),\label{Chuextensional}\\
&& \forall \sigma,\sigma'\in { \mathfrak{S}},\;\;\;\;\;\;\;\;\;\;\;\;(\, \forall { \mathfrak{l}}\in { \mathfrak{E}},\; { \epsilon}^{ \mathfrak{S}}_{ { \mathfrak{l}}}(\sigma)={ \epsilon}^{ \mathfrak{S}}_{ { \mathfrak{l}}}(\sigma') \,) \Leftrightarrow  (\, \sigma= \sigma' \,).\label{Chuseparated}
\end{eqnarray}
\end{itemize}
The partial order on ${ \mathfrak{S}}$ (resp. on ${ \mathfrak{E}}$) will be denoted by $\sqsubseteq_{{}_{{ \mathfrak{S}}}}$ (resp.  $\sqsubseteq_{{}_{{ \mathfrak{E}}}}$).\\

We will say that {\em the space of states admits a description in terms of pure states} iff we have moreover
\begin{itemize}
\item the set of complely meet-irreducible elements of ${ \mathfrak{S}}$, denoted ${ \mathfrak{S}}^{{}^{pure}}$ and called {\em set of pure states}, is equal to the set of maximal elements $Max({ \mathfrak{S}})$ and it 
 is a generating set for ${ \mathfrak{S}}$, i.e.  
\begin{eqnarray}
&&\forall \sigma \in { \mathfrak{S}}, \;\; \sigma= \bigsqcap{}^{{}^{ { \mathfrak{S}}}}  \underline{\sigma}_{{}_{ { \mathfrak{S}}}} ,\;\;\textit{\rm where}\;\;
\underline{\sigma}_{{}_{ { \mathfrak{S}}}}:=
({ \mathfrak{S}}^{{}^{pure}} \cap (\uparrow^{{}^{ { \mathfrak{S}}}}\!\!\!\! \sigma) ) \;\;\textit{\rm and}\;\; { \mathfrak{S}}^{{}^{pure}}=Max({ \mathfrak{S}}).\;\;\;\;\;\;\;\;\;\;\;\;
\label{completemeetirreducibleordergenerating}
\end{eqnarray}
\end{itemize}
We will introduce the following notations :
\begin{eqnarray}
\forall { \mathfrak{l}}\in { \mathfrak{E}},\exists \overline{\; \mathfrak{l}\;}\in { \mathfrak{E}} &\vert & \forall \sigma\in { \mathfrak{S}}, { \epsilon}^{ \mathfrak{S}}_{ \overline{\; \mathfrak{l}\;}}(\sigma)= \overline{{ \epsilon}^{ \mathfrak{S}}_{ { \mathfrak{l}}}(\sigma)},\label{etbar}\\
\exists { \mathfrak{Y}}_{ \mathfrak{E}}\in { \mathfrak{E}} &\vert & \forall \sigma\in { \mathfrak{S}}, { \epsilon}^{ \mathfrak{S}}_{{ \mathfrak{Y}}_{ \mathfrak{E}}}(\sigma)= \textit{\bf Y},\label{ety}\\
\exists \bot_{ \mathfrak{E}}\in { \mathfrak{E}} &\vert & \forall \sigma\in { \mathfrak{S}}, { \epsilon}^{ \mathfrak{S}}_{\bot_{ \mathfrak{E}}}(\sigma)= \bot.
\end{eqnarray}
Here and in the following, $\widehat{\Sigma\Sigma'}{}^{{}^{ \mathfrak{S}}}$ means that $\Sigma$ and $\Sigma'$ have a common upper-bound in ${ \mathfrak{S}}$, and $\neg \widehat{\Sigma\Sigma'}{}^{{}^{ \mathfrak{S}}}$ means they have none.  We will also adopt the following notations :   $\uparrow^{{}^{ \mathfrak{S}}}\!\!\Sigma$ for the upper subset $\{ \sigma\in { \mathfrak{S}} \;\vert\; \sigma\sqsupseteq_{{}_{{ \mathfrak{S}}}}\Sigma\}$, and $\sigma \parallel_{{}_{ \mathfrak{S}}}\sigma'$ for ($\sigma \not\sqsubseteq_{{}_{ \mathfrak{S}}}\sigma'$ and $\sigma' \not\sqsubseteq_{{}_{ \mathfrak{S}}}\sigma$).\\

Now, we intend to describe a natural States/Effects Chu space defined once is given the space of states ${ \mathfrak{S}}$.\\
The {\em natural space of effects}, denoted ${ \mathfrak{E}}_{ \mathfrak{S}}$ is defined to be 
\begin{eqnarray}
{ \mathfrak{E}}_{ \mathfrak{S}}:=\{ { \mathfrak{l}}_{{}_{(\sigma,\sigma')}}\;\vert\; \sigma,\sigma'\in { \mathfrak{S}},\;\neg \widehat{\sigma\sigma'}{}^{{}^{ \mathfrak{S}}}\;\}\cup \{ { \mathfrak{l}}_{{}_{(\sigma,\cdot)}}\;\vert\; \sigma\in { \mathfrak{S}}\;\}\cup \{ { \mathfrak{l}}_{{}_{(\cdot,\sigma)}}\;\vert\; \sigma\in { \mathfrak{S}}\;\}\cup \{\, { \mathfrak{l}}_{{}_{(\cdot,\cdot)}}\,\} &&\;\;\;\;\;\;\;\;
\end{eqnarray}
as a set, with the following Inf semi-lattice law
\begin{eqnarray}
{ \mathfrak{l}}_{{}_{(\sigma_1,\sigma'_1)}} \sqcap_{{}_{{ \mathfrak{E}}_{ \mathfrak{S}}}} { \mathfrak{l}}_{{}_{(\sigma_2,\sigma'_2)}} &=& \left\{ \begin{array}{lcl}
{ \mathfrak{l}}_{{}_{(\sigma_1\sqcup_{{}_{{ \mathfrak{S}}}} \sigma_2,\sigma'_1 \sqcup_{{}_{{ \mathfrak{S}}}} \sigma'_2)}} & \textit{\rm if} & \widehat{\sigma_1\sigma_2}{}^{{}^{ \mathfrak{S}}}\;\textit{\rm and}\; \widehat{\sigma'_1\sigma'_2}{}^{{}^{ \mathfrak{S}}}\\
{ \mathfrak{l}}_{{}_{(\;\cdot\;,\sigma'_1 \sqcup_{{}_{{ \mathfrak{S}}}} \sigma'_2)}} & \textit{\rm if} & \neg \widehat{\sigma_1\sigma_2}{}^{{}^{ \mathfrak{S}}}\;\textit{\rm and}\; \widehat{\sigma'_1\sigma'_2}{}^{{}^{ \mathfrak{S}}}\\
{ \mathfrak{l}}_{{}_{(\sigma_1\sqcup_{{}_{{ \mathfrak{S}}}} \sigma_2,\;\cdot\;)}} & \textit{\rm if} & \widehat{\sigma_1\sigma_2}{}^{{}^{ \mathfrak{S}}}\;\textit{\rm and}\; \neg\widehat{\sigma'_1\sigma'_2}{}^{{}^{ \mathfrak{S}}}\\
{ \mathfrak{l}}_{{}_{(\cdot,\cdot)}} & \textit{\rm if} & \neg\widehat{\sigma_1\sigma_2}{}^{{}^{ \mathfrak{S}}}\;\textit{\rm and}\; \neg\widehat{\sigma'_1\sigma'_2}{}^{{}^{ \mathfrak{S}}}
\end{array}\right.\label{defcapES}
\end{eqnarray}
This expression is naturally extended to the whole set of effects (i.e. including the elements of the form ${ \mathfrak{l}}_{{}_{(\sigma,\cdot)}}$, ${ \mathfrak{l}}_{{}_{(\cdot,\sigma)}}$ and ${ \mathfrak{l}}_{{}_{(\cdot,\cdot)}}$) as long as we adopt the convention defining 
\begin{eqnarray}
\forall \sigma\in { \mathfrak{S}},\;\;\;\;\neg\widehat{\;\cdot\;\sigma}{}^{{}^{ \mathfrak{S}}}:= \textit{\rm TRUE}\;\;\;\;\;\;\; &\textit{\rm and}& \;\;\;\;\;\;\; \neg\widehat{\;\cdot\;\cdot\;}{}^{{}^{ \mathfrak{S}}}:= \textit{\rm TRUE}.
\end{eqnarray}

Here and in the following, we adopt the following notations 
\begin{eqnarray}
(\uparrow^{{}^{ { \mathfrak{E}_{ \mathfrak{S}}}}}\!\!\! { \mathfrak{l}}):=\{\, { \mathfrak{l}}'\in { \mathfrak{E}}_{ \mathfrak{S}}\;\vert\; { \mathfrak{l}}\sqsubseteq_{{}_{ \mathfrak{E}_{ \mathfrak{S}}}}{ \mathfrak{l}}' \,\}& \textit{\rm and} &(\downarrow_{{}_{ { \mathfrak{E}_{ \mathfrak{S}}}}}\!\!\! { \mathfrak{l}}):=\{\, { \mathfrak{l}}'\in { \mathfrak{E}}_{ \mathfrak{S}}\;\vert\; { \mathfrak{l}}\sqsupseteq_{{}_{ \mathfrak{E}_{ \mathfrak{S}}}}{ \mathfrak{l}}' \,\}.\label{downarrowE}
\end{eqnarray}

The evaluation map $\epsilon^{ \mathfrak{S}}$ is defined by
\begin{eqnarray}
\epsilon^{ \mathfrak{S}}_{{ \mathfrak{l}}_{{}_{(\sigma,\sigma')}}}(\sigma'') &:= & \left\{ \begin{array}{lcl} \textit{\bf Y} & \textit{\rm if} & \sigma\sqsubseteq_{{}_{ \mathfrak{S}}} \sigma''\\\textit{\bf N} & \textit{\rm if} & \sigma'\sqsubseteq_{{}_{ \mathfrak{S}}} \sigma''\\ \bot & \textit{\rm otherwise } &\end{array}\right.
\end{eqnarray}
this expression is naturally extended to the whole set of effects by adopting the following convention
\begin{eqnarray}
(\;\cdot\;\sqsubseteq_{{}_{{ \mathfrak{S}}}} \sigma) & :=&\textit{\rm FALSE}.
\end{eqnarray}

We note that $\bot_{{}_{{ \mathfrak{E}}_{ \mathfrak{S}}}} = { \mathfrak{l}}_{{}_{(\cdot,\cdot)}}$ and ${ \mathfrak{Y}}_{{}_{{ \mathfrak{E}}_{ \mathfrak{S}}}} = { \mathfrak{l}}_{{}_{(\bot_{{}_{{ \mathfrak{S}}}},\cdot)}}$.\\

To conclude, we have obtained a well-defined States/Effects Chu space $({ \mathfrak{S}},{ \mathfrak{E}}_{ \mathfrak{S}},\epsilon^{ \mathfrak{S}})$.

\begin{definition}\label{DefinreductionE}
In the following, we will always consider that ${ \mathfrak{E}}$ is a subset of the natural space of effects ${ \mathfrak{E}}_{ \mathfrak{S}}$, satisfying 
\begin{eqnarray}
\forall \sigma\in { \mathfrak{S}}\smallsetminus \{\bot_{{}_{ \mathfrak{S}}}\},\exists \sigma'\in { \mathfrak{S}}\smallsetminus \{\bot_{{}_{ \mathfrak{S}}}\} & \vert & { \mathfrak{l}}_{(\sigma,\sigma')}\in { \mathfrak{E}},\label{axiomreduc1}\\
\forall L\subseteq {{ \mathfrak{E}}},&&( \bigsqcap{}^{{}^{{ \mathfrak{E}}_{ \mathfrak{S}}}}L) \in {{ \mathfrak{E}}},\label{axiomreduc2}\\
\forall { \mathfrak{l}}\in { \mathfrak{E}},&&\overline{\,{ \mathfrak{l}}\,} \in { \mathfrak{E}},\label{axiomreduc3}\\
{ \mathfrak{Y}}_{{}_{{{ \mathfrak{E}}}_{ \mathfrak{S}}}} \in { \mathfrak{E}} &\textit{\rm and} & \bot_{{}_{{{ \mathfrak{E}}}_{ \mathfrak{S}}}} \in { \mathfrak{E}}.\label{axiomreduc4}
\end{eqnarray}
\end{definition}

\begin{theorem}\label{TheoremreductionE}
If the space of effects ${ \mathfrak{E}}$ satisfies the conditions of Definition \ref{DefinreductionE}, then $({ \mathfrak{S}}, { \mathfrak{E}}, \epsilon^{ \mathfrak{S}})$ is a well-defined States/Effects Chu space.
\end{theorem}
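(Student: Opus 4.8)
The plan is to transport essentially all of the required structure along the inclusion $\mathfrak{E}\subseteq\mathfrak{E}_{\mathfrak{S}}$, using the fact established above that the natural triple $(\mathfrak{S},\mathfrak{E}_{\mathfrak{S}},\epsilon^{\mathfrak{S}})$ is a well-defined States/Effects Chu space, and to prove directly only the one axiom that genuinely depends on how large $\mathfrak{E}$ is, namely separation \eqref{Chuseparated}. The space of states $\mathfrak{S}$ is untouched, hence still a down-complete Inf semi-lattice with bottom $\bot_{\mathfrak{S}}$, and $\epsilon^{\mathfrak{S}}$ restricted to $\mathfrak{E}$ still takes values in $\mathfrak{B}^{\mathfrak{S}}$; so the work amounts to checking that $\mathfrak{E}$ is a down-complete Inf semi-lattice and that the four conditions \eqref{axiomsigmainfsemilattice}--\eqref{Chuseparated} survive the restriction.

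First I would show that $\mathfrak{E}$, with the order inherited from $\mathfrak{E}_{\mathfrak{S}}$, is a down-complete Inf semi-lattice. For $L\subseteq\mathfrak{E}$ the infimum $\bigsqcap^{\mathfrak{E}_{\mathfrak{S}}}L$ exists since $\mathfrak{E}_{\mathfrak{S}}$ is down-complete, and it lies in $\mathfrak{E}$ by \eqref{axiomreduc2}; as every lower bound of $L$ inside $\mathfrak{E}$ is also a lower bound of $L$ inside $\mathfrak{E}_{\mathfrak{S}}$, this element is the infimum of $L$ in the sub-poset $\mathfrak{E}$ as well, so $\bigsqcap^{\mathfrak{E}}L=\bigsqcap^{\mathfrak{E}_{\mathfrak{S}}}L$. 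Taking two-element subsets shows in particular that $\mathfrak{E}$ is closed under the binary meet $\sqcap_{\mathfrak{E}_{\mathfrak{S}}}$, so it is an Inf semi-lattice in its own right.

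Next come the evaluation axioms. Condition \eqref{axiomsigmainfsemilattice} holds for every $\mathfrak{l}\in\mathfrak{E}_{\mathfrak{S}}$, hence a fortiori for every $\mathfrak{l}\in\mathfrak{E}$, and extensionality \eqref{Chuextensional} holds for any pair of elements of $\mathfrak{E}_{\mathfrak{S}}$, hence for any pair drawn from the subset $\mathfrak{E}$. For \eqref{axiomEinfsemilattice} I would rewrite $\bigsqcap^{\mathfrak{E}}E$ as $\bigsqcap^{\mathfrak{E}_{\mathfrak{S}}}E$ by the previous step and then quote the corresponding identity for the natural Chu space. The remaining condition, separation \eqref{Chuseparated}, is the only one requiring real work, and it is exactly what \eqref{axiomreduc1} is designed to restore: if $\sigma,\sigma'\in\mathfrak{S}$ satisfy $\epsilon^{\mathfrak{S}}_{\mathfrak{l}}(\sigma)=\epsilon^{\mathfrak{S}}_{\mathfrak{l}}(\sigma')$ for all $\mathfrak{l}\in\mathfrak{E}$ but $\sigma\neq\sigma'$, then by the symmetry of the hypothesis I may assume $\sigma\not\sqsubseteq_{\mathfrak{S}}\sigma'$, whence $\sigma\neq\bot_{\mathfrak{S}}$; \eqref{axiomreduc1} then provides $\tau\in\mathfrak{S}\smallsetminus\{\bot_{\mathfrak{S}}\}$ with $\mathfrak{l}_{(\sigma,\tau)}\in\mathfrak{E}$, and from the definition of $\epsilon^{\mathfrak{S}}$ one gets $\epsilon^{\mathfrak{S}}_{\mathfrak{l}_{(\sigma,\tau)}}(\sigma)=\mathbf{Y}$ (because $\sigma\sqsubseteq_{\mathfrak{S}}\sigma$) whereas $\epsilon^{\mathfrak{S}}_{\mathfrak{l}_{(\sigma,\tau)}}(\sigma')\in\{\mathbf{N},\bot\}$ (because $\sigma\not\sqsubseteq_{\mathfrak{S}}\sigma'$ excludes the value $\mathbf{Y}$), contradicting the hypothesis; hence $\sigma=\sigma'$.

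Finally I would note that \eqref{axiomreduc3} and \eqref{axiomreduc4} keep the auxiliary data of a States/Effects Chu space meaningful after the reduction: the involution $\mathfrak{l}\mapsto\overline{\mathfrak{l}}$ of $\mathfrak{E}_{\mathfrak{S}}$ maps $\mathfrak{E}$ into itself, and the distinguished elements $\mathfrak{Y}_{\mathfrak{E}_{\mathfrak{S}}}=\mathfrak{l}_{(\bot_{\mathfrak{S}},\cdot)}$ and $\bot_{\mathfrak{E}_{\mathfrak{S}}}=\mathfrak{l}_{(\cdot,\cdot)}$ lie in $\mathfrak{E}$, so the notations \eqref{etbar}, \eqref{ety} and the analogous property of $\bot_{\mathfrak{E}}$ remain available. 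The only genuinely non-formal step is the separation argument above; everything else is bookkeeping carried along the inclusion $\mathfrak{E}\hookrightarrow\mathfrak{E}_{\mathfrak{S}}$, and it is precisely to make that step work that axiom \eqref{axiomreduc1} was imposed in Definition \ref{DefinreductionE}.
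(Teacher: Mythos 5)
Your proposal is correct and follows essentially the same route as the paper, which likewise reduces everything to checking the separation property (\ref{Chuseparated}) and dismisses the remaining axioms as inherited from the natural Chu space $({\mathfrak{S}},{\mathfrak{E}}_{\mathfrak{S}},\epsilon^{\mathfrak{S}})$; your use of (\ref{axiomreduc1}) to produce the separating effect ${\mathfrak{l}}_{(\sigma,\tau)}$ is exactly the "straightforward" verification the paper leaves implicit. The additional bookkeeping you supply (down-completeness of ${\mathfrak{E}}$ via (\ref{axiomreduc2}) and the identification $\bigsqcap^{{}^{\mathfrak{E}}}L=\bigsqcap^{{}^{{\mathfrak{E}}_{\mathfrak{S}}}}L$) is a correct and welcome expansion of what the paper calls tautological.
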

\begin{proof}
Trivial. It suffices to check the validity of equation (\ref{Chuseparated}), which is straightforward. The other properties are tautological.
\end{proof}


Let us note the two following theorems that will reveal to be useful in the rest of this paper.

\begin{theorem}  \label{aepsilonsigma}
Let us consider a map $(A : { \mathfrak{S}}\longrightarrow { \mathfrak{B}},  \sigma \mapsto { \mathfrak{a}}_{\sigma})$ satisfying
\begin{eqnarray}
\forall \sigma,\sigma'\in { \mathfrak{S}},&& (\sigma \sqsubseteq_{{}_{{ \mathfrak{S}}}} \sigma')\Rightarrow ({ \mathfrak{a}}_{\sigma} \leq { \mathfrak{a}}_{\sigma'}),\label{theorema1}\\
\forall \{\sigma_i\;\vert\; i\in I\}\subseteq { \mathfrak{S}},&&  { \mathfrak{a}}_{{\bigsqcap{}^{{}^{ \mathfrak{S}}}_{{}_{i\in i}}\sigma_i}} = \bigwedge{}_{{i\in I}} \;{ \mathfrak{a}}_{{\sigma_i}},\label{theorema2}
\end{eqnarray}
Then, we have
\begin{eqnarray}
\existunique \;{ \mathfrak{l}}\in { \mathfrak{E}}_{ \mathfrak{S}} & \vert & \forall \sigma\in { \mathfrak{S}}, \; { \epsilon}^{ \mathfrak{S}}_{{ \mathfrak{l}}}(\sigma)={ \mathfrak{a}}_{\sigma}.
\end{eqnarray}
\end{theorem}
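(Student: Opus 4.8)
The plan is to read the desired effect $\mathfrak{l}$ directly off the two preimages $A^{-1}(\textit{\bf Y})$ and $A^{-1}(\textit{\bf N})$, which the two hypotheses force to be principal upper sets (allowing the empty set as a degenerate ``principal'' upper set). First I would put $\mathcal{Y} := \{\sigma \in \mathfrak{S} \mid \mathfrak{a}_\sigma = \textit{\bf Y}\}$ and $\mathcal{N} := \{\sigma \in \mathfrak{S} \mid \mathfrak{a}_\sigma = \textit{\bf N}\}$. Using (\ref{theorema1}) together with the fact that $\textit{\bf Y}$ and $\textit{\bf N}$ are exactly the maximal elements of $\mathfrak{B}$, I would check that $\mathcal{Y}$ and $\mathcal{N}$ are upper subsets of $\mathfrak{S}$: if $\mathfrak{a}_\sigma = \textit{\bf Y}$ and $\sigma \sqsubseteq_{\mathfrak{S}} \sigma'$, then $\textit{\bf Y} \leq \mathfrak{a}_{\sigma'}$ forces $\mathfrak{a}_{\sigma'} = \textit{\bf Y}$, and likewise for $\textit{\bf N}$. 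Moreover $\mathcal{Y} \cap \mathcal{N} = \emptyset$ simply because $A$ is a function.

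Next I would exploit (\ref{theorema2}), the genuinely essential hypothesis. Applying it to the family $\mathcal{Y}$ itself (when $\mathcal{Y} \neq \emptyset$) gives $\mathfrak{a}_{\bigsqcap^{\mathfrak{S}}\mathcal{Y}} = \bigwedge_{\sigma \in \mathcal{Y}}\mathfrak{a}_\sigma = \textit{\bf Y}$, so the element $\alpha := \bigsqcap^{\mathfrak{S}}\mathcal{Y}$ (which exists since $\mathfrak{S}$ is down-complete) again belongs to $\mathcal{Y}$; being simultaneously the greatest lower bound of $\mathcal{Y}$ and a member of the upper set $\mathcal{Y}$, it satisfies $\mathcal{Y} = \uparrow^{\mathfrak{S}}\!\alpha$. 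Symmetrically $\mathcal{N} = \uparrow^{\mathfrak{S}}\!\beta$ with $\beta := \bigsqcap^{\mathfrak{S}}\mathcal{N}$ when $\mathcal{N} \neq \emptyset$; when a preimage is empty I would place the symbol $\cdot$ in the corresponding slot, which is consistent with the convention that $\cdot \sqsubseteq_{\mathfrak{S}} \sigma$ is FALSE. Since $\mathcal{Y} \cap \mathcal{N} = \emptyset$, the upper sets $\uparrow^{\mathfrak{S}}\!\alpha$ and $\uparrow^{\mathfrak{S}}\!\beta$ are disjoint, so $\neg\widehat{\alpha\beta}^{\mathfrak{S}}$ holds; hence $\mathfrak{l}_{(\alpha,\beta)}$ is a legitimate element of $\mathfrak{E}_{\mathfrak{S}}$.

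Finally I would verify the evaluation identity by unwinding the definition of $\epsilon^{\mathfrak{S}}_{\mathfrak{l}_{(\alpha,\beta)}}$: on $\sigma \in \uparrow^{\mathfrak{S}}\!\alpha = \mathcal{Y}$ it returns $\textit{\bf Y} = \mathfrak{a}_\sigma$; on $\sigma \in \uparrow^{\mathfrak{S}}\!\beta = \mathcal{N}$ it returns $\textit{\bf N} = \mathfrak{a}_\sigma$; and on every remaining $\sigma$ it returns $\bot$, these being exactly the states with $\mathfrak{a}_\sigma \notin \{\textit{\bf Y},\textit{\bf N}\}$, i.e. $\mathfrak{a}_\sigma = \bot$. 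Uniqueness is then immediate from the extensionality axiom (\ref{Chuextensional}) of the Chu space $(\mathfrak{S},\mathfrak{E}_{\mathfrak{S}},\epsilon^{\mathfrak{S}})$. I expect the only slightly delicate point to be the bookkeeping around the degenerate cases where $\mathcal{Y}$ or $\mathcal{N}$ is empty — in particular the constantly-$\bot$ map, which should yield $\mathfrak{l}_{(\cdot,\cdot)} = \bot_{\mathfrak{E}_{\mathfrak{S}}}$ — so that ``principal upper set or empty'' and ``no common upper bound'' are treated uniformly across all four cases.
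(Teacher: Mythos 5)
Your proof is correct and is essentially the paper's own argument: the paper likewise sets $\Sigma_A:=\bigsqcap^{{}^{\mathfrak{S}}}\{\sigma\mid \mathfrak{a}_\sigma=\textit{\bf Y}\}$, $\Sigma'_A:=\bigsqcap^{{}^{\mathfrak{S}}}\{\sigma\mid \mathfrak{a}_\sigma=\textit{\bf N}\}$ and takes $\mathfrak{l}:=\mathfrak{l}_{(\Sigma_A,\Sigma'_A)}$, merely leaving implicit the checks you spell out (the preimages are principal upper sets by (\ref{theorema1})--(\ref{theorema2}), their disjointness gives $\neg\widehat{\Sigma_A\Sigma'_A}{}^{{}^{\mathfrak{S}}}$ so the effect exists, the empty-preimage cases use the ``$\cdot$'' slots, and uniqueness follows from (\ref{Chuextensional})). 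Nothing is missing.
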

\begin{proof}
Straightforward. If $\{\, \sigma\;\vert\; { \mathfrak{a}}_{\sigma}=\textit{\bf Y}\,\}$ and $\{\, \sigma\;\vert\; { \mathfrak{a}}_{\sigma}=\textit{\bf N}\,\}$ are not empty, it suffices to define $\Sigma_A:=\bigsqcap^{{}^{{ \mathfrak{S}}}}\{\, \sigma\;\vert\; { \mathfrak{a}}_{\sigma}=\textit{\bf Y}\,\}$,  $\Sigma_A':=\bigsqcap^{{}^{{ \mathfrak{S}}}}\{\, \sigma\;\vert\; { \mathfrak{a}}_{\sigma}=\textit{\bf N}\,\}$ and ${ \mathfrak{l}}:={ \mathfrak{l}}_{(\Sigma_A,\Sigma_A')} $ (the case where some or all of these subsets are empty is treated immediately).
\end{proof}

\begin{theorem}  \label{blepsilonsigma}
Let us consider a map $(B : { \mathfrak{E}} \longrightarrow { \mathfrak{B}},{ \mathfrak{l}}\mapsto { \mathfrak{b}}_{{ \mathfrak{l}}})$ satisfying
\begin{eqnarray}
\forall { \mathfrak{l}}, { \mathfrak{l}}'\in { \mathfrak{E}},&& ({ \mathfrak{l}} \sqsubseteq_{{}_{{ \mathfrak{E}}}} { \mathfrak{l}}')\Rightarrow ({ \mathfrak{b}}_{{ \mathfrak{l}}} \leq { \mathfrak{b}}_{{ \mathfrak{l}}'}),\label{theorembl1}\\
\forall \{{ \mathfrak{l}}_i\;\vert\; i\in I\}\subseteq { \mathfrak{E}},&&  { \mathfrak{b}}_{{\bigsqcap{}^{{}^{ \mathfrak{E}}}_{{}_{i\in i}}{ \mathfrak{l}}_i}} = \bigwedge{}_{{i\in I}} \;{ \mathfrak{b}}_{{{ \mathfrak{l}}_i}},\label{theorembl2}\\
\forall { \mathfrak{l}}\in { \mathfrak{E}},&& { \mathfrak{b}}_{\overline{\; \mathfrak{l}\;}}=\overline{{ \mathfrak{b}}_{{ \mathfrak{l}}}},\label{theorembl3}\\
& & { \mathfrak{b}}_{{ \mathfrak{Y}}_{ \mathfrak{E}}}=\textit{\bf Y}.\label{theorembl4}
\end{eqnarray}
Then, we have
\begin{eqnarray}
\existunique \; \sigma\in { \mathfrak{S}} & \vert & \forall { \mathfrak{l}}\in { \mathfrak{E}}, \; { \epsilon}^{ \mathfrak{S}}_{{ \mathfrak{l}}}(\sigma)={ \mathfrak{b}}_{{ \mathfrak{l}}}.
\end{eqnarray}
\end{theorem}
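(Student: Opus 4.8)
The plan is to mimic the proof of Theorem \ref{aepsilonsigma}, but working on the ``states side'' of the Chu space and using the description of states in terms of pure states (or, more robustly, using the characterization of states as those effects-functionals that are realized by some element). First I would observe that the hypotheses (\ref{theorembl1})--(\ref{theorembl2}) say exactly that $B$ is a morphism of down-complete Inf semi-lattices from $\mathfrak{E}$ to $\mathfrak{B}$, while (\ref{theorembl3})--(\ref{theorembl4}) encode the extra compatibility with the involution $\overline{\;\cdot\;}$ and with the top effect $\mathfrak{Y}_{\mathfrak{E}}$. Uniqueness is immediate: if $\sigma$ and $\sigma'$ both satisfy $\epsilon^{\mathfrak{S}}_{\mathfrak{l}}(\sigma)=\mathfrak{b}_{\mathfrak{l}}=\epsilon^{\mathfrak{S}}_{\mathfrak{l}}(\sigma')$ for all $\mathfrak{l}\in\mathfrak{E}$, then $\sigma=\sigma'$ by the separation axiom (\ref{Chuseparated}). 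So the whole content is existence.

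For existence I would first handle the degenerate case $B\equiv\bot$, which by (\ref{theorembl4}) forces $\mathfrak{b}_{\mathfrak{Y}_{\mathfrak{E}}}=\textbf{Y}\neq\bot$, so this case does not arise; more usefully, consider the set $P:=\{\mathfrak{l}\in\mathfrak{E}\;\vert\;\mathfrak{b}_{\mathfrak{l}}=\textbf{Y}\}$. By (\ref{theorembl4}) it is nonempty ($\mathfrak{Y}_{\mathfrak{E}}\in P$), and by (\ref{theorembl2}) it is closed under arbitrary infima in $\mathfrak{E}$; moreover (\ref{theorembl1}) shows $P$ is an up-set. The natural candidate is to set $\sigma:=$ the state ``detected'' by all effects in $P$ simultaneously. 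Concretely, using the form of the natural effects $\mathfrak{l}_{(\sigma_0,\sigma_0')}$ and the fact that $\epsilon^{\mathfrak{S}}_{\mathfrak{l}_{(\sigma_0,\cdot)}}(\sigma)=\textbf{Y}\Leftrightarrow \sigma_0\sqsubseteq_{\mathfrak{S}}\sigma$, I would guess $\sigma:=\bigsqcup{}^{\mathfrak{S}}\{\sigma_0\in\mathfrak{S}\;\vert\;\mathfrak{l}_{(\sigma_0,\cdot)}\in\mathfrak{E}\text{ and }\mathfrak{b}_{\mathfrak{l}_{(\sigma_0,\cdot)}}=\textbf{Y}\}$, provided this join exists; axiom (\ref{axiomreduc1}) together with the fact that $P$ is an up-set closed under meets should guarantee that the relevant family of $\sigma_0$'s is directed (indeed meet-stable on the effect side translates to join-stable on the state side) so that the join exists in the down-complete Inf semi-lattice $\mathfrak{S}$ — this compatibility is the technical crux.

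Having produced the candidate $\sigma$, the remaining work is to verify $\epsilon^{\mathfrak{S}}_{\mathfrak{l}}(\sigma)=\mathfrak{b}_{\mathfrak{l}}$ for every $\mathfrak{l}\in\mathfrak{E}$, splitting into the three possible values. The case $\mathfrak{b}_{\mathfrak{l}}=\textbf{Y}$: monotonicity of $\epsilon^{\mathfrak{S}}_{\mathfrak{l}}$ and the definition of $\sigma$ as a join give $\epsilon^{\mathfrak{S}}_{\mathfrak{l}}(\sigma)\geq\textbf{Y}$, hence $=\textbf{Y}$. The case $\mathfrak{b}_{\mathfrak{l}}=\textbf{N}$: apply (\ref{theorembl3}) to get $\mathfrak{b}_{\overline{\mathfrak{l}}}=\textbf{Y}$, so by the previous case $\epsilon^{\mathfrak{S}}_{\overline{\mathfrak{l}}}(\sigma)=\textbf{Y}$, and then (\ref{etbar}) gives $\epsilon^{\mathfrak{S}}_{\mathfrak{l}}(\sigma)=\overline{\textbf{Y}}=\textbf{N}$. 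The case $\mathfrak{b}_{\mathfrak{l}}=\bot$: here I must rule out $\epsilon^{\mathfrak{S}}_{\mathfrak{l}}(\sigma)\in\{\textbf{Y},\textbf{N}\}$; if $\epsilon^{\mathfrak{S}}_{\mathfrak{l}}(\sigma)=\textbf{Y}$ then writing $\mathfrak{l}=\mathfrak{l}_{(\sigma_1,\sigma_1')}$ we have $\sigma_1\sqsubseteq_{\mathfrak{S}}\sigma$, and I would need that the ``Yes-part'' $\mathfrak{l}_{(\sigma_1,\cdot)}$ lies in $\mathfrak{E}$ with $\mathfrak{b}_{\mathfrak{l}_{(\sigma_1,\cdot)}}=\textbf{Y}$ forcing (via $\mathfrak{l}\sqsupseteq_{\mathfrak{E}}\mathfrak{l}\wedge\mathfrak{l}_{(\sigma_1,\cdot)}$ and monotonicity) a contradiction with $\mathfrak{b}_{\mathfrak{l}}=\bot$; the $\textbf{N}$ sub-case is symmetric via $\overline{\mathfrak{l}}$. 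I expect the main obstacle to be precisely this last case — showing that the join defining $\sigma$ is ``large enough'' to realize every $\textbf{Y}$ and $\textbf{N}$ value but ``not too large'' to spuriously upgrade a $\bot$ value — which is where one genuinely uses that $B$ commutes with \emph{all} infima (not just directed ones), is an up-set-preserving monotone map, and is compatible with the involution; a clean way to package this may be to first prove, as a lemma, that the family $\{\sigma_0\;\vert\;\mathfrak{l}_{(\sigma_0,\cdot)}\in\mathfrak{E},\ \mathfrak{b}_{\mathfrak{l}_{(\sigma_0,\cdot)}}=\textbf{Y}\}$ has a maximum, equal to $\sigma$, and dually that $\{\sigma_0'\;\vert\;\mathfrak{b}_{\mathfrak{l}_{(\cdot,\sigma_0')}}=\textbf{Y}\}$ has maximum $\bot_{\mathfrak{S}}$ (no effect says ``No'' to $\sigma$ unless trivially), after which the three-case verification becomes routine bookkeeping with the definition of $\epsilon^{\mathfrak{S}}$.
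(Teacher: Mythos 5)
Your overall strategy is the paper's own proof reflected onto the state side: uniqueness from the separation axiom (\ref{Chuseparated}), existence by extracting a state from the up-set $P:=\{\mathfrak{l}\in\mathfrak{E}\mid \mathfrak{b}_{\mathfrak{l}}=\textit{\bf Y}\}$, then a three-case verification in which the $\textit{\bf N}$ case goes through the involution and the $\bot$ case through the ``Yes-part'' $\mathfrak{l}\sqcap_{\mathfrak{E}}\mathfrak{Y}_{\mathfrak{E}}$. The genuine gap sits exactly where you locate the crux: the existence of $\sigma:=\bigsqcup^{\mathfrak{S}}\{\sigma_0\mid \mathfrak{b}_{\mathfrak{l}_{(\sigma_0,\cdot)}}=\textit{\bf Y}\}$. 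Directedness of that family does not yield the join: $\mathfrak{S}$ is only assumed down-complete (nonempty infima), so a directed set with no upper bound need not have a supremum ($\mathbb{N}$ with its usual order is down-complete in this sense), and you cannot invoke chain- or directed-completeness of $\mathfrak{S}$, because in the paper that completeness is deduced \emph{from} the present theorem — the appeal would be circular. The repair is the paper's construction: take $\mathfrak{l}_B:=\bigsqcap^{\mathfrak{E}}P$, which exists by down-completeness of $\mathfrak{E}$; by (\ref{theorembl2}) one has $\mathfrak{b}_{\mathfrak{l}_B}=\textit{\bf Y}$, while $\mathfrak{b}_{\bot_{\mathfrak{E}}}=\bot$ (since $\overline{\bot_{\mathfrak{E}}}=\bot_{\mathfrak{E}}$ and (\ref{theorembl3})); meeting with $\mathfrak{Y}_{\mathfrak{E}}$ and using (\ref{theorembl2}), (\ref{theorembl4}) then forces $\mathfrak{l}_B=\mathfrak{l}_{(\Sigma,\cdot)}$ for some $\Sigma\in\mathfrak{S}$ (a nontrivial ``No-component'' would produce a strictly smaller element of $P$, and the forms $\mathfrak{l}_{(\cdot,\tau)}$, $\bot_{\mathfrak{E}}$ would give $\mathfrak{b}_{\bot_{\mathfrak{E}}}=\textit{\bf Y}$). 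This $\Sigma$ is simultaneously the maximum of your family (so your join exists and equals it) and the state realizing $B$; with it, your three-case bookkeeping goes through as sketched.

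Two smaller inaccuracies. First, your $\textit{\bf Y}$-case (``monotonicity and the definition of $\sigma$ as a join'') silently assumes every $\mathfrak{l}\in P$ has a nontrivial Yes-component; the possibility $\mathfrak{b}_{\mathfrak{l}_{(\cdot,\tau)}}=\textit{\bf Y}$ must be excluded by the same meet-with-$\mathfrak{Y}_{\mathfrak{E}}$ argument and the fact $\mathfrak{b}_{\bot_{\mathfrak{E}}}=\bot$. Second, your proposed dual lemma is false as stated: $\{\sigma_0'\mid \mathfrak{b}_{\mathfrak{l}_{(\cdot,\sigma_0')}}=\textit{\bf Y}\}$ cannot have maximum $\bot_{\mathfrak{S}}$, since $\mathfrak{b}_{\mathfrak{l}_{(\cdot,\bot_{\mathfrak{S}})}}=\overline{\mathfrak{b}_{\mathfrak{Y}_{\mathfrak{E}}}}=\textit{\bf N}$; the correct statement (provable by the same trick, and all you need) is that this set is empty. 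Neither of these is fatal, but the join-existence step must be rebuilt along the lines above rather than by directedness.
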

\begin{proof}
Let us consider ${ \mathfrak{l}}_B:=\bigsqcap^{{}^{{ \mathfrak{E}}}}\{\, { \mathfrak{l}}\in { \mathfrak{E}} \;\vert\; { \mathfrak{b}}_{{ \mathfrak{l}}}=\textit{\bf Y}\,\}$.  Note that ${ \mathfrak{l}}_B$ exists in ${ \mathfrak{E}}$ because ${ \mathfrak{E}}$ is a down-complete Inf semi-lattice and the subset $\{\, { \mathfrak{l}}\in { \mathfrak{E}} \;\vert\; { \mathfrak{b}}_{{ \mathfrak{l}}}=\textit{\bf Y}\,\}$ contains at least the element ${ \mathfrak{Y}}_{ \mathfrak{E}}$.  Moreover, ${ \mathfrak{b}}_{{ \mathfrak{l}}_B}=\textit{\bf Y}$ because of the relation (\ref{theorembl2}). Note also that ${ \mathfrak{l}} \sqsupseteq_{{}_{{ \mathfrak{E}}}} { \mathfrak{l}}_B$ implies ${ \mathfrak{b}}_{{ \mathfrak{l}}}=\textit{\bf Y}$ because of the relation (\ref{theorembl1}), and conversely ${ \mathfrak{b}}_{{ \mathfrak{l}}}=\textit{\bf Y}$ implies ${ \mathfrak{l}} \sqsupseteq_{{}_{{ \mathfrak{E}}}} { \mathfrak{l}}_B$ due to the definition of ${ \mathfrak{l}}_B$. Let us now introduce $\Sigma_{{}_{{ \mathfrak{l}}_B}}=\bigsqcap{}^{{}^{ \mathfrak{S}}}({\epsilon}^{ \mathfrak{S}}_{{ \mathfrak{l}}_B})^{\;-1}(\textit{\bf Y})$. For any ${ \mathfrak{l}}$ such that ${ \mathfrak{l}} \sqsupseteq_{{}_{{ \mathfrak{E}}}} { \mathfrak{l}}_B$, we have $\epsilon^{ \mathfrak{S}}_{{ \mathfrak{l}}}(\Sigma_{{}_{{ \mathfrak{l}}_B}})\geq \epsilon^{ \mathfrak{S}}_{{ \mathfrak{l}}_B}(\Sigma_{{}_{{ \mathfrak{l}}_B}}) = \textit{\bf Y}$, i.e. $\epsilon^{ \mathfrak{S}}_{{ \mathfrak{l}}}(\Sigma_{{}_{{ \mathfrak{l}}_B}}) = \textit{\bf Y}$.  We could suppose that ${ \mathfrak{l}}_B={ \mathfrak{l}}_{(\Sigma_{{ \mathfrak{l}}_B},\Sigma'_{{ \mathfrak{l}}_B})}$ for a certain $\Sigma'_{{ \mathfrak{l}}_B}\in {\mathfrak{S}}$. However, we note that, because of (\ref{theorembl2}) and (\ref{theorembl4}), we have ${ \mathfrak{l}}_{(\Sigma_{{ \mathfrak{l}}_B},\cdot)}\sqsubset_{{}_{{ \mathfrak{E}}}} { \mathfrak{l}}_{(\Sigma_{{ \mathfrak{l}}_B},\Sigma'_{{ \mathfrak{l}}_B})}$ and ${ \mathfrak{b}}_{{ \mathfrak{l}}_{(\Sigma_{{ \mathfrak{l}}_B},\cdot)}}={ \mathfrak{b}}_{{ \mathfrak{l}}_{(\Sigma_{{ \mathfrak{l}}_B},\Sigma'_{{ \mathfrak{l}}_B})}\sqcap_{{}_{{ \mathfrak{E}}}} { \mathfrak{Y}}_{ \mathfrak{E}} }= { \mathfrak{b}}_{{ \mathfrak{l}}_{(\Sigma_{{ \mathfrak{l}}_B},\Sigma'_{{ \mathfrak{l}}_B})}}\wedge { \mathfrak{b}}_{{ \mathfrak{Y}}_{ \mathfrak{E}} }=\textit{\bf Y}$ which would contradict the definition of ${ \mathfrak{l}}_B$.  Hence, we have to accept that ${ \mathfrak{l}}_B={ \mathfrak{l}}_{(\Sigma_{{ \mathfrak{l}}_B},\cdot)}$.  Thus, we note that, for any ${ \mathfrak{l}}_{(\Sigma,\Sigma')}$, the property ${ \mathfrak{l}}_{(\Sigma,\Sigma')} \not\sqsupseteq_{{}_{{ \mathfrak{E}}}} { \mathfrak{l}}_B$ is then equivalent to the property $\Sigma\not\sqsupseteq_{{}_{{ \mathfrak{S}}}} \Sigma_{{ \mathfrak{l}}_B}$. Then, if ${ \mathfrak{l}}_{(\Sigma,\Sigma')} \not\sqsupseteq_{{}_{{ \mathfrak{E}}}} { \mathfrak{l}}_B$ we cannot have $\epsilon^{ \mathfrak{S}}_{{ \mathfrak{l}}_{(\Sigma,\Sigma')}}(\Sigma_{{}_{{ \mathfrak{l}}_B}})=\textit{\bf Y}$.  We then conclude that the property $\epsilon^{ \mathfrak{S}}_{{ \mathfrak{l}}}(\Sigma_{{}_{{ \mathfrak{l}}_B}}) = \textit{\bf Y}$ is equivalent to the property ${ \mathfrak{l}} \sqsupseteq_{{}_{{ \mathfrak{E}}}} { \mathfrak{l}}_B$, or in other words $\epsilon^{ \mathfrak{S}}_{{ \mathfrak{l}}}(\Sigma_{{}_{{ \mathfrak{l}}_B}}) = \textit{\bf Y}$ is equivalent to ${ \mathfrak{b}}_{{ \mathfrak{l}}}=\textit{\bf Y}$.  Using (\ref{theorembl3}) and (\ref{etbar}),  we deduce that $(\epsilon^{ \mathfrak{S}}_{{ \mathfrak{l}}}(\Sigma_{{}_{{ \mathfrak{l}}_B}}) = \textit{\bf N})\Leftrightarrow (\epsilon^{ \mathfrak{S}}_{\overline{ \mathfrak{l}}}(\Sigma_{{}_{{ \mathfrak{l}}_B}}) = \textit{\bf Y})\Leftrightarrow ({ \mathfrak{b}}_{\overline{ \mathfrak{l}}}=\textit{\bf Y}) \Leftrightarrow ({ \mathfrak{b}}_{{ \mathfrak{l}}}=\textit{\bf N})$. As a final conclusion, we have for any ${ \mathfrak{l}}\in { \mathfrak{E}}$ the equality $\epsilon^{ \mathfrak{S}}_{{ \mathfrak{l}}}(\Sigma_B)={ \mathfrak{b}}_{{ \mathfrak{l}}}$. \\
The fact that every elements of the form ${ \mathfrak{l}}_{(\sigma,\cdot)}$ must be in ${ \mathfrak{E}}$ (consequence of (\ref{axiomreduc1}) (\ref{axiomreduc2}) and (\ref{axiomreduc4})) allows to deduce the uniqueness character of the state $\sigma$.
\end{proof}

\begin{theorem}
\begin{eqnarray}
\forall \{\sigma_i\;\vert\; i\in I\} \subseteq_{Chain} { \mathfrak{S}},\;\;\exists \sigma \in { \mathfrak{S}} & \vert & \forall { \mathfrak{l}}\in { \mathfrak{E}}, \; \epsilon^{ \mathfrak{S}}_{{ \mathfrak{l}}}(\sigma)=\bigvee{}_{\!\!\! i\in I}\; \epsilon^{ \mathfrak{S}}_{{ \mathfrak{l}}}(\sigma_i),\label{demochaincontinuous1}\\
\sigma & = & \bigsqcup{}^{{}^{{ \mathfrak{S}}}}_{{i\in I}}\;\sigma_i.
\end{eqnarray} 
As a consequence, using Zorn's Lemma, we deduce that 
\begin{eqnarray}
\forall \sigma\in { \mathfrak{S}}, && \exists \sigma'\in Max({ \mathfrak{S}})\;\vert\; \sigma \sqsubseteq_{{}_{{ \mathfrak{S}}}}\sigma'.\label{existsmaxabove}
\end{eqnarray}
\end{theorem}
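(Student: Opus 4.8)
The plan is to realize the desired state as the one whose evaluation against every effect is the pointwise supremum along the chain, and then to read off that it is the join. Concretely, set $\mathfrak{b}_{\mathfrak{l}}:=\bigvee_{i\in I}\epsilon^{\mathfrak{S}}_{\mathfrak{l}}(\sigma_i)$ for each $\mathfrak{l}\in\mathfrak{E}$. For this to make sense I first note that $\sigma\mapsto\epsilon^{\mathfrak{S}}_{\mathfrak{l}}(\sigma)$ is order-preserving: if $\sigma\sqsubseteq_{\mathfrak{S}}\sigma'$ then $\sigma=\sigma\sqcap_{\mathfrak{S}}\sigma'$, so by (\ref{axiomsigmainfsemilattice}) $\epsilon^{\mathfrak{S}}_{\mathfrak{l}}(\sigma)=\epsilon^{\mathfrak{S}}_{\mathfrak{l}}(\sigma)\wedge\epsilon^{\mathfrak{S}}_{\mathfrak{l}}(\sigma')\leq\epsilon^{\mathfrak{S}}_{\mathfrak{l}}(\sigma')$. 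Hence $\{\epsilon^{\mathfrak{S}}_{\mathfrak{l}}(\sigma_i)\mid i\in I\}$ is a totally ordered subset of the three-element poset $\mathfrak{B}$, and every chain of $\mathfrak{B}$ has a supremum; so $\mathfrak{b}_{\mathfrak{l}}$ is well defined.

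I would then verify that $B:\mathfrak{l}\mapsto\mathfrak{b}_{\mathfrak{l}}$ satisfies the four hypotheses of Theorem~\ref{blepsilonsigma}. Monotonicity (\ref{theorembl1}) follows from the analogous monotonicity of $\mathfrak{l}\mapsto\epsilon^{\mathfrak{S}}_{\mathfrak{l}}(\sigma_i)$ (from (\ref{axiomEinfsemilattice})) together with the fact that the supremum of a chain in $\mathfrak{B}$ is monotone in the chain. Condition (\ref{theorembl3}) holds because the involution of $\mathfrak{B}$ is an order-automorphism (it fixes $\bot$ and exchanges the incomparable elements $\textit{\bf Y}$ and $\textit{\bf N}$), hence commutes with the suprema defining $\mathfrak{b}$, so by (\ref{etbar}) $\mathfrak{b}_{\overline{\mathfrak{l}}}=\bigvee_i\overline{\epsilon^{\mathfrak{S}}_{\mathfrak{l}}(\sigma_i)}=\overline{\mathfrak{b}_{\mathfrak{l}}}$. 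Condition (\ref{theorembl4}) is immediate: $\epsilon^{\mathfrak{S}}_{\mathfrak{Y}_{\mathfrak{E}}}(\sigma_i)=\textit{\bf Y}$ for all $i$ by (\ref{ety}), so $\mathfrak{b}_{\mathfrak{Y}_{\mathfrak{E}}}=\textit{\bf Y}$. The real work is (\ref{theorembl2}): applying (\ref{axiomEinfsemilattice}) inside the supremum, it is the interchange
\[
\bigvee_{i\in I}\bigwedge_{j\in J}\epsilon^{\mathfrak{S}}_{\mathfrak{l}_j}(\sigma_i)\;=\;\bigwedge_{j\in J}\bigvee_{i\in I}\epsilon^{\mathfrak{S}}_{\mathfrak{l}_j}(\sigma_i)
\]
for an arbitrary family $\{\mathfrak{l}_j\}_{j\in J}\subseteq\mathfrak{E}$ and the \emph{chain} $\{\sigma_i\}_{i\in I}$. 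The inequality $\leq$ is formal. For $\geq$ I would argue in $\mathfrak{B}$ according to the value $r$ of the right-hand side: if $r=\bot$ nothing is to be shown; if $r=\textit{\bf Y}$ (the case $r=\textit{\bf N}$ being symmetric via (\ref{theorembl3})), then $\bigvee_i\epsilon^{\mathfrak{S}}_{\mathfrak{l}_j}(\sigma_i)=\textit{\bf Y}$ for every $j$, so for each $j$ there is an index $i_j$ with $\epsilon^{\mathfrak{S}}_{\mathfrak{l}_j}(\sigma_{i_j})=\textit{\bf Y}$, and one uses the totality of $\{\sigma_i\}$ (together with the chain-continuity property (\ref{distributivitybullet2})) to amalgamate these witnesses into a single index $i$ with $\epsilon^{\mathfrak{S}}_{\bigsqcap^{\mathfrak{E}}_j\mathfrak{l}_j}(\sigma_i)=\textit{\bf Y}$, which gives $\geq$.

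Granting (\ref{theorembl2}), Theorem~\ref{blepsilonsigma} produces a unique $\sigma\in\mathfrak{S}$ with $\epsilon^{\mathfrak{S}}_{\mathfrak{l}}(\sigma)=\mathfrak{b}_{\mathfrak{l}}=\bigvee_i\epsilon^{\mathfrak{S}}_{\mathfrak{l}}(\sigma_i)$ for every $\mathfrak{l}$, which is (\ref{demochaincontinuous1}). To see $\sigma=\bigsqcup^{\mathfrak{S}}_{i\in I}\sigma_i$: for each $i_0$ and each $\mathfrak{l}$ one has $\epsilon^{\mathfrak{S}}_{\mathfrak{l}}(\sigma_{i_0}\sqcap_{\mathfrak{S}}\sigma)=\epsilon^{\mathfrak{S}}_{\mathfrak{l}}(\sigma_{i_0})\wedge\epsilon^{\mathfrak{S}}_{\mathfrak{l}}(\sigma)=\epsilon^{\mathfrak{S}}_{\mathfrak{l}}(\sigma_{i_0})$ (since $\epsilon^{\mathfrak{S}}_{\mathfrak{l}}(\sigma_{i_0})\leq\mathfrak{b}_{\mathfrak{l}}$), so (\ref{Chuseparated}) gives $\sigma_{i_0}\sqsubseteq_{\mathfrak{S}}\sigma$; and if $\tau$ is any upper bound of the chain then $\epsilon^{\mathfrak{S}}_{\mathfrak{l}}(\sigma_i)\leq\epsilon^{\mathfrak{S}}_{\mathfrak{l}}(\tau)$ for all $i,\mathfrak{l}$, hence $\epsilon^{\mathfrak{S}}_{\mathfrak{l}}(\sigma)=\mathfrak{b}_{\mathfrak{l}}\leq\epsilon^{\mathfrak{S}}_{\mathfrak{l}}(\tau)$ and the same argument yields $\sigma\sqsubseteq_{\mathfrak{S}}\tau$. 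Finally, (\ref{existsmaxabove}) follows by Zorn's Lemma: every chain of $\mathfrak{S}$ has an upper bound (its supremum), and the same holds inside any up-set $\uparrow^{\mathfrak{S}}\!\sigma$ since the supremum of a chain above $\sigma$ is again above $\sigma$; a maximal element of $\uparrow^{\mathfrak{S}}\!\sigma$ is maximal in $\mathfrak{S}$.

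The step I expect to be the main obstacle is (\ref{theorembl2}), i.e. the interchange displayed above: because the supremum ranges over a chain while the infimum ranges over an arbitrary family, one must use the totality of $\{\sigma_i\}$ carefully both to amalgamate the witnesses and to check that $\bigsqcap^{\mathfrak{E}}_j\mathfrak{l}_j$ behaves as expected; by contrast, the remaining three hypotheses of Theorem~\ref{blepsilonsigma}, the identification of $\sigma$ with the join, and the Zorn argument are routine Chu-space bookkeeping.
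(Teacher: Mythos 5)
Your strategy is the same as the paper's: set $\mathfrak{b}_{\mathfrak{l}}:=\bigvee_{i\in I}\epsilon^{\mathfrak{S}}_{\mathfrak{l}}(\sigma_i)$, verify the hypotheses of Theorem \ref{blepsilonsigma}, apply it, identify the resulting state with $\bigsqcup^{\mathfrak{S}}_{i\in I}\sigma_i$, and finish with Zorn; your handling of (\ref{theorembl1}), (\ref{theorembl3}), (\ref{theorembl4}), of the join identification and of the Zorn step is fine and matches the paper. The problem is exactly the step you flag, the verification of (\ref{theorembl2}). In $\mathfrak{B}$ the supremum of a chain equals $\textit{\bf Y}$ only if $\textit{\bf Y}$ occurs in the chain, so in your case $r=\textit{\bf Y}$ the left-hand side being $\textit{\bf Y}$ literally requires a \emph{single} index $i$ with $\epsilon^{\mathfrak{S}}_{\mathfrak{l}_j}(\sigma_i)=\textit{\bf Y}$ simultaneously for \emph{all} $j\in J$, whereas the hypothesis only provides, for each $j$, a witness $i_j$. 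For finite $J$ the totality of the chain lets you take the largest witness; for infinite $J$ there is no reason a single chain element dominates all the $\sigma_{i_j}$ (the chain need not even have an upper bound in $\mathfrak{S}$), and the property (\ref{distributivitybullet2}) you invoke concerns the monoid law $\bullet$, which plays no role in this computation. Since the proof of Theorem \ref{blepsilonsigma} applies (\ref{theorembl2}) to the typically infinite family $\{\mathfrak{l}\;\vert\;\mathfrak{b}_{\mathfrak{l}}=\textit{\bf Y}\}$, the infinite case cannot be avoided, so this quantifier interchange is a genuine gap rather than a routine detail.

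The gap is substantive, not expository: the interchange is not a formal consequence of the listed axioms. Take $\mathfrak{S}$ to consist of a bottom, an infinite ascending chain $\sigma_1\sqsubset\sigma_2\sqsubset\cdots$ without upper bound, and one further element $a$ covering the bottom and incomparable with every $\sigma_n$ (so that (\ref{axiomreduc1}) can be satisfied), with $\mathfrak{E}=\mathfrak{E}_{\mathfrak{S}}$. For $\mathfrak{l}_j:=\mathfrak{l}_{(\sigma_j,a)}$ one has $\bigvee_i\epsilon^{\mathfrak{S}}_{\mathfrak{l}_j}(\sigma_i)=\textit{\bf Y}$ for every $j$, while $\bigsqcap^{{}^{\mathfrak{E}_{\mathfrak{S}}}}_{j}\mathfrak{l}_j=\mathfrak{l}_{(\cdot,a)}$, which evaluates to $\bot$ on every $\sigma_i$; hence $\mathfrak{b}$ violates (\ref{theorembl2}), and indeed no state evaluates to the chain-suprema on all effects. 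So closing the argument needs an ingredient your sketch does not supply (and which, to be fair, the paper's own proof also omits, since it merely asserts that (\ref{theorembl1})--(\ref{theorembl4}) "can be checked easily" from the distributivity of $\mathfrak{B}$): either a hypothesis guaranteeing that the chain $\{\sigma_i\}$ has an upper bound in $\mathfrak{S}$, in which case $\bigsqcup_i\sigma_i$ exists by down-completeness and one can check directly that it realizes the suprema of evaluations, or some finite-determination/compactness property of the effects in $\mathfrak{E}$ that permits the amalgamation of witnesses you appeal to.
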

\begin{proof}
First of all, we note that $\{\sigma_i\;\vert\; i\in I\} \subseteq_{Chain} { \mathfrak{S}}$ and the monotonicity property of $\epsilon^{ \mathfrak{S}}$ implies that $\{\epsilon^{ \mathfrak{S}}_{{ \mathfrak{l}}}(\sigma_i)\;\vert\; i\in I\} \subseteq_{Chain} { \mathfrak{B}}$ for any ${ \mathfrak{l}}\in { \mathfrak{E}}$ and then $\bigvee{}_{\!\!\! i\in I}\; \epsilon^{ \mathfrak{S}}_{{ \mathfrak{l}}}(\sigma_i)$ exists for any ${ \mathfrak{l}}\in { \mathfrak{E}}$ due to the chain-completeness of ${ \mathfrak{B}}$.\\
Using the properties (\ref{axiomEinfsemilattice})(\ref{etbar})(\ref{ety}) of the map $\epsilon$ and the complete-distributivity properties satisfied by ${ \mathfrak{B}}$, we can check easily that the map ${ \mathfrak{l}}\mapsto \bigvee{}_{\!\!\! i\in I}\; \epsilon^{ \mathfrak{S}}_{{ \mathfrak{l}}}(\sigma_i)$ satisfies properties (\ref{theorembl1}) (\ref{theorembl2}) (\ref{theorembl3}) (\ref{theorembl4}). As a consequence, the property (\ref{demochaincontinuous1}) is a direct consequence of Theorem \ref{blepsilonsigma}. \\
By definition of the poset structure on ${ \mathfrak{S}}$, we deduce, from the property $(\, \forall { \mathfrak{l}}\in { \mathfrak{E}}, \;\epsilon^{ \mathfrak{S}}_{{ \mathfrak{l}}}(\sigma)=\bigvee{}_{\!\!\! i\in I}\; \epsilon^{ \mathfrak{S}}_{{ \mathfrak{l}}}(\sigma_i) \,)$, that $\sigma \sqsupseteq_{{}_{{ \mathfrak{S}}}} \sigma_i,\; \forall i\in I$ and $(\sigma'\sqsupseteq_{{}_{{ \mathfrak{S}}}} \sigma_i,\; \forall i\in I)\Rightarrow (\sigma \sqsupseteq_{{}_{{ \mathfrak{S}}}} \sigma')$. In other words, $\sigma  =  \bigsqcup{}^{{}^{{ \mathfrak{S}}}}_{{}_{i\in I}}\sigma_i$.
\end{proof}

\begin{remark}\label{remarkspaceeffectspurestates}
We now observe that, if ${ \mathfrak{S}}$ has a description in terms of pure states, then the natural space of effects ${ \mathfrak{E}}_{ \mathfrak{S}}$ is such that its subset of meet-irreducible elements, denoted $Irr({ \mathfrak{E}}_{ \mathfrak{S}})$, is a generating subset for ${ \mathfrak{E}}_{ \mathfrak{S}}$.  \\
To check this point, we first note that the space of effects has a bottom element denoted $\bot_{{}_{ \mathfrak{E}_{ \mathfrak{S}}}}$ and defined by
\begin{eqnarray}
\forall \sigma\in { \mathfrak{S}}, && \epsilon_{\bot_{{}_{ \mathfrak{E}_{ \mathfrak{S}}}}}(\sigma) = \bot. 
\end{eqnarray}
Secondly, ${ \mathfrak{E}}_{ \mathfrak{S}}$ appears to be an algebraic domain.\\
To prove this point, we firstly observe that ${ \mathfrak{E}}_{ \mathfrak{S}}$ satisfies the following chain-completeness property
\begin{eqnarray}
\forall \{{ \mathfrak{l}}_i\;\vert\; i\in I\} \subseteq_{Chain} { \mathfrak{E}}_{ \mathfrak{S}},\;\;\exists { \mathfrak{l}} \in { \mathfrak{E}}_{ \mathfrak{S}} & \vert & \forall \sigma\in { \mathfrak{S}}, \; \epsilon_{{ \mathfrak{l}}}(\sigma)=\bigvee{}_{\!\!\! i\in I}\; \epsilon_{{ \mathfrak{l}}_i}(\sigma),\label{demochaincontinuous2}\\
{ \mathfrak{l}} & = & \bigsqcup{}^{{}^{{ \mathfrak{E}_{ \mathfrak{S}}}}}_{{i\in I}}\;{ \mathfrak{l}}_i.
\end{eqnarray} 
This is an immediate consequence of the down-completeness of ${ \mathfrak{S}}$. \\
We secondly observe that ${ \mathfrak{E}}_{ \mathfrak{S}}$ is atomistic, i.e. 
\begin{eqnarray}
\exists { \mathcal{A}}_{{ \mathfrak{E}_{ \mathfrak{S}}}}\subseteq { \mathfrak{E}}_{ \mathfrak{S}} & \vert &
\left\{
\begin{array}{l}\forall { \mathfrak{l}}\in { \mathcal{A}}_{{ \mathfrak{E}_{ \mathfrak{S}}}},\;\; \bot_{{}_{ \mathfrak{E}_{ \mathfrak{S}}}} \sqcoversubset_{{}_{{ \mathfrak{E}_{ \mathfrak{S}}}}} { \mathfrak{l}} \\
\forall { \mathfrak{l}} \in { \mathfrak{E}_{ \mathfrak{S}}}\smallsetminus \{\bot_{{}_{ \mathfrak{E}_{ \mathfrak{S}}}}\}, \;\; \exists { \mathfrak{l}}'\in { \mathcal{A}}_{{ \mathfrak{E}_{ \mathfrak{S}}}}\;\vert\; { \mathfrak{l}}' \sqsubseteq_{{}_{{ \mathfrak{E}_{ \mathfrak{S}}}}} { \mathfrak{l}}
\end{array}\right.\label{defatomeffects}\\
{ \mathcal{A}}_{{ \mathfrak{E}_{ \mathfrak{S}}}} &=& \{{ \mathfrak{l}}_{{}_{(\Sigma,\cdot)}}\vert \Sigma\in { \mathfrak{S}}^{{}^{pure}}\}\cup \{{ \mathfrak{l}}_{{}_{(\cdot,\Sigma)}}\vert \Sigma\in { \mathfrak{S}}^{{}^{pure}}\}\label{expressionatomeffects}
\end{eqnarray}
and
\begin{eqnarray}
\forall { \mathfrak{l}}\in { \mathfrak{E}}_{ \mathfrak{S}}, && { \mathfrak{l}}= \bigsqcup{}^{{}^{ \mathfrak{E}_{ \mathfrak{S}}}} \{\, { \mathfrak{l}}'\in { \mathcal{A}}_{{ \mathfrak{E_{ \mathfrak{S}}}}} \;\vert\; { \mathfrak{l}}' \sqsubseteq_{{}_{ \mathfrak{E}_{ \mathfrak{S}}}}{ \mathfrak{l}}\}.\label{atomisticeffects}
\end{eqnarray}
Here, we have adopted the notation $\forall { \mathfrak{l}}\in {{ \mathfrak{E}_{ \mathfrak{S}}}},  \bot_{{}_{ \mathfrak{E}}} \sqcoversubset_{{}_{{ \mathfrak{E}_{ \mathfrak{S}}}}} { \mathfrak{l}} \;\Leftrightarrow\; (\, \bot_{{}_{ \mathfrak{E}_{ \mathfrak{S}}}} \sqsubset_{{}_{{ \mathfrak{E}_{ \mathfrak{S}}}}} { \mathfrak{l}} \;\;\textit{\rm and}\;\; (\forall { \mathfrak{l}}'\in {{ \mathfrak{E}_{ \mathfrak{S}}}}, \bot_{{}_{ \mathfrak{E}_{ \mathfrak{S}}}} \sqsubseteq_{{}_{{ \mathfrak{E_{ \mathfrak{S}}}}}} { \mathfrak{l}}'\sqsubseteq_{{}_{{ \mathfrak{E}_{ \mathfrak{S}}}}} { \mathfrak{l}}\;\Rightarrow\; (\bot_{{}_{ \mathfrak{E}_{ \mathfrak{S}}}} = { \mathfrak{l}}' \;\textit{\rm or}\; { \mathfrak{l}}'={ \mathfrak{l}}))\,)$.

The property (\ref{existsmaxabove}) implies directly the second condition of (\ref{defatomeffects}). The first condition of (\ref{defatomeffects}) is easy to check using the expression of the order on effects. The property (\ref{atomisticeffects}) is a direct consequence of property (\ref{completemeetirreducibleordergenerating}).\\
Endly, the compacity of atoms is trivial. \\ 
The algebraicity of ${ \mathfrak{E}}_{ \mathfrak{S}}$ follows.\\
As a conclusion of this analysis, we have the following property :
\begin{eqnarray}
&&\forall { \mathfrak{l}} \in { \mathfrak{E}}_{ \mathfrak{S}}, \;\; { \mathfrak{l}}= \bigsqcap{}^{{}^{ { \mathfrak{E}_{ \mathfrak{S}}}}}  \underline{\;{ \mathfrak{l}}\;}_{{}_{ { \mathfrak{E}_{ \mathfrak{S}}}}},\;\;\textit{\rm where}\;\;
\underline{\;{ \mathfrak{l}}\;}_{{}_{ { \mathfrak{E}_{ \mathfrak{S}}}}}=
(Irr({ \mathfrak{E}}_{ \mathfrak{S}}) \cap (\uparrow^{{}^{ { \mathfrak{E}_{ \mathfrak{S}}}}}\!\!\!\! { \mathfrak{l}}) ).\label{completemeetirreducibleordergeneratingeffects}
\end{eqnarray}
Indeed, from previous results  ${ \mathfrak{E}}_{ \mathfrak{S}}$ is a bounded-complete algebraic domain. The property (\ref{completemeetirreducibleordergeneratingeffects}) is then a direct consequence of \cite[Theorem I-4.26 p.126]{gierz_hofmann_keimel_lawson_mislove_scott_2003}. \\
However, we have to insist on the fact that $Irr({ \mathfrak{E}}_{ \mathfrak{S}})$ is not necessarily equal to $Max({ \mathfrak{E}}_{ \mathfrak{S}})$  (even when $Irr({ \mathfrak{S}})=Max({ \mathfrak{S}})$). We have nevertheless  always $Irr({ \mathfrak{E}}_{ \mathfrak{S}})\supseteq Max({ \mathfrak{E}}_{ \mathfrak{S}})$.  \\
We will conclude by characterizing the elements of $Max({ \mathfrak{E}_{ \mathfrak{S}}})$ :
\begin{eqnarray}
Max({ \mathfrak{E}}_{ \mathfrak{S}}) &=& \{\, { \mathfrak{l}}_{{}_{(\Sigma,\Sigma')}}\;\vert\; (\Sigma,\Sigma')\in { \mathfrak{S}}^{\times 2},\;\Sigma\, \widecheck{\bowtie}_{{}_{ \mathfrak{S}}}\Sigma'\,\} \cup \{{ \mathfrak{Y}}_{{}_{ \mathfrak{E_{ \mathfrak{S}}}}} \}\cup \{\overline{{ \mathfrak{Y}}_{{}_{ \mathfrak{E}_{ \mathfrak{S}}}}} \},\label{Epure}\;\;\;\;\;\;\;\;\;\;\;\;\;\;\;\;\;\;
\end{eqnarray}
where we have introduced the following binary relation, denoted by $\widecheck{\bowtie}_{{}_{ \mathfrak{S}}}$  and defined on ${ \mathfrak{S}}$ by
\begin{eqnarray}
&&\hspace{-0.5cm} \forall (\sigma,\sigma')\in { \mathfrak{S}}^{\times 2},\;\;\;\;\;\sigma\, \widecheck{\bowtie}_{{}_{ \mathfrak{S}}}\sigma'  :\Leftrightarrow \nonumber\\
&&\hspace{1cm}  (\,(\forall \sigma'' \sqsubset_{{}_{ \mathfrak{S}}}\sigma', \widehat{\sigma\sigma''}{}^{{}^{ \mathfrak{S}}})\;\;\textit{\rm and}\;\; (\forall \sigma'' \sqsubset_{{}_{ \mathfrak{S}}}\sigma, \widehat{\sigma'\sigma''}{}^{{}^{ \mathfrak{S}}})\;\textit{\rm and}\; (\neg\; \widehat{\sigma\sigma'}{}^{{}^{ \mathfrak{S}}})\,).\;\;\;\;\;\;\;\;\;\;\;\;\;\;\;\;\label{quasiantipodal}
\end{eqnarray}
\end{remark}

\subsection{Particular spaces of states}\label{particularspacesofstates}

\begin{definition}
A space of states will be said to be {\em a simplex space of states} (or {\em a classical space of states}) iff
\begin{eqnarray}
\forall \sigma \in { \mathfrak{S}},\existunique \;U_\sigma\subseteq { \mathfrak{S}}^{{}^{pure}} & \vert & \sigma=\bigsqcap{}^{{}^{ \mathfrak{S}}}U_\sigma.\label{simplexdecompunique}
\end{eqnarray}
We note that $U_\sigma=\underline{\sigma}_{{}_{{ \mathfrak{S}}}}$.
\end{definition}

\begin{lemma}\label{particularpsi}
If ${ \mathfrak{S}}$ is a simplex space of states, then for any $\sigma\in { \mathfrak{S}}$ there exists a unique homomorphism $\psi_\sigma$ from ${ \mathfrak{S}}$ to ${ \mathfrak{B}}$ satisfying $\psi_\sigma ({ \mathfrak{S}}^{{}^{pure}})\subseteq \{\textit{\bf Y},\textit{\bf N}\}$ and $\psi_\sigma (\sigma)=\textit{\bf Y}$.  The reciprocal implication is also true.
\end{lemma}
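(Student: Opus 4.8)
The plan is to pass from $\mathfrak{S}$ to its generating set of pure states, where maps into $\mathfrak{B}$ are controlled by their values on points. The cornerstone I would establish first is the identity, valid whenever $\mathfrak{S}$ is a simplex space of states,
\[ \underline{\;\bigsqcap^{\mathfrak{S}} S\;}_{\mathfrak{S}} = \bigcup_{\tau\in S}\underline{\tau}_{\mathfrak{S}}\qquad\text{for every }S\subseteq\mathfrak{S}. \]
The inclusion ``$\supseteq$'' is immediate from the definition of $\underline{\tau}_{\mathfrak{S}}$ and the fact that $\tau\sqsubseteq_{\mathfrak{S}}p$ for all $\tau\in S$ forces $\bigsqcap^{\mathfrak{S}}S\sqsubseteq_{\mathfrak{S}}p$. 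For ``$\subseteq$'' one writes $\bigsqcap^{\mathfrak{S}}S=\bigsqcap^{\mathfrak{S}}(\bigcup_{\tau}\underline{\tau}_{\mathfrak{S}})$ (an infimum of infima) and invokes the uniqueness clause of (\ref{simplexdecompunique}): since $\bigcup_{\tau}\underline{\tau}_{\mathfrak{S}}$ is a subset of $\mathfrak{S}^{{}^{pure}}$ whose infimum equals $\bigsqcap^{\mathfrak{S}}S$, it must coincide with $U_{\bigsqcap^{\mathfrak{S}}S}=\underline{\;\bigsqcap^{\mathfrak{S}}S\;}_{\mathfrak{S}}$. This is the only point at which the full simplex hypothesis, rather than just the existence of a pure-state description, enters the direct implication.

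For existence I would, given $\sigma$, define $\psi_\sigma$ on pure states by $\psi_\sigma(p):=\textit{\bf Y}$ if $\sigma\sqsubseteq_{\mathfrak{S}}p$ and $\psi_\sigma(p):=\textit{\bf N}$ otherwise, and extend it by $\psi_\sigma(\tau):=\bigwedge_{p\in\underline{\tau}_{\mathfrak{S}}}\psi_\sigma(p)$; the meet is over a nonempty family by (\ref{existsmaxabove}), and since $\underline{p}_{\mathfrak{S}}=\{p\}$ for $p$ pure the extension agrees with the original assignment on $\mathfrak{S}^{{}^{pure}}$. The two required properties $\psi_\sigma(\sigma)=\textit{\bf Y}$ and $\psi_\sigma(\mathfrak{S}^{{}^{pure}})\subseteq\{\textit{\bf Y},\textit{\bf N}\}$ hold by construction; that $\psi_\sigma$ is a homomorphism follows from the cornerstone identity, since $\psi_\sigma(\bigsqcap^{\mathfrak{S}}S)=\bigwedge_{p\in\underline{\bigsqcap^{\mathfrak{S}}S}_{\mathfrak{S}}}\psi_\sigma(p)=\bigwedge_{p\in\bigcup_{\tau}\underline{\tau}_{\mathfrak{S}}}\psi_\sigma(p)=\bigwedge_{\tau\in S}\bigwedge_{p\in\underline{\tau}_{\mathfrak{S}}}\psi_\sigma(p)=\bigwedge_{\tau\in S}\psi_\sigma(\tau)$; monotonicity is a special case, and compatibility with any existing binary join is read off pointwise in $\mathfrak{B}$ from $\underline{\tau_1\sqcup\tau_2}_{\mathfrak{S}}=\underline{\tau_1}_{\mathfrak{S}}\cap\underline{\tau_2}_{\mathfrak{S}}$ (valid when $\widehat{\tau_1\tau_2}{}^{{}^{\mathfrak{S}}}$).

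For uniqueness, let $\psi$ be any homomorphism meeting the two requirements. Since $\mathfrak{S}^{{}^{pure}}$ order-generates $\mathfrak{S}$ by (\ref{completemeetirreducibleordergenerating}) and $\psi$ preserves infima, $\psi$ is completely determined by $\psi|_{\mathfrak{S}^{{}^{pure}}}$, so it suffices to recover those values. If $\sigma\sqsubseteq_{\mathfrak{S}}p$, monotonicity together with $\psi(\sigma)=\textit{\bf Y}$ gives $\textit{\bf Y}\leq\psi(p)$, hence $\psi(p)=\textit{\bf Y}$, the only element of $\mathfrak{B}$ above $\textit{\bf Y}$; so $\psi$ and $\psi_\sigma$ already agree on $\underline{\sigma}_{\mathfrak{S}}$. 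For $p\notin\underline{\sigma}_{\mathfrak{S}}$ one has $\psi(p)\in\{\textit{\bf Y},\textit{\bf N}\}$, and the equality $\psi(p)=\textit{\bf N}$ has to be forced: this is the delicate step, where one combines the definition of a $\mathfrak{B}$-valued homomorphism with the rigidity of a simplex space --- concretely, with the fact, established above, that in such a space every subset of $\mathfrak{S}^{{}^{pure}}$ is its own set of pure upper bounds --- to rule out $\psi(p)=\textit{\bf Y}$. This yields $\psi=\psi_\sigma$.

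For the converse I would argue contrapositively. If $\mathfrak{S}$ is not a simplex space, some $\rho$ admits, besides $\underline{\rho}_{\mathfrak{S}}$, a strictly smaller decomposition into pure states, whence (arguing as in the proof of the cornerstone identity) there is $p_0\in\underline{\rho}_{\mathfrak{S}}$ with $\bigsqcap^{\mathfrak{S}}(\underline{\rho}_{\mathfrak{S}}\smallsetminus\{p_0\})=\rho$. The canonical $\psi_\rho$ sends $p_0$ to $\textit{\bf Y}$ (because $p_0\sqsupseteq_{\mathfrak{S}}\rho$), while the map $\psi'$ built by the same recipe after reassigning $\textit{\bf N}$ to $p_0$ still has $\psi'(\mathfrak{S}^{{}^{pure}})\subseteq\{\textit{\bf Y},\textit{\bf N}\}$ and, thanks to the decomposition $\underline{\rho}_{\mathfrak{S}}\smallsetminus\{p_0\}$ of $\rho$, $\psi'(\rho)=\textit{\bf Y}$; one then checks that $\psi'$ is a homomorphism, and the two distinct maps $\psi_\rho\neq\psi'$ contradict uniqueness. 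The main obstacle throughout is the interplay singled out in the two previous paragraphs: deciding which $\{\textit{\bf Y},\textit{\bf N}\}$-valued assignments on $\mathfrak{S}^{{}^{pure}}$ extend to homomorphisms $\mathfrak{S}\to\mathfrak{B}$ --- all of them when $\mathfrak{S}$ is simplex, and strictly fewer otherwise --- which is exactly what makes both the ``$\psi(p)=\textit{\bf N}$'' step and the contradiction in the converse go through.
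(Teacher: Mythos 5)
Your existence half is sound and is in substance the paper's own construction: the paper takes $\sigma':=\bigsqcap^{\mathfrak{S}}_{\alpha\in\mathfrak{S}^{pure}\smallsetminus\underline{\sigma}_{\mathfrak{S}}}\alpha$ and declares $\textit{\bf Y}$ above $\sigma$, $\textit{\bf N}$ above $\sigma'$, $\bot$ otherwise, and your extension-by-meets is exactly that map, with your identity $\underline{\;\bigsqcap^{\mathfrak{S}}S\;}_{\mathfrak{S}}=\bigcup_{\tau\in S}\underline{\tau}_{\mathfrak{S}}$ giving a cleaner verification of the homomorphism property than the paper's bare assertion.

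The genuine gap is the uniqueness step, and as you have set it up it cannot be closed. You never give the argument ruling out $\psi(p)=\textit{\bf Y}$ for a pure $p\not\sqsupseteq_{\mathfrak{S}}\sigma$; you only announce it as ``the delicate step''. But your own cornerstone identity shows that in a simplex \emph{every} $\{\textit{\bf Y},\textit{\bf N}\}$-valued assignment on $\mathfrak{S}^{{}^{pure}}$ extends by meets to an infima-preserving map $\mathfrak{S}\to\mathfrak{B}$ --- you say so yourself in your closing paragraph (``all of them when $\mathfrak{S}$ is simplex''). In particular the assignment equal to $\textit{\bf Y}$ on all of $\mathfrak{S}^{{}^{pure}}$, or the one obtained from $\psi_\sigma$ by flipping a single $p\notin\underline{\sigma}_{\mathfrak{S}}$ to $\textit{\bf Y}$, produces a homomorphism with $\psi(\sigma)=\textit{\bf Y}$ and pure values in $\{\textit{\bf Y},\textit{\bf N}\}$; so under the hypotheses as you (and the lemma) state them nothing forces $\psi(p)=\textit{\bf N}$, and your own framework refutes the postponed step. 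Uniqueness only becomes provable if the normalization is read more strongly, as the paper's explicit map in fact satisfies it (namely $\psi_\sigma^{-1}(\textit{\bf Y})=\uparrow^{\mathfrak{S}}\sigma$, equivalently $\textit{\bf N}$ exactly above $\sigma'$), and you would have to make that reading explicit. Your converse is also broken: with your recipe the flipped map has $\psi'(\rho)=\bigwedge_{p\in\underline{\rho}_{\mathfrak{S}}}\psi'(p)=\textit{\bf Y}\wedge\textit{\bf N}=\bot$, not $\textit{\bf Y}$, and no homomorphism can take the flipped values on $\mathfrak{S}^{{}^{pure}}$ at all, since $\rho=\bigsqcap^{\mathfrak{S}}\underline{\rho}_{\mathfrak{S}}=\bigsqcap^{\mathfrak{S}}(\underline{\rho}_{\mathfrak{S}}\smallsetminus\{p_0\})$ would force two different values at $\rho$; hence the second homomorphism needed for your contradiction does not exist. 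The paper's converse proceeds differently and more directly: given $\psi_\sigma$, it sets $U_\sigma:=\psi_\sigma^{-1}(\textit{\bf Y})\cap\mathfrak{S}^{{}^{pure}}$ and extracts the unique decomposition (\ref{simplexdecompunique}) from the uniqueness of $\psi_\sigma$, rather than arguing by contraposition.
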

\begin{proof}
Let us assume that ${ \mathfrak{S}}$ is a simplex space of states. For any $\sigma\in { \mathfrak{S}}$, we choose $\sigma':=\bigsqcap{}^{{}^{{ \mathfrak{S}}}}_{\alpha\in { \mathfrak{S}}^{{}^{pure}}\smallsetminus \underline{\sigma}_{{}_{{ \mathfrak{S}}}}}\alpha$.  We then define, for any $\eta\in { \mathfrak{S}}$,  $\psi_\sigma (\eta):=\textit{\bf Y}$ iff $\sigma\sqsubseteq_{{}_{ \mathfrak{S}}}\eta$, and $\psi_\sigma (\eta):=\textit{\bf N}$ iff $\sigma'\sqsubseteq_{{}_{ \mathfrak{S}}}\eta$, and $\psi_\sigma (\eta):=\bot$ otherwise. The map $\psi_\sigma$ satisfies the requirements. The unicity of $\psi_\sigma$ is clear.\\ 
Concerning the reciprocal implication, it suffices to define $U_\sigma:=\psi_\sigma^{-1}(\textit{\bf Y})\cap { \mathfrak{S}}^{{}^{pure}}$. The unicity of $U_\sigma$ is trivial to check from the unicity of $\psi_\sigma$.
\end{proof}

\begin{definition}
According to \cite[definition p.117]{Gratzer1971}, an Inf semi-lattice ${ \mathfrak{S}}$ is said to be {\em distributive} iff 
\begin{eqnarray}
&&\hspace{-2cm} \forall \sigma,\sigma_1,\sigma_2\in { \mathfrak{S}}\;\vert\; \sigma\not= \sigma_1,\sigma_2 ,\;\;\;\;\;\; (\sigma_1 \sqcap_{{}_{ \mathfrak{S}}} \sigma_2) \sqsubseteq_{{}_{ \mathfrak{S}}}  \sigma  \Rightarrow \nonumber\\ 
&&\exists \sigma'_1,\sigma'_2\in { \mathfrak{S}}\;\vert\; (\,\sigma_1 \sqsubseteq_{{}_{ \mathfrak{S}}} \sigma'_1,\;\;\; \sigma_2 \sqsubseteq_{{}_{ \mathfrak{S}}} \sigma'_2\;\;\;\textit{\rm and}\;\;\; \sigma = \sigma'_1 \sqcap_{{}_{ \mathfrak{S}}} \sigma'_2\,).\;\;\;\;\;\;\;\;\;\;\;\;
\end{eqnarray} 
\end{definition}

\begin{lemma}
A simplex space of states is necessarily distributive as an Inf semi-lattice. Conversely, a distributive Inf semi-lattice is also necessarily a simplex space of states.
\end{lemma}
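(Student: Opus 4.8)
The plan is to prove the two implications separately, in both cases working through the family $\mathfrak{S}^{pure}$ and the assignment $\sigma\mapsto\underline{\sigma}_{\mathfrak{S}}=\mathfrak{S}^{pure}\cap(\uparrow^{\mathfrak{S}}\!\sigma)$. For ``simplex $\Rightarrow$ distributive'' I would first check that $\sigma\mapsto\underline{\sigma}_{\mathfrak{S}}$ is an order anti-isomorphism from $\mathfrak{S}$ onto $(\mathcal{P}(\mathfrak{S}^{pure}),\subseteq)$: by down-completeness $\bigsqcap^{\mathfrak{S}}A$ exists for every $A\subseteq\mathfrak{S}^{pure}$ (with $\bigsqcap^{\mathfrak{S}}\varnothing$ the greatest element of $\mathfrak{S}$), and applying $(\ref{completemeetirreducibleordergenerating})$ to $\bigsqcap^{\mathfrak{S}}A$ together with uniqueness in $(\ref{simplexdecompunique})$ gives $\underline{\bigsqcap^{\mathfrak{S}}A}_{\mathfrak{S}}=A$, which yields both surjectivity and injectivity; the equivalence $\sigma\sqsubseteq_{\mathfrak{S}}\tau\Leftrightarrow\underline{\tau}_{\mathfrak{S}}\subseteq\underline{\sigma}_{\mathfrak{S}}$ is immediate. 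Under this identification $\sqcap_{\mathfrak{S}}$ corresponds to $\cup$, so $\mathfrak{S}$ is isomorphic to the complete atomic Boolean lattice $\mathcal{P}(\mathfrak{S}^{pure})$, hence in particular to a distributive lattice; and a distributive lattice is distributive as an Inf semi-lattice in Grätzer's sense (the Definition above), since from $\sigma_1\sqcap_{\mathfrak{S}}\sigma_2\sqsubseteq_{\mathfrak{S}}\sigma$ one may take $\sigma'_i:=\sigma_i\sqcup_{\mathfrak{S}}\sigma$ and get $\sigma'_1\sqcap_{\mathfrak{S}}\sigma'_2=(\sigma_1\sqcap_{\mathfrak{S}}\sigma_2)\sqcup_{\mathfrak{S}}\sigma=\sigma$. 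In the powerset picture the hypothesis is just $\underline{\sigma}_{\mathfrak{S}}\subseteq\underline{\sigma_1}_{\mathfrak{S}}\cup\underline{\sigma_2}_{\mathfrak{S}}$ and the witnesses are $\sigma'_i\leftrightarrow\underline{\sigma_i}_{\mathfrak{S}}\cap\underline{\sigma}_{\mathfrak{S}}$.

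For ``distributive $\Rightarrow$ simplex'' the existence part of $(\ref{simplexdecompunique})$ is nothing but the order-generating property $(\ref{completemeetirreducibleordergenerating})$ ($\sigma=\bigsqcap^{\mathfrak{S}}\underline{\sigma}_{\mathfrak{S}}$), so all the content sits in uniqueness. Suppose $\sigma=\bigsqcap^{\mathfrak{S}}V$ for some $V\subseteq\mathfrak{S}^{pure}$; then $V\subseteq\underline{\sigma}_{\mathfrak{S}}$ is automatic, and I would prove the reverse inclusion. After disposing of the trivial cases $V=\varnothing$ (then $\sigma=\bigsqcap^{\mathfrak{S}}\varnothing$ and $\underline{\sigma}_{\mathfrak{S}}=\varnothing$) and $|V|=1$ (then $\sigma\in Max(\mathfrak{S})$ and $\underline{\sigma}_{\mathfrak{S}}=\{\sigma\}$), assume $|V|\ge 2$, fix $p\in\underline{\sigma}_{\mathfrak{S}}$ and suppose for contradiction that $p\notin V$. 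Pick $v_0\in V$ and set $\tau:=\bigsqcap^{\mathfrak{S}}(V\smallsetminus\{v_0\})$, so that $v_0\sqcap_{\mathfrak{S}}\tau=\sigma\sqsubseteq_{\mathfrak{S}}p$. As $p$ is completely meet-irreducible and $p\notin V$ we get $p\neq v_0$, and also $p\neq\tau$ (otherwise $p=\bigsqcap^{\mathfrak{S}}(V\smallsetminus\{v_0\})$ would force $p\in V\smallsetminus\{v_0\}\subseteq V$). Applying the distributivity axiom with $(\sigma,\sigma_1,\sigma_2):=(p,v_0,\tau)$ yields $v'_0\sqsupseteq_{\mathfrak{S}}v_0$ and $\tau'\sqsupseteq_{\mathfrak{S}}\tau$ with $p=v'_0\sqcap_{\mathfrak{S}}\tau'$; since $v_0$ is a pure state, hence maximal, $v'_0=v_0$, so $p\sqsubseteq_{\mathfrak{S}}v_0$, and maximality of $p$ forces $p=v_0$ --- a contradiction. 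Hence $\underline{\sigma}_{\mathfrak{S}}\subseteq V$ and $V=\underline{\sigma}_{\mathfrak{S}}$.

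The converse is where I expect the real difficulty. That uniqueness argument leans essentially on two things: that pure states are maximal --- this is precisely what forbids a ``redundant'' summand $p\sqsupset_{\mathfrak{S}}\sigma$ from entering a decomposition --- and on the exact form of the distributivity axiom, in particular the clause $\sigma\neq\sigma_1,\sigma_2$, which is why the cases $|V|\le 1$ cannot be absorbed into the main one and must be checked separately, and which one must take care not to violate when choosing the triple $(p,v_0,\tau)$. A subsidiary point, in the direct implication, is the bookkeeping around the greatest element $\bigsqcap^{\mathfrak{S}}\varnothing$ of $\mathfrak{S}$ and the well-definedness of the witnesses $\sigma'_i$ in the boundary case $\underline{\sigma_i}_{\mathfrak{S}}\cap\underline{\sigma}_{\mathfrak{S}}=\varnothing$.
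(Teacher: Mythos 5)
Your converse direction (``distributive $\Rightarrow$ simplex'') is correct and takes a genuinely different, and in fact tighter, route than the paper. The paper negates (\ref{simplexdecompunique}) by choosing a decomposition $U\varsubsetneq\underline{\sigma}_{\mathfrak{S}}$ that is \emph{minimal} for inclusion, splits it into two nonempty pieces, applies distributivity at an outside pure state $\sigma_3$ and concludes from meet-irreducibility that $\sigma_3=\sigma'_1$ or $\sigma'_2$ ``and then necessarily $\sigma_3\in U$'' --- a step that is not justified as written ($\sigma_3\sqsupseteq\bigsqcap^{\mathfrak{S}}U_1$ does not by itself place $\sigma_3$ in $U_1$), and which moreover presupposes that a minimal $U$ exists. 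Your argument avoids both: for an arbitrary decomposition $\sigma=\bigsqcap^{\mathfrak{S}}V$ and an arbitrary $p\in\underline{\sigma}_{\mathfrak{S}}\smallsetminus V$ you split off a single $v_0$, verify the proviso $p\neq v_0,\tau$ of the distributivity axiom (which the paper never checks), and use maximality of $v_0$ and of $p$ to force $p=v_0$, a contradiction --- no minimality, no induction. The only blemish is the degenerate case $V=\varnothing$: your reading presumes $\bigsqcap^{\mathfrak{S}}\varnothing$ exists, and if a top existed while $\mathfrak{S}^{pure}=Max(\mathfrak{S})$ is generating the space would be a singleton anyway; this is convention, not mathematics.

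For the forward direction you end up with the same witnesses as the paper ($\sigma'_i\leftrightarrow\underline{\sigma_i}_{\mathfrak{S}}\cap\underline{\sigma}_{\mathfrak{S}}$, the paper's $\sigma'_i:=\bigsqcap^{\mathfrak{S}}(\underline{\sigma_i}_{\mathfrak{S}}\cap\underline{\sigma}_{\mathfrak{S}})$), so the detour through the anti-isomorphism $\sigma\mapsto\underline{\sigma}_{\mathfrak{S}}$ is a repackaging rather than a new argument; it does make the verification $\underline{\bigsqcap^{\mathfrak{S}}A}_{\mathfrak{S}}=A$ explicit, which the paper leaves implicit. Two cautions, though. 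The identification with the \emph{full} powerset (a complete atomic Boolean lattice), and hence the witnesses $\sigma'_i:=\sigma_i\sqcup_{\mathfrak{S}}\sigma$, require a top element $\bigsqcap^{\mathfrak{S}}\varnothing$ and binary joins; in the intended setting these need not exist ($\mathfrak{B}$ itself, treated as a simplex in the paper, has no top and is not upward directed), so the image of $\sigma\mapsto\underline{\sigma}_{\mathfrak{S}}$ is only the family of nonempty subsets and you must fall back on the intersection witnesses. And the boundary case you flag but leave open, $\underline{\sigma_i}_{\mathfrak{S}}\cap\underline{\sigma}_{\mathfrak{S}}=\varnothing$, is not mere bookkeeping: for the semi-lattice of nonempty subsets of $\{a,b,c\}$ ordered by reverse inclusion (a simplex), with $\sigma_1=\{a\}$, $\sigma_2=\{b,c\}$, $\sigma=\{b\}$, no witnesses $\sigma'_1,\sigma'_2$ exist at all, so this case cannot be repaired by more care --- it needs either a top element or a reformulation of the distributivity condition. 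The paper's own proof passes over exactly the same point in silence, so your attempt is no weaker than the paper's there, but you should not leave it as an unexamined ``subsidiary point''.
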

\begin{proof}
Let us first assume that ${\mathfrak{S}}$ is a simplex. 
Let us consider $\sigma,\sigma_1,\sigma_2\in { \mathfrak{S}}$ such that $\sigma\not= \sigma_1,\sigma_2 $ and $ (\sigma_1 \sqcap_{{}_{ \mathfrak{S}}} \sigma_2) \sqsubseteq_{{}_{ \mathfrak{S}}}  \sigma$. We have then $\underline{\sigma}_{{}_{{ \mathfrak{S}}}} \subseteq (\underline{\sigma_1}_{{}_{{ \mathfrak{S}}}}\cup \underline{\sigma_2}_{{}_{{ \mathfrak{S}}}})$. If we define $\sigma'_1:=\bigsqcap{}^{{}^{\mathfrak{S}}}(\underline{\sigma_1}_{{}_{{ \mathfrak{S}}}}\cap \underline{\sigma}_{{}_{{ \mathfrak{S}}}})$ and $\sigma'_2:=\bigsqcap{}^{{}^{\mathfrak{S}}}(\underline{\sigma_2}_{{}_{{ \mathfrak{S}}}}\cap \underline{\sigma}_{{}_{{ \mathfrak{S}}}})$ we check immediately that $\sigma_1 \sqsubseteq_{{}_{ \mathfrak{S}}} \sigma'_1,\;\;\; \sigma_2 \sqsubseteq_{{}_{ \mathfrak{S}}} \sigma'_2$ and $\sigma = \sigma'_1 \sqcap_{{}_{ \mathfrak{S}}} \sigma'_2$. As a result, ${\mathfrak{S}}$ is distributive.\\
Let us now suppose that ${\mathfrak{S}}$ is distributive. Let us imagine that ${\mathfrak{S}}$ is not a simplex.  It would exist $\sigma\in {\mathfrak{S}}$ and $U\varsubsetneq \underline{\sigma}_{{}_{{\mathfrak{S}}}}$ such that $\sigma=\bigsqcap{}^{{}^{{\mathfrak{S}}}} U$. Let us suppose that $U$ is a minimal subset for inclusion among subsets $V$ satisfying $\sigma=\bigsqcap{}^{{}^{{\mathfrak{S}}}} V$. We then introduce $\sigma_3\in \underline{\sigma}_{{}_{{\mathfrak{S}}}} \smallsetminus U$ and $U_1,U_2\not=\varnothing$ such that $U_1\cap U_2=\varnothing$ and $U_1\cup U_2=U$, we denote $\sigma_1:=\bigsqcap{}^{{}^{{\mathfrak{S}}}} U_1$ and $\sigma_2:=\bigsqcap{}^{{}^{{\mathfrak{S}}}} U_2$. We have $\sigma \sqsupseteq_{{}_{{\mathfrak{S}}}} (\sigma_1 \sqcap_{{}_{{\mathfrak{S}}}}\sigma_2)$. Because of the distributivity of ${\mathfrak{S}}$, it would exist $\sigma'_1$ and $\sigma'_2$ such that $\sigma'_1 \sqsupseteq_{{}_{{\mathfrak{S}}}} \sigma_1$, $\sigma'_2 \sqsupseteq_{{}_{{\mathfrak{S}}}} \sigma_2$ and 
$\sigma_3 = (\sigma'_1 \sqcap_{{}_{{\mathfrak{S}}}}\sigma'_2)$. However, $\sigma_3$ being meet-irreducible we must have $\sigma_3 = \sigma'_1$ or $\sigma_3 = \sigma'_2$, and then necessarily $\sigma_3\in U$. We have then obtained the announced contradiction.The space of states ${\mathfrak{S}}$ is then a simplex. 
\end{proof}

\begin{lemma}\label{lemmathreestatesnotsimplex}
If ${ \mathfrak{S}}$ is a non-simplex space of states, then we have
\begin{eqnarray}
\exists \sigma_1,\sigma_2,\sigma_3\in { \mathfrak{S}} & \vert &\sigma_3 \sqsupseteq_{{}_{{ \mathfrak{S}}}} (\sigma_1\sqcap_{{}_{{ \mathfrak{S}}}}\sigma_2)\;\;\textit{\rm and} \;\; \neg \widehat{\sigma_3\sigma_1}{}^{{}^{{ \mathfrak{S}}}}
\;\;\textit{\rm and}\;\;  \neg \widehat{\sigma_3\sigma_2}{}^{{}^{{ \mathfrak{S}}}}\;\;\textit{\rm and}\;\;  \sigma_1\parallel_{{}_{{ \mathfrak{S}}}}\sigma_2.\;\;\;\;\;\;\;\;\;\;\;\;\;\;\label{threestatesnotsimplex}
\end{eqnarray}
The reciprocal implication is also true.
\end{lemma}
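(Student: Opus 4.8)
The plan is to prove the two implications separately. The converse (the "reciprocal implication") is short and comes straight from the uniqueness clause (\ref{simplexdecompunique}) together with the existence of a maximal element above any state (\ref{existsmaxabove}); the forward implication is the substantial one, and I would extract it from a \emph{minimal redundant decomposition} of a suitable state.

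\emph{Converse.} Suppose $\sigma_1,\sigma_2,\sigma_3$ satisfy (\ref{threestatesnotsimplex}) and put $\sigma:=\sigma_1\sqcap_{{}_{ \mathfrak{S}}}\sigma_2$, so that $\sigma\sqsubseteq_{{}_{ \mathfrak{S}}}\sigma_3$. By (\ref{completemeetirreducibleordergenerating}) the sets $\underline{\sigma}_{{}_{ \mathfrak{S}}}$ and $\underline{\sigma_1}_{{}_{ \mathfrak{S}}}\cup\underline{\sigma_2}_{{}_{ \mathfrak{S}}}$ are both subsets of ${ \mathfrak{S}}^{{}^{pure}}$ whose ${ \mathfrak{S}}$-infimum equals $\sigma$. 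Were ${ \mathfrak{S}}$ a simplex, (\ref{simplexdecompunique}) would force these two sets to coincide. But $\sigma\sqsubseteq_{{}_{ \mathfrak{S}}}\sigma_3$ gives $\underline{\sigma_3}_{{}_{ \mathfrak{S}}}\subseteq\underline{\sigma}_{{}_{ \mathfrak{S}}}$, whereas $\neg \widehat{\sigma_3\sigma_1}{}^{{}^{ \mathfrak{S}}}$ and $\neg \widehat{\sigma_3\sigma_2}{}^{{}^{ \mathfrak{S}}}$ say precisely that $\underline{\sigma_3}_{{}_{ \mathfrak{S}}}$ meets neither $\underline{\sigma_1}_{{}_{ \mathfrak{S}}}$ nor $\underline{\sigma_2}_{{}_{ \mathfrak{S}}}$ (a pure state lying above $\sigma_3$ and above $\sigma_i$ would be a common upper bound). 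Hence $\underline{\sigma_3}_{{}_{ \mathfrak{S}}}=\varnothing$, contradicting (\ref{existsmaxabove}). Note that $\sigma_1\parallel_{{}_{ \mathfrak{S}}}\sigma_2$ is not needed here; it is in fact a consequence of the other three conditions, since $\sigma_1\sqsubseteq_{{}_{ \mathfrak{S}}}\sigma_2$ would give $\sigma_3\sqsupseteq_{{}_{ \mathfrak{S}}}\sigma_1\sqcap_{{}_{ \mathfrak{S}}}\sigma_2=\sigma_1$, hence $\widehat{\sigma_3\sigma_1}{}^{{}^{ \mathfrak{S}}}$, and symmetrically.

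\emph{Forward implication.} Assume ${ \mathfrak{S}}$ is not a simplex. Since every $\sigma$ already admits the decomposition $\underline{\sigma}_{{}_{ \mathfrak{S}}}$ by (\ref{completemeetirreducibleordergenerating}), negating (\ref{simplexdecompunique}) yields $\sigma\in { \mathfrak{S}}$ and $U\subsetneq\underline{\sigma}_{{}_{ \mathfrak{S}}}$ with $\bigsqcap{}^{{}^{ \mathfrak{S}}}U=\sigma$; choosing $\rho\in\underline{\sigma}_{{}_{ \mathfrak{S}}}\smallsetminus U$ we obtain that $\rho$ is \emph{redundant}, i.e. $\bigsqcap{}^{{}^{ \mathfrak{S}}}P=\sigma$ for $P:=\underline{\sigma}_{{}_{ \mathfrak{S}}}\smallsetminus\{\rho\}$. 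Now $\rho\sqsupseteq_{{}_{ \mathfrak{S}}}\sigma=\bigsqcap{}^{{}^{ \mathfrak{S}}}P$, while $\rho\not\sqsupseteq_{{}_{ \mathfrak{S}}}\tau$ for every $\tau\in P$ (two distinct maximal elements are incomparable). I would then pick $Q\subseteq P$ minimal for inclusion among the subsets $W$ with $\bigsqcap{}^{{}^{ \mathfrak{S}}}W\sqsubseteq_{{}_{ \mathfrak{S}}}\rho$ (the existence of such a minimal subset being handled as in the minimal-generating-subset arguments of the preceding proofs); then $Q$ has at least two elements, and every proper subset $W\subsetneq Q$ satisfies $\bigsqcap{}^{{}^{ \mathfrak{S}}}W\not\sqsubseteq_{{}_{ \mathfrak{S}}}\rho$. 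Fixing $\tau_0\in Q$, set $\sigma_1:=\bigsqcap{}^{{}^{ \mathfrak{S}}}(Q\smallsetminus\{\tau_0\})$, $\sigma_2:=\tau_0$, $\sigma_3:=\rho$. Then $\sigma_1\sqcap_{{}_{ \mathfrak{S}}}\sigma_2=\bigsqcap{}^{{}^{ \mathfrak{S}}}Q\sqsubseteq_{{}_{ \mathfrak{S}}}\rho=\sigma_3$; the elements $\rho,\tau_0$ are distinct maximal elements, so $\neg \widehat{\sigma_3\sigma_2}{}^{{}^{ \mathfrak{S}}}$; and any upper bound of $\rho$ and $\sigma_1$ lies below a maximal element which, dominating $\rho$, must equal $\rho$, so $\widehat{\sigma_3\sigma_1}{}^{{}^{ \mathfrak{S}}}$ would force $\rho\sqsupseteq_{{}_{ \mathfrak{S}}}\sigma_1=\bigsqcap{}^{{}^{ \mathfrak{S}}}(Q\smallsetminus\{\tau_0\})$, contradicting the minimality of $Q$. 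Finally $\sigma_1\parallel_{{}_{ \mathfrak{S}}}\sigma_2$ follows from these three conditions exactly as in the converse. This establishes (\ref{threestatesnotsimplex}).

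The step I expect to be the real content is this passage, in the forward direction, from an arbitrary redundant decomposition $P$ of $\sigma$ to a subset $Q$ out of which a single element $\tau_0$ can be removed so that the remaining infimum is \emph{no longer} dominated by $\rho$: it is precisely this minimality that produces the two incompatibility clauses $\neg\widehat{\sigma_3\sigma_1}{}^{{}^{ \mathfrak{S}}}$ and $\neg\widehat{\sigma_3\sigma_2}{}^{{}^{ \mathfrak{S}}}$, whereas a naive split only delivers $\sigma_3\sqsupseteq_{{}_{ \mathfrak{S}}}\sigma_1\sqcap_{{}_{ \mathfrak{S}}}\sigma_2$. Everything else — the translation of $\neg\widehat{\cdot\,\cdot}{}^{{}^{ \mathfrak{S}}}$ into a statement about pure states lying above, and the whole converse — is routine once (\ref{completemeetirreducibleordergenerating}) and (\ref{existsmaxabove}) are in hand.
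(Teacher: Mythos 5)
Your argument is correct, and it is worth recording how it sits relative to the paper's own proof. For the reciprocal implication you do essentially what the paper does: the paper exhibits the two distinct pure decompositions $\underline{\sigma_1}_{{}_{\mathfrak{S}}}\cup\underline{\sigma_2}_{{}_{\mathfrak{S}}}$ and $\underline{\sigma_1}_{{}_{\mathfrak{S}}}\cup\underline{\sigma_2}_{{}_{\mathfrak{S}}}\cup\underline{\sigma_3}_{{}_{\mathfrak{S}}}$ of $\sigma_1\sqcap_{{}_{\mathfrak{S}}}\sigma_2$ directly, while you run the same computation as a contradiction with (\ref{simplexdecompunique}); your remark that $\sigma_1\parallel_{{}_{\mathfrak{S}}}\sigma_2$ is a consequence of the other three clauses is accurate and not made in the paper. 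On the forward implication you genuinely diverge, and to your advantage. The paper takes $U\subseteq\underline{\sigma}_{{}_{\mathfrak{S}}}$ minimal with $\bigsqcap^{{}^{\mathfrak{S}}}U=\sigma$, picks $\sigma'\in\underline{\sigma}_{{}_{\mathfrak{S}}}\smallsetminus U$, splits $U$ into an \emph{arbitrary} disjoint pair $U_1,U_2$ and asserts that $(\bigsqcap U_1,\bigsqcap U_2,\sigma')$ satisfies (\ref{threestatesnotsimplex}); minimality of $U$ does give $\bigsqcap U_1\parallel_{{}_{\mathfrak{S}}}\bigsqcap U_2$, but it does not by itself exclude $\sigma'\sqsupseteq_{{}_{\mathfrak{S}}}\bigsqcap U_1$ for a badly chosen split (take pure states $a,b,c,e$ with $p:=a\sqcap b$ lying below $e$, with $a\sqcap c$, $b\sqcap c$, $p$ and the bottom pairwise distinct and $c\sqcap e=\bot$: then $U=\{a,b,c\}$ is minimal for $\bot$, yet the split $U_1=\{a,b\}$, $U_2=\{c\}$, $\sigma'=e$ violates $\neg\widehat{\sigma_3\sigma_1}$, so a good split must still be selected). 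Your construction does exactly that selection: taking $Q$ minimal among the subsets of $P=\underline{\sigma}_{{}_{\mathfrak{S}}}\smallsetminus\{\rho\}$ whose infimum lies below the redundant pure state $\rho$, and then removing a single $\tau_0$, forces $\neg\widehat{\rho\,\sigma_1}$ by minimality of $Q$ and $\neg\widehat{\rho\,\tau_0}$ because $\rho,\tau_0$ are distinct maximal elements, which is precisely the point the paper leaves implicit. The one caveat, which you share with the paper and honestly flag, is the existence of the minimal subset: for infinite families a decreasing chain of sets $W$ with $\bigsqcap W\sqsubseteq_{{}_{\mathfrak{S}}}\rho$ need not have an intersection with the same property, so a Zorn-type justification (or a finiteness restriction) is owed at exactly the same spot where the paper assumes its minimal $U$.
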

\begin{proof}
If ${ \mathfrak{S}}$ does not satisfy (\ref{simplexdecompunique}), then there exists a subset $U$ of $\underline{\sigma}_{{}_{{ \mathfrak{S}}}}$ which is minimal for the inclusion among the subsets of $\underline{\sigma}_{{}_{{ \mathfrak{S}}}}$ satisfying $\sigma=\bigsqcap{}^{{}^{\mathfrak{S}}} U$, and there exists $\sigma'$ element of $\underline{\sigma}_{{}_{{ \mathfrak{S}}}} \smallsetminus U$.\\ Let us then introduce $U_1$ and $U_2$ such that $U_1\cap U_2=\varnothing$ and $U_1\cup U_2=U$. Now, we define $\sigma_1:=\bigsqcap{}^{{}^{\mathfrak{S}}} U_1$, $\sigma_2:=\bigsqcap{}^{{}^{\mathfrak{S}}} U_2$ and $\sigma_3:=\sigma'$. They satisfy (\ref{threestatesnotsimplex}).\\
Conversely, let us assume that the property (\ref{threestatesnotsimplex}) is satisfied. Then, the state $\sigma:=(\sigma_1\sqcap_{{}_{{ \mathfrak{S}}}}\sigma_2)$ satisfies $\sigma=\bigsqcap{}^{{}^{\mathfrak{S}}}(\underline{\sigma_1}_{{}_{{ \mathfrak{S}}}}\cup \underline{\sigma_2}_{{}_{{ \mathfrak{S}}}})$ and $\sigma=\bigsqcap{}^{{}^{\mathfrak{S}}}(\underline{\sigma_1}_{{}_{{ \mathfrak{S}}}}\cup \underline{\sigma_2}_{{}_{{ \mathfrak{S}}}}\cup \underline{\sigma_3}_{{}_{{ \mathfrak{S}}}})$ with $(\underline{\sigma_1}_{{}_{{ \mathfrak{S}}}}\cup \underline{\sigma_2}_{{}_{{ \mathfrak{S}}}})\cap \underline{\sigma_3}_{{}_{{ \mathfrak{S}}}}=\varnothing$ because $\neg \widehat{\sigma_1\sigma_3}{}^{{}^{{ \mathfrak{S}}}}$ and $\neg \widehat{\sigma_2\sigma_3}{}^{{}^{{ \mathfrak{S}}}}$. As a conclusion, ${ \mathfrak{S}}$ does not satisfy (\ref{simplexdecompunique}).
\end{proof}

\begin{definition}
The space of states ${ \mathfrak{S}}$ is said to be {\em orthocomplemented} iff there exists a map $\star : { \mathfrak{S}}\smallsetminus \{\bot_{{}_{ \mathfrak{S}}}\} \rightarrow { \mathfrak{S}}\smallsetminus \{\bot_{{}_{ \mathfrak{S}}}\}$ such that
\begin{eqnarray}
\forall \sigma\in { \mathfrak{S}}\smallsetminus \{\bot_{{}_{ \mathfrak{S}}}\},&& \sigma^{\star\star}=\sigma \label{involutive}\\
\forall \sigma_1,\sigma_2\in { \mathfrak{S}}\smallsetminus \{\bot_{{}_{ \mathfrak{S}}}\},&& \sigma_1\sqsubseteq_{{}_{{ \mathfrak{S}}}}\sigma_2\;\;\Rightarrow\;\; \sigma_2^\star \sqsubseteq_{{}_{{ \mathfrak{S}}}} \sigma_1^\star \label{orderreversing}\\
\forall \sigma\in { \mathfrak{S}}\smallsetminus \{\bot_{{}_{ \mathfrak{S}}}\},&&\sigma\, \widecheck{\bowtie}_{{}_{ \mathfrak{S}}}\sigma^{\star},\label{starcomplement}
\end{eqnarray}
(see (\ref{quasiantipodal}) for the definition of $\widecheck{\bowtie}$).
\end{definition}

By the way, we note the following fact

\begin{lemma} 
A simplex space of states ${ \mathfrak{S}}$ is always orthocomplemented, as long as we fix for any $\sigma\in { \mathfrak{S}}$ : $\sigma^\star := \bigsqcap{}^{{}^{{ \mathfrak{S}}}}_{\alpha\in { \mathfrak{S}}^{{}^{pure}}\!\!\!\!\smallsetminus \underline{\sigma}_{{}_{{ \mathfrak{S}}}}} \alpha$.
\end{lemma}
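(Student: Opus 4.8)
The plan is to reduce everything to the behaviour of the assignment $\sigma \mapsto \underline{\sigma}_{{}_{{ \mathfrak{S}}}} = { \mathfrak{S}}^{{}^{pure}}\cap(\uparrow^{{}^{ \mathfrak{S}}}\!\sigma)$ under the map $\star$. First I would record two elementary facts, valid because ${ \mathfrak{S}}^{{}^{pure}}$ is order-generating and ${ \mathfrak{S}}^{{}^{pure}}=Max({ \mathfrak{S}})$: \emph{(i)} $\underline{\tau}_{{}_{{ \mathfrak{S}}}}\neq\varnothing$ for every $\tau\in{ \mathfrak{S}}$ — immediate from (\ref{existsmaxabove}) since any maximal element above $\tau$ lies in ${ \mathfrak{S}}^{{}^{pure}}\cap(\uparrow^{{}^{ \mathfrak{S}}}\!\tau)$; and \emph{(ii)} $\tau\sqsubseteq_{{}_{{ \mathfrak{S}}}}\tau'$ iff $\underline{\tau'}_{{}_{{ \mathfrak{S}}}}\subseteq\underline{\tau}_{{}_{{ \mathfrak{S}}}}$, whence $\tau\sqsubset_{{}_{{ \mathfrak{S}}}}\tau'$ iff $\underline{\tau'}_{{}_{{ \mathfrak{S}}}}\subsetneq\underline{\tau}_{{}_{{ \mathfrak{S}}}}$ (the backward direction uses $\tau=\bigsqcap{}^{{}^{ \mathfrak{S}}}\underline{\tau}_{{}_{{ \mathfrak{S}}}}$, and strictness uses the uniqueness clause of (\ref{simplexdecompunique})). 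I would also observe that for $\sigma\neq\bot_{{}_{ \mathfrak{S}}}$ the index set ${ \mathfrak{S}}^{{}^{pure}}\smallsetminus\underline{\sigma}_{{}_{{ \mathfrak{S}}}}$ is nonempty: otherwise $\underline{\sigma}_{{}_{{ \mathfrak{S}}}}={ \mathfrak{S}}^{{}^{pure}}$ and $\sigma=\bigsqcap{}^{{}^{ \mathfrak{S}}}{ \mathfrak{S}}^{{}^{pure}}=\bot_{{}_{ \mathfrak{S}}}$, a contradiction; hence $\sigma^\star$ is well defined.

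The key step is the identity $\underline{\sigma^\star}_{{}_{{ \mathfrak{S}}}}={ \mathfrak{S}}^{{}^{pure}}\smallsetminus\underline{\sigma}_{{}_{{ \mathfrak{S}}}}$. Indeed $V:={ \mathfrak{S}}^{{}^{pure}}\smallsetminus\underline{\sigma}_{{}_{{ \mathfrak{S}}}}$ is a subset of ${ \mathfrak{S}}^{{}^{pure}}$ whose infimum is $\sigma^\star$ by definition, so by the uniqueness part of (\ref{simplexdecompunique}) together with the remark $U_{\sigma^\star}=\underline{\sigma^\star}_{{}_{{ \mathfrak{S}}}}$, we get $V=\underline{\sigma^\star}_{{}_{{ \mathfrak{S}}}}$. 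From this, $\sigma^\star\neq\bot_{{}_{ \mathfrak{S}}}$ (otherwise $\underline{\sigma^\star}_{{}_{{ \mathfrak{S}}}}={ \mathfrak{S}}^{{}^{pure}}$, contradicting fact \emph{(i)} applied to $\sigma$), so $\star$ does map ${ \mathfrak{S}}\smallsetminus\{\bot_{{}_{ \mathfrak{S}}}\}$ into itself. Property (\ref{involutive}) then follows at once: $\underline{\sigma^{\star\star}}_{{}_{{ \mathfrak{S}}}}={ \mathfrak{S}}^{{}^{pure}}\smallsetminus({ \mathfrak{S}}^{{}^{pure}}\smallsetminus\underline{\sigma}_{{}_{{ \mathfrak{S}}}})=\underline{\sigma}_{{}_{{ \mathfrak{S}}}}$, so $\sigma^{\star\star}=\sigma$. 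Property (\ref{orderreversing}) also follows from the identity and fact \emph{(ii)}: if $\sigma_1\sqsubseteq_{{}_{{ \mathfrak{S}}}}\sigma_2$ then $\underline{\sigma_2}_{{}_{{ \mathfrak{S}}}}\subseteq\underline{\sigma_1}_{{}_{{ \mathfrak{S}}}}$, hence $\underline{\sigma_1^\star}_{{}_{{ \mathfrak{S}}}}={ \mathfrak{S}}^{{}^{pure}}\smallsetminus\underline{\sigma_1}_{{}_{{ \mathfrak{S}}}}\subseteq{ \mathfrak{S}}^{{}^{pure}}\smallsetminus\underline{\sigma_2}_{{}_{{ \mathfrak{S}}}}=\underline{\sigma_2^\star}_{{}_{{ \mathfrak{S}}}}$, i.e. $\sigma_2^\star\sqsubseteq_{{}_{{ \mathfrak{S}}}}\sigma_1^\star$.

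The remaining work — and the only place requiring care — is verifying $\sigma\,\widecheck{\bowtie}_{{}_{ \mathfrak{S}}}\sigma^\star$, i.e. the three conjuncts of (\ref{quasiantipodal}) with $\sigma'=\sigma^\star$. For $\neg\widehat{\sigma\sigma^\star}^{{}^{ \mathfrak{S}}}$: any common upper bound $\tau$ would satisfy $\underline{\tau}_{{}_{{ \mathfrak{S}}}}\subseteq\underline{\sigma}_{{}_{{ \mathfrak{S}}}}\cap\underline{\sigma^\star}_{{}_{{ \mathfrak{S}}}}=\underline{\sigma}_{{}_{{ \mathfrak{S}}}}\cap({ \mathfrak{S}}^{{}^{pure}}\smallsetminus\underline{\sigma}_{{}_{{ \mathfrak{S}}}})=\varnothing$, contradicting fact \emph{(i)}. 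For $\forall\sigma''\sqsubset_{{}_{ \mathfrak{S}}}\sigma$, $\widehat{\sigma^\star\sigma''}^{{}^{ \mathfrak{S}}}$: by fact \emph{(ii)}, $\underline{\sigma}_{{}_{{ \mathfrak{S}}}}\subsetneq\underline{\sigma''}_{{}_{{ \mathfrak{S}}}}$, so pick $\alpha\in\underline{\sigma''}_{{}_{{ \mathfrak{S}}}}\smallsetminus\underline{\sigma}_{{}_{{ \mathfrak{S}}}}$; then $\alpha\sqsupseteq_{{}_{{ \mathfrak{S}}}}\sigma''$ and $\alpha\in{ \mathfrak{S}}^{{}^{pure}}\smallsetminus\underline{\sigma}_{{}_{{ \mathfrak{S}}}}$ gives $\alpha\sqsupseteq_{{}_{{ \mathfrak{S}}}}\sigma^\star$, so $\alpha$ is a common upper bound. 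The third conjunct $\forall\sigma''\sqsubset_{{}_{ \mathfrak{S}}}\sigma^\star$, $\widehat{\sigma\sigma''}^{{}^{ \mathfrak{S}}}$ is symmetric: $\underline{\sigma^\star}_{{}_{{ \mathfrak{S}}}}\subsetneq\underline{\sigma''}_{{}_{{ \mathfrak{S}}}}$, so any $\alpha\in\underline{\sigma''}_{{}_{{ \mathfrak{S}}}}\smallsetminus\underline{\sigma^\star}_{{}_{{ \mathfrak{S}}}}=\underline{\sigma''}_{{}_{{ \mathfrak{S}}}}\cap\underline{\sigma}_{{}_{{ \mathfrak{S}}}}$ is a common upper bound of $\sigma$ and $\sigma''$. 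I do not expect a genuine obstacle here; the main point to keep firmly in hand is the complementation identity $\underline{\sigma^\star}_{{}_{{ \mathfrak{S}}}}={ \mathfrak{S}}^{{}^{pure}}\smallsetminus\underline{\sigma}_{{}_{{ \mathfrak{S}}}}$ and the nonemptiness fact \emph{(i)}, after which the three conjuncts are pure set bookkeeping.
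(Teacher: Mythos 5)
Your proof is correct. The paper states this lemma as a noted fact without giving any proof, so there is no argument to compare against; your route — establishing the support identity $\underline{\sigma^\star}_{{}_{{\mathfrak{S}}}}={\mathfrak{S}}^{{}^{pure}}\smallsetminus\underline{\sigma}_{{}_{{\mathfrak{S}}}}$ from the uniqueness clause of (\ref{simplexdecompunique}), and then deriving (\ref{involutive}), (\ref{orderreversing}) and the three conjuncts of (\ref{quasiantipodal}) as set bookkeeping on pure-state supports, with the nonemptiness of $\underline{\tau}_{{}_{{\mathfrak{S}}}}$ coming from (\ref{existsmaxabove}) — is the natural one and is the same mechanism the paper uses implicitly when it picks $\sigma'=\bigsqcap{}^{{}^{{\mathfrak{S}}}}_{\alpha\in{\mathfrak{S}}^{{}^{pure}}\smallsetminus\underline{\sigma}_{{}_{{\mathfrak{S}}}}}\alpha$ in the proof of Lemma \ref{particularpsi}.
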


\subsection{Reduction of the space of effects}\label{subsectionreduction}

In the rest of this paper, we will adopt the following generic reduction of the space of effects, as soon as the space of states is orthocomplemented :

\begin{definition}
\label{definreductionquantum} 
Let ${ \mathfrak{S}}$ be a space of states, ${ \mathfrak{E}}_{ \mathfrak{S}}$ and $\epsilon^{ \mathfrak{S}}$ be respectively the natural space of effects and the evaluation map introduced in previous subsection.\\
If the space of states ${ \mathfrak{S}}$ is orthocomplemented, we can define a {\em reduction} of the natural space of effects ${ \mathfrak{E}}_{ \mathfrak{S}}$, denoted by $\overline{{ \mathfrak{E}}}_{ \mathfrak{S}}$, as follows :  
\begin{eqnarray}
\hspace{-2cm}\overline{{ \mathfrak{E}}}_{ \mathfrak{S}} & := &
\{\,{ \mathfrak{l}}_{(\sigma,\sigma')}\;\vert\; \sigma,\sigma'\in { \mathfrak{S}}\smallsetminus \{\bot_{{}_{{ \mathfrak{S}}}}\}, \sigma'\sqsupseteq_{{}_{ \mathfrak{S}}} \sigma^\star\,\} \cup \nonumber\\
&&\;\;\;\;\;\;\;\;\;\;\;\;\;\; \{ { \mathfrak{l}}_{{}_{(\sigma,\cdot)}}\;\vert\; \sigma\in { \mathfrak{S}}\;\}\cup \{ { \mathfrak{l}}_{{}_{(\cdot,\sigma)}}\;\vert\; \sigma\in { \mathfrak{S}}\;\}\cup \{\, { \mathfrak{l}}_{{}_{(\cdot,\cdot)}}\,\} \;\textit{\rm as a set} \;\;\;\;\;\;\;\;\;\;\;\;\;
\end{eqnarray} 
and the Inf semi-lattice structure on $\overline{{ \mathfrak{E}}}_{ \mathfrak{S}}$ is induced from that defined on ${ \mathfrak{E}}_{ \mathfrak{S}}$, i.e. defined by (\ref{defcapES}). The evaluation map is the restriction of $\epsilon^{ \mathfrak{S}}$ to $\overline{{ \mathfrak{E}}}_{ \mathfrak{S}}$.
\end{definition}

\begin{theorem}
According to Theorem \ref{TheoremreductionE}, the reduced space of effects $\overline{{ \mathfrak{E}}}_{ \mathfrak{S}}$ is a well defined space of effects associated to ${ \mathfrak{S}}$. In other words, $({ \mathfrak{S}}, \overline{{ \mathfrak{E}}}_{ \mathfrak{S}}, \epsilon^{ \mathfrak{S}})$ is a well defined States/Effects Chu space. 
\end{theorem}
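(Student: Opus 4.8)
The plan is to verify that $\overline{{ \mathfrak{E}}}_{ \mathfrak{S}}$, as a subset of the natural space of effects ${ \mathfrak{E}}_{ \mathfrak{S}}$, meets the four conditions (\ref{axiomreduc1})--(\ref{axiomreduc4}) of Definition \ref{DefinreductionE}; then Theorem \ref{TheoremreductionE} applies verbatim and yields that $({ \mathfrak{S}}, \overline{{ \mathfrak{E}}}_{ \mathfrak{S}}, \epsilon^{ \mathfrak{S}})$ is a well-defined States/Effects Chu space. So the proof is entirely a matter of checking stability of $\overline{{ \mathfrak{E}}}_{ \mathfrak{S}}$ under the relevant operations, using the hypothesis that ${ \mathfrak{S}}$ is orthocomplemented.

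First I would dispose of (\ref{axiomreduc1}) and (\ref{axiomreduc4}). For (\ref{axiomreduc4}): ${ \mathfrak{Y}}_{{}_{{{ \mathfrak{E}}}_{ \mathfrak{S}}}} = { \mathfrak{l}}_{{}_{(\bot_{{}_{{ \mathfrak{S}}}},\cdot)}}$ and $\bot_{{}_{{{ \mathfrak{E}}}_{ \mathfrak{S}}}} = { \mathfrak{l}}_{{}_{(\cdot,\cdot)}}$, both of which belong to $\overline{{ \mathfrak{E}}}_{ \mathfrak{S}}$ since the latter contains all effects of the form ${ \mathfrak{l}}_{{}_{(\sigma,\cdot)}}$, ${ \mathfrak{l}}_{{}_{(\cdot,\sigma)}}$ and ${ \mathfrak{l}}_{{}_{(\cdot,\cdot)}}$ by construction. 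For (\ref{axiomreduc1}): given $\sigma\in{ \mathfrak{S}}\smallsetminus\{\bot_{{}_{ \mathfrak{S}}}\}$, the orthocomplement $\sigma^\star$ lies in ${ \mathfrak{S}}\smallsetminus\{\bot_{{}_{ \mathfrak{S}}}\}$, and taking $\sigma':=\sigma^\star$ we have trivially $\sigma'\sqsupseteq_{{}_{ \mathfrak{S}}}\sigma^\star$, so ${ \mathfrak{l}}_{(\sigma,\sigma^\star)}\in\overline{{ \mathfrak{E}}}_{ \mathfrak{S}}$ as required.

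For (\ref{axiomreduc3}), observe that $\overline{{ \mathfrak{l}}_{(\sigma,\sigma')}} = { \mathfrak{l}}_{(\sigma',\sigma)}$ (swapping the roles of the $\textit{\bf Y}$- and $\textit{\bf N}$-witnesses), and likewise $\overline{{ \mathfrak{l}}_{(\sigma,\cdot)}} = { \mathfrak{l}}_{(\cdot,\sigma)}$ and $\overline{{ \mathfrak{l}}_{(\cdot,\cdot)}} = { \mathfrak{l}}_{(\cdot,\cdot)}$. The elements with a $\cdot$ are in $\overline{{ \mathfrak{E}}}_{ \mathfrak{S}}$ unconditionally, so the only thing to check is that if ${ \mathfrak{l}}_{(\sigma,\sigma')}\in\overline{{ \mathfrak{E}}}_{ \mathfrak{S}}$ then ${ \mathfrak{l}}_{(\sigma',\sigma)}\in\overline{{ \mathfrak{E}}}_{ \mathfrak{S}}$, i.e. that $\sigma'\sqsupseteq_{{}_{ \mathfrak{S}}}\sigma^\star$ implies $\sigma\sqsupseteq_{{}_{ \mathfrak{S}}}\sigma'^\star$. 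This follows by applying the order-reversing property (\ref{orderreversing}) to $\sigma'\sqsupseteq_{{}_{ \mathfrak{S}}}\sigma^\star$, which gives $\sigma'^\star\sqsubseteq_{{}_{ \mathfrak{S}}}\sigma^{\star\star}=\sigma$ by the involutivity (\ref{involutive}); and one must also note that $\sigma,\sigma'\neq\bot_{{}_{ \mathfrak{S}}}$ is symmetric in the pair, so membership of ${ \mathfrak{l}}_{(\sigma',\sigma)}$ in the first set of the defining union of $\overline{{ \mathfrak{E}}}_{ \mathfrak{S}}$ is confirmed.

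The main obstacle is (\ref{axiomreduc2}): closure under arbitrary infima computed in ${ \mathfrak{E}}_{ \mathfrak{S}}$ via (\ref{defcapES}). The strategy is to reduce to binary and then to arbitrary meets. Given a family $\{{ \mathfrak{l}}_{(\sigma_i,\sigma'_i)}\}_{i\in I}\subseteq\overline{{ \mathfrak{E}}}_{ \mathfrak{S}}$ with $\sigma'_i\sqsupseteq_{{}_{ \mathfrak{S}}}\sigma_i^\star$ for all $i$, formula (\ref{defcapES}) (extended to infinite meets in the obvious way, consistent with (\ref{axiomEinfsemilattice})) produces, when all the relevant joins exist, the effect ${ \mathfrak{l}}_{(\bigsqcup_i\sigma_i,\bigsqcup_i\sigma'_i)}$; I must check that $\bigsqcup_i\sigma'_i\sqsupseteq_{{}_{ \mathfrak{S}}}(\bigsqcup_i\sigma_i)^\star$. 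Here the inequality $\bigsqcup_i\sigma'_i\sqsupseteq_{{}_{ \mathfrak{S}}}\sigma'_j\sqsupseteq_{{}_{ \mathfrak{S}}}\sigma_j^\star\sqsupseteq_{{}_{ \mathfrak{S}}}(\bigsqcup_i\sigma_i)^\star$ works for each $j$ by order-reversal, since $\sigma_j\sqsubseteq_{{}_{ \mathfrak{S}}}\bigsqcup_i\sigma_i$ gives $(\bigsqcup_i\sigma_i)^\star\sqsubseteq_{{}_{ \mathfrak{S}}}\sigma_j^\star$; so $\bigsqcup_i\sigma'_i$ is an upper bound of the $(\bigsqcup_i\sigma_i)^\star$-issue. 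The subtle points are the degenerate branches of (\ref{defcapES}): when some joins fail to exist we land on an effect of the form ${ \mathfrak{l}}_{(\cdot,\sigma)}$, ${ \mathfrak{l}}_{(\sigma,\cdot)}$ or ${ \mathfrak{l}}_{(\cdot,\cdot)}$, all of which are in $\overline{{ \mathfrak{E}}}_{ \mathfrak{S}}$ by fiat; and in the branch ${ \mathfrak{l}}_{(\sigma_1\sqcup_{{}_{{ \mathfrak{S}}}}\sigma_2,\cdot)}$ there is no constraint to verify. Finally, I would note that since $\overline{{ \mathfrak{E}}}_{ \mathfrak{S}}$ is closed under the binary $\sqcap_{{}_{{ \mathfrak{E}}_{ \mathfrak{S}}}}$ and, by the argument just given, under arbitrary meets of $(\sigma,\sigma')$-type effects, and contains $\bot_{{}_{{ \mathfrak{E}}_{ \mathfrak{S}}}}$ as the meet of the empty family, it is closed under $\bigsqcap^{{}^{{ \mathfrak{E}}_{ \mathfrak{S}}}}$; hence (\ref{axiomreduc2}) holds. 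With all four conditions verified, Theorem \ref{TheoremreductionE} gives the claim.
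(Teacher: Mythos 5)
Your proof is correct and follows exactly the route the paper intends: the paper states this theorem without a separate proof, taking it as an application of Theorem \ref{TheoremreductionE} once the conditions (\ref{axiomreduc1})--(\ref{axiomreduc4}) of Definition \ref{DefinreductionE} are checked for $\overline{{\mathfrak{E}}}_{\mathfrak{S}}$, and your verification of those four conditions (using (\ref{involutive}), (\ref{orderreversing}) and the absorbing role of the ``$\cdot$'' branches of (\ref{defcapES})) is precisely what is needed. The only blemish is the aside that $\bot_{{}_{{\mathfrak{E}}_{\mathfrak{S}}}}$ arises as the meet of the empty family (an empty infimum would be a top element, not the bottom), but this is harmless since ${\mathfrak{l}}_{{}_{(\cdot,\cdot)}}$ belongs to $\overline{{\mathfrak{E}}}_{\mathfrak{S}}$ by definition, as you already used for (\ref{axiomreduc4}).
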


\begin{theorem}\label{decomppurestatesreducedeffectsspace}
The reduced space of effects $\overline{{ \mathfrak{E}}}_{ \mathfrak{S}}$ is defined in terms of pure effects :
\begin{eqnarray}
&&\forall { \mathfrak{l}} \in \overline{ \mathfrak{E}}_{ \mathfrak{S}}, \;\; { \mathfrak{l}}= \bigsqcap{}^{{}^{ { \overline{{ \mathfrak{E}}}_{ \mathfrak{S}}}}}  \underline{\;{ \mathfrak{l}}\;}_{{}_{ \overline{ \mathfrak{E}}_{ \mathfrak{S}}}},\;\;\textit{\rm where}\;\;
\underline{\;{ \mathfrak{l}}\;}_{{}_{ {\overline{ \mathfrak{E}}_{ \mathfrak{S}}}}}=
(\overline{ \mathfrak{E}}{}_{ \mathfrak{S}}^{{}^{pure}} \cap (\uparrow^{{}^{ \overline{ \mathfrak{E}}_{ \mathfrak{S}}}}\!\!\! { \mathfrak{l}}) )
\end{eqnarray}
with 
\begin{eqnarray}
\overline{{ \mathfrak{E}}}_{ \mathfrak{S}}^{{}^{pure}}=Max(\overline{{ \mathfrak{E}}}_{ \mathfrak{S}})=
\{\,{ \mathfrak{l}}_{(\sigma,\sigma^\star)}\;\vert\; \sigma\in { \mathfrak{S}}\smallsetminus \{\bot_{{}_{{ \mathfrak{S}}}}\}\,\} \cup \{\, { \mathfrak{Y}}_{{}_{{{ \mathfrak{E}}}_{ \mathfrak{S}}}}\,\}\cup \{\, \overline{{ \mathfrak{Y}}_{{}_{{{ \mathfrak{E}}}_{ \mathfrak{S}}}}}\,\}.
\end{eqnarray}
\end{theorem}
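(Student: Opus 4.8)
The plan is to prove the statement in three stages: (i) determine $Max(\overline{\mathfrak{E}}_{\mathfrak{S}})$ explicitly, (ii) show this set is order-generating, i.e. $\mathfrak{l}=\bigsqcap^{\overline{\mathfrak{E}}_{\mathfrak{S}}}(Max(\overline{\mathfrak{E}}_{\mathfrak{S}})\cap\uparrow^{\overline{\mathfrak{E}}_{\mathfrak{S}}}\mathfrak{l})$ for every $\mathfrak{l}\in\overline{\mathfrak{E}}_{\mathfrak{S}}$, and (iii) deduce $\overline{\mathfrak{E}}{}_{\mathfrak{S}}^{pure}=Max(\overline{\mathfrak{E}}_{\mathfrak{S}})$ by a purely order-theoretic argument. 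Two preliminary remarks are used throughout. First, reading the order off the evaluation map, $\mathfrak{l}_{(\sigma_1,\sigma_1')}\sqsubseteq_{\overline{\mathfrak{E}}_{\mathfrak{S}}}\mathfrak{l}_{(\sigma_2,\sigma_2')}$ iff $\sigma_2\sqsubseteq_{\mathfrak{S}}\sigma_1$ and $\sigma_2'\sqsubseteq_{\mathfrak{S}}\sigma_1'$ (the symbol $\cdot$ dominating every state), so $(\sigma,\sigma')\mapsto\mathfrak{l}_{(\sigma,\sigma')}$ is order-reversing in each coordinate. Second, on $\overline{\mathfrak{E}}_{\mathfrak{S}}$ the evaluation map is unambiguous: $\epsilon^{\mathfrak{S}}_{\mathfrak{l}_{(\sigma,\sigma')}}$ never takes both values $\textit{\bf Y}$ and $\textit{\bf N}$ on a state, since $\sigma'\sqsupseteq_{\mathfrak{S}}\sigma^\star$ together with $\sigma\,\widecheck{\bowtie}_{\mathfrak{S}}\,\sigma^\star$ (hence $\neg\widehat{\sigma\sigma^\star}{}^{\mathfrak{S}}$) force $\neg\widehat{\sigma\sigma'}{}^{\mathfrak{S}}$; the same remark shows that if $\sigma\,\widecheck{\bowtie}_{\mathfrak{S}}\,\sigma'$ and $\sigma'\sqsupseteq_{\mathfrak{S}}\sigma^\star$ then $\sigma'=\sigma^\star$.

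For (i): $\mathfrak{Y}_{\mathfrak{E}_{\mathfrak{S}}}$ and $\overline{\mathfrak{Y}_{\mathfrak{E}_{\mathfrak{S}}}}$ are maximal because by (\ref{Chuextensional}) they are the only effects with constant value $\textit{\bf Y}$, resp. $\textit{\bf N}$; for $\sigma\neq\bot_{\mathfrak{S}}$, $\mathfrak{l}_{(\sigma,\sigma^\star)}$ is maximal because $\mathfrak{l}_{(\tau,\tau')}\sqsupseteq\mathfrak{l}_{(\sigma,\sigma^\star)}$ gives $\tau\sqsubseteq_{\mathfrak{S}}\sigma$ and $\tau'\sqsubseteq_{\mathfrak{S}}\sigma^\star$, whence by order-reversal of $\star$ the chain $\sigma^\star\sqsubseteq_{\mathfrak{S}}\tau^\star\sqsubseteq_{\mathfrak{S}}\tau'\sqsubseteq_{\mathfrak{S}}\sigma^\star$, so $\tau'=\tau^\star=\sigma^\star$ and $\tau=\tau^{\star\star}=\sigma$, while no $\mathfrak{l}_{(\tau,\cdot)}$ or $\mathfrak{l}_{(\cdot,\tau)}$ can dominate $\mathfrak{l}_{(\sigma,\sigma^\star)}$ since the latter takes both values. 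Conversely every other effect is strictly below one of these: $\mathfrak{l}_{(\sigma,\sigma')}$ with $\sigma'\sqsupsetneq_{\mathfrak{S}}\sigma^\star$ and $\mathfrak{l}_{(\sigma,\cdot)}$ with $\sigma\neq\bot_{\mathfrak{S}}$ sit strictly below $\mathfrak{l}_{(\sigma,\sigma^\star)}$, dually for $\mathfrak{l}_{(\cdot,\sigma)}$, and $\mathfrak{l}_{(\cdot,\cdot)}=\bot_{\mathfrak{E}_{\mathfrak{S}}}$. This yields the announced expression for $Max(\overline{\mathfrak{E}}_{\mathfrak{S}})$, which by the second preliminary remark is exactly the part of $Max(\mathfrak{E}_{\mathfrak{S}})$ from (\ref{Epure}) lying in $\overline{\mathfrak{E}}_{\mathfrak{S}}$.

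Stage (ii) is the core. For $\mathfrak{l}\in\overline{\mathfrak{E}}_{\mathfrak{S}}$ put $M_{\mathfrak{l}}:=Max(\overline{\mathfrak{E}}_{\mathfrak{S}})\cap\uparrow^{\overline{\mathfrak{E}}_{\mathfrak{S}}}\mathfrak{l}$; since $\overline{\mathfrak{E}}_{\mathfrak{S}}$ is closed under the infima of $\mathfrak{E}_{\mathfrak{S}}$ (preceding theorem) and $M_{\mathfrak{l}}$ is always nonempty, it suffices, by (\ref{axiomEinfsemilattice}) and (\ref{Chuseparated}), to verify $\bigwedge_{\mathfrak{m}\in M_{\mathfrak{l}}}\epsilon^{\mathfrak{S}}_{\mathfrak{m}}(\sigma'')=\epsilon^{\mathfrak{S}}_{\mathfrak{l}}(\sigma'')$ for all $\sigma''\in\mathfrak{S}$. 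One computes $M_{\mathfrak{l}}$ in each case: $M_{\mathfrak{l}_{(\sigma,\sigma')}}=\{\mathfrak{l}_{(\rho^\star,\rho)}\mid\sigma^\star\sqsubseteq_{\mathfrak{S}}\rho\sqsubseteq_{\mathfrak{S}}\sigma'\}$ (from $\mathfrak{l}_{(\tau,\tau^\star)}\sqsupseteq\mathfrak{l}_{(\sigma,\sigma')}\Leftrightarrow(\tau\sqsubseteq_{\mathfrak{S}}\sigma$ and $\tau^\star\sqsubseteq_{\mathfrak{S}}\sigma')$, reindexed by $\rho:=\tau^\star$); $M_{\mathfrak{l}_{(\sigma,\cdot)}}=\{\mathfrak{Y}_{\mathfrak{E}_{\mathfrak{S}}}\}\cup\{\mathfrak{l}_{(\tau,\tau^\star)}\mid\tau\sqsubseteq_{\mathfrak{S}}\sigma\}$ for $\sigma\neq\bot_{\mathfrak{S}}$, dually for $\mathfrak{l}_{(\cdot,\sigma)}$; $M_{\mathfrak{l}_{(\cdot,\cdot)}}=Max(\overline{\mathfrak{E}}_{\mathfrak{S}})$, where already $\mathfrak{Y}_{\mathfrak{E}_{\mathfrak{S}}}\sqcap_{\overline{\mathfrak{E}}_{\mathfrak{S}}}\overline{\mathfrak{Y}_{\mathfrak{E}_{\mathfrak{S}}}}=\bot_{\mathfrak{E}_{\mathfrak{S}}}$; and for maximal $\mathfrak{l}$ there is nothing to prove. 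In each remaining case one splits on whether $\sigma\sqsubseteq_{\mathfrak{S}}\sigma''$, $\sigma'\sqsubseteq_{\mathfrak{S}}\sigma''$, or neither: when $\epsilon^{\mathfrak{S}}_{\mathfrak{l}}(\sigma'')\in\{\textit{\bf Y},\textit{\bf N}\}$ the monotonicity of $\epsilon^{\mathfrak{S}}$ makes all terms equal to it; in the ``neither'' sub-case one exhibits either one element of $M_{\mathfrak{l}}$ evaluating to $\bot$ at $\sigma''$, or two elements evaluating to distinct non-$\bot$ values --- here one uses $\neg\widehat{\rho^\star\rho}{}^{\mathfrak{S}}$ (valid for every $\rho\neq\bot_{\mathfrak{S}}$, since $\widecheck{\bowtie}_{\mathfrak{S}}$ is symmetric and $\rho\,\widecheck{\bowtie}_{\mathfrak{S}}\,\rho^\star$ by (\ref{starcomplement})) together with the involutivity and order-reversal of $\star$.

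Stage (iii) is then formal. Every maximal element is completely meet-irreducible: if $\mathfrak{m}=\bigsqcap^{\overline{\mathfrak{E}}_{\mathfrak{S}}}S$ with $\mathfrak{m}$ maximal, then every element of $S$ equals $\mathfrak{m}$, and $S\neq\varnothing$ because $\overline{\mathfrak{E}}_{\mathfrak{S}}$ has no greatest element ($\mathfrak{Y}_{\mathfrak{E}_{\mathfrak{S}}}$ and $\overline{\mathfrak{Y}_{\mathfrak{E}_{\mathfrak{S}}}}$ are incomparable maxima), so $\mathfrak{m}\in S$; conversely, any completely meet-irreducible $\mathfrak{m}$ is, by (ii), the infimum of $M_{\mathfrak{m}}\subseteq Max(\overline{\mathfrak{E}}_{\mathfrak{S}})$, hence $\mathfrak{m}\in M_{\mathfrak{m}}$ and $\mathfrak{m}$ is maximal; thus $\overline{\mathfrak{E}}{}_{\mathfrak{S}}^{pure}=Max(\overline{\mathfrak{E}}_{\mathfrak{S}})$, and (ii) is the required decomposition. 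I expect the only genuine difficulty to lie in stage (ii): getting $M_{\mathfrak{l}_{(\sigma,\sigma')}}$ right (the reindexing $\tau\leftrightarrow\tau^\star$ and keeping track of which inequalities survive under $\sigma'\sqsupseteq_{\mathfrak{S}}\sigma^\star$), and, in the ``neither'' sub-case, producing the witnessing maximal effects --- this is exactly where orthocomplementation and the relation $\widecheck{\bowtie}_{\mathfrak{S}}$ are used; everything else is either formal or a transcription of the $\mathfrak{E}_{\mathfrak{S}}$-computations already carried out in Remark \ref{remarkspaceeffectspurestates}.
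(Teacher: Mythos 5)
Your proof is correct and follows essentially the same route as the paper: first identify $Max(\overline{\mathfrak{E}}_{\mathfrak{S}})$, then show this set is order-generating, and finally conclude $\overline{\mathfrak{E}}{}_{\mathfrak{S}}^{{}^{pure}}=Max(\overline{\mathfrak{E}}_{\mathfrak{S}})$ by the standard order-theoretic step (maximal elements are completely meet-irreducible when there is no top, and any completely meet-irreducible element, being an infimum of maximal elements above it, must itself be maximal). The only difference is packaging: the paper invokes Remark \ref{remarkspaceeffectspurestates} and records the single identity $\mathfrak{l}_{(\sigma,\sigma')}=\mathfrak{l}_{(\sigma,\sigma^\star)}\sqcap\,\mathfrak{l}_{(\sigma'{}^{\star},\sigma')}$, which in your argument appears implicitly as the two witnesses $\rho=\sigma^\star$ and $\rho=\sigma'$ inside $M_{\mathfrak{l}_{(\sigma,\sigma')}}$, whereas you re-derive the generation property self-containedly by explicit computation of $M_{\mathfrak{l}}$ and pointwise evaluation.
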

\begin{proof}
It suffices to exploit the remark \ref{remarkspaceeffectspurestates} and to note that
\begin{eqnarray}
\forall \sigma,\sigma'\in { \mathfrak{S}}\;\vert\; \sigma^\star\sqsubseteq_{{}_{ \mathfrak{S}}}\sigma', && { \mathfrak{l}}_{(\sigma,\sigma')}={ \mathfrak{l}}_{(\sigma,\sigma^\star)} \sqcap_{{}_{{ \mathfrak{E}}_{ \mathfrak{S}}}} { \mathfrak{l}}_{(\sigma'{}^\star,\sigma')}.
\end{eqnarray}
The set of maximal elements of $\overline{{ \mathfrak{E}}}_{ \mathfrak{S}}$ being a generating subset of $\overline{{ \mathfrak{E}}}_{ \mathfrak{S}}$, this subset is then equal to the set of meet-irreducible elements of $\overline{{ \mathfrak{E}}}_{ \mathfrak{S}}$.
\end{proof}

\subsection{Morphisms}\label{subsectionchannels}

We turn the collection of States/Effects Chu spaces into a category by defining the following morphisms.

\begin{definition}
We will consider the morphisms from a States/Effects Chu space $({ \mathfrak{S}}_{A},{ \mathfrak{E}}_{A},\epsilon^{{ \mathfrak{S}}_A})$ to another States/Effects Chu space $({ \mathfrak{S}}_{B},{ \mathfrak{E}}_{B},\epsilon^{{ \mathfrak{S}}_B})$, i.e.  pairs of maps $f : { \mathfrak{S}}_{A} \rightarrow { \mathfrak{S}}_{B}$ and $f^{\ast} : { \mathfrak{E}}_{B}\rightarrow { \mathfrak{E}}_{A}$ satisfying the following properties (see \cite{Pratt1999}) 
\begin{eqnarray} 
\forall \sigma_A\in { \mathfrak{S}}_{A}, \forall { \mathfrak{l}}_B \in { \mathfrak{E}}_{B}&&\epsilon^{{ \mathfrak{S}}_B}_{{ \mathfrak{l}}_B}(f(\sigma_A))=\epsilon^{{ \mathfrak{S}}_A}_{f^{\ast}({ \mathfrak{l}}_B)}(\sigma_A).\label{defchumorphism}
\end{eqnarray}
\end{definition}

\begin{remark}
Note that, the eventual surjectivity of $f^{\ast}$ implies the injectivity of $f$.  This point uses the property (\ref{Chuseparated}).  Explicitly,
\begin{eqnarray}
\forall \sigma_A,\sigma'_A\in { \mathfrak{S}}_{A},\;\; f(\sigma_A)=f(\sigma'_A) &\Leftrightarrow & (\,\forall { \mathfrak{l}}_B\in { \mathfrak{E}}_{B},\;\epsilon^{{ \mathfrak{S}}_B}_{{ \mathfrak{l}}_B}(f(\sigma_A))=\epsilon^{{ \mathfrak{S}}_B}_{{ \mathfrak{l}}_B}(f(\sigma'_A))\,)\nonumber\\
&\Leftrightarrow & (\,\forall { \mathfrak{l}}_B\in { \mathfrak{E}}_{B},\;\epsilon^{{ \mathfrak{S}}_B}_{f^{\ast}({ \mathfrak{l}}_B)}(\sigma_A)=\epsilon^{{ \mathfrak{S}}_B}_{f^{\ast}({ \mathfrak{l}}_B)}(\sigma'_A)\,)\nonumber\\
&\Leftrightarrow & (\,\forall { \mathfrak{l}}'_B\in { \mathfrak{E}}_{B},\;\epsilon^{{ \mathfrak{S}}_B}_{{ \mathfrak{l}}'_B}(\sigma_A)=\epsilon^{{ \mathfrak{S}}_B}_{{ \mathfrak{l}}'_B}(\sigma'_A)\,)\nonumber\\
&\Leftrightarrow & (\, \sigma_A=\sigma'_A\,).
\end{eqnarray}
In the same way,  using the properties (\ref{Chuextensional}) and the surjectivity of $f$, we can deduce the injectivity of $f^{\ast}$. 
\end{remark}

The duality property (\ref{defchumorphism}) suffices to deduce the following properties.
\begin{theorem} The left-component $f$ of a Chu morphism from $({ \mathfrak{S}}_A,{ \mathfrak{E}}_A,\epsilon^{{ \mathfrak{S}}_A})$ to $({ \mathfrak{S}}_B,{ \mathfrak{E}}_B,\epsilon^{{ \mathfrak{S}}_B})$ satisfies
\begin{eqnarray}
\forall S\subseteq { \mathfrak{S}}_A,&& f(\bigsqcap{}^{{}^{{ \mathfrak{S}}_A}} \! S) = \bigsqcap{}^{{}^{{ \mathfrak{S}}_B}}_{{}_{\sigma\in S}} \; f(\sigma)\label{f12cap}\\
\forall { \mathfrak{C}}\subseteq_{Chain} { \mathfrak{S}}_A,&& f(\bigsqcup{}^{{}^{{ \mathfrak{S}}_A}} \! { \mathfrak{C}}) = \bigsqcup{}^{{}^{{ \mathfrak{S}}_B}}_{{}_{\sigma\in { \mathfrak{C}}}} \; f(\sigma).\label{f12cupchain}
\end{eqnarray}
As a consequence of (\ref{f12cupchain}), $f$ is in particular order-preserving \\
The right-component $f^{\ast}$ of a Chu morphism from $({ \mathfrak{S}}_A,{ \mathfrak{E}}_A,\epsilon^{{ \mathfrak{S}}_A})$ to $({ \mathfrak{S}}_B,{ \mathfrak{E}}_B,\epsilon^{{ \mathfrak{S}}_B})$ satisfies
\begin{eqnarray}
\forall E\subseteq { \mathfrak{E}}_B,&& f^{\ast}(\bigsqcap{}^{{}^{{ \mathfrak{E}}_B}} \! E) = \bigsqcap{}^{{}^{{ \mathfrak{E}}_A}}_{{}_{{ \mathfrak{l}}\in E}} \; f^{\ast}({ \mathfrak{l}})\label{f21cap}\\
\forall C\subseteq_{Chain} { \mathfrak{E}}_B,&& f^{\ast}(\bigsqcup{}^{{}^{{ \mathfrak{E}}_B}} \! C) = \bigsqcup{}^{{}^{{ \mathfrak{E}}_A}}_{{}_{{ \mathfrak{l}}\in C}} \; f^{\ast}({ \mathfrak{l}})\label{f21cupchain}\\
\forall { \mathfrak{l}} \in { \mathfrak{E}}_B,&& f^{\ast}(\; \overline{\,{ \mathfrak{l}}\,} \;) = \overline{f^{\ast}({ \mathfrak{l}})}\label{f21bar}\\
&& f^{\ast}(\;{ \mathfrak{Y}}_{{ \mathfrak{E}}_B}\;) = { \mathfrak{Y}}_{{ \mathfrak{E}}_A}.\label{f21Y}
\end{eqnarray}
In particular, $f^{\ast}$ is order-preserving.
\end{theorem}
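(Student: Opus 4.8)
The plan is to deduce all the stated identities mechanically from the adjunction (\ref{defchumorphism}), using the separation axiom (\ref{Chuseparated}) on $\mathfrak{S}_B$ to decide equalities of states and the extensionality axiom (\ref{Chuextensional}) on $\mathfrak{E}_A$ to decide equalities of effects. The recurring move is: to prove $f(x)=y$ in $\mathfrak{S}_B$ it suffices to show $\epsilon^{\mathfrak{S}_B}_{\mathfrak{l}_B}(f(x))=\epsilon^{\mathfrak{S}_B}_{\mathfrak{l}_B}(y)$ for every $\mathfrak{l}_B\in\mathfrak{E}_B$, and by (\ref{defchumorphism}) the left-hand side rewrites as $\epsilon^{\mathfrak{S}_A}_{f^{\ast}(\mathfrak{l}_B)}(x)$, so the structural axioms of the evaluation map on the $A$-side can be applied and the result transported back across $f$. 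Dually, to prove $f^{\ast}(\mathfrak{m})=\mathfrak{n}$ in $\mathfrak{E}_A$ it suffices to show $\epsilon^{\mathfrak{S}_A}_{f^{\ast}(\mathfrak{m})}(\sigma_A)=\epsilon^{\mathfrak{S}_A}_{\mathfrak{n}}(\sigma_A)$ for every $\sigma_A\in\mathfrak{S}_A$, and here (\ref{defchumorphism}) rewrites $\epsilon^{\mathfrak{S}_A}_{f^{\ast}(\mathfrak{m})}(\sigma_A)=\epsilon^{\mathfrak{S}_B}_{\mathfrak{m}}(f(\sigma_A))$.

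For (\ref{f12cap}) I would fix $S\subseteq\mathfrak{S}_A$ and an arbitrary $\mathfrak{l}_B\in\mathfrak{E}_B$ and compute $\epsilon^{\mathfrak{S}_B}_{\mathfrak{l}_B}(f(\bigsqcap^{\mathfrak{S}_A}S)) =\epsilon^{\mathfrak{S}_A}_{f^{\ast}(\mathfrak{l}_B)}(\bigsqcap^{\mathfrak{S}_A}S) =\bigwedge_{\sigma\in S}\epsilon^{\mathfrak{S}_A}_{f^{\ast}(\mathfrak{l}_B)}(\sigma) =\bigwedge_{\sigma\in S}\epsilon^{\mathfrak{S}_B}_{\mathfrak{l}_B}(f(\sigma))$, using (\ref{defchumorphism}), then (\ref{axiomsigmainfsemilattice}), then (\ref{defchumorphism}) again; since (\ref{axiomsigmainfsemilattice}) also gives $\epsilon^{\mathfrak{S}_B}_{\mathfrak{l}_B}(\bigsqcap^{\mathfrak{S}_B}_{\sigma\in S}f(\sigma))=\bigwedge_{\sigma\in S}\epsilon^{\mathfrak{S}_B}_{\mathfrak{l}_B}(f(\sigma))$, the claim follows from (\ref{Chuseparated}). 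Specializing to $S=\{\sigma,\sigma'\}$ with $\sigma\sqsubseteq_{\mathfrak{S}_A}\sigma'$ shows $f(\sigma')=f(\sigma)\sqcup_{\mathfrak{S}_B}f(\sigma')$ once we also have the chain case, or directly $f(\sigma)=f(\sigma)\sqcap_{\mathfrak{S}_B}f(\sigma')$ from (\ref{f12cap}), so $f$ is order-preserving; hence $f(\mathfrak{C})$ is a chain in $\mathfrak{S}_B$ for any chain $\mathfrak{C}\subseteq\mathfrak{S}_A$, and (\ref{f12cupchain}) is obtained by the very same computation with $\bigvee$ in place of $\bigwedge$, the role of (\ref{axiomsigmainfsemilattice}) now being played by the chain-continuity of the evaluation map recorded in (\ref{demochaincontinuous1}).

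The statements about $f^{\ast}$ are the exact duals. For (\ref{f21cap}) I would fix $\sigma_A\in\mathfrak{S}_A$ and $E\subseteq\mathfrak{E}_B$ and run the analogous chain of equalities through (\ref{defchumorphism}), (\ref{axiomEinfsemilattice}), (\ref{defchumorphism}), concluding with (\ref{Chuextensional}); order-preservation of $f^{\ast}$ then follows exactly as for $f$, and (\ref{f21cupchain}) follows by the chain version, the input being the chain-continuity of evaluation in the effect variable (the effect-side analogue of (\ref{demochaincontinuous1}), cf.\ (\ref{demochaincontinuous2})). For (\ref{f21bar}) one replaces the middle step by (\ref{etbar}): $\epsilon^{\mathfrak{S}_A}_{f^{\ast}(\overline{\mathfrak{l}})}(\sigma_A)=\epsilon^{\mathfrak{S}_B}_{\overline{\mathfrak{l}}}(f(\sigma_A))=\overline{\epsilon^{\mathfrak{S}_B}_{\mathfrak{l}}(f(\sigma_A))}=\overline{\epsilon^{\mathfrak{S}_A}_{f^{\ast}(\mathfrak{l})}(\sigma_A)}=\epsilon^{\mathfrak{S}_A}_{\overline{f^{\ast}(\mathfrak{l})}}(\sigma_A)$, and for (\ref{f21Y}) one uses (\ref{ety}): $\epsilon^{\mathfrak{S}_A}_{f^{\ast}(\mathfrak{Y}_{\mathfrak{E}_B})}(\sigma_A)=\epsilon^{\mathfrak{S}_B}_{\mathfrak{Y}_{\mathfrak{E}_B}}(f(\sigma_A))=\textit{\bf Y}=\epsilon^{\mathfrak{S}_A}_{\mathfrak{Y}_{\mathfrak{E}_A}}(\sigma_A)$; in both cases (\ref{Chuextensional}) closes the argument.

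I do not expect any real obstacle: the whole proof is a formal unwinding of the duality (\ref{defchumorphism}) combined with the evaluation axioms. The only points deserving a line of care are that all the infima and chain-suprema written above genuinely exist — the infima by the down-completeness of $\mathfrak{S}$ and $\mathfrak{E}$, the chain-suprema by the chain-completeness already recorded — and that one should invoke the chain-continuity of $\epsilon$ (which rests on Theorem \ref{blepsilonsigma}) rather than try to reprove it, since that is the single non-formal ingredient.
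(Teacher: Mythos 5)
Your proposal is correct and follows essentially the same route as the paper: the paper proves (\ref{f12cap}) by exactly your computation (duality (\ref{defchumorphism}), then (\ref{axiomsigmainfsemilattice}), then duality again, closing with (\ref{Chuseparated})) and declares that all remaining identities follow by the same trick, which is what you carry out explicitly, including the correct inputs (\ref{etbar}), (\ref{ety}), (\ref{Chuextensional}) and the chain-continuity statements for the two chain-supremum cases. Your explicit treatment of the dual and chain cases is merely a fuller write-up of what the paper leaves implicit, so there is nothing to correct.
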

\begin{proof}
All proofs follow the same trick based on the duality relation (\ref{defchumorphism}) and the separation property (\ref{Chuseparated}).  For example, for any $S\subseteq { \mathfrak{S}}_A$ and any ${ \mathfrak{l}} \in { \mathfrak{E}}_B$, we have, using (\ref{defchumorphism}) and (\ref{axiomEinfsemilattice}) :
\begin{eqnarray}
\epsilon^{{ \mathfrak{S}}_B}_{{ \mathfrak{l}}}(f(\bigsqcap{}^{{}^{{ \mathfrak{S}}_A}} \!\! S)) &=& \epsilon^{{ \mathfrak{S}}_A}_{f^{\ast}({ \mathfrak{l}})}(\bigsqcap{}^{{}^{{ \mathfrak{S}}_A}} \!\! S)\nonumber\\
&=& \bigwedge{}_{\sigma\in S} \; \epsilon^{{ \mathfrak{S}}_A}_{f^{\ast}({ \mathfrak{l}})}(\sigma)\nonumber\\
&=&  \bigwedge{}_{\sigma\in S} \; \epsilon^{{ \mathfrak{S}}_B}_{{ \mathfrak{l}}}(f(\sigma))\nonumber\\
&=&  \epsilon^{{ \mathfrak{S}}_B}_{{ \mathfrak{l}}}(\bigsqcap{}^{{}^{{ \mathfrak{S}}_B}}_{\sigma\in S} f(\sigma))
\end{eqnarray}
We now use the property (\ref{Chuseparated}) to deduce (\ref{f12cap}).
\end{proof}

\begin{theorem}
For any map $f:{ \mathfrak{S}} \longrightarrow { \mathfrak{S}}'$ satisfying $\forall \{\sigma_i\;\vert\; i\in I\}\subseteq { \mathfrak{S}}, \; f({\bigsqcap{}^{{}^{ \mathfrak{S}}}_{{}_{i\in i}}\sigma_i}) = \bigsqcap{}^{{}^{{ \mathfrak{S}}'}}_{{i\in I}} \;f(\sigma_i)$, there exists a unique map $f^\ast: { \mathfrak{E}}' \longrightarrow { \mathfrak{E}}$ such that 
\begin{eqnarray}
&&\forall { \mathfrak{l}} \in { \mathfrak{E}},\; \epsilon^{{ \mathfrak{S}}}_{f^\ast({ \mathfrak{l}})}(\sigma)=\epsilon^{{ \mathfrak{S}}'}_{ \mathfrak{l}}(f(\sigma)).
\end{eqnarray}
\end{theorem}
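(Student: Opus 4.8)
The plan is to construct $f^\ast$ effect-by-effect by invoking Theorem~\ref{aepsilonsigma}, and to extract uniqueness from the extensionality axiom~(\ref{Chuextensional}) of the Chu space $(\mathfrak{S},\mathfrak{E},\epsilon^{\mathfrak{S}})$. Concretely, I would fix $\mathfrak{l}\in\mathfrak{E}'$ and look at the map $A^{(\mathfrak{l})}:\mathfrak{S}\to\mathfrak{B}$ sending $\sigma$ to $\mathfrak{a}_\sigma:=\epsilon^{\mathfrak{S}'}_{\mathfrak{l}}(f(\sigma))$; the goal is to recognise $A^{(\mathfrak{l})}$ as an admissible ``$A$'' in the sense of Theorem~\ref{aepsilonsigma}, so that it is realised by a unique effect, which we then christen $f^\ast(\mathfrak{l})$.

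First I would check hypothesis~(\ref{theorema1}) for $A^{(\mathfrak{l})}$: since $f$ preserves finite infima it is order-preserving (the relation $\sigma\sqsubseteq\sigma'$ is $\sigma=\sigma\sqcap\sigma'$, to which one applies $f$), and $\epsilon^{\mathfrak{S}'}_{\mathfrak{l}}$ is order-preserving by~(\ref{axiomsigmainfsemilattice}) applied in $\mathfrak{S}'$ to a two-element family; composing these gives monotonicity of $A^{(\mathfrak{l})}$. Then hypothesis~(\ref{theorema2}): for $\{\sigma_i\mid i\in I\}\subseteq\mathfrak{S}$ one computes $\mathfrak{a}_{\bigsqcap^{\mathfrak{S}}_i\sigma_i}=\epsilon^{\mathfrak{S}'}_{\mathfrak{l}}\!\big(f(\bigsqcap^{\mathfrak{S}}_i\sigma_i)\big)=\epsilon^{\mathfrak{S}'}_{\mathfrak{l}}\!\big(\bigsqcap^{\mathfrak{S}'}_i f(\sigma_i)\big)=\bigwedge_{i}\epsilon^{\mathfrak{S}'}_{\mathfrak{l}}(f(\sigma_i))=\bigwedge_i\mathfrak{a}_{\sigma_i}$, where the second equality is the hypothesis on $f$ and the third is~(\ref{axiomsigmainfsemilattice}). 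Theorem~\ref{aepsilonsigma} then delivers a unique $f^\ast(\mathfrak{l})\in\mathfrak{E}_{\mathfrak{S}}$ with $\epsilon^{\mathfrak{S}}_{f^\ast(\mathfrak{l})}(\sigma)=\mathfrak{a}_\sigma=\epsilon^{\mathfrak{S}'}_{\mathfrak{l}}(f(\sigma))$ for all $\sigma$, i.e.\ exactly the required identity; letting $\mathfrak{l}$ range over $\mathfrak{E}'$ gives the map $f^\ast$.

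For uniqueness of $f^\ast$ as a whole: if $g:\mathfrak{E}'\to\mathfrak{E}$ also satisfies the identity, then for every $\mathfrak{l}$ and every $\sigma$ we get $\epsilon^{\mathfrak{S}}_{g(\mathfrak{l})}(\sigma)=\epsilon^{\mathfrak{S}'}_{\mathfrak{l}}(f(\sigma))=\epsilon^{\mathfrak{S}}_{f^\ast(\mathfrak{l})}(\sigma)$, whence $g(\mathfrak{l})=f^\ast(\mathfrak{l})$ by~(\ref{Chuextensional}). The only step that is not purely formal is the verification that $f^\ast(\mathfrak{l})$ actually lies in $\mathfrak{E}$ and not merely in the ambient natural space $\mathfrak{E}_{\mathfrak{S}}$ produced by Theorem~\ref{aepsilonsigma}: if $\mathfrak{E}$ is the natural space of effects this is immediate, and for a genuinely reduced $\mathfrak{E}$ one writes $f^\ast(\mathfrak{l})=\mathfrak{l}_{(\Sigma,\Sigma')}$ with $\Sigma=\bigsqcap^{\mathfrak{S}}\{\sigma\mid\epsilon^{\mathfrak{S}'}_{\mathfrak{l}}(f(\sigma))=\textit{\bf Y}\}$ and $\Sigma'=\bigsqcap^{\mathfrak{S}}\{\sigma\mid\epsilon^{\mathfrak{S}'}_{\mathfrak{l}}(f(\sigma))=\textit{\bf N}\}$ (as in the proof of Theorem~\ref{aepsilonsigma}) and deduces membership in $\mathfrak{E}$ from the closure properties~(\ref{axiomreduc1})--(\ref{axiomreduc4}) of Definition~\ref{DefinreductionE}; this is the point I expect to cost the most effort.
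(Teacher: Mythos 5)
Your proposal is correct and is essentially the paper's own proof: the paper disposes of this theorem with the single line ``direct consequence of Theorem~\ref{aepsilonsigma}'', and your explicit verification of hypotheses (\ref{theorema1})--(\ref{theorema2}) for $\sigma\mapsto\epsilon^{\mathfrak{S}'}_{\mathfrak{l}}(f(\sigma))$, followed by uniqueness of $f^\ast$ via the (bi-)extensionality property, is exactly what that citation amounts to. The one point you defer -- that the effect produced by Theorem~\ref{aepsilonsigma} lies in $\mathfrak{E}$ and not merely in $\mathfrak{E}_{\mathfrak{S}}$ -- is not addressed by the paper either; its proof implicitly treats $\mathfrak{E}$ as the natural space of effects. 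Be aware, though, that your suggested repair via (\ref{axiomreduc1})--(\ref{axiomreduc4}) cannot work in full generality: those closure axioms force membership of effects of the form $\mathfrak{l}_{(\sigma,\cdot)}$, $\mathfrak{l}_{(\cdot,\sigma)}$, $\mathfrak{Y}$, $\bot$ and of meets of elements already in $\mathfrak{E}$, but a general pulled-back effect $\mathfrak{l}_{(\Sigma,\Sigma')}$ need not belong to a proper reduction such as $\overline{\mathfrak{E}}_{\mathfrak{S}}$ (e.g.\ take $f$ the identity on $\mathfrak{S}$ with $\mathfrak{E}'=\mathfrak{E}_{\mathfrak{S}}$ and $\mathfrak{E}=\overline{\mathfrak{E}}_{\mathfrak{S}}$: then $f^\ast(\mathfrak{l})$ would have to equal $\mathfrak{l}$, which may lie outside $\overline{\mathfrak{E}}_{\mathfrak{S}}$), so this is a defect of the statement's choice of codomain rather than of your argument, and the clean fix is to state the result with values in $\mathfrak{E}_{\mathfrak{S}}$ or to add a compatibility hypothesis on $\mathfrak{E}$.
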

\begin{proof}
Direct consequence of Theorem \ref{aepsilonsigma}.
\end{proof}

As a consequence of this theorem, the couple of maps $(f,f^{\ast})$ defining a morphism from $({ \mathfrak{S}}_A,{ \mathfrak{E}}_A,\epsilon^{{ \mathfrak{S}}_A})$ to $({ \mathfrak{S}}_B,{ \mathfrak{E}}_B,\epsilon^{{ \mathfrak{S}}_B})$ can then be reduced to the single data $f$. We will then speak shortly of "the morphism $f$ from the space of states ${ \mathfrak{S}}_{A}$ to the space of states ${ \mathfrak{S}}_{B}$" rather than "the morphism from the states/effects Chu space $({ \mathfrak{S}}_A,{ \mathfrak{E}}_A,\epsilon^{{ \mathfrak{S}}_A})$ to the states/effects Chu space $({ \mathfrak{S}}_B,{ \mathfrak{E}}_B,\epsilon^{{ \mathfrak{S}}_B})$".
\begin{definition}
The space of morphisms from the space of states ${ \mathfrak{S}}_A$ to the space of states ${ \mathfrak{S}}_B$ will be denoted ${ \mathfrak{C}}({{ \mathfrak{S}}_A},{{ \mathfrak{S}}_B})$. It is the space of maps from ${ \mathfrak{S}}_A$ to ${ \mathfrak{S}}_B$ that is order-preserving and satisfies the homomorphic property (\ref{f12cap}).
\end{definition}

\begin{theorem}
The composition of a morphism $(f ,f^{\ast})$ from $({ \mathfrak{S}}_A,{ \mathfrak{E}}_A,\epsilon^{{ \mathfrak{S}}_A})$ to $({ \mathfrak{S}}_B,{ \mathfrak{E}}_B,\epsilon^{{ \mathfrak{S}}_B})$ by another morphism $(g ,g^{\ast})$ defined from $({ \mathfrak{S}}_B,{ \mathfrak{E}}_B,\epsilon^{{ \mathfrak{S}}_B})$ to $({ \mathfrak{S}}_C,{ \mathfrak{E}}_C,\epsilon^{{ \mathfrak{S}}_C})$ is given by 
$( g\circ f ,f^{\ast} \circ g^{\ast})$ defining a valid morphism from $({ \mathfrak{S}}_A,{ \mathfrak{E}}_A,\epsilon^{{ \mathfrak{S}}_A})$ to $({ \mathfrak{S}}_C,{ \mathfrak{E}}_C,\epsilon^{{ \mathfrak{S}}_C})$.
\end{theorem}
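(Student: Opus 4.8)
The plan is to verify directly that the pair $(g\circ f, f^{\ast}\circ g^{\ast})$ satisfies the defining duality relation (\ref{defchumorphism}) for a Chu morphism from $({ \mathfrak{S}}_A,{ \mathfrak{E}}_A,\epsilon^{{ \mathfrak{S}}_A})$ to $({ \mathfrak{S}}_C,{ \mathfrak{E}}_C,\epsilon^{{ \mathfrak{S}}_C})$. Concretely, one takes an arbitrary $\sigma_A\in { \mathfrak{S}}_A$ and an arbitrary ${ \mathfrak{l}}_C\in { \mathfrak{E}}_C$, and chases the equality through the two given morphism conditions: starting from $\epsilon^{{ \mathfrak{S}}_C}_{{ \mathfrak{l}}_C}((g\circ f)(\sigma_A)) = \epsilon^{{ \mathfrak{S}}_C}_{{ \mathfrak{l}}_C}(g(f(\sigma_A)))$, apply the duality for $(g,g^{\ast})$ with the state $f(\sigma_A)\in { \mathfrak{S}}_B$ to rewrite this as $\epsilon^{{ \mathfrak{S}}_B}_{g^{\ast}({ \mathfrak{l}}_C)}(f(\sigma_A))$, then apply the duality for $(f,f^{\ast})$ with the effect $g^{\ast}({ \mathfrak{l}}_C)\in { \mathfrak{E}}_B$ to rewrite this as $\epsilon^{{ \mathfrak{S}}_A}_{f^{\ast}(g^{\ast}({ \mathfrak{l}}_C))}(\sigma_A) = \epsilon^{{ \mathfrak{S}}_A}_{(f^{\ast}\circ g^{\ast})({ \mathfrak{l}}_C)}(\sigma_A)$, which is exactly what is required.

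First I would set up the computation as a short display chain:
\begin{eqnarray}
\epsilon^{{ \mathfrak{S}}_C}_{{ \mathfrak{l}}_C}((g\circ f)(\sigma_A)) &=& \epsilon^{{ \mathfrak{S}}_C}_{{ \mathfrak{l}}_C}(g(f(\sigma_A)))\nonumber\\
&=& \epsilon^{{ \mathfrak{S}}_B}_{g^{\ast}({ \mathfrak{l}}_C)}(f(\sigma_A))\nonumber\\
&=& \epsilon^{{ \mathfrak{S}}_A}_{f^{\ast}(g^{\ast}({ \mathfrak{l}}_C))}(\sigma_A)\nonumber\\
&=& \epsilon^{{ \mathfrak{S}}_A}_{(f^{\ast}\circ g^{\ast})({ \mathfrak{l}}_C)}(\sigma_A),\nonumber
\end{eqnarray}
where the second equality uses (\ref{defchumorphism}) for $(g,g^{\ast})$ applied at the point $f(\sigma_A)$, and the third uses (\ref{defchumorphism}) for $(f,f^{\ast})$ applied at the point $\sigma_A$ with effect $g^{\ast}({ \mathfrak{l}}_C)$. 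Since $\sigma_A$ and ${ \mathfrak{l}}_C$ were arbitrary, $(g\circ f, f^{\ast}\circ g^{\ast})$ satisfies (\ref{defchumorphism}), hence is a morphism of States/Effects Chu spaces from $({ \mathfrak{S}}_A,{ \mathfrak{E}}_A,\epsilon^{{ \mathfrak{S}}_A})$ to $({ \mathfrak{S}}_C,{ \mathfrak{E}}_C,\epsilon^{{ \mathfrak{S}}_C})$.

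There is essentially no obstacle here: the statement is a formal consequence of the duality axiom, and all the structural preservation properties of $g\circ f$ and $f^{\ast}\circ g^{\ast}$ (the homomorphism property (\ref{f12cap}), order-preservation, preservation of chain-suprema, of the involution, and of ${ \mathfrak{Y}}$) then follow automatically from the earlier theorem characterizing left- and right-components of Chu morphisms, so they need not be checked by hand. The only point worth a remark is well-definedness of the composite as set maps, which is immediate: $f:{ \mathfrak{S}}_A\to{ \mathfrak{S}}_B$ and $g:{ \mathfrak{S}}_B\to{ \mathfrak{S}}_C$ compose to $g\circ f:{ \mathfrak{S}}_A\to{ \mathfrak{S}}_C$, while $g^{\ast}:{ \mathfrak{E}}_C\to{ \mathfrak{E}}_B$ and $f^{\ast}:{ \mathfrak{E}}_B\to{ \mathfrak{E}}_A$ compose to $f^{\ast}\circ g^{\ast}:{ \mathfrak{E}}_C\to{ \mathfrak{E}}_A$, matching the contravariant direction on effects. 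If one wished, one could also invoke the reduction of a morphism to its single datum $f$ noted after the preceding theorem, in which case the claim reduces to the observation that the composite of two order-preserving maps satisfying (\ref{f12cap}) again satisfies (\ref{f12cap}), which is trivial.
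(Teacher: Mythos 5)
Your proof is correct and is essentially the paper's own argument: both simply apply the duality relation (\ref{defchumorphism}) twice, first for $(g,g^{\ast})$ at $f(\sigma_A)$ and then for $(f,f^{\ast})$ at $\sigma_A$ with effect $g^{\ast}({ \mathfrak{l}}_C)$. The extra remarks on well-definedness and on the automatic structural properties are fine but not needed beyond this two-step chase.
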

\begin{proof}Using two times the duality property, we obtain
\begin{eqnarray} 
\epsilon^{{ \mathfrak{S}}_C}_{{ \mathfrak{l}}_C}(g\circ f (\sigma_A))&=&
\epsilon^{{ \mathfrak{S}}_B}_{g^{\ast}({ \mathfrak{l}}_C)}(f (\sigma_A))=
\epsilon^{{ \mathfrak{S}}_A}_{f^{\ast} \circ g^{\ast}({ \mathfrak{l}}_C)}(\sigma_A).
\end{eqnarray}
\end{proof}

\begin{definition}
We define the infimum of two maps $f$ and $g$ satisfying (\ref{f12cap}) (resp. two maps $f^\ast$ and $g^\ast$ satisfying (\ref{f21cap})) by $\forall \sigma \in { \mathfrak{S}}_A, (f\sqcap g)(\sigma):= f(\sigma) \sqcap_{{}_{{ \mathfrak{S}}_B}} g(\sigma)$ (resp. $\forall { \mathfrak{l}} \in { \mathfrak{E}}_B, (f^\ast\sqcap g^\ast)({ \mathfrak{l}}):= f^\ast({ \mathfrak{l}}) \sqcap_{{}_{{ \mathfrak{E}}_A}} g^\ast({ \mathfrak{l}})$).
\end{definition}

\begin{theorem}
The infimum of a morphism $(f ,f^{\ast})$ from $({ \mathfrak{S}}_A,{ \mathfrak{E}}_A,\epsilon^{{ \mathfrak{S}}_A})$ to $({ \mathfrak{S}}_B,{ \mathfrak{E}}_B,\epsilon^{{ \mathfrak{S}}_B})$ with another morphism $(g ,g^{\ast})$ defined from $({ \mathfrak{S}}_A,{ \mathfrak{E}}_A,\epsilon^{{ \mathfrak{S}}_A})$ to $({ \mathfrak{S}}_B,{ \mathfrak{E}}_B,\epsilon^{{ \mathfrak{S}}_B})$ is given by 
$( g\sqcap f ,f^{\ast} \sqcap g^{\ast})$ defining a valid morphism from $({ \mathfrak{S}}_A,{ \mathfrak{E}}_A,\epsilon^{{ \mathfrak{S}}_A})$ to $({ \mathfrak{S}}_B,{ \mathfrak{E}}_B,\epsilon^{{ \mathfrak{S}}_B})$.
\end{theorem}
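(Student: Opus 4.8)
The plan is to use the fact that, by the definition given above, a morphism is nothing more than a pair of maps $(h,h^{\ast})$ with $h:{ \mathfrak{S}}_A\to{ \mathfrak{S}}_B$ and $h^{\ast}:{ \mathfrak{E}}_B\to{ \mathfrak{E}}_A$ satisfying the duality relation (\ref{defchumorphism}); no other condition has to be checked. So for $h:=g\sqcap f$ and $h^{\ast}:=f^{\ast}\sqcap g^{\ast}$, the only two things to verify are that these pointwise meets land in the correct spaces and that the pair satisfies (\ref{defchumorphism}).

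First I would observe that $g\sqcap f$ is well defined as a map ${ \mathfrak{S}}_A\to{ \mathfrak{S}}_B$ because ${ \mathfrak{S}}_B$ is a down-complete Inf semi-lattice, hence closed under binary meets; likewise $f^{\ast}\sqcap g^{\ast}$ is well defined as a map ${ \mathfrak{E}}_B\to{ \mathfrak{E}}_A$ since ${ \mathfrak{E}}_A$ is a down-complete Inf semi-lattice (closed under infima by axiom (\ref{axiomreduc2}) when ${ \mathfrak{E}}_A$ is a reduced space of effects).

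Next I would check the duality relation. Fix $\sigma_A\in{ \mathfrak{S}}_A$ and ${ \mathfrak{l}}_B\in{ \mathfrak{E}}_B$. Using (\ref{axiomsigmainfsemilattice}) to split the evaluation over the meet taken in ${ \mathfrak{S}}_B$, and then the duality (\ref{defchumorphism}) for $f$ and for $g$, one gets $\epsilon^{{ \mathfrak{S}}_B}_{{ \mathfrak{l}}_B}((g\sqcap f)(\sigma_A)) = \epsilon^{{ \mathfrak{S}}_B}_{{ \mathfrak{l}}_B}(g(\sigma_A))\wedge\epsilon^{{ \mathfrak{S}}_B}_{{ \mathfrak{l}}_B}(f(\sigma_A)) = \epsilon^{{ \mathfrak{S}}_A}_{g^{\ast}({ \mathfrak{l}}_B)}(\sigma_A)\wedge\epsilon^{{ \mathfrak{S}}_A}_{f^{\ast}({ \mathfrak{l}}_B)}(\sigma_A)$. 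On the other hand, (\ref{axiomEinfsemilattice}) applied to the meet $f^{\ast}({ \mathfrak{l}}_B)\sqcap g^{\ast}({ \mathfrak{l}}_B)$ taken in ${ \mathfrak{E}}_A$ gives $\epsilon^{{ \mathfrak{S}}_A}_{(f^{\ast}\sqcap g^{\ast})({ \mathfrak{l}}_B)}(\sigma_A)=\epsilon^{{ \mathfrak{S}}_A}_{f^{\ast}({ \mathfrak{l}}_B)}(\sigma_A)\wedge\epsilon^{{ \mathfrak{S}}_A}_{g^{\ast}({ \mathfrak{l}}_B)}(\sigma_A)$. By commutativity of $\wedge$ in ${ \mathfrak{B}}$ the two right-hand sides coincide, so (\ref{defchumorphism}) holds for $(g\sqcap f,f^{\ast}\sqcap g^{\ast})$, which is therefore a valid morphism.

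Finally, I would remark that this is consistent with the description of a morphism by its left component alone: $g\sqcap f$ satisfies the homomorphic property (\ref{f12cap}) — indeed, applying (\ref{f12cap}) to $f$ and $g$ and then rearranging the indexed meet in the down-complete semi-lattice ${ \mathfrak{S}}_B$ (infima may be grouped in any order) yields $(g\sqcap f)(\bigsqcap^{{ \mathfrak{S}}_A}S)=\bigsqcap^{{ \mathfrak{S}}_B}_{\sigma\in S}(g\sqcap f)(\sigma)$ — and the right component attached to it by the earlier uniqueness theorem is precisely $f^{\ast}\sqcap g^{\ast}$, by the computation just made (the separation property (\ref{Chuseparated}) being what licenses this identification). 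There is no genuine obstacle in this proof: the only points deserving a word are the existence of all the meets involved (guaranteed by down-completeness of ${ \mathfrak{S}}_B$ and ${ \mathfrak{E}}_A$) and the commutativity/associativity of infima that allow one to re-bracket them; the remainder is an immediate application of axioms (\ref{axiomsigmainfsemilattice}), (\ref{axiomEinfsemilattice}) and (\ref{defchumorphism}).
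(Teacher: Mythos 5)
Your proof is correct and follows essentially the same route as the paper: the duality relation for $(g\sqcap f, f^{\ast}\sqcap g^{\ast})$ is verified by splitting $\epsilon^{{\mathfrak{S}}_B}_{{\mathfrak{l}}_B}$ over the binary meet via (\ref{axiomsigmainfsemilattice}), applying (\ref{defchumorphism}) to $f$ and $g$ separately, and recombining with (\ref{axiomEinfsemilattice}), which is exactly the paper's chain of equalities. Your additional remarks on well-definedness of the pointwise meets and on consistency with the left-component description are harmless elaborations, not a different argument.
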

\begin{proof}Using the duality property and the homomorphic property, we obtain
\begin{eqnarray} 
\epsilon^{{ \mathfrak{S}}_B}_{{ \mathfrak{l}}_B}((f\sqcap g) (\sigma_A))&=&
\epsilon^{{ \mathfrak{S}}_B}_{{ \mathfrak{l}}_B}(f (\sigma_A))\wedge \epsilon^{{ \mathfrak{S}}_B}_{{ \mathfrak{l}}_B}(g (\sigma_A))\nonumber\\
&=&
\epsilon^{{ \mathfrak{S}}_A}_{f^{\ast}({ \mathfrak{l}}_B)}(\sigma_A) \wedge \epsilon^{{ \mathfrak{S}}_A}_{g^{\ast}({ \mathfrak{l}}_B)}(\sigma_A)\nonumber\\
&=&
\epsilon^{{ \mathfrak{S}}_A}_{(f^{\ast}\sqcap g^{\ast})({ \mathfrak{l}}_B)}(\sigma_A).\;\;\;\;\;\;\;\;\;\;
\end{eqnarray}
\end{proof}

\section{A new perspective on the construction of the tensor product of semi-lattices}\label{sectionnewperspective}

\subsection{The canonical tensor product construction}\label{subsectionbasic}

Let us first introduce the classical construction of G.A. Fraser for the tensor product of semi-lattices \cite{Fraser1976,Fraser1978}. As it will be clarified in the next subsection a new proposal can be made for the tensor product of semi-lattices.

\begin{definition} Let $A, B$ and $C$ be semi-lattices. 
A function $f:A \times B \longrightarrow C$ is a bi-homomorphism if the functions $g_a:B \longrightarrow C$ defined by $g_a(b) = f(a, b)$ and $h_b:A\longrightarrow C$ defined by $h_b(a)=f(a, b)$ are homomorphisms for all $a\in A$ and $b\in B$.
\end{definition}

\begin{theorem}{\bf \cite[Definition 2.2 and Theorem 2.3]{Fraser1976}}\label{theoremtensorbasic}\\
The tensor product ${S}_{AB}:={ \mathfrak{S}}_{A} \otimes { \mathfrak{S}}_{B}$ of the two Inf semi-lattices ${ \mathfrak{S}}_{A}$ and ${ \mathfrak{S}}_{B}$  is obtained as a solution of the following universal problem : there exists a bi-homomorphism, denoted $\iota$ from ${ \mathfrak{S}}_{A} \times { \mathfrak{S}}_{B}$ to ${S}_{AB}$, such that, for any Inf semi-lattice ${ \mathfrak{S}}$ and any bi-homomorphism $f$ from ${ \mathfrak{S}}_{A} \times { \mathfrak{S}}_{B}$ to ${ \mathfrak{S}}$,  there is a unique homomorphism $g$ from ${S}_{AB}$ to ${ \mathfrak{S}}$ with $f = g \circ \iota$. We denote $\iota(\sigma,\sigma')=\sigma \otimes \sigma'$ for any $\sigma\in { \mathfrak{S}}_{A}$ and $\sigma'\in { \mathfrak{S}}_{B}$.  \\
The tensor product ${S}_{AB}$ exists and is unique up to isomorphism, it is built as the homomorphic image of the free $\sqcap$ semi-lattice generated by the set ${ \mathfrak{S}}_{A} \times { \mathfrak{S}}_{B}$ under the congruence relation determined by identifying $(\sigma_1 \sqcap_{{}_{{ \mathfrak{S}}_{A}}}\sigma_2, \sigma')$ with $(\sigma_1,  \sigma')\sqcap (\sigma_2, \sigma')$ for all $\sigma_1,\sigma_2\in { \mathfrak{S}}_{A}, \sigma'\in { \mathfrak{S}}_{B}$ and identifying  $(\sigma, \sigma'_1 \sqcap_{{}_{{ \mathfrak{S}}_{B}}}\sigma'_2)$ with $(\sigma,  \sigma'_1)\sqcap (\sigma, \sigma'_2)$ for all $\sigma\in { \mathfrak{S}}_{A},\sigma'_1,\sigma'_2\in { \mathfrak{S}}_{B}$. \\
From now on, ${S}_{AB}$ is the Inf semi-lattice (the infimum of $S\subseteq {S}_{AB}$ will be denoted $\bigsqcap{}^{{}^{{S}_{AB}}} S$) generated by the elements $\sigma_A \otimes \sigma_B$ with $\sigma_A \in { \mathfrak{S}}_{A}, \sigma_B \in { \mathfrak{S}}_{B}$ and subject to the conditions 
\begin{eqnarray}
&&\left\{\begin{array}{l}
(\sigma_A\sqcap_{{}_{{ \mathfrak{S}}_{A}}}\sigma'_A)\otimes \sigma_B = (\sigma_A\otimes \sigma_B)\sqcap_{{}_{{S}_{AB}}}(\sigma'_A\otimes \sigma_B),\\
 \sigma_A \otimes (\sigma_B\sqcap_{{}_{{ \mathfrak{S}}_{B}}}\sigma'_B) = (\sigma_A\otimes \sigma_B)\sqcap_{{}_{{S}_{AB}}}(\sigma_A\otimes \sigma'_B). 
\end{array}\right.
\end{eqnarray}
\end{theorem}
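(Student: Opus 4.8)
The plan is to prove the three assertions of the statement in turn — the explicit construction of $S_{AB}$ as a quotient of a free object, the universal property of the pair $(S_{AB},\iota)$, and uniqueness up to isomorphism. First I would recall the free Inf semi-lattice on a set $X$: take $F(X)$ to be the collection of nonempty finite subsets of $X$ with the operation $\sqcap$ given by union (so the induced order is reverse inclusion). It is elementary that $F(X)$ is an Inf semi-lattice and that $x\mapsto\{x\}$ is universal among maps from $X$ into Inf semi-lattices, the extension of a map $f:X\to\mathfrak{S}$ being $\{x_1,\dots,x_n\}\mapsto f(x_1)\sqcap\cdots\sqcap f(x_n)$, which is well defined by idempotency, commutativity and associativity of $\sqcap$. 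Apply this with $X={\mathfrak{S}}_A\times{\mathfrak{S}}_B$ and set $F:=F({\mathfrak{S}}_A\times{\mathfrak{S}}_B)$. Let $\theta$ be the smallest congruence on $F$ — defined as the intersection of all congruences on $F$, which is again a congruence — containing every pair $(\{(\sigma_1\sqcap_{{ \mathfrak{S}}_A}\sigma_2,\sigma')\},\{(\sigma_1,\sigma'),(\sigma_2,\sigma')\})$ and every pair $(\{(\sigma,\sigma'_1\sqcap_{{ \mathfrak{S}}_B}\sigma'_2)\},\{(\sigma,\sigma'_1),(\sigma,\sigma'_2)\})$. Define $S_{AB}:=F/\theta$, let $q:F\to S_{AB}$ be the quotient homomorphism, and set $\iota(\sigma,\sigma'):=q(\{(\sigma,\sigma')\})=:\sigma\otimes\sigma'$. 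That $\iota$ is a bi-homomorphism is immediate from the two families of generating relations of $\theta$, and $S_{AB}$ is generated as an Inf semi-lattice by the elements $\sigma\otimes\sigma'$, since $q(\{x_1,\dots,x_n\})=(\sigma_1\otimes\sigma'_1)\sqcap\cdots\sqcap(\sigma_n\otimes\sigma'_n)$ for $x_i=(\sigma_i,\sigma'_i)$.

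Next I would verify the universal property. Given an Inf semi-lattice $\mathfrak{S}$ and a bi-homomorphism $f:{\mathfrak{S}}_A\times{\mathfrak{S}}_B\to\mathfrak{S}$, extend $f$ by freeness to the unique homomorphism $\widetilde{f}:F\to\mathfrak{S}$. The key step is to show $\theta\subseteq\ker\widetilde{f}$; since $\ker\widetilde{f}$ is a congruence, it suffices to check that it contains the two families of generating pairs of $\theta$, and this is exactly the bi-homomorphism hypothesis on $f$ (for instance $\widetilde{f}(\{(\sigma_1\sqcap\sigma_2,\sigma')\})=f(\sigma_1\sqcap\sigma_2,\sigma')=f(\sigma_1,\sigma')\sqcap f(\sigma_2,\sigma')=\widetilde{f}(\{(\sigma_1,\sigma'),(\sigma_2,\sigma')\})$, and symmetrically in the second variable). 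Hence $\widetilde{f}$ factors as $\widetilde{f}=g\circ q$ for a unique homomorphism $g:S_{AB}\to\mathfrak{S}$, and then $g\circ\iota=f$. Uniqueness of $g$ follows because $S_{AB}$ is generated by $\iota({\mathfrak{S}}_A\times{\mathfrak{S}}_B)$: two homomorphisms out of $S_{AB}$ agreeing on these generators agree on all their finite meets, hence on all of $S_{AB}$.

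Finally, uniqueness up to isomorphism is the usual argument for solutions of a universal problem: if $(S',\iota')$ also solves it, applying the universal property of $(S_{AB},\iota)$ to the bi-homomorphism $\iota'$ and that of $(S',\iota')$ to $\iota$ yields homomorphisms $g:S_{AB}\to S'$ and $g':S'\to S_{AB}$ with $g\circ\iota=\iota'$ and $g'\circ\iota'=\iota$; then $g'\circ g$ and $\mathrm{id}_{S_{AB}}$ both send $\iota$ to $\iota$, so $g'\circ g=\mathrm{id}_{S_{AB}}$ by the uniqueness clause, and likewise $g\circ g'=\mathrm{id}_{S'}$. I expect the only genuinely delicate point to be the handling of the generated congruence $\theta$: once one commits to describing it as the intersection of all congruences containing the bilinearity pairs — rather than via explicit finite chains of elementary rewriting steps — the inclusion $\theta\subseteq\ker\widetilde{f}$ reduces to the one-line computations above, and the explicit description of $S_{AB}$ by generators $\sigma_A\otimes\sigma_B$ subject to the two displayed relations becomes a mere paraphrase of the construction $F/\theta$.
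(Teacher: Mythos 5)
Your proposal is correct: the quotient-of-the-free-semilattice construction, the verification that the kernel of the induced map contains the generating bilinearity pairs, and the standard uniqueness argument are all sound, and they are exactly the construction the theorem statement describes. The paper itself gives no proof here (it cites Fraser's Definition 2.2 and Theorem 2.3), so your argument simply supplies the standard details behind that citation, in the same spirit as the original.
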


\begin{definition}
The space ${S}_{AB}={ \mathfrak{S}}_{A} \otimes { \mathfrak{S}}_{B}$ is turned into a partially ordered set with the following binary relation
\begin{eqnarray}
\forall \sigma_{AB},\sigma'_{AB} \in {S}_{AB},\;\;\;\; 
(\,\sigma_{AB} \sqsubseteq_{{}_{{S}_{AB}}} \sigma'_{AB}\,) & :\Leftrightarrow &
(\,\sigma_{AB} \sqcap_{{}_{{S}_{AB}}} \sigma'_{AB} = \sigma_{AB}\,).\;\;\;\;\;\;\;\;\;\;\;\;
\end{eqnarray}
\end{definition}

\begin{definition}
A non-empty subset ${ \mathfrak{R}}$ of ${ \mathfrak{S}}_{A} \times { \mathfrak{S}}_{B}$ is called a bi-filter of ${ \mathfrak{S}}_{A} \times { \mathfrak{S}}_{B}$ iff 
\begin{eqnarray}
&&\forall \sigma_A,\sigma_{1,A},\sigma_{2,A}\in { \mathfrak{S}}_{A},\forall \sigma_B,\sigma_{1,B},\sigma_{2,B}\in { \mathfrak{S}}_{B},\nonumber\\
&&(\,  (\sigma_{1,A},\sigma_{1,B})\leq (\sigma_{2,A},\sigma_{2,B}) \;\;\textit{\rm and}\;\; (\sigma_{1,A},\sigma_{1,B})\in { \mathfrak{R}} \,)\;\;\Rightarrow\;\; (\sigma_{2,A},\sigma_{2,B})\in { \mathfrak{R}},\;\;\;\;\;\;\;\;\;\;\label{defbifilter1}\\
&&(\sigma_{1,A},\sigma_{B}),(\sigma_{2,A},\sigma_{B})\in { \mathfrak{R}}\;\;\Rightarrow\;\; (\sigma_{1,A}\sqcap_{{}_{{ \mathfrak{S}}_A}}\sigma_{2,A},\sigma_{B})\in { \mathfrak{R}},\label{defbifilter2}\\
&&(\sigma_{A},\sigma_{1,B}),(\sigma_{A},\sigma_{2,B})\in { \mathfrak{R}}\;\;\Rightarrow\;\; (\sigma_{A},\sigma_{1,B}\sqcap_{{}_{{ \mathfrak{S}}_B}}\sigma_{2,B})\in { \mathfrak{R}}.\label{defbifilter3}
\end{eqnarray}
\end{definition}
\begin{definition}
If $\{(\sigma_{1,A},\sigma_{1,B}),\cdots,(\sigma_{n,A},\sigma_{n,B})\}$ is a non-empty finite subset of ${ \mathfrak{S}}_{A} \times { \mathfrak{S}}_{B}$, then the intersection of all bi-filters of ${ \mathfrak{S}}_{A} \times { \mathfrak{S}}_{B}$ which contain $(\sigma_{1,A},\sigma_{1,B})$, $\cdots$, $(\sigma_{n,A},\sigma_{n,B})$ is a bi-filter, which we denote by ${ \mathfrak{F}}\{(\sigma_{1,A},\sigma_{1,B}),\cdots,(\sigma_{n,A},\sigma_{n,B})\}$.
\end{definition}

\begin{lemma}\label{FalphaF}
If $F$ is a filter of ${S}_{AB}={ \mathfrak{S}}_{A} \otimes { \mathfrak{S}}_{B}$ then the set $\alpha(F):=\{\,(\sigma_{A},\sigma_{B})\in { \mathfrak{S}}_{A} \times { \mathfrak{S}}_{B}\;\vert\; \sigma_{A}\otimes \sigma_{B} \in F\;\}$ is a bi-filter of ${ \mathfrak{S}}_{A} \times { \mathfrak{S}}_{B}$.
\end{lemma}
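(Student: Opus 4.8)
The plan is to verify directly that $\alpha(F)$ satisfies the three defining conditions of a bi-filter, using the universal property of the tensor product to translate between operations in ${ \mathfrak{S}}_A \times { \mathfrak{S}}_B$ and operations in ${S}_{AB}$. First I would note that $\alpha(F)$ is non-empty: since $F$ is a filter it is non-empty, and every element of ${S}_{AB}$ is a finite infimum of generators $\sigma_A \otimes \sigma_B$, so any element of $F$ lies above some generator $\sigma_A\otimes\sigma_B$; as $F$ is upward-closed, that generator lies in $F$, whence $(\sigma_A,\sigma_B)\in\alpha(F)$.

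Next I would check the upward-closure condition (\ref{defbifilter1}). Suppose $(\sigma_{1,A},\sigma_{1,B})\leq(\sigma_{2,A},\sigma_{2,B})$ in ${ \mathfrak{S}}_A\times{ \mathfrak{S}}_B$ and $(\sigma_{1,A},\sigma_{1,B})\in\alpha(F)$, i.e. $\sigma_{1,A}\otimes\sigma_{1,B}\in F$. The componentwise order means $\sigma_{1,A}\sqcap_{{}_{{ \mathfrak{S}}_A}}\sigma_{2,A}=\sigma_{1,A}$ and $\sigma_{1,B}\sqcap_{{}_{{ \mathfrak{S}}_B}}\sigma_{2,B}=\sigma_{1,B}$; applying $\iota$ and the bi-homomorphism identities gives $\sigma_{1,A}\otimes\sigma_{1,B}=(\sigma_{1,A}\otimes\sigma_{1,B})\sqcap_{{}_{{S}_{AB}}}(\sigma_{2,A}\otimes\sigma_{2,B})$, so $\sigma_{1,A}\otimes\sigma_{1,B}\sqsubseteq_{{}_{{S}_{AB}}}\sigma_{2,A}\otimes\sigma_{2,B}$. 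Since $F$ is a filter it is upward-closed, so $\sigma_{2,A}\otimes\sigma_{2,B}\in F$, i.e. $(\sigma_{2,A},\sigma_{2,B})\in\alpha(F)$.

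For conditions (\ref{defbifilter2}) and (\ref{defbifilter3}) I would argue symmetrically; take (\ref{defbifilter2}). If $(\sigma_{1,A},\sigma_B),(\sigma_{2,A},\sigma_B)\in\alpha(F)$ then $\sigma_{1,A}\otimes\sigma_B\in F$ and $\sigma_{2,A}\otimes\sigma_B\in F$. Because $F$ is a filter it is closed under finite infima, so $(\sigma_{1,A}\otimes\sigma_B)\sqcap_{{}_{{S}_{AB}}}(\sigma_{2,A}\otimes\sigma_B)\in F$. But the first defining relation of the tensor product gives $(\sigma_{1,A}\otimes\sigma_B)\sqcap_{{}_{{S}_{AB}}}(\sigma_{2,A}\otimes\sigma_B)=(\sigma_{1,A}\sqcap_{{}_{{ \mathfrak{S}}_A}}\sigma_{2,A})\otimes\sigma_B$, so this latter generator lies in $F$, i.e. $(\sigma_{1,A}\sqcap_{{}_{{ \mathfrak{S}}_A}}\sigma_{2,A},\sigma_B)\in\alpha(F)$. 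Condition (\ref{defbifilter3}) is identical with the roles of $A$ and $B$ exchanged.

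None of the steps is genuinely hard; the only point requiring care is that a ``filter'' of ${S}_{AB}$ here means an upward-closed subset closed under finite infima (not necessarily a proper or prime filter), and that the two tensor relations are exactly the algebraic identities one needs — so the mild obstacle, if any, is simply making sure one is invoking the correct defining properties of $F$ and of the tensor congruence rather than proving anything substantive.
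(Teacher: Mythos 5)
Your proof is correct, and since the paper states Lemma \ref{FalphaF} without any proof (treating it as a routine verification inherited from Fraser's construction), your direct check of the three bi-filter axioms via the defining tensor identities is exactly the intended argument. The only point worth tightening is the upward-closure step: from $\sigma_{1,A}\otimes\sigma_{1,B}=(\sigma_{1,A}\sqcap_{{}_{{ \mathfrak{S}}_A}}\sigma_{2,A})\otimes(\sigma_{1,B}\sqcap_{{}_{{ \mathfrak{S}}_B}}\sigma_{2,B})$ one should either expand fully into the four-term meet or apply monotonicity in each argument separately (first in $A$, then in $B$) to reach $\sigma_{1,A}\otimes\sigma_{1,B}\sqsubseteq_{{}_{{S}_{AB}}}\sigma_{2,A}\otimes\sigma_{2,B}$; this is immediate but is not literally a single application of one bi-homomorphism identity as your phrasing suggests.
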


\begin{lemma}{\bf \cite[Lemma 1]{Fraser1978}}\label{sigmainFinequality}
Let us choose $\sigma_A,\sigma_{1,A},\cdots,\sigma_{n,A}\in { \mathfrak{S}}_{A}$ and $\sigma_B,\sigma_{1,B},\cdots,\sigma_{n,B}\in { \mathfrak{S}}_{B}$. Then, 
\begin{eqnarray}
\hspace{-1cm}(\sigma_{A},\sigma_{B})\in { \mathfrak{F}}\{(\sigma_{1,A},\sigma_{1,B}),\cdots,(\sigma_{n,A},\sigma_{n,B})\} & \Leftrightarrow & \left( \bigsqcap{}^{{}^{{S}_{AB}}}_{1\leq i\leq n}\; \sigma_{i,A}\otimes \sigma_{i,B}\right) \sqsubseteq_{{}_{{S}_{AB}}} \sigma_{A}\otimes \sigma_{B}.\;\;\;\;\;\;\;\;\;\;\;\;\;\;
\end{eqnarray}
\end{lemma}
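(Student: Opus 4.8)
The statement is a biconditional, and I would treat the two implications separately: the forward one is routine, while the converse carries the real content.

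\medskip

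\noindent\emph{Forward implication.} Assume $(\sigma_A,\sigma_B)\in\mathfrak{F}\{(\sigma_{1,A},\sigma_{1,B}),\dots,(\sigma_{n,A},\sigma_{n,B})\}$. I would take $F$ to be the principal filter of $S_{AB}$ generated by $\bigsqcap^{{}^{S_{AB}}}_{1\le i\le n}\sigma_{i,A}\otimes\sigma_{i,B}$. By Lemma~\ref{FalphaF}, the set $\alpha(F)=\{(\tau_A,\tau_B)\in\mathfrak{S}_A\times\mathfrak{S}_B\mid \bigsqcap^{{}^{S_{AB}}}_{1\le i\le n}\sigma_{i,A}\otimes\sigma_{i,B}\sqsubseteq_{{}_{S_{AB}}}\tau_A\otimes\tau_B\}$ is a bi-filter of $\mathfrak{S}_A\times\mathfrak{S}_B$, and it contains every $(\sigma_{i,A},\sigma_{i,B})$ since a meet lies below each of its terms. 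By minimality of $\mathfrak{F}\{(\sigma_{1,A},\sigma_{1,B}),\dots,(\sigma_{n,A},\sigma_{n,B})\}$ we get $\mathfrak{F}\{(\sigma_{1,A},\sigma_{1,B}),\dots,(\sigma_{n,A},\sigma_{n,B})\}\subseteq\alpha(F)$, whence $(\sigma_A,\sigma_B)\in\alpha(F)$, which is exactly the asserted inequality.

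\medskip

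\noindent\emph{Converse implication.} Here I would exploit the universal property of Theorem~\ref{theoremtensorbasic} against a well-chosen test semi-lattice. Let $T$ be the set of all bi-filters of $\mathfrak{S}_A\times\mathfrak{S}_B$, ordered by \emph{reverse} inclusion. Since an arbitrary intersection of bi-filters is again a bi-filter, $T$ is a complete lattice (in particular an Inf semi-lattice), in which $\mathfrak{R}_1\sqcap_T\mathfrak{R}_2$ is the smallest bi-filter containing $\mathfrak{R}_1\cup\mathfrak{R}_2$. Define $\mu\colon\mathfrak{S}_A\times\mathfrak{S}_B\to T$ by letting $\mu(\sigma_A,\sigma_B)$ be the principal bi-filter $\{(\tau_A,\tau_B)\mid \sigma_A\sqsubseteq_{{}_{\mathfrak{S}_A}}\tau_A,\ \sigma_B\sqsubseteq_{{}_{\mathfrak{S}_B}}\tau_B\}$; that this up-set is a bi-filter is immediate from (\ref{defbifilter1})--(\ref{defbifilter3}). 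The only nontrivial point is that $\mu$ is a bi-homomorphism, and this reduces to the identities $\mathfrak{F}\{(\sigma_1,\sigma_B),(\sigma_2,\sigma_B)\}=\mu(\sigma_1\sqcap_{{}_{\mathfrak{S}_A}}\sigma_2,\sigma_B)$ and $\mathfrak{F}\{(\sigma_A,\sigma_1),(\sigma_A,\sigma_2)\}=\mu(\sigma_A,\sigma_1\sqcap_{{}_{\mathfrak{S}_B}}\sigma_2)$: one inclusion holds because $\mu(\sigma_1\sqcap_{{}_{\mathfrak{S}_A}}\sigma_2,\sigma_B)$ is itself a bi-filter containing $(\sigma_1,\sigma_B)$ and $(\sigma_2,\sigma_B)$, the other because any bi-filter containing these two points contains $(\sigma_1\sqcap_{{}_{\mathfrak{S}_A}}\sigma_2,\sigma_B)$ by (\ref{defbifilter2}) and then its up-set by (\ref{defbifilter1}).

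\medskip

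By Theorem~\ref{theoremtensorbasic} there is then a homomorphism $g\colon S_{AB}\to T$ with $g(\sigma_A\otimes\sigma_B)=\mu(\sigma_A,\sigma_B)$ for all $\sigma_A\in\mathfrak{S}_A$, $\sigma_B\in\mathfrak{S}_B$. A homomorphism preserves finite meets and is therefore order-preserving, so applying $g$ to the hypothesis $\bigsqcap^{{}^{S_{AB}}}_{1\le i\le n}\sigma_{i,A}\otimes\sigma_{i,B}\sqsubseteq_{{}_{S_{AB}}}\sigma_A\otimes\sigma_B$ gives, inside $T$, the relation $\bigsqcap^{T}_{1\le i\le n}\mu(\sigma_{i,A},\sigma_{i,B})\sqsubseteq_T\mu(\sigma_A,\sigma_B)$. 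By definition of $\sqcap_T$, the left-hand side is the smallest bi-filter containing $(\sigma_{1,A},\sigma_{1,B}),\dots,(\sigma_{n,A},\sigma_{n,B})$, i.e. $\mathfrak{F}\{(\sigma_{1,A},\sigma_{1,B}),\dots,(\sigma_{n,A},\sigma_{n,B})\}$; and $\sqsubseteq_T$ unwinds to reverse inclusion, so $\mu(\sigma_A,\sigma_B)\subseteq\mathfrak{F}\{(\sigma_{1,A},\sigma_{1,B}),\dots,(\sigma_{n,A},\sigma_{n,B})\}$. Since $(\sigma_A,\sigma_B)\in\mu(\sigma_A,\sigma_B)$, we conclude $(\sigma_A,\sigma_B)\in\mathfrak{F}\{(\sigma_{1,A},\sigma_{1,B}),\dots,(\sigma_{n,A},\sigma_{n,B})\}$. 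The main obstacle is precisely this converse direction: one needs a test semi-lattice fine enough to detect membership in finitely generated bi-filters, the bi-filter lattice of $\mathfrak{S}_A\times\mathfrak{S}_B$ being the canonical candidate, and the work lies in confirming that $\mu$ is a bi-homomorphism and that $\bigsqcap^{T}_{1\le i\le n}\mu(\sigma_{i,A},\sigma_{i,B})$ coincides with $\mathfrak{F}\{(\sigma_{1,A},\sigma_{1,B}),\dots,(\sigma_{n,A},\sigma_{n,B})\}$.
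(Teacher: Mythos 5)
Your proof is correct. Note, however, that the paper does not prove this lemma at all: it is imported verbatim from Fraser (\cite[Lemma 1]{Fraser1978}), so there is no in-paper argument to compare against; what you have done is reconstruct, essentially, Fraser's own argument. Your forward direction (pull back the principal filter of $\bigsqcap_i\sigma_{i,A}\otimes\sigma_{i,B}$ through $\alpha$ and use minimality of $\mathfrak{F}$) is exactly the routine half, and your converse is the standard device: test the universal property of Theorem \ref{theoremtensorbasic} against the semilattice of bi-filters under reverse inclusion, check that $(\sigma_A,\sigma_B)\mapsto{\uparrow}(\sigma_A,\sigma_B)$ is a bi-homomorphism via the identity $\mathfrak{F}\{(\sigma_1,\sigma_B),(\sigma_2,\sigma_B)\}=\mu(\sigma_1\sqcap_{{}_{\mathfrak{S}_A}}\sigma_2,\sigma_B)$, and transport the order relation through the induced homomorphism $g$; all of these verifications are carried out correctly, including the identification of $\bigsqcap^{T}_i\mu(\sigma_{i,A},\sigma_{i,B})$ with $\mathfrak{F}\{(\sigma_{1,A},\sigma_{1,B}),\dots,(\sigma_{n,A},\sigma_{n,B})\}$. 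One cosmetic quibble: your claim that $T$ is a \emph{complete} lattice is not quite right as stated, since an arbitrary intersection of bi-filters may be empty (bi-filters are required non-empty) and a least bi-filter need not exist; but this is harmless, because the argument only uses finite meets, which exist as the smallest bi-filter containing a (non-empty) union, so $T$ is an Inf semi-lattice in the sense needed for Theorem \ref{theoremtensorbasic}.
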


\begin{lemma}{\bf \cite[Theorem 1]{Fraser1978}}\label{orderimpliespolynomial}\\
Let us choose $\sigma_A,\sigma_{1,A},\cdots,\sigma_{n,A}\in { \mathfrak{S}}_{A}$ and $\sigma_B,\sigma_{1,B},\cdots,\sigma_{n,B}\in { \mathfrak{S}}_{B}$. Then, 
\begin{eqnarray}
&&\hspace{-2cm} \left( \bigsqcap{}^{{}^{{S}_{AB}}}_{1\leq i\leq n}\; \sigma_{i,A}\otimes \sigma_{i,B}\right) \sqsubseteq_{{}_{{S}_{AB}}} \sigma_{A}\otimes \sigma_{B}  \Leftrightarrow \nonumber\\  
&& (\,\textit{\rm there exists a n$-$ary lattice polynomial $p$}\;\vert\; \sigma_{A}\sqsupseteq_{{}_{{ \mathfrak{S}}_A}} p(\sigma_{1,A},\cdots,\sigma_{n,A})\nonumber\\
&&\textit{\rm and}\;\;  \sigma_{B}\sqsupseteq_{{}_{{ \mathfrak{S}}_B}} p^\ast(\sigma_{1,B},\cdots,\sigma_{n,B}) \,).
\end{eqnarray}
where $p^\ast$ denotes the lattice polynomial obtained from $p$ by dualizing the lattice operations.
\end{lemma}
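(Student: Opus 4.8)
The plan is to obtain the two implications from the bi-filter characterisation recorded in Lemma~\ref{sigmainFinequality} together with the defining identities of the tensor product. Write $\vec{\sigma}_A=(\sigma_{1,A},\dots,\sigma_{n,A})$ and $\vec{\sigma}_B=(\sigma_{1,B},\dots,\sigma_{n,B})$. Two elementary consequences of Theorem~\ref{theoremtensorbasic} are used repeatedly: $\otimes$ is order-preserving in each variable, and it satisfies the finite bilinearity identities $\bigsqcap{}^{{}^{{S}_{AB}}}_{j}(\sigma_A\otimes\tau_j)=\sigma_A\otimes(\bigsqcap{}^{{}^{{ \mathfrak{S}}_B}}_{j}\tau_j)$ and its mirror image, all immediate from the congruence generating ${S}_{AB}$. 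Whenever a lattice polynomial appears in an inequality $\sigma_A\sqsupseteq_{{}_{{ \mathfrak{S}}_A}}p(\vec{\sigma}_A)$, the value $p(\vec{\sigma}_A)$ is to be computed in ${ \mathfrak{S}}_A$: the meets needed for this always exist, and the joins needed exist because any element witnessing the inequality already furnishes the required upper bound, ${ \mathfrak{S}}_A$ being a down-complete Inf semi-lattice.

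Right-to-left: assume $p$ is an $n$-ary lattice polynomial with $\sigma_A\sqsupseteq_{{}_{{ \mathfrak{S}}_A}}p(\vec{\sigma}_A)$ and $\sigma_B\sqsupseteq_{{}_{{ \mathfrak{S}}_B}}p^\ast(\vec{\sigma}_B)$. I would prove by induction on the construction of $p$ that
\[
\bigsqcap{}^{{}^{{S}_{AB}}}_{1\le i\le n}\ \sigma_{i,A}\otimes\sigma_{i,B}\ \sqsubseteq_{{}_{{S}_{AB}}}\ p(\vec{\sigma}_A)\otimes p^\ast(\vec{\sigma}_B).
\]
The base case $p=x_k$ is the trivial inequality $\bigsqcap_i\sigma_{i,A}\otimes\sigma_{i,B}\sqsubseteq_{{}_{{S}_{AB}}}\sigma_{k,A}\otimes\sigma_{k,B}$. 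If $p=q\wedge r$, so $p^\ast=q^\ast\vee r^\ast$, the two induction hypotheses place the left-hand side below $q(\vec{\sigma}_A)\otimes q^\ast(\vec{\sigma}_B)$ and below $r(\vec{\sigma}_A)\otimes r^\ast(\vec{\sigma}_B)$; raising the right-hand slot by order-preservation puts it below $q(\vec{\sigma}_A)\otimes p^\ast(\vec{\sigma}_B)$ and below $r(\vec{\sigma}_A)\otimes p^\ast(\vec{\sigma}_B)$, and taking the infimum of these two and applying the bilinearity identity in the left slot gives $p(\vec{\sigma}_A)\otimes p^\ast(\vec{\sigma}_B)$. The case $p=q\vee r$ is dual, raising the left slot instead. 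One concludes by order-preservation in both variables: $p(\vec{\sigma}_A)\otimes p^\ast(\vec{\sigma}_B)\sqsubseteq_{{}_{{S}_{AB}}}\sigma_A\otimes\sigma_B$.

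Left-to-right: by Lemma~\ref{sigmainFinequality}, the hypothesis $\bigsqcap_i\sigma_{i,A}\otimes\sigma_{i,B}\sqsubseteq_{{}_{{S}_{AB}}}\sigma_A\otimes\sigma_B$ is equivalent to $(\sigma_A,\sigma_B)\in{ \mathfrak{F}}\{(\sigma_{1,A},\sigma_{1,B}),\dots,(\sigma_{n,A},\sigma_{n,B})\}$. It thus suffices to show that
\[
R:=\bigl\{\,(\tau_A,\tau_B)\ \bigm|\ \exists\ n\text{-ary lattice polynomial }p:\ \tau_A\sqsupseteq_{{}_{{ \mathfrak{S}}_A}}p(\vec{\sigma}_A)\ \text{and}\ \tau_B\sqsupseteq_{{}_{{ \mathfrak{S}}_B}}p^\ast(\vec{\sigma}_B)\,\bigr\}
\]
is a bi-filter of ${ \mathfrak{S}}_A\times{ \mathfrak{S}}_B$ containing every $(\sigma_{i,A},\sigma_{i,B})$; minimality of ${ \mathfrak{F}}\{\dots\}$ then forces $(\sigma_A,\sigma_B)\in R$, i.e.\ produces the required polynomial. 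The generators are witnessed by $p=x_i$, and condition (\ref{defbifilter1}) is immediate. For (\ref{defbifilter2}): if $p$ witnesses $(\sigma_{1,A},\tau_B)\in R$ and $q$ witnesses $(\sigma_{2,A},\tau_B)\in R$, then $p\wedge q$ witnesses $(\sigma_{1,A}\sqcap_{{}_{{ \mathfrak{S}}_A}}\sigma_{2,A},\tau_B)$, since $(p\wedge q)(\vec{\sigma}_A)=p(\vec{\sigma}_A)\sqcap_{{}_{{ \mathfrak{S}}_A}}q(\vec{\sigma}_A)\sqsubseteq_{{}_{{ \mathfrak{S}}_A}}\sigma_{1,A}\sqcap_{{}_{{ \mathfrak{S}}_A}}\sigma_{2,A}$ and $(p\wedge q)^\ast(\vec{\sigma}_B)=p^\ast(\vec{\sigma}_B)\sqcup_{{}_{{ \mathfrak{S}}_B}}q^\ast(\vec{\sigma}_B)\sqsubseteq_{{}_{{ \mathfrak{S}}_B}}\tau_B$ (as $\tau_B$ is an upper bound of both $p^\ast(\vec{\sigma}_B)$ and $q^\ast(\vec{\sigma}_B)$). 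Condition (\ref{defbifilter3}) is the mirror statement, witnessed by $p\vee q$.

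The point that needs genuine care — and the one I would flag — is the interpretation and existence of the polynomial values: meets never cause difficulty, but one must track the joins appearing in $p$ and in $p^\ast$ and check, each time, that the relevant upper bound is already present, which is guaranteed here by the witnessing element together with down-completeness. Everything else is bookkeeping layered on top of Lemma~\ref{sigmainFinequality} (Fraser's Lemma~1), which carries the real combinatorial content relating the congruence defining ${S}_{AB}$ to membership in finitely generated bi-filters; we take that lemma as given.
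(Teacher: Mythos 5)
Your argument is correct, but note that the paper does not actually prove Lemma \ref{orderimpliespolynomial}: it is imported verbatim as \cite[Theorem 1]{Fraser1978}, just as Lemma \ref{sigmainFinequality} is imported as Fraser's Lemma 1, so there is no internal proof to compare against. What you wrote is a faithful reconstruction of Fraser's argument in the paper's vocabulary: the right-to-left direction by induction on the structure of $p$, using monotonicity of $\otimes$ in each slot together with the defining identities $(\sigma_A\sqcap_{{}_{{ \mathfrak{S}}_A}}\sigma'_A)\otimes\sigma_B=(\sigma_A\otimes\sigma_B)\sqcap_{{}_{{S}_{AB}}}(\sigma'_A\otimes\sigma_B)$ and its mirror; the left-to-right direction by showing that the set of polynomial-dominated pairs is a bi-filter containing the generators (conditions (\ref{defbifilter1})--(\ref{defbifilter3})) and then invoking Lemma \ref{sigmainFinequality} and the minimality of ${ \mathfrak{F}}\{\cdots\}$. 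Both steps are sound. The only point I would sharpen is your blanket justification for the existence of joins: in the right-to-left direction the existence of all sub-evaluations of $p$ and $p^\ast$ is simply part of the hypothesis that $p(\sigma_{1,A},\dots,\sigma_{n,A})$ and $p^\ast(\sigma_{1,B},\dots,\sigma_{n,B})$ are defined --- the witnessing element $\sigma_A$ does not bound inner joins such as the one occurring in $(x_1\vee x_2)\wedge x_3$ --- whereas in the bi-filter verification the joins you actually create, e.g.\ $p^\ast(\sigma_{1,B},\dots,\sigma_{n,B})\sqcup_{{}_{{ \mathfrak{S}}_B}}q^\ast(\sigma_{1,B},\dots,\sigma_{n,B})$, are bounded by the common coordinate $\tau_B$ and hence exist because the spaces of states of this paper are down-complete. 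With that reading your proof is complete and is, in substance, the proof Fraser gives.
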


\subsection{The maximal tensor product}\label{subsectionmaximaltensorproduct}

Let us now consider a radically different approach of tensor product, exploiting the notion of States/Effects Chu spaces. From now on, $({ \mathfrak{S}}_A,{ \mathfrak{E}}_{A},{ \epsilon}^{{ \mathfrak{S}}_A})$ and $({ \mathfrak{S}}_B,{ \mathfrak{E}}_{B},{ \epsilon}^{{ \mathfrak{S}}_B})$ are States/Effects Chu spaces. 

\begin{definition}
We will denote by $\widecheck{S}_{AB}$ (or equivalently by ${ \mathfrak{S}}_A\widecheck{\otimes}{ \mathfrak{S}}_B$) the set of maps $\Phi$ from ${ \mathfrak{E}}_{A}\times { \mathfrak{E}}_{B}$ to ${ \mathfrak{E}}_\bot\cong { \mathfrak{B}}$ satisfying
\begin{eqnarray}
\forall \{{ \mathfrak{l}}_{i,A}\;\vert\; i\in I\}\subseteq { \mathfrak{E}}_{A},\forall { \mathfrak{l}}_{B} \in { \mathfrak{E}}_{B}, && \Phi(\bigsqcap{}^{{}_{{ \mathfrak{E}}_{A}}}_{i\in I} { \mathfrak{l}}_{i,A},{ \mathfrak{l}}_{B})= \bigwedge{}_{i\in I} \;\Phi( { \mathfrak{l}}_{i,A},{ \mathfrak{l}}_{B})\label{bilinear1}\\
\forall \{{ \mathfrak{l}}_{j,B}\;\vert\; j\in J\}\subseteq { \mathfrak{E}}_{B},\forall { \mathfrak{l}}_{A} \in { \mathfrak{E}}_{A}, && \Phi({ \mathfrak{l}}_{A},\bigsqcap{}^{{}_{{ \mathfrak{E}}_{B}}}_{j\in J} { \mathfrak{l}}_{j,B})= \bigwedge{}_{j\in J} \;\Phi({ \mathfrak{l}}_{A}, { \mathfrak{l}}_{j,B}),\;\;\;\;\;\;\;\;\;\;\;\;\;\;\label{bilinear2}\\
\textit{\rm and}&&\nonumber\\
\forall { \mathfrak{l}}_A\in { \mathfrak{E}}_{A} && \Phi (\overline{{ \mathfrak{l}}_A}, { \mathfrak{Y}}_{{ \mathfrak{E}}_{B}})= \overline{\Phi({ \mathfrak{l}}_A,{ \mathfrak{Y}}_{{ \mathfrak{E}}_{B}})},\label{furtherquotient1}\\ 
\forall { \mathfrak{l}}_B\in { \mathfrak{E}}_{B},&&  \Phi ({ \mathfrak{Y}}_{{ \mathfrak{E}}_{A}},\overline{{ \mathfrak{l}}_B})= \overline{\Phi({ \mathfrak{Y}}_{{ \mathfrak{E}}_{A}},{ \mathfrak{l}}_B)},\label{furtherquotient2}\\
&&\Phi( { \mathfrak{Y}}_{{ \mathfrak{E}}_{A}},{ \mathfrak{Y}}_{{ \mathfrak{E}}_{B}}) = \textit{\bf Y}.\label{furtherquotientYY}
\end{eqnarray}
$\widecheck{S}_{AB}$ is called {\em the maximal tensor product} of ${ \mathfrak{S}}_A$ and ${ \mathfrak{S}}_B$.
\end{definition}

\begin{theorem}
$\widecheck{S}_{AB}$ is equipped with the pointwise partial order. It is a down-complete Inf semi-lattice with 
\begin{eqnarray}
\hspace{-1cm}\forall \{\Phi_i\;\vert\; i\in I\}\subseteq \widecheck{S}_{AB},\forall ({ \mathfrak{l}}_{A},{ \mathfrak{l}}_{B}) \in { \mathfrak{E}}_{A} \times { \mathfrak{E}}_{B},&& (\bigsqcap{}^{{}^{\widecheck{S}_{AB}}}_{i\in I} \Phi_i)({ \mathfrak{l}}_{A},{ \mathfrak{l}}_{B}):=\bigwedge{}_{i\in I} \;\Phi_i({ \mathfrak{l}}_{A},{ \mathfrak{l}}_{B}).\;\;\;\;\;\;\;\;\;\;\;\;\;\;\;\;
\end{eqnarray}
\end{theorem}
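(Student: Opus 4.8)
The statement has two parts: (i) that the proposed pointwise infimum is again an element of $\widecheck{S}_{AB}$, i.e.\ it satisfies conditions (\ref{bilinear1})--(\ref{furtherquotientYY}); and (ii) that it is indeed the infimum of the family $\{\Phi_i\}_{i\in I}$ in the pointwise order. Part (ii) is essentially formal once (i) is known: the pointwise order on ${ \mathfrak{B}}$-valued maps makes any set of maps into a poset, and $\bigwedge$ taken pointwise in ${ \mathfrak{B}}$ (which exists because ${ \mathfrak{B}}$ is a down-complete Inf semi-lattice) is by construction the greatest lower bound among \emph{all} maps ${ \mathfrak{E}}_A\times{ \mathfrak{E}}_B\to{ \mathfrak{B}}$; so as soon as it lies in $\widecheck{S}_{AB}$, it is a fortiori the greatest lower bound inside $\widecheck{S}_{AB}$. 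The real work is part (i).

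\textbf{Verifying the five conditions.} For (\ref{bilinear1}): fix $\{{ \mathfrak{l}}_{k,A}\}_{k\in K}\subseteq { \mathfrak{E}}_A$ and ${ \mathfrak{l}}_B\in{ \mathfrak{E}}_B$. Then
\begin{eqnarray*}
(\bigsqcap{}^{\widecheck{S}_{AB}}_{i\in I}\Phi_i)(\bigsqcap{}^{{ \mathfrak{E}}_A}_{k\in K}{ \mathfrak{l}}_{k,A},{ \mathfrak{l}}_B)
&=& \bigwedge{}_{i\in I}\Phi_i(\bigsqcap{}^{{ \mathfrak{E}}_A}_{k\in K}{ \mathfrak{l}}_{k,A},{ \mathfrak{l}}_B)
= \bigwedge{}_{i\in I}\bigwedge{}_{k\in K}\Phi_i({ \mathfrak{l}}_{k,A},{ \mathfrak{l}}_B)\\
&=& \bigwedge{}_{k\in K}\bigwedge{}_{i\in I}\Phi_i({ \mathfrak{l}}_{k,A},{ \mathfrak{l}}_B)
= \bigwedge{}_{k\in K}(\bigsqcap{}^{\widecheck{S}_{AB}}_{i\in I}\Phi_i)({ \mathfrak{l}}_{k,A},{ \mathfrak{l}}_B),
\end{eqnarray*}
where the first and last equalities are the definition of the pointwise infimum, the second uses that each $\Phi_i$ satisfies (\ref{bilinear1}), and the third is commutativity/associativity of iterated infima in the complete meet-semilattice ${ \mathfrak{B}}$. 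Condition (\ref{bilinear2}) is identical with the roles of $A$ and $B$ swapped. For (\ref{furtherquotient1}): for each ${ \mathfrak{l}}_A\in{ \mathfrak{E}}_A$ we have $(\bigsqcap_i\Phi_i)(\overline{{ \mathfrak{l}}_A},{ \mathfrak{Y}}_{{ \mathfrak{E}}_B})=\bigwedge_i\Phi_i(\overline{{ \mathfrak{l}}_A},{ \mathfrak{Y}}_{{ \mathfrak{E}}_B})=\bigwedge_i\overline{\Phi_i({ \mathfrak{l}}_A,{ \mathfrak{Y}}_{{ \mathfrak{E}}_B})}$; and here one invokes that the involution $\overline{(\cdot)}$ on ${ \mathfrak{B}}$ commutes with arbitrary infima, i.e.\ $\bigwedge_{b\in B}\overline{b}=\overline{\bigwedge_{b\in B}b}$ for any $B\subseteq{ \mathfrak{B}}$ --- a three-line case check on $\{\textbf{Y},\textbf{N},\bot\}$ (if all values agree the meet is that value and the bar distributes; if they disagree both sides are $\bot$, using $\overline{\bot}=\bot$) --- to conclude this equals $\overline{\bigwedge_i\Phi_i({ \mathfrak{l}}_A,{ \mathfrak{Y}}_{{ \mathfrak{E}}_B})}=\overline{(\bigsqcap_i\Phi_i)({ \mathfrak{l}}_A,{ \mathfrak{Y}}_{{ \mathfrak{E}}_B})}$. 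Condition (\ref{furtherquotient2}) is symmetric. Finally (\ref{furtherquotientYY}): $(\bigsqcap_i\Phi_i)({ \mathfrak{Y}}_{{ \mathfrak{E}}_A},{ \mathfrak{Y}}_{{ \mathfrak{E}}_B})=\bigwedge_i\Phi_i({ \mathfrak{Y}}_{{ \mathfrak{E}}_A},{ \mathfrak{Y}}_{{ \mathfrak{E}}_B})=\bigwedge_i\textbf{Y}=\textbf{Y}$.

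\textbf{Where the difficulty lies.} None of this is deep; the only point requiring a moment's thought is the interchange of two infima in (\ref{bilinear1})--(\ref{bilinear2}) and, more pointedly, the distributivity of the bar involution over arbitrary meets in ${ \mathfrak{B}}$ needed for (\ref{furtherquotient1})--(\ref{furtherquotient2}). The latter is not among the properties (\ref{distributivitybullet})--(\ref{distributivitybullet2}) explicitly recorded for ${ \mathfrak{B}}$, so I would either add it as a one-line observation about the boolean domain or verify it inline by the case analysis above. Everything else is a direct unfolding of the pointwise definitions together with the fact, already used implicitly in the statement, that ${ \mathfrak{B}}$ is a down-complete Inf semi-lattice so that all the infima written actually exist. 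I would present the proof as: ``Each of (\ref{bilinear1})--(\ref{furtherquotientYY}) is checked pointwise, using that the corresponding property holds for every $\Phi_i$ and that infima in ${ \mathfrak{B}}$ commute with each other and with the involution $\overline{(\cdot)}$; that the result is the infimum in $\widecheck{S}_{AB}$ is then immediate since the pointwise meet is the infimum already in the ambient poset ${ \mathfrak{B}}^{{ \mathfrak{E}}_A\times{ \mathfrak{E}}_B}$.''
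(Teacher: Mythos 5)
Your proof is correct: the paper states this theorem without proof, and your pointwise verification of conditions (\ref{bilinear1})--(\ref{furtherquotientYY}), together with the observation that the pointwise meet is already the infimum in the ambient poset ${\mathfrak{B}}^{{\mathfrak{E}}_A\times{\mathfrak{E}}_B}$, is exactly the routine argument the paper leaves implicit. Your explicit case check that the involution $\overline{(\cdot)}$ commutes with arbitrary (nonempty) infima in ${\mathfrak{B}}$ is a sensible addition, since that fact is indeed used but nowhere recorded in the paper.
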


\begin{theorem} The following maps are homomorphisms
\begin{eqnarray}
\begin{array}{rcrclccrcrcl}
\eta &: & { \widecheck{S}}_{AB} & \longrightarrow & { \mathfrak{E}}^\ast_{{A}}  &&  & \lambda &: & { \widecheck{S}}_{AB} & \longrightarrow & { \mathfrak{E}}^\ast_{{B}}  \\
& & \Phi & \mapsto & \Phi(\cdot,{ \mathfrak{Y}}_{{ \mathfrak{E}}_{{B}}}) & & & & & \Phi & \mapsto & \Phi({ \mathfrak{Y}}_{{ \mathfrak{E}}_{{A}}},\cdot)
\end{array}
\end{eqnarray}
with 
\begin{eqnarray}
{ \mathfrak{E}}^\ast &:= &\{\, \psi \in { \mathfrak{C}}( { \mathfrak{E}},{ \mathfrak{B}})\;\vert\; \forall { \mathfrak{l}} \in { \mathfrak{E}}, \;\psi(\; \overline{\,{ \mathfrak{l}}\,} \;) = \overline{\psi({ \mathfrak{l}})}\;\textit{\rm and}\;  \psi({ \mathfrak{Y}}_{{ \mathfrak{E}}}) = \textit{\bf Y}\,\}.\;\;\;\;\;\;\;\;\;\;
\end{eqnarray}
Moreover, we have, for any space of states ${ \mathfrak{S}}$, the following isomorphism :
\begin{eqnarray}
{ \mathfrak{E}}^\ast \cong { \mathfrak{S}}
\end{eqnarray}
\end{theorem}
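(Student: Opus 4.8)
The plan is to prove the two assertions in the reverse of the stated order, since the isomorphism $\mathfrak{E}^\ast\cong\mathfrak{S}$ is what identifies the Inf semi-lattice structure that $\eta$ and $\lambda$ must respect. The crucial observation is that membership in $\mathfrak{E}^\ast$ is literally the hypothesis list of Theorem \ref{blepsilonsigma}: a map $\psi:\mathfrak{E}\to\mathfrak{B}$ belongs to $\mathfrak{C}(\mathfrak{E},\mathfrak{B})$ exactly when it is order-preserving and preserves infima, i.e. (\ref{theorembl1})--(\ref{theorembl2}), and the two further clauses cutting out $\mathfrak{E}^\ast$ are verbatim (\ref{theorembl3}) and (\ref{theorembl4}). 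I would therefore define $\Theta:\mathfrak{S}\to\mathfrak{E}^\ast$ by $\Theta(\sigma):=\big(\mathfrak{l}\mapsto\epsilon^{\mathfrak{S}}_{\mathfrak{l}}(\sigma)\big)$, which is well defined because (\ref{axiomEinfsemilattice}), (\ref{etbar}) and (\ref{ety}) assert precisely that $\epsilon^{\mathfrak{S}}_{\cdot}(\sigma)$ meets the four conditions. Theorem \ref{blepsilonsigma} supplies a two-sided inverse $\psi\mapsto\sigma_\psi$, and the two composites are identities by the uniqueness clause of that theorem together with the separation axiom (\ref{Chuseparated}). That $\Theta$ is an order isomorphism is a short computation: if $\sigma=\sigma\sqcap_{{}_{\mathfrak{S}}}\sigma'$ then (\ref{axiomsigmainfsemilattice}) gives $\epsilon^{\mathfrak{S}}_{\mathfrak{l}}(\sigma)=\epsilon^{\mathfrak{S}}_{\mathfrak{l}}(\sigma)\wedge\epsilon^{\mathfrak{S}}_{\mathfrak{l}}(\sigma')\leq\epsilon^{\mathfrak{S}}_{\mathfrak{l}}(\sigma')$ for all $\mathfrak{l}$, while the converse combines (\ref{axiomsigmainfsemilattice}) with (\ref{Chuseparated}). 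An order isomorphism between Inf semi-lattices preserves infima, so $\Theta$ is an isomorphism; in particular the infimum on $\mathfrak{E}^\ast$ is computed pointwise.

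For the homomorphisms I would treat $\eta$, the case of $\lambda$ being symmetric via (\ref{bilinear2}) and (\ref{furtherquotient2}). First one checks that $\eta(\Phi)=\Phi(\cdot,\mathfrak{Y}_{\mathfrak{E}_B})$ lands in $\mathfrak{E}^\ast_A$: specializing (\ref{bilinear1}) at $\mathfrak{l}_B=\mathfrak{Y}_{\mathfrak{E}_B}$ gives $\eta(\Phi)(\bigsqcap_{i}\mathfrak{l}_{i,A})=\bigwedge_i\eta(\Phi)(\mathfrak{l}_{i,A})$, so $\eta(\Phi)$ preserves infima and lies in $\mathfrak{C}(\mathfrak{E}_A,\mathfrak{B})$; clause (\ref{furtherquotient1}) is exactly $\eta(\Phi)(\overline{\mathfrak{l}_A})=\overline{\eta(\Phi)(\mathfrak{l}_A)}$, and clause (\ref{furtherquotientYY}) is exactly $\eta(\Phi)(\mathfrak{Y}_{\mathfrak{E}_A})=\textit{\bf Y}$. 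These are the three defining conditions of $\mathfrak{E}^\ast_A$. The homomorphic property is then immediate from pointwise infima on both sides: for a family $\{\Phi_i\}$ and any $\mathfrak{l}_A$, the pointwise formula for the infimum in $\widecheck{S}_{AB}$ yields $\eta(\bigsqcap_i\Phi_i)(\mathfrak{l}_A)=(\bigsqcap_i\Phi_i)(\mathfrak{l}_A,\mathfrak{Y}_{\mathfrak{E}_B})=\bigwedge_i\Phi_i(\mathfrak{l}_A,\mathfrak{Y}_{\mathfrak{E}_B})=\bigwedge_i\eta(\Phi_i)(\mathfrak{l}_A)=(\bigsqcap_i\eta(\Phi_i))(\mathfrak{l}_A)$, the last step using the pointwise infimum on $\mathfrak{E}^\ast_A$ established above. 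As this holds for every $\mathfrak{l}_A$, we conclude $\eta(\bigsqcap_i\Phi_i)=\bigsqcap_i\eta(\Phi_i)$.

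I do not anticipate a genuine obstacle: each property of $\eta(\Phi)$ and of $\Theta$ is a one-line specialization of an axiom of $\widecheck{S}_{AB}$ or of the Chu space. The only point needing care is the bookkeeping of the two Inf semi-lattice structures, namely confirming that the infimum on $\mathfrak{E}^\ast_A$ is the pointwise one, so that $\eta$ genuinely intertwines it with the pointwise infimum on $\widecheck{S}_{AB}$; this is exactly what the isomorphism step secures. En route one should also note that $\mathfrak{E}^\ast$ is closed under pointwise infima, which holds because the involution $\overline{(\cdot)}$ is an order-automorphism of $\mathfrak{B}$ and hence commutes with $\bigwedge$.
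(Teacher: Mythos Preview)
Your proposal is correct and follows essentially the same approach as the paper: you define the same map $\Theta$ (called $\rho$ in the paper) via $\sigma\mapsto\epsilon^{\mathfrak{S}}_{\cdot}(\sigma)$, invoke Theorem~\ref{blepsilonsigma} for bijectivity, and verify that $\eta(\Phi)\in\mathfrak{E}^\ast_A$ by specializing (\ref{bilinear1}), (\ref{furtherquotient1}), (\ref{furtherquotientYY}). Your write-up is in fact slightly more complete than the paper's: you explicitly verify that $\eta$ preserves infima and that the infimum on $\mathfrak{E}^\ast$ is pointwise, whereas the paper only checks well-definedness of $\eta$ and the homomorphic property of $\rho$, leaving the preservation of infima by $\eta$ implicit.
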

\begin{proof}
Let $\Phi$ be an element of ${ \widecheck{S}}_{AB}$.\\
The map $\psi$ from ${ \mathfrak{E}}_{A}$ to ${ \mathfrak{B}}$ defined by $\psi({ \mathfrak{l}}_A):= \Phi({ \mathfrak{l}}_A,{ \mathfrak{Y}}_{{ \mathfrak{E}}_{{B}}})$ is an element of ${ \mathfrak{C}}( { \mathfrak{E}}_{A},{ \mathfrak{B}})$ because of relation (\ref{bilinear1}).  We have $ \forall { \mathfrak{l}} \in { \mathfrak{E}}, \;\psi(\; \overline{\,{ \mathfrak{l}}\,} \;) = \overline{\psi({ \mathfrak{l}})}$ because of relation  (\ref{furtherquotient1}) and $\psi({ \mathfrak{Y}}_{{ \mathfrak{E}}}) = \textit{\bf Y}$ because of relation (\ref{furtherquotientYY}).  As a result, $\psi$ is an element of ${ \mathfrak{E}}^\ast_{{A}}$.\\
In the same way, the map $\psi'$ from ${ \mathfrak{E}}_{B}$ to ${ \mathfrak{B}}$ defined by $\psi'({ \mathfrak{l}}_B):= \Phi({ \mathfrak{Y}}_{{ \mathfrak{E}}_{{B}}},{ \mathfrak{l}}_B)$ is an element of ${ \mathfrak{E}}^\ast_{{B}}$ because of relations (\ref{bilinear2})(\ref{furtherquotient2}) and (\ref{furtherquotientYY}). \\
Secondly, we note the following isomorphism of Inf semi-lattices :
\begin{eqnarray}
\begin{array}{rcrcl} 
\rho &:& { \mathfrak{S}} &{\buildrel \cong \over \longrightarrow} & { \mathfrak{E}}^\ast
\\
& & \sigma & \mapsto & \rho(\sigma) \;\vert\; \rho(\sigma)({ \mathfrak{l}}):=\epsilon^{ \mathfrak{S}}_{{ \mathfrak{l}}}(\sigma),\;\forall { \mathfrak{l}} \in { \mathfrak{E}}.
\end{array}\;\;\;\;\;\;\;\;\;\;\label{isomorphismSpsi}
\end{eqnarray} 
Indeed, for any $\sigma\in { \mathfrak{S}}$, we can define a map $\varphi$ from ${ \mathfrak{E}}$ to ${ \mathfrak{B}}$ by $\varphi ({{ \mathfrak{l}}}):=\epsilon^{ \mathfrak{S}}_{{ \mathfrak{l}}}(\sigma)$. Using the properties (\ref{etbar})(\ref{axiomEinfsemilattice})(\ref{ety}) of $\epsilon^{ \mathfrak{S}}$, we deduce that $\varphi \in { \mathfrak{E}}^\ast$.\\
Reciprocally, using Theorem \ref{blepsilonsigma}, we know that
\begin{eqnarray}
 \forall \varphi \in { \mathfrak{E}}^\ast, && \existunique \; \sigma\in { \mathfrak{S}} \;\; \vert \;\;  \forall { \mathfrak{l}}\in { \mathfrak{E}}, \; \epsilon^{ \mathfrak{S}}_{{ \mathfrak{l}}}(\sigma)=\varphi ({{ \mathfrak{l}}}).
\end{eqnarray}
The bijective character of the map $\rho$ is then established. We have also trivially, for any $\{\sigma_i\;\vert\; i\in I\}\subseteq { \mathfrak{S}}$ the homomorphic property 
$\rho(\bigsqcap{}^{ \mathfrak{S}}_{i\in I}\sigma_i) = \bigsqcap{}_{i\in I}\rho(\sigma_i)$ due to the property (\ref{axiomsigmainfsemilattice}).
\end{proof}

\begin{theorem}
The inclusion of pure tensors in $\widecheck{S}_{AB}$ is realized as follows :
\begin{eqnarray}
\begin{array}{rcrcl}
\iota^{\widecheck{S}_{AB}} &:& {{ \mathfrak{S}}_A}\times {{ \mathfrak{S}}_B} & \hookrightarrow & \widecheck{S}_{AB}\\
& & (\sigma_A,\sigma_B) & \mapsto & \iota^{\widecheck{S}_{AB}}(\sigma_A,\sigma_B)\;\vert\; \forall ({ \mathfrak{l}}_A,{ \mathfrak{l}}_B)\in { \mathfrak{E}}_{A}\times { \mathfrak{E}}_{B},\; \\
& & & & \iota^{\widecheck{S}_{AB}}(\sigma_A,\sigma_B)({ \mathfrak{l}}_A,{ \mathfrak{l}}_B):= \epsilon^{{ \mathfrak{S}}_A}_{{ \mathfrak{l}}_A}(\sigma_A) \bullet \epsilon^{{ \mathfrak{S}}_B}_{{ \mathfrak{l}}_B}(\sigma_B)\;\;\in { \mathfrak{B}}.
\end{array}\label{inclusionpuretensorsScheck}
\end{eqnarray}
\end{theorem}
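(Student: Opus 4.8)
The statement contains two assertions: that for each pair $(\sigma_A,\sigma_B)$ the formula (\ref{inclusionpuretensorsScheck}) defines an element of $\widecheck{S}_{AB}$ --- i.e. a map $\Phi$ satisfying (\ref{bilinear1})--(\ref{furtherquotientYY}) --- and that the assignment $(\sigma_A,\sigma_B)\mapsto\Phi$ is injective. I would establish these two points separately. The only ingredients are the axioms (\ref{axiomEinfsemilattice}), (\ref{etbar}), (\ref{ety}) of the evaluation maps, the arithmetic of $\bullet$ on ${\mathfrak{B}}$ --- commutativity, the unit law $x\bullet\textit{\bf Y}=x$ from (\ref{expressionbullet}), and the distributivity (\ref{distributivitybullet}) of $\bullet$ over infima --- together with the separation axiom (\ref{Chuseparated}) (equivalently, the isomorphism $\rho$ of the preceding theorem).

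For well-definedness I would fix $(\sigma_A,\sigma_B)$, abbreviate $\Phi:=\iota^{\widecheck{S}_{AB}}(\sigma_A,\sigma_B)$, and check the five conditions in turn. Condition (\ref{bilinear1}) follows by first rewriting $\epsilon^{{\mathfrak{S}}_A}_{\bigsqcap_{i}{\mathfrak{l}}_{i,A}}(\sigma_A)=\bigwedge_{i}\epsilon^{{\mathfrak{S}}_A}_{{\mathfrak{l}}_{i,A}}(\sigma_A)$ via (\ref{axiomEinfsemilattice}), then pulling $\epsilon^{{\mathfrak{S}}_B}_{{\mathfrak{l}}_B}(\sigma_B)\bullet(-)$ inside the infimum via (\ref{distributivitybullet}) and commutativity of $\bullet$; condition (\ref{bilinear2}) is the mirror-image computation. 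For (\ref{furtherquotient1}) I would observe that the unit law and (\ref{ety}) give $\Phi({\mathfrak{l}}_A,{\mathfrak{Y}}_{{\mathfrak{E}}_B})=\epsilon^{{\mathfrak{S}}_A}_{{\mathfrak{l}}_A}(\sigma_A)$, so that (\ref{etbar}) turns $\Phi(\overline{{\mathfrak{l}}_A},{\mathfrak{Y}}_{{\mathfrak{E}}_B})$ into $\overline{\epsilon^{{\mathfrak{S}}_A}_{{\mathfrak{l}}_A}(\sigma_A)}=\overline{\Phi({\mathfrak{l}}_A,{\mathfrak{Y}}_{{\mathfrak{E}}_B})}$; (\ref{furtherquotient2}) is symmetric, and (\ref{furtherquotientYY}) is immediate from (\ref{ety}) and $\textit{\bf Y}\bullet\textit{\bf Y}=\textit{\bf Y}$. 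This shows $\Phi\in\widecheck{S}_{AB}$.

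For injectivity I would use the two marginals $\eta,\lambda$. The computation just made shows that $\eta(\Phi)$ is the map ${\mathfrak{l}}_A\mapsto\epsilon^{{\mathfrak{S}}_A}_{{\mathfrak{l}}_A}(\sigma_A)$ and, symmetrically, that $\lambda(\Phi)$ is ${\mathfrak{l}}_B\mapsto\epsilon^{{\mathfrak{S}}_B}_{{\mathfrak{l}}_B}(\sigma_B)$; in the language of (\ref{isomorphismSpsi}) these are precisely $\rho(\sigma_A)$ and $\rho(\sigma_B)$. Hence if $\iota^{\widecheck{S}_{AB}}(\sigma_A,\sigma_B)=\iota^{\widecheck{S}_{AB}}(\sigma'_A,\sigma'_B)$, then $\epsilon^{{\mathfrak{S}}_A}_{{\mathfrak{l}}_A}(\sigma_A)=\epsilon^{{\mathfrak{S}}_A}_{{\mathfrak{l}}_A}(\sigma'_A)$ for all ${\mathfrak{l}}_A\in{\mathfrak{E}}_A$ and likewise on the $B$ side, so the separation axiom (\ref{Chuseparated}) --- or the injectivity of $\rho$ --- forces $\sigma_A=\sigma'_A$ and $\sigma_B=\sigma'_B$. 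Thus $\iota^{\widecheck{S}_{AB}}$ is an injection.

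I do not expect a genuine obstacle: the proof is pure bookkeeping. The only non-formal inputs are that $\bullet$ distributes over \emph{arbitrary} infima (property (\ref{distributivitybullet})), which is exactly what makes (\ref{bilinear1})--(\ref{bilinear2}) hold for index sets of arbitrary size, matching the down-completeness of ${\mathfrak{E}}_A$ and ${\mathfrak{E}}_B$, and the unit law $x\bullet\textit{\bf Y}=x$, which is what makes the marginals of a pure tensor recover the two original states. Any remaining delicacy is merely the routine matter of conventions for empty meets, already absorbed into the statements of (\ref{axiomEinfsemilattice}) and (\ref{distributivitybullet}).
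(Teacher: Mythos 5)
Your proof is correct. The well-definedness half is essentially the paper's own argument: conditions (\ref{bilinear1})--(\ref{bilinear2}) from (\ref{axiomEinfsemilattice}) together with (\ref{distributivitybullet}), and (\ref{furtherquotient1})--(\ref{furtherquotientYY}) from (\ref{etbar}), (\ref{ety}) and (\ref{expressionbullet}); there is nothing to add there. For injectivity you take a genuinely different (and slightly more economical) route: you evaluate the pure tensor against $({ \mathfrak{l}}_A,{ \mathfrak{Y}}_{{ \mathfrak{E}}_B})$ for \emph{all} ${ \mathfrak{l}}_A$, observe via the unit law that the marginal $\eta$ returns exactly $\rho(\sigma_A)$ in the sense of (\ref{isomorphismSpsi}), and then invoke the separation axiom (\ref{Chuseparated}) directly; symmetrically for $\lambda$ and $\sigma_B$. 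The paper instead evaluates the two supposedly equal tensors at the specific effects ${ \mathfrak{l}}_{(\sigma_A,\cdot)},{ \mathfrak{l}}_{(\sigma_B,\cdot)}$ (and then at ${ \mathfrak{l}}_{(\sigma'_A,\cdot)},{ \mathfrak{l}}_{(\sigma'_B,\cdot)}$), reading off $\sigma_A\sqsubseteq_{{}_{{ \mathfrak{S}}_A}}\sigma'_A$, $\sigma_B\sqsubseteq_{{}_{{ \mathfrak{S}}_B}}\sigma'_B$ and the reverse inequalities, and concludes by antisymmetry. Your version needs only that ${ \mathfrak{Y}}_{{ \mathfrak{E}}_A}$ and ${ \mathfrak{Y}}_{{ \mathfrak{E}}_B}$ lie in the effect spaces (guaranteed by (\ref{ety}) and (\ref{axiomreduc4})) and works verbatim for any reduced space of effects satisfying Definition \ref{DefinreductionE}, whereas the paper's relies on the elements ${ \mathfrak{l}}_{(\sigma,\cdot)}$ belonging to ${ \mathfrak{E}}$ (which does hold under (\ref{axiomreduc1})--(\ref{axiomreduc4}), but is an extra ingredient); the paper's choice, on the other hand, makes the order-theoretic content of the comparison explicit. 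Both arguments are sound, and yours integrates cleanly with the marginal homomorphisms $\eta,\lambda$ introduced just before.
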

\begin{proof}
The properties (\ref{bilinear1}) and (\ref{bilinear2}) are direct consequences of the properties (\ref{axiomEinfsemilattice}) and (\ref{distributivitybullet}). The properties (\ref{furtherquotient1}) (\ref{furtherquotient2}) (\ref{furtherquotientYY}) and (\ref{furtherquotientNN}) are direct consequences of the properties (\ref{etbar}) (\ref{ety}) (\ref{expressionbullet}). As a conclusion, $\iota^{\widecheck{S}_{AB}}(\sigma_A,\sigma_B)\in \widecheck{S}_{AB}$ for any $(\sigma_A,\sigma_B)$ in ${{ \mathfrak{S}}_A}\times {{ \mathfrak{S}}_B}$.\\
Let us now consider $(\sigma_A,\sigma_B)$ and $(\sigma'_A,\sigma'_B)$ in ${{ \mathfrak{S}}_A}\times {{ \mathfrak{S}}_B}$ such that $ \iota^{\widecheck{S}_{AB}}(\sigma_A,\sigma_B)= \iota^{\widecheck{S}_{AB}}(\sigma'_A,\sigma'_B)$. We choose first of all ${ \mathfrak{l}}_A:={ \mathfrak{l}}_{(\sigma_A,\cdot)}$ and ${ \mathfrak{l}}_B:={ \mathfrak{l}}_{(\sigma_B,\cdot)}$.  We have 
\begin{eqnarray}
\iota^{\widecheck{S}_{AB}}(\sigma_A,\sigma_B)({ \mathfrak{l}}_A,{ \mathfrak{l}}_B)=\textit{\bf Y}\bullet \textit{\bf Y}=\textit{\bf Y}
\end{eqnarray} 
and then must have $\epsilon^{{ \mathfrak{S}}_A}_{{ \mathfrak{l}}_A}(\sigma'_A)=\textit{\bf Y}$ and $\epsilon^{{ \mathfrak{S}}_B}_{{ \mathfrak{l}}_B}(\sigma'_B)=\textit{\bf Y}$, i.e.  $\sigma_A\sqsubseteq_{{}_{{ \mathfrak{S}}_A}} \sigma'_A$ and $\sigma_B\sqsubseteq_{{}_{{ \mathfrak{S}}_B}} \sigma'_B$. Choosing now ${ \mathfrak{l}}_A:={ \mathfrak{l}}_{(\sigma'_A,\cdot)}$ and ${ \mathfrak{l}}_B:={ \mathfrak{l}}_{(\sigma'_B,\cdot)}$, we justify also $\sigma_A\sqsupseteq_{{}_{{ \mathfrak{S}}_A}} \sigma'_A$ and $\sigma_B\sqsupseteq_{{}_{{ \mathfrak{S}}_B}} \sigma'_B$. The map $\iota^{\widecheck{S}_{AB}}$ is then injective.
\end{proof}

\begin{theorem}\label{bifilterPcheck}
We have the following relations
\begin{eqnarray}
\hspace{-1cm}\forall \{\sigma_{i,A}\;\vert\; i\in I\}\subseteq { \mathfrak{S}}_{A},\forall \sigma_B\in { \mathfrak{S}}_{B},&& \iota^{\widecheck{S}_{AB}}(\bigsqcap{}^{{}^{{ \mathfrak{S}}_{A}}}_{i\in I}\,\sigma_{i,A},\sigma_B)= \bigsqcap{}^{{}^{\widecheck{S}_{AB}}}_{i\in I}\, \iota^{\widecheck{S}_{AB}}(\sigma_{i,A},\sigma_B),\;\;\;\;\;\;\;\;\;\;\;\;\;\;\;\;\;\;\;\;\label{pitensor=tensorpi1check}\\
\forall \{\sigma_{i,B}\;\vert\; i\in I\}\subseteq { \mathfrak{S}}_{B},\forall \sigma_A\in { \mathfrak{S}}_{A},&& \iota^{\widecheck{S}_{AB}}(\sigma_A, \bigsqcap{}^{{}^{{ \mathfrak{S}}_{A}}}_{i\in I}\,\sigma_{i,B})= \bigsqcap{}^{{}^{\widecheck{S}_{AB}}}_{i\in I}\, \iota^{\widecheck{S}_{AB}}(\sigma_{A},\sigma_{i,B}).\label{pitensor=tensorpi2check}
\end{eqnarray}
\end{theorem}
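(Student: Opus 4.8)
The plan is to establish both identities by the same elementary device. Since $\widecheck{S}_{AB}$ carries the pointwise order and its infima are computed pointwise (a fact established above), it is enough to check that the two sides of (\ref{pitensor=tensorpi1check}) (resp.\ (\ref{pitensor=tensorpi2check})) coincide after being evaluated on an arbitrary pair $({ \mathfrak{l}}_A,{ \mathfrak{l}}_B)\in { \mathfrak{E}}_{A}\times { \mathfrak{E}}_{B}$; this reduces the statement to a single equality in the three-element Inf semi-lattice ${ \mathfrak{B}}$. Before starting the computation I would record that the left-hand side is a genuine element of $\widecheck{S}_{AB}$ and that the right-hand side infimum exists, both being guaranteed by the theorem realizing $\iota^{\widecheck{S}_{AB}}$ (formula (\ref{inclusionpuretensorsScheck})) and by the down-completeness of $\widecheck{S}_{AB}$.

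For (\ref{pitensor=tensorpi1check}), fix $({ \mathfrak{l}}_A,{ \mathfrak{l}}_B)$ and unfold the definition (\ref{inclusionpuretensorsScheck}): the left-hand side evaluated at $({ \mathfrak{l}}_A,{ \mathfrak{l}}_B)$ equals $\epsilon^{{ \mathfrak{S}}_A}_{{ \mathfrak{l}}_A}(\bigsqcap{}^{{}^{{ \mathfrak{S}}_{A}}}_{i\in I}\sigma_{i,A}) \bullet \epsilon^{{ \mathfrak{S}}_B}_{{ \mathfrak{l}}_B}(\sigma_B)$. Applying the evaluation axiom (\ref{axiomsigmainfsemilattice}) to ${ \mathfrak{S}}_A$ turns the first factor into $\bigwedge_{i\in I}\epsilon^{{ \mathfrak{S}}_A}_{{ \mathfrak{l}}_A}(\sigma_{i,A})$; commutativity of the monoid law $\bullet$ together with the distributivity property (\ref{distributivitybullet}) then allows the remaining factor $\epsilon^{{ \mathfrak{S}}_B}_{{ \mathfrak{l}}_B}(\sigma_B)$ to be pushed inside the meet, yielding $\bigwedge_{i\in I}\bigl(\epsilon^{{ \mathfrak{S}}_A}_{{ \mathfrak{l}}_A}(\sigma_{i,A}) \bullet \epsilon^{{ \mathfrak{S}}_B}_{{ \mathfrak{l}}_B}(\sigma_B)\bigr)$. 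Reading (\ref{inclusionpuretensorsScheck}) backwards identifies this with $\bigwedge_{i\in I}\iota^{\widecheck{S}_{AB}}(\sigma_{i,A},\sigma_B)({ \mathfrak{l}}_A,{ \mathfrak{l}}_B)$, which by the pointwise description of the infimum on $\widecheck{S}_{AB}$ is exactly $\bigl(\bigsqcap{}^{{}^{\widecheck{S}_{AB}}}_{i\in I}\iota^{\widecheck{S}_{AB}}(\sigma_{i,A},\sigma_B)\bigr)({ \mathfrak{l}}_A,{ \mathfrak{l}}_B)$. As $({ \mathfrak{l}}_A,{ \mathfrak{l}}_B)$ is arbitrary, (\ref{pitensor=tensorpi1check}) is proved.

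Relation (\ref{pitensor=tensorpi2check}) follows from the identical computation with $A$ and $B$ interchanged, using the ${ \mathfrak{S}}_B$-instance of (\ref{axiomsigmainfsemilattice}) and again (\ref{distributivitybullet}). I do not expect any genuine obstacle here: the argument is purely a pointwise unfolding of definitions. The only step deserving a moment's care is the use of (\ref{distributivitybullet}), which is stated in the form $x\bullet\bigwedge B=\bigwedge_{b\in B}(x\bullet b)$ whereas here the meet initially appears on the left of $\bullet$; hence commutativity of $\bullet$ must be invoked explicitly to bring the expression into that form.
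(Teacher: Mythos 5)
Your proof is correct and follows essentially the same route as the paper's: a pointwise evaluation at an arbitrary pair of effects, the axiom (\ref{axiomsigmainfsemilattice}) to turn the infimum of states into a meet in ${ \mathfrak{B}}$, and the distributivity (\ref{distributivitybullet}) of $\bullet$ (together with its commutativity) to pull the second factor inside, after which the pointwise description of infima in $\widecheck{S}_{AB}$ finishes the argument. Your explicit remark about invoking commutativity of $\bullet$ is a small but legitimate point of care that the paper leaves implicit.
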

\begin{proof}
This is a direct consequence of properties (\ref{axiomsigmainfsemilattice}) and (\ref{distributivitybullet}). More explicitly,
\begin{eqnarray}
\hspace{-1cm}\forall ({ \mathfrak{l}}_A,{ \mathfrak{l}}_B)\in { \mathfrak{E}}_{A}\times { \mathfrak{E}}_{B},\;\;\;\; \iota^{\widecheck{S}_{AB}}(\bigsqcap{}^{{}^{{ \mathfrak{S}}_{A}}}_{i\in I}\,\sigma_{i,A},\sigma_B)({ \mathfrak{l}}_A,{ \mathfrak{l}}_B) &=& \epsilon^{{ \mathfrak{S}}_A}_{{ \mathfrak{l}}_A}(\bigsqcap{}^{{}^{{ \mathfrak{S}}_{A}}}_{i\in I}\,\sigma_{i,A}) \bullet \epsilon^{{ \mathfrak{S}}_B}_{{ \mathfrak{l}}_B}(\sigma_B)\nonumber\\
&=& (\bigwedge{}_{i\in I}\,\epsilon^{{ \mathfrak{S}}_A}_{{ \mathfrak{l}}_A}(\sigma_{i,A})) \bullet \epsilon^{{ \mathfrak{S}}_B}_{{ \mathfrak{l}}_B}(\sigma_B)\nonumber\\
&=&\bigwedge{}_{i\in I}\, (\epsilon^{{ \mathfrak{S}}_A}_{{ \mathfrak{l}}_A}(\sigma_{i,A}) \bullet \epsilon^{{ \mathfrak{S}}_B}_{{ \mathfrak{l}}_B}(\sigma_B))\nonumber\\
&=&\bigwedge{}_{i\in I}\, \iota^{\widecheck{S}_{AB}}(\sigma_{i,A},\sigma_B)({ \mathfrak{l}}_A,{ \mathfrak{l}}_B)\nonumber\\
&=&(\bigsqcap{}^{{}^{\widecheck{S}_{AB}}}_{i\in I}\, \iota^{\widecheck{S}_{AB}}(\sigma_{i,A},\sigma_B))({ \mathfrak{l}}_A,{ \mathfrak{l}}_B).\;\;\;\;\;\;\;\;\;\;\;\;\;\;\;\;\;\;\;\;
\end{eqnarray}
In other words, $\iota^{\widecheck{S}_{AB}}(\bigsqcap{}^{{}^{{ \mathfrak{S}}_{A}}}_{i\in I}\,\sigma_{i,A},\sigma_B)= \bigsqcap{}^{{}^{\widecheck{S}_{AB}}}_{i\in I}\, \iota^{\widecheck{S}_{AB}}(\sigma_{i,A},\sigma_B)$.
\end{proof}

\begin{definition}
We define ${\widetilde{S}_{AB}}$ to be the sub Inf semi-lattice of ${\widecheck{S}_{AB}}$ generated by the elements $\iota^{\widecheck{S}_{AB}}(\sigma_A,\sigma_B)$ for any $(\sigma_A,\sigma_B)\in { \mathfrak{S}}_{A}\times { \mathfrak{S}}_{B}$.\\
${\widetilde{S}_{AB}}$ will be equivalently denoted ${ \mathfrak{S}}_A\widetilde{\otimes}{ \mathfrak{S}}_B$ and called {\em the minimal tensor product} of ${ \mathfrak{S}}_A$ and ${ \mathfrak{S}}_B$
\end{definition}

\subsection{The minimal tensor product}\label{subsectionminimaltensorproduct}

In the following, the set ${ \mathcal{P}}({ \mathfrak{S}}_{A} \times { \mathfrak{S}}_{B})$ will be equipped with the Inf semi-lattice structure $\cup$.

\begin{definition}
${ \mathcal{P}}({ \mathfrak{S}}_{A} \times { \mathfrak{S}}_{B})$ is equipped with a congruence relation denoted $\approx$ and defined between any two elements $\{\,(\sigma_{i,A},\sigma_{i,B})\;\vert\; i\in I\,\}$ and $\{\,(\sigma'_{j,A},\sigma'_{j,B})\;\vert\; j\in J\,\}$ of ${ \mathcal{P}}({ \mathfrak{S}}_{A} \times { \mathfrak{S}}_{B})$ by
\begin{eqnarray}
&&\hspace{-1cm}(\,\{\,(\sigma_{i,A},\sigma_{i,B})\;\vert\; i\in I\,\} \approx \{\,(\sigma'_{j,A},\sigma'_{j,B})\;\vert\; j\in J\,\}\,) \;\; :\Leftrightarrow \;\;\nonumber\\
&&\hspace{-1cm}(\,\forall {\mathfrak{l}}_A\in { \mathfrak{E}}_{A},\forall {\mathfrak{l}}_B\in { \mathfrak{E}}_{B},\;\;
\bigwedge{}_{i\in I}\; {\epsilon}\,{}^{{ \mathfrak{S}}_{A}}_{{ \mathfrak{l}}_A}(\sigma_{i,A}) \bullet {\epsilon}\,{}^{{ \mathfrak{S}}_{B}}_{{ \mathfrak{l}}_B}(\sigma_{i,B})
=
\bigwedge{}_{j\in J}\; {\epsilon}\,{}^{{ \mathfrak{S}}_{A}}_{{ \mathfrak{l}}_A}(\sigma'_{j,A}) \bullet {\epsilon}\,{}^{{ \mathfrak{S}}_{B}}_{{ \mathfrak{l}}_B}(\sigma'_{j,B})\,).\;\;\;\;\;\;\;\;\;\;\;\;
\end{eqnarray}
The congruence class associated with $U\subseteq { \mathfrak{S}}_{A} \times { \mathfrak{S}}_{B}$ will be denoted $U_\approx$.
\end{definition}

\begin{definition}
We introduce the following injective Inf semi-lattice homomorphism
\begin{eqnarray}
&&\hspace{-1cm}\begin{array}{rcrcl}
\Omega & : & { \mathcal{P}}({ \mathfrak{S}}_{A} \times { \mathfrak{S}}_{B})/_\approx & \hookrightarrow & {{\widecheck{S}}}_{AB}\\
& & \{\,(\sigma_{i,A},\sigma_{i,B})\;\vert\; i\in I\,\}_\approx & \mapsto & \Omega( \{\,(\sigma_{i,A},\sigma_{i,B})\;\vert\; i\in I\,\}_\approx)\;\vert\; \forall {\mathfrak{l}}_A\in { \mathfrak{E}}_{A},\forall {\mathfrak{l}}_B\in { \mathfrak{E}}_{B},
\end{array}\nonumber \\
&&\;\;\;\;\;\;\Omega( \{\,(\sigma_{i,A},\sigma_{i,B})\;\vert\; i\in I\,\}_\approx)({\mathfrak{l}}_A,{\mathfrak{l}}_B):=\bigwedge{}_{i\in I}\; {\epsilon}\,{}^{{ \mathfrak{S}}_{A}}_{{ \mathfrak{l}}_A}(\sigma_{i,A}) \bullet {\epsilon}\,{}^{{ \mathfrak{S}}_{B}}_{{ \mathfrak{l}}_B}(\sigma_{i,B}).
\;\;\;\;\;\;\;\;\;\;\;\;\;\;\;\;\;\;\;\;\;\;\;\;
\end{eqnarray}
We note that ${\widetilde{S}_{AB}}$ is the image of $\Omega$ in ${\widecheck{S}_{AB}}$.  
\end{definition}

If we adopt the following notation
\begin{eqnarray}
\forall \widetilde{\sigma}\in {{\widetilde{S}}}_{AB},\;\;\;\; \langle \widetilde{\sigma} \rangle & := & Max \{\,U\in { \mathcal{P}}({ \mathfrak{S}}_{A} \times { \mathfrak{S}}_{B})\;\vert\; \Omega(U_\approx)  \sqsupseteq_{{}_{{{\widetilde{S}}}_{AB}}} \widetilde{\sigma} \,\}\nonumber\\
& = & \{\,(\sigma_A,\sigma_B) \;\vert\; \Omega(\{(\sigma_A,\sigma_B)\}_\approx)  \sqsupseteq_{{}_{{{\widetilde{S}}}_{AB}}}\widetilde{\sigma} \,\},
\end{eqnarray}
we note the following Galois connection
\begin{eqnarray}
\forall \widetilde{\sigma}\in {{\widetilde{S}}}_{AB},\forall U\in { \mathcal{P}}({ \mathfrak{S}}_{A} \times { \mathfrak{S}}_{B}),&&  \langle \widetilde{\sigma} \rangle \supseteq U \;\;\;\Leftrightarrow\;\;\; \widetilde{\sigma} \sqsubseteq_{{}_{{{\widetilde{S}}}_{AB}}} \Omega (U_\approx).
\end{eqnarray}
Indeed, let us fix $U:=\{\,(\sigma_{i,A},\sigma_{i,B})\;\vert\; i\in I\,\}$ and $\widetilde{\sigma}\in {{\widetilde{S}}}_{AB}$.  We have
\begin{eqnarray}
\langle \widetilde{\sigma} \rangle \supseteq U & \Leftrightarrow & 
\forall i\in I, \Omega(\{(\sigma_{i,A},\sigma_{i,B})\}_\approx)  \sqsupseteq_{{}_{{{\widetilde{S}}}_{AB}}}\widetilde{\sigma}\nonumber\\
& \Leftrightarrow & \forall i\in I,  \forall {\mathfrak{l}}_A\in { \mathfrak{E}}_{A},\forall {\mathfrak{l}}_B\in { \mathfrak{E}}_{B}, \;\; \widetilde{\sigma}({\mathfrak{l}}_A,{\mathfrak{l}}_B) \leq
{\epsilon}\,{}^{{ \mathfrak{S}}_{A}}_{{\mathfrak{l}}_A} (\sigma_{i,A}) \bullet {\epsilon}\,{}^{{ \mathfrak{S}}_{B}}_{{\mathfrak{l}}_B}(\sigma_{i,B})\nonumber\\
 & \Leftrightarrow & 
\forall {\mathfrak{l}}_A\in { \mathfrak{E}}_{A},\forall {\mathfrak{l}}_B\in { \mathfrak{E}}_{B}, \;\;\widetilde{\sigma}({\mathfrak{l}}_A,{\mathfrak{l}}_B) \leq \bigwedge{}_{i\in I}\,
{\epsilon}\,{}^{{ \mathfrak{S}}_{A}}_{{\mathfrak{l}}_A} (\sigma_{i,A}) \bullet {\epsilon}\,{}^{{ \mathfrak{S}}_{B}}_{{\mathfrak{l}}_B}(\sigma_{i,B})\nonumber\\
& \Leftrightarrow & \widetilde{\sigma} \sqsubseteq_{{}_{{{\widetilde{S}}}_{AB}}} \Omega(U_\approx).
\end{eqnarray}  
As a consequence of this Galois relation, we obtain that 
\begin{theorem}
${{\widetilde{S}}}_{AB}$ is a down-complete Inf semi-lattice with
\begin{eqnarray}
\forall \{\,U_i\;\vert\; i\in I\,\}\subseteq { \mathcal{P}}({ \mathfrak{S}}_{A} \times { \mathfrak{S}}_{B}),&&\bigsqcap{}^{{}^{{{\widetilde{S}}}_{AB}}}_{i\in I} \; \Omega((U_i)_\approx)  = \Omega((\bigcup{}_{i\in I}\; U_i)_\approx).\;\;\;\;\;\;\;\;\;\;\;\;\;\;\;\;\label{tensorinfSAB}
\end{eqnarray}
We will adopt the notation $\bigsqcap{}^{{}^{{ \widetilde{S}}_{AB}}}_{i\in I} \sigma_{i,A}\widetilde{\otimes} \sigma_{i,B}:=\Omega(\{\,(\sigma_{i,A},\sigma_{i,B})\;\vert\; i\in I\,\}_\approx)$ for any $\{\,(\sigma_{i,A},\sigma_{i,B})\;\vert\; i\in I\,\}$ in ${ \mathcal{P}}({ \mathfrak{S}}_{A} \times { \mathfrak{S}}_{B})$.
\end{theorem}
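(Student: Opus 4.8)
The plan is to read everything off the Galois connection displayed immediately before the statement, namely $\langle\widetilde{\sigma}\rangle\supseteq U\;\Leftrightarrow\;\widetilde{\sigma}\sqsubseteq_{\widetilde{S}_{AB}}\Omega(U_\approx)$, together with the observation recorded just after the definition of $\Omega$ that $\widetilde{S}_{AB}$ is precisely the image of $\Omega$, so that every $\widetilde{\sigma}\in\widetilde{S}_{AB}$ may be written $\widetilde{\sigma}=\Omega(U_\approx)$ for some $U\in\mathcal{P}(\mathfrak{S}_A\times\mathfrak{S}_B)$. Hence it suffices to prove that, for any family $\{U_i\;\vert\;i\in I\}$ with $I\neq\varnothing$, the element $\Omega((\bigcup_{i\in I}U_i)_\approx)$ is the greatest lower bound in $\widetilde{S}_{AB}$ of $\{\Omega((U_i)_\approx)\;\vert\;i\in I\}$; granting this, arbitrary infima exist in $\widetilde{S}_{AB}$, so $\widetilde{S}_{AB}$ is a down-complete Inf semi-lattice, and (\ref{tensorinfSAB}) holds by construction.

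First I would check that $\Omega((\bigcup_{i\in I}U_i)_\approx)$ is a lower bound. Taking $U:=\bigcup_{i\in I}U_i$ and $\widetilde{\sigma}:=\Omega((\bigcup_{i\in I}U_i)_\approx)$ in the Galois connection, the trivially true relation $\widetilde{\sigma}\sqsubseteq_{\widetilde{S}_{AB}}\widetilde{\sigma}$ gives $\langle\Omega((\bigcup_{i\in I}U_i)_\approx)\rangle\supseteq\bigcup_{i\in I}U_i$; since $U_j\subseteq\bigcup_{i\in I}U_i$ for each $j\in I$, we get $\langle\Omega((\bigcup_{i\in I}U_i)_\approx)\rangle\supseteq U_j$, whence, reading the Galois connection the other way, $\Omega((\bigcup_{i\in I}U_i)_\approx)\sqsubseteq_{\widetilde{S}_{AB}}\Omega((U_j)_\approx)$ for every $j$. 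Next I would verify maximality: if $\widetilde{\tau}\in\widetilde{S}_{AB}$ satisfies $\widetilde{\tau}\sqsubseteq_{\widetilde{S}_{AB}}\Omega((U_i)_\approx)$ for all $i\in I$, then $\langle\widetilde{\tau}\rangle\supseteq U_i$ for all $i$, hence $\langle\widetilde{\tau}\rangle\supseteq\bigcup_{i\in I}U_i$, and applying the Galois connection once more gives $\widetilde{\tau}\sqsubseteq_{\widetilde{S}_{AB}}\Omega((\bigcup_{i\in I}U_i)_\approx)$. This is exactly the statement that $(\langle\cdot\rangle,\Omega(\cdot_\approx))$ is an antitone Galois connection, so that $\Omega(\cdot_\approx)$ converts unions into infima; combined with the surjectivity of $\Omega(\cdot_\approx)$ onto $\widetilde{S}_{AB}$, this shows all infima exist in $\widetilde{S}_{AB}$ and are computed by (\ref{tensorinfSAB}).

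As a cross-check I would also verify the identity by a direct pointwise computation: for every $(\mathfrak{l}_A,\mathfrak{l}_B)\in\mathfrak{E}_A\times\mathfrak{E}_B$, the pointwise formula for infima in $\widecheck{S}_{AB}$ together with associativity of $\bigwedge$ in $\mathfrak{B}$ gives $\bigl(\bigsqcap^{\widecheck{S}_{AB}}_{i\in I}\Omega((U_i)_\approx)\bigr)(\mathfrak{l}_A,\mathfrak{l}_B)=\bigwedge_{i\in I}\bigwedge_{(\sigma_A,\sigma_B)\in U_i}\epsilon^{\mathfrak{S}_A}_{\mathfrak{l}_A}(\sigma_A)\bullet\epsilon^{\mathfrak{S}_B}_{\mathfrak{l}_B}(\sigma_B)=\bigwedge_{(\sigma_A,\sigma_B)\in\bigcup_{i\in I}U_i}\epsilon^{\mathfrak{S}_A}_{\mathfrak{l}_A}(\sigma_A)\bullet\epsilon^{\mathfrak{S}_B}_{\mathfrak{l}_B}(\sigma_B)=\Omega((\bigcup_{i\in I}U_i)_\approx)(\mathfrak{l}_A,\mathfrak{l}_B)$, so that $\bigsqcap^{\widecheck{S}_{AB}}_{i\in I}\Omega((U_i)_\approx)$ already lies in the image of $\Omega$, i.e. in $\widetilde{S}_{AB}$. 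Thus $\widetilde{S}_{AB}$ is closed under the ambient infima of $\widecheck{S}_{AB}$, and the infimum computed inside $\widetilde{S}_{AB}$ coincides with the one computed in $\widecheck{S}_{AB}$. There is no genuine obstacle here; the only delicate points, and they are minor, are the agreement between the infimum taken in the sub-semi-lattice $\widetilde{S}_{AB}$ and the one taken in $\widecheck{S}_{AB}$ — immediate from the closure just noted — and the handling of the empty family, which, consistently with the convention already in force for $\widecheck{S}_{AB}$ (which has no greatest element), is excluded.
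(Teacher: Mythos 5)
Your argument is correct and follows essentially the same route as the paper, which presents this theorem precisely as a consequence of the Galois connection $\langle\widetilde{\sigma}\rangle\supseteq U\Leftrightarrow\widetilde{\sigma}\sqsubseteq_{{}_{\widetilde{S}_{AB}}}\Omega(U_\approx)$ established just before the statement. Your additional pointwise cross-check, showing that the ambient infimum in $\widecheck{S}_{AB}$ of the $\Omega((U_i)_\approx)$ already lies in the image of $\Omega$, is a sound supplement that makes explicit why the infimum in the sub-semi-lattice agrees with the ambient one, but it does not change the substance of the argument.
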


\begin{theorem}\label{bifilterPtilde}
We have the following relations
\begin{eqnarray}
\forall \{\sigma_{i,A}\;\vert\; i\in I\}\subseteq { \mathfrak{S}}_{A},\forall \sigma_B\in { \mathfrak{S}}_{B},&& (\bigsqcap{}^{{}^{{ \mathfrak{S}}_{A}}}_{i\in I}\,\sigma_{i,A})\widetilde{\otimes} \sigma_B =  \bigsqcap{}^{{}^{{ \widetilde{S}}_{AB}}}_{i\in I} (\sigma_{i,A}\widetilde{\otimes} \sigma_B),\;\;\;\;\;\;\;\;\;\;\;\;\;\;\;\;\label{pitensor=tensorpi1}\\
\forall \{\sigma_{i,B}\;\vert\; i\in I\}\subseteq { \mathfrak{S}}_{B},\forall \sigma_A\in { \mathfrak{S}}_{A},&& \sigma_A \widetilde{\otimes}  (\bigsqcap{}^{{}^{{ \mathfrak{S}}_{B}}}_{i\in I}\,\sigma_{i,B}) =  \bigsqcap{}^{{}^{{ \widetilde{S}}_{AB}}}_{i\in I} (\sigma_A \widetilde{\otimes} \sigma_{i,B}).\;\;\;\;\;\;\;\;\;\;\;\;\;\;\;\;\label{pitensor=tensorpi2}
\end{eqnarray}
\end{theorem}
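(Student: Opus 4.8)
The plan is to obtain both identities as immediate consequences of their counterparts for the \emph{maximal} tensor product, i.e.\ of relations (\ref{pitensor=tensorpi1check}) and (\ref{pitensor=tensorpi2check}) of Theorem \ref{bifilterPcheck}, together with the already-established description (\ref{tensorinfSAB}) of infima in ${\widetilde{S}}_{AB}$. The only thing to keep in mind is that a single pure tensor $\sigma_A\widetilde{\otimes}\sigma_B$ is, by construction, nothing but $\Omega(\{(\sigma_A,\sigma_B)\}_\approx)$, which in turn equals $\iota^{\widecheck{S}_{AB}}(\sigma_A,\sigma_B)$ since for a one-element index set the defining formula for $\Omega$ reads $({ \mathfrak{l}}_A,{ \mathfrak{l}}_B)\mapsto \epsilon^{{ \mathfrak{S}}_A}_{{ \mathfrak{l}}_A}(\sigma_A)\bullet\epsilon^{{ \mathfrak{S}}_B}_{{ \mathfrak{l}}_B}(\sigma_B)$.

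Concretely, for the first identity I would start from the left-hand side $(\bigsqcap{}^{{}^{{ \mathfrak{S}}_{A}}}_{i\in I}\sigma_{i,A})\widetilde{\otimes}\sigma_B$, rewrite it as $\iota^{\widecheck{S}_{AB}}(\bigsqcap{}^{{}^{{ \mathfrak{S}}_{A}}}_{i\in I}\sigma_{i,A},\sigma_B)$, and apply (\ref{pitensor=tensorpi1check}) to get the ambient infimum $\bigsqcap{}^{{}^{\widecheck{S}_{AB}}}_{i\in I}\iota^{\widecheck{S}_{AB}}(\sigma_{i,A},\sigma_B)$ computed in ${\widecheck{S}}_{AB}$. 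Since the order on ${\widecheck{S}}_{AB}$ is pointwise, evaluating this element at any $({ \mathfrak{l}}_A,{ \mathfrak{l}}_B)$ yields $\bigwedge_{i\in I}\epsilon^{{ \mathfrak{S}}_A}_{{ \mathfrak{l}}_A}(\sigma_{i,A})\bullet\epsilon^{{ \mathfrak{S}}_B}_{{ \mathfrak{l}}_B}(\sigma_B)$, which is exactly $\Omega(\{(\sigma_{i,A},\sigma_B)\mid i\in I\}_\approx)({ \mathfrak{l}}_A,{ \mathfrak{l}}_B)$ by definition of $\Omega$. Hence the ambient infimum belongs to ${\widetilde{S}}_{AB}$ and, by (\ref{tensorinfSAB}) and the notation fixed just after it, equals $\bigsqcap{}^{{}^{{\widetilde{S}}_{AB}}}_{i\in I}(\sigma_{i,A}\widetilde{\otimes}\sigma_B)$, the right-hand side. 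The second identity is proved verbatim with the roles of ${ \mathfrak{S}}_A$ and ${ \mathfrak{S}}_B$ interchanged, invoking (\ref{pitensor=tensorpi2check}) instead of (\ref{pitensor=tensorpi1check}).

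The only delicate point — and the one where I expect any friction — is the passage between the infimum taken in the large semi-lattice ${\widecheck{S}}_{AB}$ and the one taken in the sub-Inf-semi-lattice ${\widetilde{S}}_{AB}$, since in general these need not coincide for a sub-semi-lattice. Here the pointwise computation resolves it cleanly: the ${\widecheck{S}}_{AB}$-infimum of a family of pure tensors is itself of the form $\Omega(U_\approx)$, hence lies in ${\widetilde{S}}_{AB}$, and because ${\widetilde{S}}_{AB}$ carries the induced order this common element is automatically the infimum in ${\widetilde{S}}_{AB}$ as well. This is precisely what the down-completeness statement (\ref{tensorinfSAB}), established immediately before, records, so no further argument is required.
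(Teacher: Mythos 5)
Your proposal is correct and follows essentially the same route as the paper, whose proof is literally ``Rewriting of Theorem \ref{bifilterPcheck}'': you make that rewriting explicit by identifying pure tensors with $\Omega$-classes (equivalently $\iota^{\widecheck{S}_{AB}}$-images), invoking (\ref{pitensor=tensorpi1check})--(\ref{pitensor=tensorpi2check}), and using (\ref{tensorinfSAB}) to see that the pointwise infimum lands in ${\widetilde{S}}_{AB}$ and agrees with the infimum there. Your explicit handling of the sub-semi-lattice versus ambient infimum issue is a sound elaboration of what the paper leaves implicit, so no gap remains.
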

\begin{proof}
Rewriting of Theorem \ref{bifilterPcheck}.
\end{proof}

\begin{theorem} The following maps are homomorphisms
\begin{eqnarray}
\hspace{-1cm}\begin{array}{rcrclccrcrcl}
\eta &: & { \widetilde{S}}_{AB} & \longrightarrow & {{ \mathfrak{S}}_{A}}  &&  & \lambda &: & { \widetilde{S}}_{AB} & \longrightarrow & {{ \mathfrak{S}}_{B}}  \\
& & \bigsqcap{}^{{}^{{ \widetilde{S}}_{AB}}}_{i\in I} \sigma_{i,A} \widetilde{\otimes} \sigma_{i,B} & \mapsto & \bigsqcap{}^{{}^{{{ \mathfrak{S}}_{A}}}}_{i\in I} \sigma_{i,A}  & & & & & \bigsqcap{}^{{}^{{ \widetilde{S}}_{AB}}}_{i\in I} \sigma_{i,A} \widetilde{\otimes} \sigma_{i,B} & \mapsto & \bigsqcap{}^{{}^{{{ \mathfrak{S}}_{B}}}}_{i\in I}  \sigma_{i,B}
\end{array}
\end{eqnarray}
\end{theorem}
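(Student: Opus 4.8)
The plan is first to verify that the rules defining $\eta$ and $\lambda$ do not depend on the chosen representative of the congruence class, and then to read off the homomorphic property directly from the description (\ref{tensorinfSAB}) of infima in $\widetilde{S}_{AB}$.

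\emph{Well-definedness of $\eta$.} Suppose $\Omega(\{(\sigma_{i,A},\sigma_{i,B})\mid i\in I\}_\approx)=\Omega(\{(\sigma'_{j,A},\sigma'_{j,B})\mid j\in J\}_\approx)$, i.e. the two underlying subsets of ${\mathfrak{S}}_A\times{\mathfrak{S}}_B$ are $\approx$-equivalent. Specialising the defining condition of $\approx$ to ${\mathfrak{l}}_B:={\mathfrak{Y}}_{{\mathfrak{E}}_B}$ and using the identity $x\bullet\textit{\bf Y}=x$ of (\ref{expressionbullet}) together with $\epsilon^{{\mathfrak{S}}_B}_{{\mathfrak{Y}}_{{\mathfrak{E}}_B}}(\cdot)=\textit{\bf Y}$, we obtain $\bigwedge_{i\in I}\epsilon^{{\mathfrak{S}}_A}_{{\mathfrak{l}}_A}(\sigma_{i,A})=\bigwedge_{j\in J}\epsilon^{{\mathfrak{S}}_A}_{{\mathfrak{l}}_A}(\sigma'_{j,A})$ for every ${\mathfrak{l}}_A\in{\mathfrak{E}}_A$. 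By axiom (\ref{axiomsigmainfsemilattice}) the two sides are $\epsilon^{{\mathfrak{S}}_A}_{{\mathfrak{l}}_A}(\bigsqcap{}^{{}^{{\mathfrak{S}}_A}}_{i\in I}\sigma_{i,A})$ and $\epsilon^{{\mathfrak{S}}_A}_{{\mathfrak{l}}_A}(\bigsqcap{}^{{}^{{\mathfrak{S}}_A}}_{j\in J}\sigma'_{j,A})$; since the equality holds for all ${\mathfrak{l}}_A$, the separation property (\ref{Chuseparated}) of $({\mathfrak{S}}_A,{\mathfrak{E}}_A,\epsilon^{{\mathfrak{S}}_A})$ forces $\bigsqcap{}^{{}^{{\mathfrak{S}}_A}}_{i\in I}\sigma_{i,A}=\bigsqcap{}^{{}^{{\mathfrak{S}}_A}}_{j\in J}\sigma'_{j,A}$, which is exactly the equality of the values assigned to $\eta$. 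The well-definedness of $\lambda$ follows symmetrically, specialising instead to ${\mathfrak{l}}_A:={\mathfrak{Y}}_{{\mathfrak{E}}_A}$ and using the corresponding instances of (\ref{axiomsigmainfsemilattice}) and (\ref{Chuseparated}) for $({\mathfrak{S}}_B,{\mathfrak{E}}_B,\epsilon^{{\mathfrak{S}}_B})$.

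\emph{Homomorphic property.} Let $\{\widetilde{\sigma}_k\mid k\in K\}$ be a family in $\widetilde{S}_{AB}$ and write $\widetilde{\sigma}_k=\Omega((U_k)_\approx)$ with $U_k:=\{(\sigma^{(k)}_{i,A},\sigma^{(k)}_{i,B})\mid i\in I_k\}$, so that $\eta(\widetilde{\sigma}_k)=\bigsqcap{}^{{}^{{\mathfrak{S}}_A}}_{i\in I_k}\sigma^{(k)}_{i,A}$. By (\ref{tensorinfSAB}) we have $\bigsqcap{}^{{}^{\widetilde{S}_{AB}}}_{k\in K}\widetilde{\sigma}_k=\Omega\bigl((\bigcup_{k\in K}U_k)_\approx\bigr)$, and applying the now representative-independent rule defining $\eta$ to the representative $\bigcup_{k\in K}U_k$ gives $\bigsqcap{}^{{}^{{\mathfrak{S}}_A}}$ of all first coordinates occurring in $\bigcup_{k\in K}U_k$, which by associativity of infima in ${\mathfrak{S}}_A$ equals $\bigsqcap{}^{{}^{{\mathfrak{S}}_A}}_{k\in K}\bigl(\bigsqcap{}^{{}^{{\mathfrak{S}}_A}}_{i\in I_k}\sigma^{(k)}_{i,A}\bigr)=\bigsqcap{}^{{}^{{\mathfrak{S}}_A}}_{k\in K}\eta(\widetilde{\sigma}_k)$. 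Hence $\eta$ preserves arbitrary infima, and the same argument with $B$ in place of $A$ applies to $\lambda$.

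\emph{Main obstacle.} The only delicate point is the well-definedness: everything rests on being able to ``freeze'' one tensor factor by evaluating at the distinguished effect ${\mathfrak{Y}}_{{\mathfrak{E}}_B}$ (respectively ${\mathfrak{Y}}_{{\mathfrak{E}}_A}$), which collapses the $\bullet$-dependence on one side, after which (\ref{axiomsigmainfsemilattice}) and (\ref{Chuseparated}) convert a pointwise identity into an equality of states. Once this is secured, the homomorphic property is a purely formal consequence of (\ref{tensorinfSAB}) and associativity of infima. One may note in passing that $\eta$ and $\lambda$ are moreover surjective, since $\eta(\sigma_A\widetilde{\otimes}\sigma_B)=\sigma_A$ for every $\sigma_B\in{\mathfrak{S}}_B$, and dually for $\lambda$.
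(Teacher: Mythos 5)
Your proof is correct: the paper states this theorem without any proof (it is meant to follow from the earlier theorem on $\eta,\lambda:\widecheck{S}_{AB}\to{\mathfrak{E}}^\ast$ together with the isomorphism ${\mathfrak{E}}^\ast\cong{\mathfrak{S}}$, i.e.\ precisely from evaluating at ${\mathfrak{Y}}_{{\mathfrak{E}}_B}$ resp.\ ${\mathfrak{Y}}_{{\mathfrak{E}}_A}$ and using (\ref{axiomsigmainfsemilattice}), (\ref{Chuseparated})), and your argument is exactly that route, spelled out, with the well-definedness step and the preservation of infima via (\ref{tensorinfSAB}) both handled correctly.
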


We can obviously clarify the poset structure on ${ \widetilde{S}}_{AB}$.

\begin{lemma} 
\begin{eqnarray}
&&\hspace{-1.5cm}(\,\bigsqcap{}^{{}^{{ \widetilde{S}}_{AB}}}_{i\in I} \sigma_{i,A}\widetilde{\otimes} \sigma_{i,B} \;\;\sqsubseteq_{{}_{\widetilde{S}_{AB}}} \;\;
\bigsqcap{}^{{}^{{ \widetilde{S}}_{AB}}}_{j\in J} \sigma'_{j,A}\widetilde{\otimes} \sigma'_{j,B} \,) \;\; \Leftrightarrow \;\;\nonumber\\
&&\hspace{-1.5cm}(\,\forall {\mathfrak{l}}_A\in { \mathfrak{E}}_{A},\forall {\mathfrak{l}}_B\in { \mathfrak{E}}_{B},\;\;
\bigwedge{}_{i\in I}\; {\epsilon}\,{}^{{ \mathfrak{S}}_{A}}_{{ \mathfrak{l}}_A}(\sigma_{i,A}) \bullet {\epsilon}\,{}^{{ \mathfrak{S}}_{B}}_{{ \mathfrak{l}}_B}(\sigma_{i,B})
\leq
\bigwedge{}_{j\in J}\; {\epsilon}\,{}^{{ \mathfrak{S}}_{A}}_{{ \mathfrak{l}}_A}(\sigma'_{j,A}) \bullet {\epsilon}\,{}^{{ \mathfrak{S}}_{B}}_{{ \mathfrak{l}}_B}(\sigma'_{j,B})\,).\;\;\;\;\;\;\;\;\;\;\;\;\;\;\;
\end{eqnarray}
\end{lemma}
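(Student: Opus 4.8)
The plan is to reduce the statement to the defining formula for $\Omega$ together with the fact that $\widecheck{S}_{AB}$ carries the pointwise order. Since $\widetilde{S}_{AB}$ is, by construction, a sub Inf semi-lattice of $\widecheck{S}_{AB}$, for any two of its elements $\Phi$ and $\Psi$ one has $\Phi \sqsubseteq_{\widetilde{S}_{AB}} \Psi$ iff $\Phi \sqcap_{\widetilde{S}_{AB}} \Psi = \Phi$ iff $\Phi \sqcap_{\widecheck{S}_{AB}} \Psi = \Phi$, and this last equality, read off pointwise using the formula $(\Phi \sqcap_{\widecheck{S}_{AB}} \Psi)({ \mathfrak{l}}_A,{ \mathfrak{l}}_B) = \Phi({ \mathfrak{l}}_A,{ \mathfrak{l}}_B) \wedge \Psi({ \mathfrak{l}}_A,{ \mathfrak{l}}_B)$, amounts exactly to $\Phi({ \mathfrak{l}}_A,{ \mathfrak{l}}_B) \leq \Psi({ \mathfrak{l}}_A,{ \mathfrak{l}}_B)$ for every $({ \mathfrak{l}}_A,{ \mathfrak{l}}_B) \in { \mathfrak{E}}_{A}\times { \mathfrak{E}}_{B}$, by the very definition of the order on maps valued in the Inf semi-lattice ${ \mathfrak{B}}$.

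I would then apply this with $\Phi := \bigsqcap^{\widetilde{S}_{AB}}_{i\in I}\sigma_{i,A}\widetilde{\otimes}\sigma_{i,B} = \Omega(\{(\sigma_{i,A},\sigma_{i,B})\mid i\in I\}_\approx)$ and $\Psi := \bigsqcap^{\widetilde{S}_{AB}}_{j\in J}\sigma'_{j,A}\widetilde{\otimes}\sigma'_{j,B} = \Omega(\{(\sigma'_{j,A},\sigma'_{j,B})\mid j\in J\}_\approx)$, and substitute the defining formula of $\Omega$, namely $\Omega(\{(\sigma_{i,A},\sigma_{i,B})\mid i\in I\}_\approx)({ \mathfrak{l}}_A,{ \mathfrak{l}}_B) = \bigwedge_{i\in I}\epsilon^{{ \mathfrak{S}}_A}_{{ \mathfrak{l}}_A}(\sigma_{i,A}) \bullet \epsilon^{{ \mathfrak{S}}_B}_{{ \mathfrak{l}}_B}(\sigma_{i,B})$ and likewise for the $J$-indexed family. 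This converts the pointwise inequality $\Phi({ \mathfrak{l}}_A,{ \mathfrak{l}}_B) \leq \Psi({ \mathfrak{l}}_A,{ \mathfrak{l}}_B)$, for all $({ \mathfrak{l}}_A,{ \mathfrak{l}}_B)$, into precisely the right-hand side of the asserted equivalence.

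An equivalent route is through the congruence $\approx$: by $(\ref{tensorinfSAB})$ one has $\Phi \sqcap_{\widetilde{S}_{AB}} \Psi = \Omega\big((\{(\sigma_{i,A},\sigma_{i,B})\mid i\in I\}\cup\{(\sigma'_{j,A},\sigma'_{j,B})\mid j\in J\})_\approx\big)$, so by injectivity of $\Omega$ the relation $\Phi \sqcap_{\widetilde{S}_{AB}} \Psi = \Phi$ is equivalent to the $\approx$-equivalence of that union with $\{(\sigma_{i,A},\sigma_{i,B})\mid i\in I\}$; unfolding the definition of $\approx$ gives $\bigwedge_{i\in I}(\cdots)\wedge\bigwedge_{j\in J}(\cdots) = \bigwedge_{i\in I}(\cdots)$ for all $({ \mathfrak{l}}_A,{ \mathfrak{l}}_B)$, i.e.\ the stated inequality. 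In fact the desired equivalence is nothing but an instance of the Galois connection established just above, taking $\widetilde{\sigma} = \Phi$ and $U = \{(\sigma'_{j,A},\sigma'_{j,B})\mid j\in J\}$.

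There is no genuine obstacle here; the content is pure bookkeeping. The only points deserving a moment's care are the orientation of the pointwise order on $\widecheck{S}_{AB}$ (so that $\sqsubseteq$ translates to $\leq$ and not $\geq$) and the observation that meets computed in $\widetilde{S}_{AB}$ coincide with those computed in $\widecheck{S}_{AB}$, which is immediate because $\widetilde{S}_{AB}$ is a sub Inf semi-lattice of $\widecheck{S}_{AB}$.
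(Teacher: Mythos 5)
Your argument is correct and is exactly the intended justification: the paper states this lemma without proof, regarding it as immediate from the fact that the order on $\widetilde{S}_{AB}$ is the restriction of the pointwise order on $\widecheck{S}_{AB}$ together with the defining formula for $\Omega$ (equivalently, the Galois connection established just before). Your unfolding of these definitions, including the check that meets (and hence the order) in $\widetilde{S}_{AB}$ agree with those of $\widecheck{S}_{AB}$, supplies precisely the bookkeeping the paper leaves implicit.
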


This poset structure can be "explicited" according to following lemma addressing the word problem in ${{\widetilde{S}}}_{AB}$.

\begin{lemma}\label{Lemmadevelopetildeleqetilde}
\begin{eqnarray}
&& \hspace{-2cm}\left( \bigsqcap{}^{{}^{{ \widetilde{S}}_{AB}}}_{i\in I} \sigma_{i,A}\widetilde{\otimes} \sigma_{i,B} \;\;\sqsubseteq_{{}_{{{\widetilde{S}}}_{AB}}} \;\sigma_{A}\widetilde{\otimes} \sigma_{B} \right)
 \nonumber\\
&\Leftrightarrow &\left( (\bigsqcap{}^{{}_{{ \mathfrak{S}}_{A}}}_{k\in I}\; \sigma_{k,A}) \;\sqsubseteq_{{}_{{ \mathfrak{S}}_{A}}}\sigma_{A}
\;\;\textit{\rm and}\;\;
(\bigsqcap{}^{{}_{{ \mathfrak{S}}_{B}}}_{m\in I} \;\sigma_{m,B})\; \sqsubseteq_{{}_{{ \mathfrak{S}}_{B}}} \sigma_{B}
\;\;\textit{\rm and}\;\;\right.\nonumber\\
&&\left.\left(\forall \varnothing \varsubsetneq K  \varsubsetneq I, \;\; (\bigsqcap{}^{{}_{{ \mathfrak{S}}_{A}}}_{k\in K}\; \sigma_{k,A}) \;\sqsubseteq_{{}_{{ \mathfrak{S}}_{A}}}\sigma_{A}\;\;\;\textit{\rm or}\;\;\; (\bigsqcap{}^{{}_{{ \mathfrak{S}}_{B}}}_{m\in I-K} \;\sigma_{m,B})\; \sqsubseteq_{{}_{{ \mathfrak{S}}_{B}}} \sigma_{B}\right) \right)\;\;\;\;\;\;\;\;\;\;\;\;\;\;\;\;\;\;\;\;\;\;\;
\label{developmentetildeordersimplify}\\
&\Leftrightarrow & \left( 
\exists { \mathcal{K}},{ \mathcal{K}}'\subseteq 2^I\;\;\textit{\rm with}\;\; { \mathcal{K}}\cup { \mathcal{K}}'=2^I, { \mathcal{K}}\cap { \mathcal{K}}'=\varnothing, \{\varnothing\}\in { \mathcal{K}}', I\in { \mathcal{K}}\;\vert \right. \;\;\nonumber\\
&& \left. (\bigsqcup{}^{{}_{{ \mathfrak{S}}_{A}}}_{K\in { \mathcal{K}}}\bigsqcap{}^{{}_{{ \mathfrak{S}}_{A}}}_{k\in K}\; \sigma_{k,A}) \;\sqsubseteq_{{}_{{ \mathfrak{S}}_{A}}}\sigma_{A}\;\;\;\textit{\rm and}\;\;\; (\bigsqcup{}^{{}_{{ \mathfrak{S}}_{A}}}_{K'\in { \mathcal{K}}'}\bigsqcap{}^{{}_{{ \mathfrak{S}}_{B}}}_{m\in I-K'} \;\sigma_{m,B})\; \sqsubseteq_{{}_{{ \mathfrak{S}}_{B}}} \sigma_{B}\right).\label{developmentetildeordersimplifypost}
\end{eqnarray}
\end{lemma}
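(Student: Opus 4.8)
The plan is to reduce the word problem to the criterion supplied by the preceding lemma, specialised to a single pure tensor on the right: the relation $\bigsqcap^{\widetilde{S}_{AB}}_{i\in I}\sigma_{i,A}\widetilde{\otimes}\sigma_{i,B}\sqsubseteq_{\widetilde{S}_{AB}}\sigma_A\widetilde{\otimes}\sigma_B$ holds if and only if, for every $(\mathfrak{l}_A,\mathfrak{l}_B)\in\mathfrak{E}_A\times\mathfrak{E}_B$,
\[
\bigwedge_{i\in I}\epsilon^{\mathfrak{S}_A}_{\mathfrak{l}_A}(\sigma_{i,A})\bullet\epsilon^{\mathfrak{S}_B}_{\mathfrak{l}_B}(\sigma_{i,B})\ \leq\ \epsilon^{\mathfrak{S}_A}_{\mathfrak{l}_A}(\sigma_A)\bullet\epsilon^{\mathfrak{S}_B}_{\mathfrak{l}_B}(\sigma_B).
\]
First I would record the relevant three-element arithmetic: in $\mathfrak{B}$ one has $x\bullet y=\textit{\bf Y}$ iff $x=y=\textit{\bf Y}$, $x\bullet y=\textit{\bf N}$ iff $\textit{\bf N}\in\{x,y\}$, and $x\bullet y=\bot$ otherwise, while $\bigwedge_{i}z_i$ equals the common value of the $z_i$ when that exists and $\bot$ otherwise. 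Hence the left-hand side above is $\bot$ --- so the inequality is automatic --- unless either all $\epsilon_{\mathfrak{l}_A}(\sigma_{i,A})=\epsilon_{\mathfrak{l}_B}(\sigma_{i,B})=\textit{\bf Y}$ (left-hand side $=\textit{\bf Y}$, and since $\textit{\bf Y}$ is maximal in $\mathfrak{B}$ the inequality then demands $\epsilon_{\mathfrak{l}_A}(\sigma_A)=\epsilon_{\mathfrak{l}_B}(\sigma_B)=\textit{\bf Y}$), or every $i$ satisfies $\epsilon_{\mathfrak{l}_A}(\sigma_{i,A})=\textit{\bf N}$ or $\epsilon_{\mathfrak{l}_B}(\sigma_{i,B})=\textit{\bf N}$ (left-hand side $=\textit{\bf N}$, the inequality then demanding $\epsilon_{\mathfrak{l}_A}(\sigma_A)=\textit{\bf N}$ or $\epsilon_{\mathfrak{l}_B}(\sigma_B)=\textit{\bf N}$). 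Thus the criterion splits into a ``$\textit{\bf Y}$-clause'' (i) and an ``$\textit{\bf N}$-clause'' (ii), and it remains to match (i)$\wedge$(ii) with (\ref{developmentetildeordersimplify}).

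For (i): since $\epsilon^{\mathfrak{S}}$ is order-preserving and preserves arbitrary infima (relation (\ref{axiomsigmainfsemilattice})), $\epsilon_{\mathfrak{l}_A}(\sigma_{i,A})=\textit{\bf Y}$ for all $i$ forces $\epsilon_{\mathfrak{l}_A}(\bigsqcap^{\mathfrak{S}_A}_{i\in I}\sigma_{i,A})=\textit{\bf Y}$, so $\bigsqcap^{\mathfrak{S}_A}_{i\in I}\sigma_{i,A}\sqsubseteq_{\mathfrak{S}_A}\sigma_A$ delivers the $A$-part of the conclusion, and likewise for $B$; conversely, each $\mathfrak{l}_{(\Sigma,\cdot)}$ with $\Sigma\in\mathfrak{S}_A$ lies in $\mathfrak{E}_A$ (write $\mathfrak{l}_{(\Sigma,\cdot)}=\mathfrak{l}_{(\Sigma,\Sigma')}\sqcap\mathfrak{Y}_{\mathfrak{E}_A}$, with $\Sigma'$ furnished by (\ref{axiomreduc1}) when $\Sigma\neq\bot_{\mathfrak{S}_A}$, and use the closure axioms of Definition \ref{DefinreductionE}), and since $\epsilon_{\mathfrak{l}_{(\Sigma,\cdot)}}(\sigma_{i,A})=\textit{\bf Y}$ for all $i$ when $\Sigma=\bigsqcap^{\mathfrak{S}_A}_{i\in I}\sigma_{i,A}$, evaluating (i) at $\mathfrak{l}_A=\mathfrak{l}_{(\Sigma,\cdot)}$, $\mathfrak{l}_B=\mathfrak{Y}_{\mathfrak{E}_B}$ returns $\bigsqcap^{\mathfrak{S}_A}_{i\in I}\sigma_{i,A}\sqsubseteq_{\mathfrak{S}_A}\sigma_A$, and symmetrically for $B$. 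Hence (i) is equivalent to the first two clauses of (\ref{developmentetildeordersimplify}).

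The substantive point is (ii), which I would analyse assuming the first two clauses already in force, showing that then (ii) is equivalent to the third clause of (\ref{developmentetildeordersimplify}). The lever is the involution (\ref{etbar}): $\epsilon_{\mathfrak{l}}(\sigma)=\textit{\bf N}$ iff $\epsilon_{\overline{\mathfrak{l}}}(\sigma)=\textit{\bf Y}$, and $\overline{\mathfrak{l}}$ ranges over $\mathfrak{E}$ together with $\mathfrak{l}$ (axiom (\ref{axiomreduc3})). For one direction, given $\varnothing\subsetneq K\subsetneq I$ set $\Sigma_A:=\bigsqcap^{\mathfrak{S}_A}_{k\in K}\sigma_{k,A}$ and $\Sigma_B:=\bigsqcap^{\mathfrak{S}_B}_{m\in I\smallsetminus K}\sigma_{m,B}$ (both exist, $K$ and $I\smallsetminus K$ being non-empty) and apply (ii) to $\mathfrak{l}_A:=\overline{\mathfrak{l}_{(\Sigma_A,\cdot)}}$, $\mathfrak{l}_B:=\overline{\mathfrak{l}_{(\Sigma_B,\cdot)}}$: its hypothesis holds because $\epsilon_{\mathfrak{l}_A}(\sigma_{i,A})=\textit{\bf N}$ for $i\in K$ and $\epsilon_{\mathfrak{l}_B}(\sigma_{i,B})=\textit{\bf N}$ for $i\in I\smallsetminus K$, while its conclusion unwinds to exactly $\Sigma_A\sqsubseteq_{\mathfrak{S}_A}\sigma_A$ or $\Sigma_B\sqsubseteq_{\mathfrak{S}_B}\sigma_B$, the third clause for this $K$. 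Conversely, given $\mathfrak{l}_A,\mathfrak{l}_B$ meeting the hypothesis of (ii), put $K:=\{i: \epsilon_{\mathfrak{l}_A}(\sigma_{i,A})=\textit{\bf N}\}$, so that $\epsilon_{\overline{\mathfrak{l}_A}}(\sigma_{k,A})=\textit{\bf Y}$ on $K$ and $\epsilon_{\overline{\mathfrak{l}_B}}(\sigma_{m,B})=\textit{\bf Y}$ on $I\smallsetminus K$; using infimum-preservation and monotonicity, the three cases $K=I$, $K=\varnothing$ and $\varnothing\subsetneq K\subsetneq I$ each yield the conclusion of (ii) from the first, second and third clause respectively. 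This gives (i)$\wedge$(ii) $\Leftrightarrow$ (\ref{developmentetildeordersimplify}).

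Finally, (\ref{developmentetildeordersimplify}) $\Leftrightarrow$ (\ref{developmentetildeordersimplifypost}) is pure bookkeeping on $2^I$, once $\bigsqcup^{\mathfrak{S}_A}_{K\in\mathcal{K}}\bigsqcap_{k\in K}\sigma_{k,A}\sqsubseteq_{\mathfrak{S}_A}\sigma_A$ is read as ``$\sigma_A$ is an upper bound of that family'' and dually: from (\ref{developmentetildeordersimplify}) build the partition $2^I=\mathcal{K}\sqcup\mathcal{K}'$ by placing $I$ in $\mathcal{K}$, $\varnothing$ in $\mathcal{K}'$, and each proper non-empty $K$ in $\mathcal{K}$ or $\mathcal{K}'$ according to which disjunct of the third clause it satisfies; from such a partition the first clause is the instance $I\in\mathcal{K}$, the second the instance $\varnothing\in\mathcal{K}'$, and the third holds since $\mathcal{K}\cup\mathcal{K}'=2^I$. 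The step I expect to be the main obstacle is clause (ii): one must recognise that an effect enters only through the $\textit{\bf Y}$-level set of its involute --- a principal filter $\uparrow^{\mathfrak{S}}\Sigma$ --- and that every such filter is realised inside the (possibly reduced) space $\mathfrak{E}$ by $\overline{\mathfrak{l}_{(\Sigma,\cdot)}}$, this membership being the one genuine use of the reduction axioms of Definition \ref{DefinreductionE}; everything else is the elementary arithmetic of $\mathfrak{B}$.
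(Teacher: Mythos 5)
Your proof is correct and follows essentially the same route as the paper: reduce the order relation to the pointwise inequality in $\mathfrak{B}$, extract the first two clauses by testing against $\mathfrak{Y}_{\mathfrak{E}_A}$, $\mathfrak{Y}_{\mathfrak{E}_B}$, and extract the third clause by testing against the effects $\mathfrak{l}_{(\cdot,\Sigma_A)}$, $\mathfrak{l}_{(\cdot,\Sigma_B)}$ attached to $\bigsqcap_{k\in K}\sigma_{k,A}$ and $\bigsqcap_{m\in I-K}\sigma_{m,B}$. You in fact go further than the printed argument: the sufficiency direction, which the paper dismisses with ``we let the reader check that we have obtained the whole set of independent inequalities,'' is carried out in your case analysis $K=I$, $K=\varnothing$, $\varnothing\varsubsetneq K\varsubsetneq I$ with $K:=\{i:\epsilon_{\mathfrak{l}_A}(\sigma_{i,A})=\textit{\bf N}\}$, and you also supply the membership argument $\mathfrak{l}_{(\Sigma,\cdot)}\in\mathfrak{E}$ and the bookkeeping for the second equivalence, neither of which appears in the paper.
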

\begin{proof}
The inequality $\bigsqcap{}^{{}^{{ \widetilde{S}}_{AB}}}_{i\in I} \sigma_{i,A}\widetilde{\otimes} \sigma_{i,B} \sqsubseteq_{{}_{{{\widetilde{S}}}_{AB}}} \sigma_{A}\widetilde{\otimes} \sigma_{B}$ 
 is equivalent to
\begin{eqnarray}
\forall {\mathfrak{l}}_A\in { \mathfrak{E}}_{A}, \forall {\mathfrak{l}}_B\in { \mathfrak{E}}_{B},&&
\left( \bigwedge{}_{i\in I}\;
{\epsilon}\,{}^{{ \mathfrak{S}}_{A}}_{{\mathfrak{l}}_A} ( \sigma_{i,A})\bullet {\epsilon}\,{}^{{ \mathfrak{S}}_{B}}_{{\mathfrak{l}}_B} ( \sigma_{i,B})\right)
\leq
{\epsilon}\,{}^{{ \mathfrak{S}}_{A}}_{{\mathfrak{l}}_A} (\sigma_{A}) \bullet {\epsilon}\,{}^{{ \mathfrak{S}}_{B}}_{{\mathfrak{l}}_B}(\sigma_{B}).\;\;\;\;\;\;\;\;\;\;\;\;\;\;\;\;\;\;\label{assumptionorder2}
\end{eqnarray}
We intent to choose a pertinent set of effects ${\mathfrak{l}}_A\in { \mathfrak{E}}_{A}$
 and ${\mathfrak{l}}_B\in { \mathfrak{E}}_{B}$ to reformulate this inequality. \\
Let us firstly choose ${\mathfrak{l}}_B={ \mathfrak{Y}}_{{}_{{ \mathfrak{E}}_{B}}}$. Using  (\ref{expressionbullet}), we obtain
\begin{eqnarray}
{\epsilon}\,{}^{{ \mathfrak{S}}_{A}}_{{ \mathfrak{l}}_A}(\bigsqcap{}^{{}^{{ \mathfrak{S}}_{A}}}_{i\in I}\; \sigma_{i,A})\leq {\epsilon}\,{}^{{ \mathfrak{S}}_{A}}_{{ \mathfrak{l}}_A}(\sigma_{A})
,\forall { \mathfrak{l}}_A \in { \mathfrak{E}}_A,
\end{eqnarray} 
which leads immediately (using (\ref{Chuseparated}))
\begin{eqnarray}
\bigsqcap{}^{{}^{{ \mathfrak{S}}_{A}}}_{i\in I}\; \sigma_{i,A} \;\sqsubseteq_{{}_{{ \mathfrak{S}}_{A}}} \sigma_{A}.\label{ineqBY}
\end{eqnarray}
Choosing ${\mathfrak{l}}_A={ \mathfrak{Y}}_{{}_{{ \mathfrak{E}}_{A}}}$, we obtain along the same line 
\begin{eqnarray}
\bigsqcap{}^{{}^{{ \mathfrak{S}}_{B}}}_{i\in I} \;\sigma_{i,B}\; \sqsubseteq_{{}_{{ \mathfrak{S}}_{B}}} \sigma_{B}.\label{ineqAY}
\end{eqnarray}
Let us now consider $\varnothing \varsubsetneq K  \varsubsetneq I$ and let us choose ${ \mathfrak{l}}_A$ and ${ \mathfrak{l}}_B$ according to
\begin{eqnarray}
{\epsilon}\,{}^{{ \mathfrak{S}}_{A}}_{{ \mathfrak{l}}_A}(\sigma):=\textit{\bf N},\forall \sigma \sqsupseteq_{{}_{{ \mathfrak{S}}_{A}}} \bigsqcap{}^{{}^{{ \mathfrak{S}}_{A}}}_{k\in K}\; \sigma_{k,A}&\textit{ \rm and}&{\epsilon}\,{}^{{ \mathfrak{S}}_{A}}_{{ \mathfrak{l}}_A}(\sigma):=\bot, \;\;\textit{ \rm elsewhere}, \;\;\;\;\;\;\;\;\;\;\;\;\;\;\;\;\;\;\\
{\epsilon}\,{}^{{ \mathfrak{S}}_{B}}_{{ \mathfrak{l}}_B}(\sigma):=\textit{\bf N},\forall \sigma \sqsupseteq_{{}_{{ \mathfrak{S}}_{B}}} \bigsqcap{}^{{}^{{ \mathfrak{S}}_{B}}}_{m\in I-K}\; \sigma_{m,B}&\textit{ \rm and}& {\epsilon}\,{}^{{ \mathfrak{S}}_{B}}_{{ \mathfrak{l}}_B}(\sigma):=\bot, \;\;\textit{ \rm elsewhere}.
\end{eqnarray}
We deduce, from the assumption (\ref{assumptionorder2}), that, for this $\varnothing \varsubsetneq K  \varsubsetneq I$, we have
\begin{eqnarray}
 &&(\bigsqcap{}^{{}^{{ \mathfrak{S}}_{A}}}_{k\in K}\; \sigma_{k,A} \;\sqsubseteq_{{}_{{ \mathfrak{S}}_{A}}} \sigma_{A}) \;\;\textit{\rm or}\;\; (\bigsqcap{}^{{}^{{ \mathfrak{S}}_{B}}}_{m\in I-K} \;\sigma_{m,B}\; \sqsubseteq_{{}_{{ \mathfrak{S}}_{B}}} \sigma_{B}).\label{ineqN}
\end{eqnarray}
We let the reader check that we have obtained the whole set of independent inequalities reformulating the property (\ref{assumptionorder2}).
\end{proof}

\begin{theorem}
If ${ \mathfrak{S}}_{A}$ and ${ \mathfrak{S}}_{B}$ admit $\bot_{{}_{{\mathfrak{S}}_A}}$ and $\bot_{{}_{{\mathfrak{S}}_B}}$ respectively as bottom elements, then
${{\widetilde{S}}}_{AB}$ admits a bottom element explicitly given by $\bot_{{}_{{{\widetilde{S}}}_{AB}}}=\bot_{{}_{{\mathfrak{S}}_A}}\widetilde{\otimes}\bot_{{}_{{\mathfrak{S}}_B}}$.
\end{theorem}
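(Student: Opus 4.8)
The plan is to reduce the claim to a single-generator inequality that is handled by the word problem of Lemma \ref{Lemmadevelopetildeleqetilde}. First I would record that $\bot_{{}_{{\mathfrak{S}}_A}}\widetilde{\otimes}\bot_{{}_{{\mathfrak{S}}_B}}$ is genuinely an element of ${\widetilde{S}}_{AB}$: it is the value $\iota^{\widecheck{S}_{AB}}(\bot_{{}_{{\mathfrak{S}}_A}},\bot_{{}_{{\mathfrak{S}}_B}})$ of a pure tensor, hence one of the generators of the sub Inf semi-lattice ${\widetilde{S}}_{AB}$. It therefore only remains to check that it lies below every element of ${\widetilde{S}}_{AB}$.

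Next I would invoke the fact that every element of ${\widetilde{S}}_{AB}$ has the form $\widetilde{\sigma}=\bigsqcap{}^{{}^{{ \widetilde{S}}_{AB}}}_{i\in I}\sigma_{i,A}\widetilde{\otimes}\sigma_{i,B}$ for some (non-empty) family in ${ \mathcal{P}}({ \mathfrak{S}}_{A}\times { \mathfrak{S}}_{B})$. Since $\bigsqcap{}^{{}^{{ \widetilde{S}}_{AB}}}$ is a greatest lower bound, it suffices to prove $\bot_{{}_{{\mathfrak{S}}_A}}\widetilde{\otimes}\bot_{{}_{{\mathfrak{S}}_B}}\sqsubseteq_{{}_{{\widetilde{S}}_{AB}}}\sigma_{A}\widetilde{\otimes}\sigma_{B}$ for each generator $\sigma_A\widetilde{\otimes}\sigma_B$; then $\bot_{{}_{{\mathfrak{S}}_A}}\widetilde{\otimes}\bot_{{}_{{\mathfrak{S}}_B}}$ is a lower bound of the family $\{\sigma_{i,A}\widetilde{\otimes}\sigma_{i,B}\mid i\in I\}$ and hence lies below $\widetilde{\sigma}$. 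To obtain this generator inequality I would apply Lemma \ref{Lemmadevelopetildeleqetilde} with singleton index set $I=\{\ast\}$, $\sigma_{\ast,A}=\bot_{{}_{{\mathfrak{S}}_A}}$ and $\sigma_{\ast,B}=\bot_{{}_{{\mathfrak{S}}_B}}$: the two conditions $\bigsqcap{}^{{}_{{ \mathfrak{S}}_{A}}}_{k\in I}\sigma_{k,A}=\bot_{{}_{{\mathfrak{S}}_A}}\sqsubseteq_{{}_{{ \mathfrak{S}}_{A}}}\sigma_{A}$ and $\bigsqcap{}^{{}_{{ \mathfrak{S}}_{B}}}_{m\in I}\sigma_{m,B}=\bot_{{}_{{\mathfrak{S}}_B}}\sqsubseteq_{{}_{{ \mathfrak{S}}_{B}}}\sigma_{B}$ hold by definition of the bottom elements, and the remaining disjunctive condition, quantified over $\varnothing\varsubsetneq K\varsubsetneq I$, is vacuous because $I$ is a singleton.

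Alternatively, and even more directly, I could argue pointwise inside $\widecheck{S}_{AB}$: since $\bot_{{}_{{ \mathfrak{S}}}}\sqsubseteq_{{}_{{ \mathfrak{S}}}}\sigma$ for all $\sigma$ and $\epsilon^{{ \mathfrak{S}}}$ is order-preserving, one has $\epsilon^{{ \mathfrak{S}}_A}_{{ \mathfrak{l}}_A}(\bot_{{}_{{\mathfrak{S}}_A}})\leq\epsilon^{{ \mathfrak{S}}_A}_{{ \mathfrak{l}}_A}(\sigma_A)$ and $\epsilon^{{ \mathfrak{S}}_B}_{{ \mathfrak{l}}_B}(\bot_{{}_{{\mathfrak{S}}_B}})\leq\epsilon^{{ \mathfrak{S}}_B}_{{ \mathfrak{l}}_B}(\sigma_B)$ for all effects; combined with the monotonicity of $\bullet$ in each argument (a consequence of (\ref{distributivitybullet})) this yields $\iota^{\widecheck{S}_{AB}}(\bot_{{}_{{\mathfrak{S}}_A}},\bot_{{}_{{\mathfrak{S}}_B}})({ \mathfrak{l}}_A,{ \mathfrak{l}}_B)\leq\iota^{\widecheck{S}_{AB}}(\sigma_A,\sigma_B)({ \mathfrak{l}}_A,{ \mathfrak{l}}_B)$ for all $({ \mathfrak{l}}_A,{ \mathfrak{l}}_B)$, i.e.\ exactly the generator inequality in the pointwise order. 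Either route makes the statement immediate; I do not expect a real obstacle here beyond bookkeeping. The only points that deserve a word of care are that ${\widetilde{S}}_{AB}$ genuinely consists of infima of generators (so that verifying minimality on generators suffices) and that $\bot_{{}_{{\mathfrak{S}}_A}}\widetilde{\otimes}\bot_{{}_{{\mathfrak{S}}_B}}$ does belong to ${\widetilde{S}}_{AB}$.
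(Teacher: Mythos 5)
Your proof is correct and takes essentially the same route as the paper, whose entire argument is the invocation of the expansion (\ref{developmentetildeordersimplify}); your reduction to the generator inequality $\bot_{{}_{{\mathfrak{S}}_A}}\widetilde{\otimes}\bot_{{}_{{\mathfrak{S}}_B}}\sqsubseteq_{{}_{{\widetilde{S}}_{AB}}}\sigma_A\widetilde{\otimes}\sigma_B$ via the singleton case of Lemma \ref{Lemmadevelopetildeleqetilde}, followed by passing to infima of generators, is just that argument written out in full. The alternative pointwise argument (monotonicity of $\epsilon$ and of $\bullet$) is equally valid but does not differ in substance.
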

\begin{proof}
Trivial using the expansion (\ref{developmentetildeordersimplify}).
\end{proof}

\subsection{Canonical vs. minimal tensor product}\label{subsectioncanonical}

\begin{definition}
We denote by ${\widetilde{S}}{}^{fin}_{AB}$ the sub-poset of ${\widetilde{S}}_{AB}$ defined as follows :
\begin{eqnarray}
{\widetilde{S}}{}^{fin}_{AB} &:=& \{ \Omega(U_\approx) \;\vert\; U \;\textit{\rm finite subset of}\; { \mathfrak{S}}_A\times { \mathfrak{S}}_B\,\}.
\end{eqnarray}
It is also a sub- Inf semi-lattice of ${\widetilde{S}}_{AB}$.
\end{definition}

\begin{theorem} \label{theoremsqsubseteqPABgeqSAB} 
We have the following obvious property relating the partial orders of ${{\widetilde{S}}}{}^{fin}_{AB}$ and ${S}_{AB}$. For any $\{(\sigma_{i,A},\sigma_{i,B})\;\vert\; i\in I\}\subseteq_{fin} { \mathfrak{S}}_{A}\times { \mathfrak{S}}_{B}$,
\begin{eqnarray}
(\bigsqcap{}^{{}^{{S}_{AB}}}_{i\in I} \sigma_{i,A}\otimes \sigma_{i,B}) \sqsubseteq_{{}_{{S}_{AB}}}  \sigma'_{A}\otimes \sigma'_{B}  & \Rightarrow & (\bigsqcap{}^{{}^{{\widetilde{S}}_{AB}}}_{i\in I} \sigma_{i,A}\widetilde{\otimes} \sigma_{i,B}) \sqsubseteq_{{}_{{\widetilde{S}}_{AB}}}   \sigma'_{A}\widetilde{\otimes} \sigma'_{B}.\;\;\;\;\;\;\;\;\;\;\;
\end{eqnarray}
We denote 
\begin{eqnarray}
\widetilde{ \mathfrak{F}}\{(\sigma_{i,A},\sigma_{i,B})\;\vert\; i\in I\}&:=&\{\,(\sigma'_A,\sigma'_B)\;\vert\; (\bigsqcap{}^{{}^{{\widetilde{S}}_{AB}}}_{i\in I} \sigma_{i,A}\widetilde{\otimes} \sigma_{i,B}) \sqsubseteq_{{}_{{\widetilde{S}}_{AB}}}   \sigma'_{A}\widetilde{\otimes} \sigma'_{B}\}\nonumber\\ &=&\langle \bigsqcap{}^{{}^{{\widetilde{S}}_{AB}}}_{i\in I} \sigma_{i,A}\widetilde{\otimes} \sigma_{i,B} \rangle. \;\;\;\;\;\;\;\;\;\;
\end{eqnarray}
\end{theorem}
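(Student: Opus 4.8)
The plan is to apply the universal property of Fraser's canonical tensor product (Theorem~\ref{theoremtensorbasic}) to a suitable bi-homomorphism with values in ${\widetilde{S}}_{AB}$. By Theorem~\ref{bifilterPtilde}, the assignment $(\sigma_A,\sigma_B)\mapsto \sigma_A\widetilde{\otimes}\sigma_B$ from ${ \mathfrak{S}}_{A}\times { \mathfrak{S}}_{B}$ to the Inf semi-lattice ${\widetilde{S}}_{AB}$ is a bi-homomorphism in Fraser's sense: for each fixed $\sigma_B$, relation (\ref{pitensor=tensorpi1}) specialised to a two-element index set says it carries binary (hence finite) meets of ${ \mathfrak{S}}_{A}$ to meets of ${\widetilde{S}}_{AB}$, and symmetrically (\ref{pitensor=tensorpi2}) handles the second variable. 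Taking ${ \mathfrak{S}}={\widetilde{S}}_{AB}$ and $f=\widetilde{\otimes}$ in Theorem~\ref{theoremtensorbasic} therefore yields a unique Inf semi-lattice homomorphism $g\colon {S}_{AB}\longrightarrow {\widetilde{S}}_{AB}$ with $g\circ\iota=\widetilde{\otimes}$, i.e.\ $g(\sigma_A\otimes\sigma_B)=\sigma_A\widetilde{\otimes}\sigma_B$ for all $(\sigma_A,\sigma_B)\in { \mathfrak{S}}_{A}\times { \mathfrak{S}}_{B}$; its image is precisely ${\widetilde{S}}{}^{fin}_{AB}$, which is thus realised as a homomorphic image of ${S}_{AB}$.

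It then remains to invoke the elementary fact that an Inf semi-lattice homomorphism is order-preserving: if $x\sqcap y=x$ then $g(x)\sqcap g(y)=g(x\sqcap y)=g(x)$, so $g(x)\sqsubseteq g(y)$. Applying $g$ to the hypothesis $(\bigsqcap{}^{{}^{{S}_{AB}}}_{i\in I}\sigma_{i,A}\otimes\sigma_{i,B})\sqsubseteq_{{}_{{S}_{AB}}}\sigma'_{A}\otimes\sigma'_{B}$ gives $g(\bigsqcap{}^{{}^{{S}_{AB}}}_{i\in I}\sigma_{i,A}\otimes\sigma_{i,B})\sqsubseteq_{{}_{{\widetilde{S}}_{AB}}}g(\sigma'_{A}\otimes\sigma'_{B})$; since $I$ is finite and $g$ preserves finite meets, the left-hand side equals $\bigsqcap{}^{{}^{{\widetilde{S}}_{AB}}}_{i\in I}\sigma_{i,A}\widetilde{\otimes}\sigma_{i,B}$ and the right-hand side equals $\sigma'_{A}\widetilde{\otimes}\sigma'_{B}$, which is exactly the asserted inequality.

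There is essentially no obstacle here, which is why the statement is "obvious": the whole content is the factorisation $f=g\circ\iota$ coming from the universal problem, the bi-homomorphism identities of Theorem~\ref{bifilterPtilde}, and the triviality that semi-lattice morphisms respect the order. The only point deserving a line of care is that Fraser's universal property is finitary, so one must use the two-element instances of (\ref{pitensor=tensorpi1})--(\ref{pitensor=tensorpi2}) and the fact that $\bigsqcap{}^{{}^{{S}_{AB}}}_{i\in I}$ is a genuinely finite meet; both are immediate. Should one wish to avoid the universal property, one can instead argue from the word problems directly: Lemma~\ref{orderimpliespolynomial} rewrites the hypothesis as the existence of a lattice polynomial $p$ with $\sigma'_{A}\sqsupseteq_{{}_{{ \mathfrak{S}}_{A}}}p(\sigma_{1,A},\dots,\sigma_{n,A})$ and $\sigma'_{B}\sqsupseteq_{{}_{{ \mathfrak{S}}_{B}}}p^{\ast}(\sigma_{1,B},\dots,\sigma_{n,B})$, and one checks that the partition of $2^{I}$ read off from $p$ produces precisely the inequalities that Lemma~\ref{Lemmadevelopetildeleqetilde} identifies with $\bigsqcap{}^{{}^{{\widetilde{S}}_{AB}}}_{i\in I}\sigma_{i,A}\widetilde{\otimes}\sigma_{i,B}\sqsubseteq_{{}_{{\widetilde{S}}_{AB}}}\sigma'_{A}\widetilde{\otimes}\sigma'_{B}$ — a longer but equivalent route.
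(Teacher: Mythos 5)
Your proof is correct, but it follows a genuinely different route from the paper's. You invoke Fraser's universal property (Theorem \ref{theoremtensorbasic}): since Theorem \ref{bifilterPtilde} shows $(\sigma_A,\sigma_B)\mapsto\sigma_A\widetilde{\otimes}\sigma_B$ is a bi-homomorphism into the Inf semi-lattice ${\widetilde{S}}_{AB}$, you obtain a semi-lattice homomorphism $g\colon {S}_{AB}\to{\widetilde{S}}_{AB}$ with $g(\sigma_A\otimes\sigma_B)=\sigma_A\widetilde{\otimes}\sigma_B$, and the implication follows from the triviality that homomorphisms preserve finite meets and hence the order. The paper instead stays inside the bi-filter formalism: it checks that $\widetilde{\mathfrak{F}}\{(\sigma_{i,A},\sigma_{i,B})\;\vert\;i\in I\}$ is a bi-filter containing each generator $(\sigma_{k,A},\sigma_{k,B})$ (using the expansion of Lemma \ref{Lemmadevelopetildeleqetilde}), deduces $\widetilde{\mathfrak{F}}\supseteq{\mathfrak{F}}$ since ${\mathfrak{F}}$ is the smallest such bi-filter, and then converts the hypothesis into membership in ${\mathfrak{F}}$ via Fraser's Lemma \ref{sigmainFinequality}. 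Your argument is shorter and more conceptual, needing only the universal property and Theorem \ref{bifilterPtilde}; the paper's argument is more hands-on but produces the explicit containment ${\mathfrak{F}}\subseteq\widetilde{\mathfrak{F}}$ of bi-filters, which is precisely the half that Theorem \ref{theoremsqsubseteqPAB=SAB} later complements (proving the reverse inclusion in the distributive case), so the bi-filter bookkeeping is not wasted there. Your closing remark about an alternative via Lemma \ref{orderimpliespolynomial} plus Lemma \ref{Lemmadevelopetildeleqetilde} is also viable and is closer in spirit to the paper's computations, though the paper does not literally translate lattice polynomials. The only part of the statement you leave implicit, the identification $\widetilde{\mathfrak{F}}\{\cdots\}=\langle\bigsqcap{}^{{}^{{\widetilde{S}}_{AB}}}_{i\in I}\sigma_{i,A}\widetilde{\otimes}\sigma_{i,B}\rangle$, is immediate from the definition of $\langle\cdot\rangle$ and needs no further argument.
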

\begin{proof}
First of all, it is clear that $\widetilde{ \mathfrak{F}}\{(\sigma_{i,A},\sigma_{i,B})\;\vert\; i\in I\}$ is a bi-filter.\\ 
Secondly, it is easy to check that $(\sigma_{k,A},\sigma_{k,B})\in \widetilde{ \mathfrak{F}}\{(\sigma_{i,A},\sigma_{i,B})\;\vert\; i\in I\}$ for any $k\in I$. Indeed, for any $K\subseteq I$, if $k\in K$ we have $(\bigsqcap{}^{{}_{{ \mathfrak{S}}_{A}}}_{l\in K}\; \sigma_{l,A}) \;\sqsubseteq_{{}_{{ \mathfrak{S}}_{A}}}\sigma_{k,A} $ and if $k\notin K$ we have $(\bigsqcap{}^{{}_{{ \mathfrak{S}}_{B}}}_{m\in I-K} \;\sigma_{m,B})\; \sqsubseteq_{{}_{{ \mathfrak{S}}_{B}}} \sigma_{k,B}$. \\
As a conclusion,  and by definition of ${ \mathfrak{F}}\{(\sigma_{i,A},\sigma_{i,B})\;\vert\; i\in I\}$ as the intersection of all bi-filters containing $(\sigma_{i,A},\sigma_{i,B})$ for any $i\in I$, we have then 
$\widetilde{ \mathfrak{F}}\{(\sigma_{i,A},\sigma_{i,B})\;\vert\; i\in I\} \supseteq  { \mathfrak{F}}\{(\sigma_{i,A},\sigma_{i,B})\;\vert\; i\in I\}$.\\
We now use Lemma \ref{sigmainFinequality} to obtain the announced result.
\end{proof}

\begin{theorem} \label{theoremsqsubseteqPAB=SAB}
If ${ \mathfrak{S}}_{A}$ \underline{or} ${ \mathfrak{S}}_{B}$ are distributive (i.e. simplex), then ${{\widetilde{S}}}{}^{fin}_{AB}$ and ${S}_{AB}$ are isomorphic posets.  
\end{theorem}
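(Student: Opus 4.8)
The plan is to produce an explicit surjective $\sqcap$-homomorphism $\pi:S_{AB}\to\widetilde{S}^{fin}_{AB}$ and to show that it reflects the order; since $\pi$ is automatically order-preserving this makes it a poset isomorphism. The map $(\sigma_A,\sigma_B)\mapsto\sigma_A\widetilde{\otimes}\sigma_B$ is a bi-homomorphism from $\mathfrak{S}_A\times\mathfrak{S}_B$ to $\widetilde{S}_{AB}$ by Theorem \ref{bifilterPtilde}, so the universal property of the canonical tensor product (Theorem \ref{theoremtensorbasic}) yields a unique $\sqcap$-homomorphism $\pi:S_{AB}\to\widetilde{S}_{AB}$ with $\pi(\sigma_A\otimes\sigma_B)=\sigma_A\widetilde{\otimes}\sigma_B$, whose image is $\{\bigsqcap{}^{\widetilde{S}_{AB}}_{i\in I}\sigma_{i,A}\widetilde{\otimes}\sigma_{i,B}\mid I\text{ finite}\}=\widetilde{S}^{fin}_{AB}$. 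As a homomorphism $\pi$ is order-preserving (this is the content of Theorem \ref{theoremsqsubseteqPABgeqSAB}). An order-preserving, order-reflecting surjection of posets is an isomorphism, so it remains to prove order-reflection, and since every element of $S_{AB}$ and of $\widetilde{S}^{fin}_{AB}$ is a finite meet of pure tensors and $x\sqsubseteq\bigsqcap_j y_j$ iff $x\sqsubseteq y_j$ for all $j$, it suffices to show, for all finite $I$,
\[
\bigsqcap{}^{\widetilde{S}_{AB}}_{i\in I}\sigma_{i,A}\widetilde{\otimes}\sigma_{i,B}\ \sqsubseteq_{\widetilde{S}_{AB}}\ \sigma_A\widetilde{\otimes}\sigma_B\ \Longrightarrow\ \bigsqcap{}^{S_{AB}}_{i\in I}\sigma_{i,A}\otimes\sigma_{i,B}\ \sqsubseteq_{S_{AB}}\ \sigma_A\otimes\sigma_B,
\]
i.e. $\widetilde{\mathfrak{F}}\{(\sigma_{i,A},\sigma_{i,B})\mid i\in I\}\subseteq\mathfrak{F}\{(\sigma_{i,A},\sigma_{i,B})\mid i\in I\}$, the reverse inclusion being Theorem \ref{theoremsqsubseteqPABgeqSAB}.

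Assume, without loss of generality, that $\mathfrak{S}_A$ is a simplex, and that the left-hand inequality holds. By Lemma \ref{Lemmadevelopetildeleqetilde} this amounts to: for every $K\subseteq I$, $(\bigsqcap{}^{\mathfrak{S}_A}_{k\in K}\sigma_{k,A})\sqsubseteq\sigma_A$ or $(\bigsqcap{}^{\mathfrak{S}_B}_{m\in I\setminus K}\sigma_{m,B})\sqsubseteq\sigma_B$ (the cases $K=I$ and $K=\varnothing$ giving in particular $\bigsqcap{}^{\mathfrak{S}_A}_{i\in I}\sigma_{i,A}\sqsubseteq\sigma_A$ and $\bigsqcap{}^{\mathfrak{S}_B}_{i\in I}\sigma_{i,B}\sqsubseteq\sigma_B$). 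Now exploit the simplex structure of $\mathfrak{S}_A$: writing $\underline{\tau}=\mathfrak{S}_A^{pure}\cap(\uparrow^{\mathfrak{S}_A}\!\tau)$ for the unique pure decomposition of $\tau\in\mathfrak{S}_A$, one gets $\underline{\bigsqcap V}=V$ for $V\subseteq\mathfrak{S}_A^{pure}$, hence $\underline{\bigsqcap{}^{\mathfrak{S}_A}_{k\in K}\sigma_{k,A}}=\bigcup_{k\in K}\underline{\sigma_{k,A}}$ and $\tau\sqsubseteq\tau'$ iff $\underline{\tau}\supseteq\underline{\tau'}$. For $\alpha\in\underline{\sigma_A}$ put $I_\alpha=\{k\in I\mid\sigma_{k,A}\sqsubseteq\alpha\}$; then $(\bigsqcap{}^{\mathfrak{S}_A}_{k\in K}\sigma_{k,A})\sqsubseteq\sigma_A$ is equivalent to ``$K$ meets every $I_\alpha$'', so the maximal $K\subseteq I$ for which it fails are among the sets $I\setminus I_\alpha$. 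Since $\{L\subseteq I\mid(\bigsqcap{}^{\mathfrak{S}_B}_{m\in L}\sigma_{m,B})\sqsubseteq\sigma_B\}$ is an up-set of $2^I$, the condition above is equivalent to
\[
(\bigsqcap{}^{\mathfrak{S}_B}_{k\in I_\alpha}\sigma_{k,B})\ \sqsubseteq_{\mathfrak{S}_B}\ \sigma_B\qquad\text{for every }\alpha\in\underline{\sigma_A},
\]
and each $I_\alpha$ is nonempty because $I$ meets every $I_\alpha$.

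Finally, feed this into the bi-filter $\mathfrak{F}\{(\sigma_{i,A},\sigma_{i,B})\mid i\in I\}$, which we abbreviate $\mathfrak{F}$. Since $I$ is finite, $\mathcal{T}:=\{I_\alpha\mid\alpha\in\underline{\sigma_A}\}$ is a finite family of nonempty subsets of $I$; for $T\in\mathcal{T}$ set $\beta_T:=\bigsqcap{}^{\mathfrak{S}_A}\{\alpha\in\underline{\sigma_A}\mid I_\alpha=T\}$, so that $\bigsqcap{}^{\mathfrak{S}_A}_{T\in\mathcal{T}}\beta_T=\bigsqcap{}^{\mathfrak{S}_A}\underline{\sigma_A}=\sigma_A$ (a finite meet, by (\ref{completemeetirreducibleordergenerating})) and $\sigma_{k,A}\sqsubseteq\beta_T$ for all $k\in T$. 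Fix $T\in\mathcal{T}$: for $k\in T$ one has $(\sigma_{k,A},\sigma_{k,B})\leq(\beta_T,\sigma_{k,B})$, so $(\beta_T,\sigma_{k,B})\in\mathfrak{F}$ by (\ref{defbifilter1}); meeting over the finite nonempty $T$ in the second component via (\ref{defbifilter3}) gives $(\beta_T,\bigsqcap{}^{\mathfrak{S}_B}_{k\in T}\sigma_{k,B})\in\mathfrak{F}$, and then $(\beta_T,\sigma_B)\in\mathfrak{F}$ by (\ref{defbifilter1}) again. Meeting over the finitely many $T\in\mathcal{T}$ in the first component via (\ref{defbifilter2}) gives $(\sigma_A,\sigma_B)=(\bigsqcap{}^{\mathfrak{S}_A}_{T\in\mathcal{T}}\beta_T,\sigma_B)\in\mathfrak{F}$, and Lemma \ref{sigmainFinequality} turns this into $\bigsqcap{}^{S_{AB}}_{i\in I}\sigma_{i,A}\otimes\sigma_{i,B}\sqsubseteq\sigma_A\otimes\sigma_B$, which finishes the argument. (If $\underline{\sigma_A}=\varnothing$, i.e. $\sigma_A$ is the top of $\mathfrak{S}_A$, use $\bigsqcap{}^{\mathfrak{S}_B}_{i\in I}\sigma_{i,B}\sqsubseteq\sigma_B$ directly; the case where $\mathfrak{S}_B$ is a simplex is symmetric. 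Alternatively one may read off from the data $(\beta_T,T)_{T\in\mathcal{T}}$ a suitable Fraser lattice polynomial and invoke Lemma \ref{orderimpliespolynomial}.) The delicate step — the heart of the proof — is the middle paragraph: converting the abstract inequality-system of Lemma \ref{Lemmadevelopetildeleqetilde} into the clean hitting-set condition using the simplex structure, and, crucially, coping with a possibly infinite $\underline{\sigma_A}$ while both $S_{AB}$ and the bi-filters reflect only finite meets, which is exactly what forces the grouping of the pure states above $\sigma_A$ into the finitely many classes $I_\alpha$ before any meet is taken.
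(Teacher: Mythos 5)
Your proof is correct, but it takes a genuinely different route from the paper's. The paper also reduces everything to the bi-filter inclusion $\widetilde{\mathfrak{F}}\{(\sigma_{i,A},\sigma_{i,B})\}\subseteq{\mathfrak{F}}\{(\sigma_{i,A},\sigma_{i,B})\}$, but it proves it through the sup-of-inf reformulation (\ref{developmentetildeordersimplifypost}): it first establishes, for the distributive factor, the lattice identity $(\bigsqcup_{K'\in\mathcal{K}'}\bigsqcap_{m\in I-K'}\sigma_m)\sqsupseteq(\bigsqcap_{K\in\mathcal{K}}\bigsqcup_{k\in K}\sigma_k)$ via the Balbes--Dwinger distributivity lemma, then shows by induction on polynomial complexity, using (\ref{defbifilter1})--(\ref{defbifilter3}), that any bi-filter containing the generators contains the pairs $(\bigsqcup_{\mathcal{K}}\bigsqcap_K\sigma_{k,A},\bigsqcap_{\mathcal{K}}\bigsqcup_K\sigma_{k,B})$, and concludes by upward closure. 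You instead work only with the inf-formulation (\ref{developmentetildeordersimplify}), exploit the simplex (unique pure decomposition) description of the distributive factor to turn the system of inequalities into the hitting-set condition $\bigsqcap_{k\in I_\alpha}\sigma_{k,B}\sqsubseteq\sigma_B$ for all $\alpha\in\underline{\sigma_A}$, and then reconstruct $(\sigma_A,\sigma_B)$ inside the generated bi-filter using only finitely many applications of (\ref{defbifilter1})--(\ref{defbifilter3}), the grouping of the (possibly infinite) set $\underline{\sigma_A}$ into the at most $2^{|I|}$ classes $\beta_T$ being exactly what makes this legitimate. What your approach buys: it never forms suprema in ${\mathfrak{S}}_A$ or ${\mathfrak{S}}_B$ (so no existence issues for the joins appearing in the paper's lattice polynomials), it bypasses both the Balbes--Dwinger identity and Fraser's Lemma \ref{orderimpliespolynomial}, and the packaging via the universal-property homomorphism $\pi$ makes the ``isomorphic posets'' conclusion transparent (order-preserving, order-reflecting surjection). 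What it costs: you lean on the equivalence ``distributive $=$ simplex'' (i.e.\ on the unique decomposition into pure states and the identities $\underline{\bigsqcap_K\sigma_{k,A}}=\bigcup_{k\in K}\underline{\sigma_{k,A}}$, $\tau\sqsubseteq\tau'\Leftrightarrow\underline{\tau}\supseteq\underline{\tau'}$), which the paper's proof does not need in this form since it argues directly from the distributivity axiom; within this paper that is harmless, as the equivalence is established in Section \ref{particularspacesofstates} and is already invoked in the statement of the theorem. Your boundary case $\underline{\sigma_A}=\varnothing$ and the WLOG symmetry remark are handled correctly, if tersely.
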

\begin{proof}
We now suppose that ${ \mathfrak{S}}_{A}$ or ${ \mathfrak{S}}_{B}$ is distributive and we intent to prove that ${ \mathfrak{F}}\{(\sigma_{i,A},\sigma_{i,B})\;\vert\; i\in I\}=\widetilde{ \mathfrak{F}}\{(\sigma_{i,A},\sigma_{i,B})\;\vert\; i\in I\}$ for any $\{(\sigma_{i,A},\sigma_{i,B})\;\vert\; i\in I\}\subseteq_{fin} { \mathfrak{S}}_{A}\times { \mathfrak{S}}_{B}$.\\
Let us prove the following fact : every bi-filter $F$ which contains $(\sigma_{k,A},\sigma_{k,B})$ for any $k\in I$ contains also $\widetilde{ \mathfrak{F}}\{(\sigma_{i,A},\sigma_{i,B})\;\vert\; i\in I\}$.  In fact, we can show that, for any bi-filter $F$ we have
\begin{eqnarray}
&&\hspace{-1cm}(\forall k\in I,\;(\sigma_{k,A},\sigma_{k,B})\in F) \Rightarrow \nonumber \\
&& (\bigsqcup{}^{{}_{{ \mathfrak{S}}_{A}}}_{K\in { \mathcal{K}}}\bigsqcap{}^{{}_{{ \mathfrak{S}}_{A}}}_{k\in K}\; \sigma_{k,A}, \bigsqcup{}^{{}_{{ \mathfrak{S}}_{A}}}_{K'\in { \mathcal{K}}'}\bigsqcap{}^{{}_{{ \mathfrak{S}}_{B}}}_{m\in I-K'} \;\sigma_{m,B})\in F, \nonumber\\
&& \forall { \mathcal{K}},{ \mathcal{K}}'\subseteq 2^I,{ \mathcal{K}}\cup { \mathcal{K}}'=2^I, { \mathcal{K}}\cap { \mathcal{K}}'=\varnothing, \{\varnothing\}\in { \mathcal{K}}', I\in { \mathcal{K}}.\;\;\;\;\;\;\;\;\;\;\;\;\;\label{intermediatedistrib}
\end{eqnarray}

The first step towards (\ref{intermediatedistrib}) is obtained by checking that $\forall { \mathcal{K}},{ \mathcal{K}}'\subseteq 2^I,{ \mathcal{K}}\cup { \mathcal{K}}'=2^I, { \mathcal{K}}\cap { \mathcal{K}}'=\varnothing, \{\varnothing\}\in { \mathcal{K}}', I\in { \mathcal{K}}$,
\begin{eqnarray}
 && (\bigsqcup{}^{{}^{{ \mathfrak{S}}}}_{K'\in { \mathcal{K}}'}\bigsqcap{}^{{}^{{ \mathfrak{S}}}}_{m\in I-K'} \;\sigma_{m}) \sqsupseteq_{{}_{ \mathfrak{S}}}
(\bigsqcap{}^{{}^{{ \mathfrak{S}}}}_{K\in { \mathcal{K}}}\bigsqcup{}^{{}^{{ \mathfrak{S}}}}_{k\in K}\; \sigma_{k})\label{firststep}
\end{eqnarray}
for any distributive ${ \mathfrak{S}}$ and any collection of elements of ${ \mathfrak{S}}$  denoted $\sigma_k$ for $k\in I$ for which these two sides of inequality exist.  To check this fact, we have to note that,  using \cite[Lemma 8 p. 50]{Balbes1975}, we have first of all
\begin{eqnarray}
(\bigsqcap{}^{{}^{{ \mathfrak{S}}}}_{K\in { \mathcal{K}}}\bigsqcup{}^{{}^{{ \mathfrak{S}}}}_{k\in K}\; \sigma_{k})= \bigsqcup{}^{{}^{{ \mathfrak{S}}}}\!\!\! \left\{ \bigsqcap{}^{{}^{{ \mathfrak{S}}}}_{K\in { \mathcal{K}}} \pi_K(A) \;\vert\; A\in \prod_{K\in { \mathcal{K}}} K \right\},
\end{eqnarray}
where $\pi_K$ denotes the projection of the component indexed by K in the cardinal product $\prod_{K\in { \mathcal{K}}} K$. Moreover,  for any $A\in \prod_{K\in { \mathcal{K}}} K$,  there exists $L\in { \mathcal{K}}'$ such that $\bigcup \{ \pi_K(A) \;\vert\; K\in { \mathcal{K}}\}\;\supseteq\; (I\smallsetminus L)$ and then $(\bigsqcap{}^{{}^{{ \mathfrak{S}}}}_{K\in { \mathcal{K}}} \pi_K(A)) \sqsubseteq_{{}_{ \mathfrak{S}}} (\bigsqcap{}^{{}^{{ \mathfrak{S}}}}_{m\in I-L} \;\sigma_{m}) \sqsubseteq_{{}_{ \mathfrak{S}}} (\bigsqcup{}^{{}^{{ \mathfrak{S}}}}_{K'\in { \mathcal{K}}'}\bigsqcap{}^{{}^{{ \mathfrak{S}}}}_{m\in I-K'} \;\sigma_{m})$. As a result, we obtain the property (\ref{firststep}).\\

The second step towards (\ref{intermediatedistrib}) consists in showing that 
\begin{eqnarray}
(\forall k\in I,\;(\sigma_{k,A},\sigma_{k,B})\in F) & \Rightarrow & (\bigsqcup{}^{{}_{{ \mathfrak{S}}_{A}}}_{K\in { \mathcal{K}}}\bigsqcap{}^{{}_{{ \mathfrak{S}}_{A}}}_{k\in K}\; \sigma_{k,A},  \bigsqcap{}^{{}_{{ \mathfrak{S}}_{B}}}_{K\in { \mathcal{K}}}\bigsqcup{}^{{}_{{ \mathfrak{S}}_{B}}}_{k\in K}\; \sigma_{k,B})\;\in F\;\;\;\;\;\;\;\;\;\;\;\;\;
\end{eqnarray}
for any ${ \mathcal{K}}\subseteq 2^I$. This intermediary result is obtained by induction on the complexity of the polynomial $(\bigsqcup{}^{{}_{{ \mathfrak{S}}_{A}}}_{K\in { \mathcal{K}}}\bigsqcap{}^{{}_{{ \mathfrak{S}}_{A}}}_{k\in K}\; \sigma_{k,A})$ by using the following elementary result
\begin{eqnarray}
&&\forall \sigma_A,\sigma'_A\in { \mathfrak{S}}_A,\sigma_B,\sigma'_B\in { \mathfrak{S}}_B,\;\;\;\; \left( (\sigma_A,\sigma_B),(\sigma'_A,\sigma'_B)\in F \right) \Rightarrow \nonumber\\
&&\hspace{2cm} \left\{
\begin{array}{l}
(\sigma_A \sqcup_{{}_{{ \mathfrak{S}}_A}} \sigma'_A,\sigma_B \sqcap_{{}_{{ \mathfrak{S}}_B}} \sigma'_B)\in F\nonumber\\
(\sigma_A \sqcap_{{}_{{ \mathfrak{S}}_A}} \sigma'_A,\sigma_B \sqcup_{{}_{{ \mathfrak{S}}_B}} \sigma'_B)\in F
\end{array}\right.
\end{eqnarray}
trivially deduced using the bi-filter character of $F$, i.e. properties (\ref{defbifilter1})(\ref{defbifilter2})(\ref{defbifilter3}). \\

As a final conclusion,  using the explicit definition of ${ \mathfrak{F}}\{(\sigma_{i,A},\sigma_{i,B})\;\vert\; i\in I\}$ as the intersection of all bi-ideals containing $(\sigma_{k,A},\sigma_{k,B})$ for any $k\in I$,  we obtain $\widetilde{ \mathfrak{F}}\{(\sigma_{i,A},\sigma_{i,B})\;\vert\; i\in I\}={ \mathfrak{F}}\{(\sigma_{i,A},\sigma_{i,B})\;\vert\; i\in I\}$.\\

${{\widetilde{S}}}{}^{fin}_{AB}$ and ${S}_{AB}$ are then isomorphic posets.
\end{proof}

\begin{remark}\label{remarkdistributivity}
We note that the distributivity property is a key condition to obtain previous isomorphism between  ${{\widetilde{S}}}{}^{fin}_{AB}$ and ${S}_{AB}$. Indeed, let us consider that ${ \mathfrak{S}}_A$ and ${ \mathfrak{S}}_B$ are both defined as the Inf semi-lattice associated to the following Hasse diagram ${ \mathcal{S}}^{(1)}_3$. 
According to (\ref{developmentetildeordersimplify}), we have $(\bot_{{}_{{ \mathfrak{S}}_A}}, \bot_{{}_{{ \mathfrak{S}}_B}})\in \widetilde{ \mathfrak{F}}\{(\sigma_{1},\sigma_{1}),(\sigma_{2},\sigma_{2}),(\sigma_{3},\sigma_{3})\}.$ However, we have obviously $(\bot_{{}_{{ \mathfrak{S}}_A}}, \bot_{{}_{{ \mathfrak{S}}_B}})\notin { \mathfrak{F}}\{(\sigma_{1},\sigma_{1}),(\sigma_{2},\sigma_{2}),(\sigma_{3},\sigma_{3})\}.$
\end{remark}

\subsection{Properties of the minimal tensor product}\label{subsectionpropertiesminimal}

Let us now consider that ${ \mathfrak{S}}_{A}$ and ${ \mathfrak{S}}_{B}$ have a description in terms of pure states. We intend to prove that ${{\widetilde{S}}}{}_{AB}$ inherits a description in terms of pure states.

\begin{theorem}\label{theorempuretilde} 
\begin{eqnarray}
{{\widetilde{S}}}^{\;{}^{pure}}_{AB} & = & \{\, \sigma_A \widetilde{\otimes} \sigma_B\;\vert\; \sigma_A \in { \mathfrak{S}}^{\;{}^{pure}}_{A}, \sigma_B \in { \mathfrak{S}}^{\;{}^{pure}}_{B}\,\} = Max({{\widetilde{S}}}_{AB})
\end{eqnarray}
\end{theorem}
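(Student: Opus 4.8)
The plan is to establish the chain of inclusions
\[
\{\,\sigma_A\widetilde{\otimes}\sigma_B\;\vert\;\sigma_A\in{ \mathfrak{S}}^{\;{}^{pure}}_{A},\,\sigma_B\in{ \mathfrak{S}}^{\;{}^{pure}}_{B}\,\}\ \subseteq\ Max({{\widetilde{S}}}_{AB})\ \subseteq\ {{\widetilde{S}}}^{\;{}^{pure}}_{AB}\ \subseteq\ \{\,\sigma_A\widetilde{\otimes}\sigma_B\;\vert\;\sigma_A\in{ \mathfrak{S}}^{\;{}^{pure}}_{A},\,\sigma_B\in{ \mathfrak{S}}^{\;{}^{pure}}_{B}\,\},
\]
which forces the four sets to coincide. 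The middle inclusion is immediate: if $\widetilde\sigma$ is maximal in ${{\widetilde{S}}}_{AB}$ and $\widetilde\sigma=\bigsqcap^{{{\widetilde{S}}}_{AB}}S$ for some nonempty $S$, then every member of $S$ lies above $\widetilde\sigma$, hence equals $\widetilde\sigma$ by maximality; thus $\widetilde\sigma\in S$, so $\widetilde\sigma$ is completely meet-irreducible.

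For the first inclusion, fix $\sigma_A\in{ \mathfrak{S}}^{\;{}^{pure}}_{A}=Max({ \mathfrak{S}}_{A})$ and $\sigma_B\in{ \mathfrak{S}}^{\;{}^{pure}}_{B}=Max({ \mathfrak{S}}_{B})$, and suppose $\sigma_A\widetilde{\otimes}\sigma_B\sqsubseteq_{{{\widetilde{S}}}_{AB}}\widetilde\tau$, where $\widetilde\tau=\bigsqcap^{{{\widetilde{S}}}_{AB}}_{j\in J}\sigma'_{j,A}\widetilde{\otimes}\sigma'_{j,B}\in{{\widetilde{S}}}_{AB}$. Applying the (order-preserving) homomorphisms $\eta:{{\widetilde{S}}}_{AB}\to{ \mathfrak{S}}_{A}$ and $\lambda:{{\widetilde{S}}}_{AB}\to{ \mathfrak{S}}_{B}$ introduced above to this inequality, and using $\eta(\sigma_A\widetilde{\otimes}\sigma_B)=\sigma_A$, $\lambda(\sigma_A\widetilde{\otimes}\sigma_B)=\sigma_B$ together with the maximality of $\sigma_A$ and $\sigma_B$, one obtains $\bigsqcap^{{ \mathfrak{S}}_{A}}_{j\in J}\sigma'_{j,A}=\sigma_A$ and $\bigsqcap^{{ \mathfrak{S}}_{B}}_{j\in J}\sigma'_{j,B}=\sigma_B$. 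It then suffices to verify $\widetilde\tau\sqsubseteq_{{{\widetilde{S}}}_{AB}}\sigma_A\widetilde{\otimes}\sigma_B$ through the word-problem criterion (\ref{developmentetildeordersimplify}) of Lemma \ref{Lemmadevelopetildeleqetilde} (with $J$ in the role of $I$): the two global inequalities hold, as equalities, by the previous step, and for every $\varnothing\varsubsetneq K\varsubsetneq J$ the relation $\bigsqcap^{{ \mathfrak{S}}_{A}}_{k\in K}\sigma'_{k,A}\sqsupseteq_{{ \mathfrak{S}}_{A}}\bigsqcap^{{ \mathfrak{S}}_{A}}_{j\in J}\sigma'_{j,A}=\sigma_A$ forces $\bigsqcap^{{ \mathfrak{S}}_{A}}_{k\in K}\sigma'_{k,A}=\sigma_A$ by maximality, so the first disjunct is always satisfied. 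Hence $\widetilde\tau=\sigma_A\widetilde{\otimes}\sigma_B$, i.e. $\sigma_A\widetilde{\otimes}\sigma_B\in Max({{\widetilde{S}}}_{AB})$.

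For the last inclusion, I first check that the pure tensors of pure states form a generating subset of ${{\widetilde{S}}}_{AB}$. Given $\widetilde\sigma=\bigsqcap^{{{\widetilde{S}}}_{AB}}_{i\in I}\sigma_{i,A}\widetilde{\otimes}\sigma_{i,B}$, write $\sigma_{i,A}=\bigsqcap^{{ \mathfrak{S}}_{A}}\underline{\sigma_{i,A}}_{{}_{{ \mathfrak{S}}_{A}}}$ and $\sigma_{i,B}=\bigsqcap^{{ \mathfrak{S}}_{B}}\underline{\sigma_{i,B}}_{{}_{{ \mathfrak{S}}_{B}}}$ (these sets being nonempty by (\ref{existsmaxabove})), and apply Theorem \ref{bifilterPtilde} twice to obtain $\widetilde\sigma=\bigsqcap^{{{\widetilde{S}}}_{AB}}\{\,\alpha\widetilde{\otimes}\beta\;\vert\;i\in I,\,\alpha\in\underline{\sigma_{i,A}}_{{}_{{ \mathfrak{S}}_{A}}},\,\beta\in\underline{\sigma_{i,B}}_{{}_{{ \mathfrak{S}}_{B}}}\,\}$, a meet of pure tensors of pure states, each lying above $\widetilde\sigma$. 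Hence $\widetilde\sigma$ is the meet of the nonempty family of all pure tensors of pure states above it. If moreover $\widetilde\sigma\in{{\widetilde{S}}}^{\;{}^{pure}}_{AB}$ is completely meet-irreducible, it must equal one of these pure tensors of pure states, which closes the chain of inclusions; together with the resulting generating property this also shows that ${{\widetilde{S}}}_{AB}$ admits a description in terms of pure states.

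The only delicate step is the maximality argument: once the homomorphisms $\eta$ and $\lambda$ pin the two global meets appearing in an arbitrary representation of $\widetilde\tau$ down onto $\sigma_A$ and $\sigma_B$, the criterion (\ref{developmentetildeordersimplify}) collapses to disjunctions that hold trivially, because maximality of $\sigma_A$ squeezes every partial meet $\bigsqcap_{k\in K}\sigma'_{k,A}$ back onto $\sigma_A$. Everything else is routine bookkeeping with Theorem \ref{bifilterPtilde} and the definition of complete meet-irreducibility.
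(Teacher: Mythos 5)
Your proof is correct, and it leans on the same two pillars as the paper's argument --- the bi-homomorphism property (Theorem \ref{bifilterPtilde}) and the word-problem expansion (\ref{developmentetildeordersimplify}) --- but it routes the argument differently. The paper first shows that a pure tensor $\sigma_A\widetilde{\otimes}\sigma_B$ of maximal states is completely meet-irreducible by taking an arbitrary representation $\bigsqcap_{i\in I}\sigma_{i,A}\widetilde{\otimes}\sigma_{i,B}=\sigma_A\widetilde{\otimes}\sigma_B$ and using the two marginal equalities from (\ref{developmentetildeordersimplify}) plus maximality to collapse every $\sigma_{i,A}$ onto $\sigma_A$ and every $\sigma_{i,B}$ onto $\sigma_B$; maximality of the pure tensor is then read off the expansion, and the inclusion $\widetilde{S}^{\,pure}_{AB}\subseteq\{\sigma_A\widetilde{\otimes}\sigma_B\,\vert\,\sigma_A,\sigma_B\ \textrm{pure}\}$ is handled component-wise (meet-irreducibility of the tensor forces meet-irreducibility of each factor). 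You instead organize everything as a cycle of three inclusions: you prove maximality of $\sigma_A\widetilde{\otimes}\sigma_B$ directly, by pushing an upper bound $\widetilde\tau$ through the marginal homomorphisms $\eta,\lambda$ and then collapsing the criterion (\ref{developmentetildeordersimplify}); you get $Max\subseteq\widetilde{S}^{\,pure}_{AB}$ from the elementary fact that a maximal element is completely meet-irreducible (a step the paper leaves implicit, and which your argument makes explicit); and for $\widetilde{S}^{\,pure}_{AB}\subseteq\{\sigma_A\widetilde{\otimes}\sigma_B\}$ you decompose an arbitrary element into pure tensors of pure states, which in effect anticipates the generating statement the paper only proves afterwards as Theorem \ref{theoremA4Ptilde}. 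The trade-off: your chain delivers all three equalities in one sweep and closes the small logical gap around $Max\subseteq\widetilde{S}^{\,pure}_{AB}$, at the cost of invoking the well-definedness and monotonicity of $\eta,\lambda$ (stated just before in the paper) and of duplicating part of Theorem \ref{theoremA4Ptilde}; the paper's version stays closer to the expansion lemma alone but asserts the easy inclusions without spelling them out.
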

\begin{proof}
First of all, it is a trivial fact that the completely meet-irreducible elements of  ${{\widetilde{S}}}_{AB}$ are necessarily pure tensors of  ${{\widetilde{S}}}_{AB}$, i.e.  elements of the form $\sigma_A \widetilde{\otimes} \sigma_B$. \\
Let us then consider $\sigma_A \widetilde{\otimes} \sigma_B$ a completely meet-irreducible element of ${{\widetilde{S}}}_{AB}$ and let us assume that $\sigma_A  = \bigsqcap{}^{{}^{{ \mathfrak{S}}_{A}}}_{i\in I} \sigma_{i,A}$ for $ \sigma_{i,A}\in { \mathfrak{S}}_{A}$ for any $i\in I$. We have then $(\sigma_A \widetilde{\otimes} \sigma_B) = ((\bigsqcap{}^{{}^{{ \mathfrak{S}}_{A}}}_{i\in I} \sigma_{i,A})\widetilde{\otimes}  \sigma_B)=\bigsqcap{}^{{}^{{{\widetilde{S}}}_{AB}}}_{i\in I} ( \sigma_{i,A}\widetilde{\otimes}  \sigma_B)$.  On another part, $\sigma_A \widetilde{\otimes} \sigma_B$ being completely meet-irreducible in ${{\widetilde{S}}}_{AB}$, there exists $k\in I$ such that $\sigma_A \widetilde{\otimes} \sigma_B=\sigma_{k,A} \widetilde{\otimes} \sigma_B$, i.e, $\sigma_A=\sigma_{k,A}$. As a conclusion, $\sigma_A$ is completely meet-irreducible. In the same way, $\sigma_B$ is completely meet-irreducible.  As a first result, pure states of ${{\widetilde{S}}}_{AB}$ are necessarily of the form $\sigma_A \widetilde{\otimes} \sigma_B$ with $\sigma_A \in { \mathfrak{S}}^{\;{}^{pure}}_{A}, \sigma_B \in { \mathfrak{S}}^{\;{}^{pure}}_{B}$.\\
Conversely, let us consider $\sigma_A$ a pure state of ${ \mathfrak{S}}_{A}$ and $\sigma_B$ a pure state of ${ \mathfrak{S}}_{B}$, and let us suppose that $ (\bigsqcap{}^{{}^{{{\widetilde{S}}}_{AB}}}_{i\in I} \sigma_{i,A}\widetilde{\otimes}  \sigma_{i,B})  =  (\sigma_A \widetilde{\otimes} \sigma_B)$ with $\sigma_{i,A}\in { \mathfrak{S}}_{A}$ and $\sigma_{i,B}\in { \mathfrak{S}}_{B}$ for any $i\in I$. We now exploit the two conditions $(\bigsqcap{}^{{}_{{ \mathfrak{S}}_{A}}}_{k\in I}\; \sigma_{k,A}) = \sigma_{A}$ and $(\bigsqcap{}^{{}_{{ \mathfrak{S}}_{B}}}_{m\in I} \;\sigma_{m,B}) = \sigma_{B}$ derived from the expansion (\ref{developmentetildeordersimplify}).  From $\sigma_A\in Max({ \mathfrak{S}}_{A})$ and $\sigma_B\in Max({ \mathfrak{S}}_{B})$, we deduce that $\sigma_{i,A} = \sigma_A$ and $\sigma_{j,B} = \sigma_B$ for any $i,j\in I$. As a second result, we have then obtained that the state $(\sigma_A \widetilde{\otimes} \sigma_B)$, with $\sigma_A$ a pure state of ${ \mathfrak{S}}_{A}$ and $\sigma_B$ a pure state of ${ \mathfrak{S}}_{B}$, is completely meet-irreducible.\\
From the expansion (\ref{developmentetildeordersimplify}), we deduce also immediately that $(\sigma_A \widetilde{\otimes} \sigma_B)\in Max({{\widetilde{S}}}_{AB})$ as long as $\sigma_A\in Max({ \mathfrak{S}}_{A})$ and $\sigma_B\in Max({ \mathfrak{S}}_{B})$.
\end{proof}

\begin{theorem} \label{theoremA4Ptilde}
\begin{eqnarray}
&&\forall \sigma \in {{\widetilde{S}}}_{AB}, \;\; \sigma= \bigsqcap{}^{{}^{{{\widetilde{S}}}_{AB}}}  \underline{\sigma}_{{}_{ {{\widetilde{S}}}_{AB}}},\;\;\textit{\rm where}\;\;
\underline{\sigma}_{{}_{ {{\widetilde{S}}}_{AB}}}=
({{\widetilde{S}}}{}_{AB}^{\,{}^{pure}} \cap (\uparrow^{{}^{{{\widetilde{S}}}_{AB}}}\!\!\!\! \sigma) ).
\end{eqnarray}
\end{theorem}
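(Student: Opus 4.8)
The plan is to reduce the statement to the order-generating property of the factors $\mathfrak{S}_A$ and $\mathfrak{S}_B$, using the characterization of pure states of the minimal tensor product (Theorem \ref{theorempuretilde}) together with the distributivity of $\widetilde{\otimes}$ over arbitrary infima (Theorem \ref{bifilterPtilde}). First I would take an arbitrary $\sigma \in {{\widetilde{S}}}_{AB}$ and write it, by construction of ${{\widetilde{S}}}_{AB}$ as the image of $\Omega$, in the form $\sigma = \bigsqcap{}^{{}^{{{\widetilde{S}}}_{AB}}}_{i\in I} \sigma_{i,A}\widetilde{\otimes}\sigma_{i,B}$ for a suitable family $\{(\sigma_{i,A},\sigma_{i,B})\;\vert\; i\in I\}$ in $\mathcal{P}({ \mathfrak{S}}_{A}\times { \mathfrak{S}}_{B})$.

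Next, since ${ \mathfrak{S}}_{A}$ and ${ \mathfrak{S}}_{B}$ admit descriptions in terms of pure states, relation (\ref{completemeetirreducibleordergenerating}) applied in each factor gives $\sigma_{i,A} = \bigsqcap{}^{{}^{{ \mathfrak{S}}_{A}}}\underline{\sigma_{i,A}}_{{}_{{ \mathfrak{S}}_{A}}}$ and $\sigma_{i,B} = \bigsqcap{}^{{}^{{ \mathfrak{S}}_{B}}}\underline{\sigma_{i,B}}_{{}_{{ \mathfrak{S}}_{B}}}$, with $\underline{\sigma_{i,A}}_{{}_{{ \mathfrak{S}}_{A}}}\subseteq { \mathfrak{S}}^{\;{}^{pure}}_{A}$ and $\underline{\sigma_{i,B}}_{{}_{{ \mathfrak{S}}_{B}}}\subseteq { \mathfrak{S}}^{\;{}^{pure}}_{B}$ (both nonempty by (\ref{existsmaxabove})). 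Iterating the homomorphic relations (\ref{pitensor=tensorpi1}) and (\ref{pitensor=tensorpi2}) of Theorem \ref{bifilterPtilde} in the two variables, I obtain $\sigma_{i,A}\widetilde{\otimes}\sigma_{i,B} = \bigsqcap{}^{{}^{{{\widetilde{S}}}_{AB}}}_{\alpha\in\underline{\sigma_{i,A}}_{{}_{{ \mathfrak{S}}_{A}}},\;\beta\in\underline{\sigma_{i,B}}_{{}_{{ \mathfrak{S}}_{B}}}} (\alpha\widetilde{\otimes}\beta)$, and then, combining these nested infima via (\ref{tensorinfSAB}), $\sigma = \bigsqcap{}^{{}^{{{\widetilde{S}}}_{AB}}} T$ where $T := \{\,\alpha\widetilde{\otimes}\beta \;\vert\; i\in I,\ \alpha\in\underline{\sigma_{i,A}}_{{}_{{ \mathfrak{S}}_{A}}},\ \beta\in\underline{\sigma_{i,B}}_{{}_{{ \mathfrak{S}}_{B}}}\}$. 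By Theorem \ref{theorempuretilde} every element of $T$ is a pure state of ${{\widetilde{S}}}_{AB}$, so $T\subseteq {{\widetilde{S}}}{}_{AB}^{\,{}^{pure}}$.

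Finally I would close with the standard sandwich argument, all infima below existing because ${{\widetilde{S}}}_{AB}$ is down-complete. On the one hand, since $\sigma = \bigsqcap{}^{{}^{{{\widetilde{S}}}_{AB}}} T$, each element of $T$ lies above $\sigma$, hence $T\subseteq \underline{\sigma}_{{}_{{{\widetilde{S}}}_{AB}}} = {{\widetilde{S}}}{}_{AB}^{\,{}^{pure}}\cap(\uparrow^{{}^{{{\widetilde{S}}}_{AB}}}\!\sigma)$; since a greater set has a smaller infimum, $\bigsqcap{}^{{}^{{{\widetilde{S}}}_{AB}}}\underline{\sigma}_{{}_{{{\widetilde{S}}}_{AB}}} \sqsubseteq_{{}_{{{\widetilde{S}}}_{AB}}} \bigsqcap{}^{{}^{{{\widetilde{S}}}_{AB}}} T = \sigma$. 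On the other hand, every element of $\underline{\sigma}_{{}_{{{\widetilde{S}}}_{AB}}}$ is by definition $\sqsupseteq_{{}_{{{\widetilde{S}}}_{AB}}}\sigma$, so $\sigma$ is a lower bound of $\underline{\sigma}_{{}_{{{\widetilde{S}}}_{AB}}}$ and thus $\sigma\sqsubseteq_{{}_{{{\widetilde{S}}}_{AB}}}\bigsqcap{}^{{}^{{{\widetilde{S}}}_{AB}}}\underline{\sigma}_{{}_{{{\widetilde{S}}}_{AB}}}$. The two inequalities give the claimed equality. I do not expect a real obstacle here; the only point needing care is justifying the two-variable distributivity of $\widetilde{\otimes}$ over arbitrary infima as an iteration of the one-variable relations of Theorem \ref{bifilterPtilde} combined with (\ref{tensorinfSAB}).
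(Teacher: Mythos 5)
Your proof is correct and follows essentially the same route as the paper: write $\sigma=\bigsqcap{}^{{}^{{{\widetilde{S}}}_{AB}}}_{i\in I}\sigma_{i,A}\widetilde{\otimes}\sigma_{i,B}$, decompose each $\sigma_{i,A}$, $\sigma_{i,B}$ into pure states of the factors, use the bi-homomorphism property of Theorem \ref{bifilterPtilde} (together with (\ref{tensorinfSAB})) to rewrite $\sigma$ as an infimum of pure tensors of pure states, invoke Theorem \ref{theorempuretilde} to see these lie in $\underline{\sigma}_{{}_{{{\widetilde{S}}}_{AB}}}$, and close with the two-sided comparison of infima. No gap to report.
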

\begin{proof}
Let us fix $\sigma \in {{\widetilde{S}}}_{AB}$.\\ 
We note that $\sigma \sqsubseteq_{{}_{{{\widetilde{S}}}_{AB}}} \sigma'$ for any $\sigma'\in ({{\widetilde{S}}}{}_{AB}^{\,{}^{pure}} \cap (\uparrow^{{}^{{{\widetilde{S}}}_{AB}}}\!\!\!\! \sigma)) $ and then $\sigma \sqsubseteq_{{}_{{{\widetilde{S}}}_{AB}}} \bigsqcap{}^{{}^{{{\widetilde{S}}}_{AB}}}  \underline{\sigma}_{{}_{ {{\widetilde{S}}}_{AB}}}$. \\
Secondly,  denoting  
$\sigma:=(\bigsqcap{}^{{}^{{{\widetilde{S}}}_{AB}}}_{i\in I} \sigma_{i,A}\widetilde{\otimes}  \sigma_{i,B})$, we note immediately that, for any 
$\sigma_{A} \in { \mathfrak{S}}_{A}^{pure}$ and $\sigma_{B} \in { \mathfrak{S}}_{B}^{pure}$, if 
$\sigma_{A}\sqsupseteq_{{}_{{ \mathfrak{S}}_{A}}} \sigma_{i,A}$ and $ \sigma_{B}\sqsupseteq_{{}_{{ \mathfrak{S}}_{B}}} \sigma_{i,B}$, then $(\sigma_{A}\widetilde{\otimes}  \sigma_{B}) \sqsupseteq_{{}_{{ \mathfrak{S}}_{AB}}} \sigma$, i.e. $(\sigma_{A}\widetilde{\otimes}  \sigma_{B}) \in \underline{\sigma}_{{}_{ {{\widetilde{S}}}_{AB}}}$. As a consequence, we have
\begin{eqnarray}
(\bigsqcap{}^{{}^{{{\widetilde{S}}}_{AB}}}_{i\in I}\bigsqcap{}^{{}^{{{\widetilde{S}}}_{AB}}}_{\sigma_{A} \in { \mathfrak{S}}_{A}^{pure}\;\vert\; \sigma_{A}\sqsupseteq_{{}_{{ \mathfrak{S}}_{A}}} \sigma_{i,A}}\bigsqcap{}^{{}^{{{\widetilde{S}}}_{AB}}}_{\sigma_{B} \in { \mathfrak{S}}_{B}^{pure}\;\vert\; \sigma_{B}\sqsupseteq_{{}_{{ \mathfrak{S}}_{B}}} \sigma_{i,B}} \sigma_{A}\widetilde{\otimes}  \sigma_{B}) \sqsupseteq_{{}_{{{\widetilde{S}}}_{AB}}} \bigsqcap{}^{{}^{{{\widetilde{S}}}_{AB}}}  \underline{\sigma}_{{}_{ {{\widetilde{S}}}_{AB}}}.\;\;\;\;\;\;\;
\end{eqnarray}
Endly, using Theorem \ref{bifilterPtilde},we have
\begin{eqnarray}
\sigma &=&\bigsqcap{}^{{}^{{{\widetilde{S}}}_{AB}}}_{i\in I} \sigma_{i,A}\widetilde{\otimes}  \sigma_{i,B} =  
\bigsqcap{}^{{}^{{{\widetilde{S}}}_{AB}}}_{i\in I} (\bigsqcap{}^{{}^{{{\widetilde{S}}}_{AB}}}_{\sigma_{A} \in { \mathfrak{S}}_{A}^{pure}\;\vert\; \sigma_{A}\sqsupseteq_{{}_{{ \mathfrak{S}}_{A}}} \sigma_{i,A}}\sigma_{A})\widetilde{\otimes}  (\bigsqcap{}^{{}^{{{\widetilde{S}}}_{AB}}}_{\sigma_{B} \in { \mathfrak{S}}_{B}^{pure}\;\vert\; \sigma_{B}\sqsupseteq_{{}_{{ \mathfrak{S}}_{B}}} \sigma_{i,B}}\sigma_{B})\nonumber\\
&= & 
\bigsqcap{}^{{}^{{{\widetilde{S}}}_{AB}}}_{i\in I}\bigsqcap{}^{{}^{{{\widetilde{S}}}_{AB}}}_{\sigma_{A} \in { \mathfrak{S}}_{A}^{pure}\;\vert\; \sigma_{A}\sqsupseteq_{{}_{{ \mathfrak{S}}_{A}}} \sigma_{i,A}}\bigsqcap{}^{{}^{{{\widetilde{S}}}_{AB}}}_{\sigma_{B} \in { \mathfrak{S}}_{B}^{pure}\;\vert\; \sigma_{B}\sqsupseteq_{{}_{{ \mathfrak{S}}_{B}}} \sigma_{i,B}} \sigma_{A}\widetilde{\otimes}  \sigma_{B}.
\end{eqnarray}
As a final conclusion, we obtain
\begin{eqnarray}
\sigma = (\bigsqcap{}^{{}^{{{\widetilde{S}}}_{AB}}}_{i\in I}\bigsqcap{}^{{}^{{{\widetilde{S}}}_{AB}}}_{\sigma_{A} \in { \mathfrak{S}}_{A}^{pure}\;\vert\; \sigma_{A}\sqsupseteq_{{}_{{ \mathfrak{S}}_{A}}} \sigma_{i,A}}\bigsqcap{}^{{}^{{{\widetilde{S}}}_{AB}}}_{\sigma_{B} \in { \mathfrak{S}}_{B}^{pure}\;\vert\; \sigma_{B}\sqsupseteq_{{}_{{ \mathfrak{S}}_{B}}} \sigma_{i,B}} \sigma_{A}\widetilde{\otimes}  \sigma_{B}) = \bigsqcap{}^{{}^{{{\widetilde{S}}}_{AB}}}  \underline{\sigma}_{{}_{ {{\widetilde{S}}}_{AB}}}.\;\;\;\;\;\;\;\;\;\;\;\;\;\;
\end{eqnarray}
\end{proof}

\begin{theorem}\label{formulacupStilde}
Let $\widetilde{\sigma}_{AB}$ and $\widetilde{\sigma}'_{AB}$ be two elements of ${{\widetilde{S}}}_{AB}$ having a common upper-bound. Then the supremum of $\{\widetilde{\sigma}_{AB},\widetilde{\sigma}'_{AB}\}$ exists in ${{\widetilde{S}}}_{AB}$ and its expression is given by
\begin{eqnarray}
\widetilde{\sigma}_{AB} \sqcup_{{}_{{{\widetilde{S}}}_{AB}}} \widetilde{\sigma}'_{AB} = \bigsqcap{}^{{}^{{{\widetilde{S}}}_{AB}}}_{\widetilde{\sigma} \in (\underline{\widetilde{\sigma}_{AB}}_{{}_{ {{\widetilde{S}}}_{AB}}}\!\!\!\!\!\!\!\!\!\cap\; \underline{\widetilde{\sigma}'_{AB}}_{{}_{ {{\widetilde{S}}}_{AB}}}\!\!\!\!\!\!\!\!\!)} \; \widetilde{\sigma}\label{formulacupPtilde2}
\end{eqnarray}
\end{theorem}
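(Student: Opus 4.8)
The plan is to show that the right-hand side of (\ref{formulacupPtilde2}) is literally the least upper bound of $\{\widetilde{\sigma}_{AB},\widetilde{\sigma}'_{AB}\}$; this is a purely order-theoretic consequence of Theorem \ref{theoremA4Ptilde} (every element of ${{\widetilde{S}}}_{AB}$ is the infimum of the pure tensors lying above it) together with the down-completeness of ${{\widetilde{S}}}_{AB}$ established around (\ref{tensorinfSAB}). I would write $R:=\underline{\widetilde{\sigma}_{AB}}_{{}_{ {{\widetilde{S}}}_{AB}}}\cap\,\underline{\widetilde{\sigma}'_{AB}}_{{}_{ {{\widetilde{S}}}_{AB}}}$ for the set of pure tensors of ${{\widetilde{S}}}_{AB}$ dominating both arguments, and set $\widetilde{\tau}:=\bigsqcap{}^{{}^{{{\widetilde{S}}}_{AB}}}_{\widetilde{\sigma}\in R}\widetilde{\sigma}$, which exists by down-completeness; the goal is then just $\widetilde{\tau}=\widetilde{\sigma}_{AB}\sqcup_{{}_{{{\widetilde{S}}}_{AB}}}\widetilde{\sigma}'_{AB}$.

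First I would record the inclusion that does all the work: for \emph{any} common upper bound $\widetilde{\rho}$ of $\widetilde{\sigma}_{AB}$ and $\widetilde{\sigma}'_{AB}$, every pure tensor $\widetilde{\sigma}$ with $\widetilde{\sigma}\sqsupseteq_{{}_{{{\widetilde{S}}}_{AB}}}\widetilde{\rho}$ automatically satisfies $\widetilde{\sigma}\sqsupseteq_{{}_{{{\widetilde{S}}}_{AB}}}\widetilde{\sigma}_{AB}$ and $\widetilde{\sigma}\sqsupseteq_{{}_{{{\widetilde{S}}}_{AB}}}\widetilde{\sigma}'_{AB}$, i.e. $\underline{\widetilde{\rho}}_{{}_{ {{\widetilde{S}}}_{AB}}}\subseteq R$. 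Since by hypothesis such a $\widetilde{\rho}$ exists, and since ${{\widetilde{S}}}_{AB}$ is a space of states, property (\ref{existsmaxabove}) gives a maximal element above $\widetilde{\rho}$, which is pure by Theorem \ref{theorempuretilde}; hence $\underline{\widetilde{\rho}}_{{}_{ {{\widetilde{S}}}_{AB}}}\neq\varnothing$ and therefore $R\neq\varnothing$.

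Next I would check that $\widetilde{\tau}$ is an upper bound: each of $\widetilde{\sigma}_{AB},\widetilde{\sigma}'_{AB}$ is by definition a lower bound of $R$, so it lies below $\bigsqcap{}^{{}^{{{\widetilde{S}}}_{AB}}}R=\widetilde{\tau}$. For minimality, take an arbitrary common upper bound $\widetilde{\rho}$; by the inclusion $\underline{\widetilde{\rho}}_{{}_{ {{\widetilde{S}}}_{AB}}}\subseteq R$ and the elementary fact that an infimum over a larger set is smaller, we get $\widetilde{\tau}=\bigsqcap{}^{{}^{{{\widetilde{S}}}_{AB}}}R\ \sqsubseteq_{{}_{{{\widetilde{S}}}_{AB}}}\ \bigsqcap{}^{{}^{{{\widetilde{S}}}_{AB}}}\underline{\widetilde{\rho}}_{{}_{ {{\widetilde{S}}}_{AB}}}$, and the right-hand side equals $\widetilde{\rho}$ by Theorem \ref{theoremA4Ptilde} applied to $\widetilde{\rho}$. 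Thus $\widetilde{\tau}$ is the supremum, which is exactly (\ref{formulacupPtilde2}).

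I do not expect a genuine obstacle: the argument is the standard recipe for recovering joins from meets in a poset that is down-complete and generated by its (completely) meet-irreducible elements. The one point that wants a little care is that $\bigsqcap\underline{\widetilde{\rho}}_{{}_{ {{\widetilde{S}}}_{AB}}}$ be a genuine infimum of a non-empty set rather than a formal top element — which is precisely why the argument routes through (\ref{existsmaxabove}) and Theorem \ref{theorempuretilde} to guarantee $\underline{\widetilde{\rho}}_{{}_{ {{\widetilde{S}}}_{AB}}}\neq\varnothing$, and it is exactly there that the hypothesis "$\widetilde{\sigma}_{AB}$ and $\widetilde{\sigma}'_{AB}$ have a common upper-bound" gets used.
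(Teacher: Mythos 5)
Your proof is correct and follows essentially the same route as the paper's: both use the pure-tensor decomposition of Theorem \ref{theoremA4Ptilde} together with down-completeness, showing that the infimum over $\underline{\widetilde{\sigma}_{AB}}_{{}_{{{\widetilde{S}}}_{AB}}}\cap\underline{\widetilde{\sigma}'_{AB}}_{{}_{{{\widetilde{S}}}_{AB}}}$ dominates both arguments and sits below any common upper bound $\widetilde{\rho}$ via the inclusion $\underline{\widetilde{\rho}}_{{}_{{{\widetilde{S}}}_{AB}}}\subseteq\underline{\widetilde{\sigma}_{AB}}_{{}_{{{\widetilde{S}}}_{AB}}}\cap\underline{\widetilde{\sigma}'_{AB}}_{{}_{{{\widetilde{S}}}_{AB}}}$. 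Your extra care in justifying non-emptiness of the intersection (via (\ref{existsmaxabove}) and Theorem \ref{theorempuretilde}) is a small refinement of a point the paper merely asserts, but does not change the argument.
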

\begin{proof} As long as $\widetilde{\sigma}_{AB}$ and $\widetilde{\sigma}'_{AB}$ have a common upper-bound, $\underline{\widetilde{\sigma}_{AB}}\cap \underline{\widetilde{\sigma}'_{AB}}$ is not empty. Secondly, it is clear that $\widetilde{\sigma}_{AB}= (\bigsqcap{}^{{}^{{{\widetilde{S}}}_{AB}}}_{\widetilde{\sigma} \in \underline{\widetilde{\sigma}_{AB}}} \; \widetilde{\sigma})\; \sqsubseteq_{{}_{{{\widetilde{S}}}_{AB}}} \bigsqcap{}^{{}^{{{\widetilde{S}}}_{AB}}}_{\widetilde{\sigma} \in \underline{\widetilde{\sigma}_{AB}}\cap \underline{\widetilde{\sigma}'_{AB}}} \; \widetilde{\sigma}$ and $\widetilde{\sigma}'_{AB}= (\bigsqcap{}^{{}^{{{\widetilde{S}}}_{AB}}}_{\widetilde{\sigma} \in \underline{\widetilde{\sigma}'_{AB}}} \; \widetilde{\sigma})\; \sqsubseteq_{{}_{{{\widetilde{S}}}_{AB}}} \bigsqcap{}^{{}^{{{\widetilde{S}}}_{AB}}}_{\widetilde{\sigma} \in \underline{\widetilde{\sigma}_{AB}}\cap \underline{\widetilde{\sigma}'_{AB}}} \; \widetilde{\sigma}$. Then, if we suppose there exists $\widetilde{\sigma}''_{AB}$ such that $\widetilde{\sigma}_{AB}, \widetilde{\sigma}'_{AB} \sqsubseteq_{{}_{{{\widetilde{S}}}_{AB}}} \widetilde{\sigma}''_{AB}$ we can use Theorem \ref{theoremA4Ptilde} to obtain the decomposition $\widetilde{\sigma}''_{AB}=(\bigsqcap{}^{{}^{{{\widetilde{S}}}_{AB}}}_{\widetilde{\sigma} \in \underline{\widetilde{\sigma}''_{AB}}} \; \widetilde{\sigma})$ with necessarily $\forall \widetilde{\sigma}\in \underline{\widetilde{\sigma}''_{AB}},$ $\widetilde{\sigma}_{AB} \sqsubseteq_{{}_{{{\widetilde{S}}}_{AB}}}\widetilde{\sigma}$ and $\widetilde{\sigma}'_{AB} \sqsubseteq_{{}_{{{\widetilde{S}}}_{AB}}}\widetilde{\sigma}$, i.e. $\widetilde{\sigma}\in  \underline{\widetilde{\sigma}_{AB}}\cap \underline{\widetilde{\sigma}'_{AB}}$, and then $(\bigsqcap{}^{{}^{{{\widetilde{S}}}_{AB}}}_{\widetilde{\sigma} \in \underline{\widetilde{\sigma}_{AB}}\cap \underline{\widetilde{\sigma}'_{AB}}} \; \widetilde{\sigma}) \sqsubseteq_{{}_{{{\widetilde{S}}}_{AB}}} \widetilde{\sigma}''_{AB}$.  \end{proof}

\begin{theorem}\label{SASBditribandcup}
If ${ \mathfrak{S}}_A$ \underline{and} ${ \mathfrak{S}}_B$ are distributive (i.e. simplex), then ${{\widetilde{S}}}_{AB}$ is also distributive (i.e. a simplex).  \\
Note, using Theorem \ref{theoremsqsubseteqPAB=SAB},  that, in this situation, we have also ${{\widetilde{S}}}{}^{fin}_{AB}={S}_{AB}$.\\
In that case, the explicit expression for the supremum of two elements in ${{\widetilde{S}}}{}^{fin}_{AB}$ is given by
\begin{eqnarray}
&&\hspace{-1cm}(\bigsqcap{}^{{}^{{{\widetilde{S}}}_{AB}}}_{i\in I} \sigma_{i,A}\widetilde{\otimes} \sigma_{i,B}) \sqcup{}_{{}_{{{\widetilde{S}}}_{AB}}} (\bigsqcap{}^{{}^{{{\widetilde{S}}}_{AB}}}_{j\in J} \sigma'_{j,A}\widetilde{\otimes} \sigma'_{j,B}) = \nonumber\\
&&\hspace{1cm} =\bigsqcap{}^{{}^{{{\widetilde{S}}}_{AB}}}_{i\in I,\;j\in J}\; (\sigma_{i,A}\sqcup_{{}_{{ \mathfrak{S}}_A}}\sigma'_{j,A}) \widetilde{\otimes} (\sigma_{i,B} \sqcup_{{}_{{ \mathfrak{S}}_B}} \sigma'_{j,B}). \;\;\;\;\;\;\;\;\;\label{formulacupSAB}
\end{eqnarray}
\end{theorem}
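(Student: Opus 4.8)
The plan is to prove the three assertions in turn, the whole argument hinging on a single combinatorial reading of the word problem (Lemma~\ref{Lemmadevelopetildeleqetilde}) in the simplex setting. Since, for a space of states, being distributive as an Inf semi-lattice is the same as being a simplex, it suffices to show that $\widetilde{S}_{AB}$ is a simplex.

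The key step is the following claim: if ${ \mathfrak{S}}_A$ and ${ \mathfrak{S}}_B$ are simplices, then for any family $\{(\sigma_{i,A},\sigma_{i,B})\mid i\in I\}$ and any pure states $\alpha_A\in{ \mathfrak{S}}_A^{pure}$, $\alpha_B\in{ \mathfrak{S}}_B^{pure}$,
\[
\Bigl(\bigsqcap{}^{\widetilde{S}_{AB}}_{i\in I}\sigma_{i,A}\widetilde{\otimes}\sigma_{i,B}\Bigr)\sqsubseteq_{\widetilde{S}_{AB}}\alpha_A\widetilde{\otimes}\alpha_B\ \Longleftrightarrow\ (\,\exists\, i\in I\;\vert\; \sigma_{i,A}\sqsubseteq_{{ \mathfrak{S}}_A}\alpha_A\ \textit{\rm and}\ \sigma_{i,B}\sqsubseteq_{{ \mathfrak{S}}_B}\alpha_B\,).
\]
To prove it I would apply the expansion (\ref{developmentetildeordersimplify}) with target $\alpha_A\widetilde{\otimes}\alpha_B$. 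Because ${ \mathfrak{S}}_A$ is a simplex, for any $\varnothing\neq K\subseteq I$ the unique pure decomposition of $\bigsqcap{}^{{ \mathfrak{S}}_A}_{k\in K}\sigma_{k,A}$ is $\bigcup_{k\in K}\underline{\sigma_{k,A}}_{{ \mathfrak{S}}_A}$ (by the order-generating property and uniqueness), so, using that $\alpha_A$ is maximal, $\bigsqcap{}^{{ \mathfrak{S}}_A}_{k\in K}\sigma_{k,A}\sqsubseteq_{{ \mathfrak{S}}_A}\alpha_A$ iff $\sigma_{k,A}\sqsubseteq_{{ \mathfrak{S}}_A}\alpha_A$ for some $k\in K$, and likewise on the $B$ side. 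Setting $I_A:=\{i\in I\mid \sigma_{i,A}\sqsubseteq\alpha_A\}$, $I_B:=\{i\in I\mid \sigma_{i,B}\sqsubseteq\alpha_B\}$, the conditions of (\ref{developmentetildeordersimplify}) then read: $I_A\neq\varnothing$, $I_B\neq\varnothing$, and for every $\varnothing\varsubsetneq K\varsubsetneq I$ one has $K\cap I_A\neq\varnothing$ or $(I\smallsetminus K)\cap I_B\neq\varnothing$. Taking $K=I_B$ (the case $I_B=I$ being immediate, since then $I_A\subseteq I_B$) shows these conditions force $I_A\cap I_B\neq\varnothing$; the reverse implication is clear. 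This combinatorial observation is what I expect to be the main obstacle.

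Granting this, $\widetilde{S}_{AB}$ is a simplex: by Theorems~\ref{theorempuretilde} and~\ref{theoremA4Ptilde} it already admits a description in terms of pure states, with $\widetilde{S}^{pure}_{AB}=Max(\widetilde{S}_{AB})$ the set of pure tensors and $\sigma=\bigsqcap{}^{\widetilde{S}_{AB}}\underline{\sigma}_{\widetilde{S}_{AB}}$ for every $\sigma$; only uniqueness of this decomposition is missing. So let $\sigma=\bigsqcap{}^{\widetilde{S}_{AB}}U$ with $U=\{\beta_{k,A}\widetilde{\otimes}\beta_{k,B}\mid k\in K\}\subseteq\widetilde{S}^{pure}_{AB}$; then $U\subseteq\underline{\sigma}_{\widetilde{S}_{AB}}$ trivially, and conversely, given $\alpha_A\widetilde{\otimes}\alpha_B\in\underline{\sigma}_{\widetilde{S}_{AB}}$, the key step yields $k\in K$ with $\beta_{k,A}\sqsubseteq\alpha_A$ and $\beta_{k,B}\sqsubseteq\alpha_B$, whence $\beta_{k,A}=\alpha_A$ and $\beta_{k,B}=\alpha_B$ by maximality, i.e. $\alpha_A\widetilde{\otimes}\alpha_B\in U$. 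Thus $U=\underline{\sigma}_{\widetilde{S}_{AB}}$, the decomposition is unique, and $\widetilde{S}_{AB}$ is a simplex, equivalently distributive. The identity $\widetilde{S}^{fin}_{AB}={S}_{AB}$ is then just Theorem~\ref{theoremsqsubseteqPAB=SAB} (both factors distributive, a fortiori one of them).

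For the supremum formula, put $\widetilde{\sigma}_{AB}=\bigsqcap{}^{\widetilde{S}_{AB}}_{i\in I}\sigma_{i,A}\widetilde{\otimes}\sigma_{i,B}$ and $\widetilde{\sigma}'_{AB}=\bigsqcap{}^{\widetilde{S}_{AB}}_{j\in J}\sigma'_{j,A}\widetilde{\otimes}\sigma'_{j,B}$ (with $I,J$ finite), and let $T$ be the right-hand side of (\ref{formulacupSAB}), the meet being taken over those $(i,j)$ for which $\sigma_{i,A}\sqcup_{{ \mathfrak{S}}_A}\sigma'_{j,A}$ and $\sigma_{i,B}\sqcup_{{ \mathfrak{S}}_B}\sigma'_{j,B}$ both exist (it being understood that the two sides of (\ref{formulacupSAB}) are defined simultaneously). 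Applying the key step first to $\widetilde{\sigma}_{AB}$ and $\widetilde{\sigma}'_{AB}$ and then to the one-element families defining the terms of $T$, one checks that $\underline{\widetilde{\sigma}_{AB}}_{\widetilde{S}_{AB}}\cap\underline{\widetilde{\sigma}'_{AB}}_{\widetilde{S}_{AB}}$ and $\underline{T}_{\widetilde{S}_{AB}}$ are both equal to the set of pure tensors $\alpha_A\widetilde{\otimes}\alpha_B$ with $\sigma_{i,A},\sigma'_{j,A}\sqsubseteq\alpha_A$ and $\sigma_{i,B},\sigma'_{j,B}\sqsubseteq\alpha_B$ for some $i\in I$, $j\in J$ (using that $\alpha_A$ and $\alpha_B$, being upper bounds, force the relevant joins to exist and dominate them, and conversely by monotonicity of $\widetilde{\otimes}$). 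Hence $\underline{T}_{\widetilde{S}_{AB}}=\underline{\widetilde{\sigma}_{AB}}_{\widetilde{S}_{AB}}\cap\underline{\widetilde{\sigma}'_{AB}}_{\widetilde{S}_{AB}}$, and since any term of $T$ is a common upper bound of $\widetilde{\sigma}_{AB}$ and $\widetilde{\sigma}'_{AB}$, Theorem~\ref{formulacupStilde} gives $\widetilde{\sigma}_{AB}\sqcup_{\widetilde{S}_{AB}}\widetilde{\sigma}'_{AB}=\bigsqcap{}^{\widetilde{S}_{AB}}(\underline{\widetilde{\sigma}_{AB}}_{\widetilde{S}_{AB}}\cap\underline{\widetilde{\sigma}'_{AB}}_{\widetilde{S}_{AB}})=\bigsqcap{}^{\widetilde{S}_{AB}}\underline{T}_{\widetilde{S}_{AB}}=T$, the last equality by Theorem~\ref{theoremA4Ptilde}. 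Once the combinatorial key step is in place, this join computation — like the simplex property above — is pure bookkeeping with down-sets of pure tensors.
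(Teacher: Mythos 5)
Your proof is correct, but it takes a genuinely different route from the paper. The paper's own proof is two lines: it invokes Theorem \ref{theoremsqsubseteqPAB=SAB} to identify ${\widetilde{S}}{}^{fin}_{AB}$ with the canonical tensor product ${S}_{AB}$, and then cites Fraser's Theorem 3 of \cite{Fraser1978} for the distributivity of ${S}_{AB}$ (and, implicitly, for the join formula (\ref{formulacupSAB})). You instead prove everything internally: your key equivalence, read off from the word-problem expansion (\ref{developmentetildeordersimplify}) in the simplex setting — $\bigsqcap{}_{i}\sigma_{i,A}\widetilde{\otimes}\sigma_{i,B}\sqsubseteq\alpha_A\widetilde{\otimes}\alpha_B$ iff some single index $i$ satisfies $\sigma_{i,A}\sqsubseteq\alpha_A$ and $\sigma_{i,B}\sqsubseteq\alpha_B$ — is sound (the $K=I_B$ trick does force $I_A\cap I_B\neq\varnothing$, and the identification $\underline{\bigsqcap_{k\in K}\sigma_{k,A}}=\bigcup_{k}\underline{\sigma_{k,A}}$ follows from uniqueness of pure decompositions; note that what you actually use about $\alpha_A$ is pureness, not maximality, though the two coincide under the standing hypothesis of the subsection). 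From it, uniqueness of the pure decomposition in ${\widetilde{S}}_{AB}$ and the join formula via Theorems \ref{theorempuretilde}, \ref{theoremA4Ptilde} and \ref{formulacupStilde} follow as you describe. What each approach buys: the paper's reduction is minimal effort but strictly speaking only covers the finitely generated part ${\widetilde{S}}{}^{fin}_{AB}={S}_{AB}$, delegating both distributivity and the supremum formula to an external reference; your argument is longer but self-contained, works for arbitrary (possibly infinite) index sets and hence for the full down-complete ${\widetilde{S}}_{AB}$ (granting Lemma \ref{Lemmadevelopetildeleqetilde} for arbitrary $I$, as the paper states it), and it makes explicit the convention needed for (\ref{formulacupSAB}) when some joins $\sigma_{i,A}\sqcup_{{}_{{\mathfrak{S}}_A}}\sigma'_{j,A}$ fail to exist, a point the paper's formula glosses over.
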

\begin{proof}
Using Theorem \ref{theoremsqsubseteqPAB=SAB}, we note that, as soon as ${ \mathfrak{S}}_A$ or ${ \mathfrak{S}}_B$ is distributive, we have ${{\widetilde{S}}}_{AB}={S}_{AB}$ as Inf semi-lattices.  We are then reduced to prove the distributivity of ${S}_{AB}$.  This theorem is then a direct consequence of \cite[Theorem 3]{Fraser1978}.
\end{proof}

\begin{theorem}\label{Stildereducible}
Let us consider $\sigma_{1,A}$ and $\sigma_{2,A}$ two distinct elements of ${ \mathfrak{S}}_{A}$ ,  and $\sigma_{1,B}$ and $\sigma_{2,B}$ two distinct elements of ${ \mathfrak{S}}_{B}$. 
We have then
\begin{eqnarray}
&&\hspace{-2cm} 
(\, \Phi \sqsupset_{{}_{{\widetilde{S}}_{AB}}} (\sigma_{1,A}\widetilde{\otimes} \sigma_{1,B} \sqcap_{{}_{{\widetilde{S}}_{AB}}} \sigma_{2,A}\widetilde{\otimes} \sigma_{2,B}) \;\;\;\textit{\rm and}\;\;\;
\Phi\in Max({\widetilde{S}}_{AB})\,) \Rightarrow \nonumber\\
&& 
  \Phi\in \{\,\sigma_{1,A}\widetilde{\otimes} \sigma_{1,B} \,,\, \sigma_{2,A}\widetilde{\otimes} \sigma_{2,B}\,\}.\;\;\;\;\;\;\;\;\;\;\;\;\;\;\;\;\;\;
\end{eqnarray} 
\end{theorem}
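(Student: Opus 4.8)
The plan is to combine the classification of $Max(\widetilde S_{AB})$ from Theorem~\ref{theorempuretilde} with the explicit word problem of Lemma~\ref{Lemmadevelopetildeleqetilde}, specialised to a two-element index set. First, since $\Phi\in Max(\widetilde S_{AB})$, Theorem~\ref{theorempuretilde} lets us write $\Phi=\alpha\widetilde\otimes\beta$ for some $\alpha\in Max({ \mathfrak{S}}_{A})={ \mathfrak{S}}_{A}^{pure}$ and $\beta\in Max({ \mathfrak{S}}_{B})={ \mathfrak{S}}_{B}^{pure}$; in particular $\alpha$ and $\beta$ are maximal, so every relation $\sigma\sqsubseteq_{{ \mathfrak{S}}_{A}}\alpha$ with $\sigma$ itself maximal forces $\sigma=\alpha$, and likewise on the $B$ side. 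Since $\Phi=\alpha\widetilde\otimes\beta$, it then suffices to show that the pair $(\alpha,\beta)$ equals $(\sigma_{1,A},\sigma_{1,B})$ or $(\sigma_{2,A},\sigma_{2,B})$.

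Next I would feed the hypothesis — which in particular gives $(\sigma_{1,A}\widetilde\otimes\sigma_{1,B})\sqcap_{{\widetilde S}_{AB}}(\sigma_{2,A}\widetilde\otimes\sigma_{2,B})\sqsubseteq_{{\widetilde S}_{AB}}\alpha\widetilde\otimes\beta$ — into Lemma~\ref{Lemmadevelopetildeleqetilde} with $I=\{1,2\}$, $\sigma_{A}=\alpha$, $\sigma_{B}=\beta$. The only nonempty proper subsets of $\{1,2\}$ being $\{1\}$ and $\{2\}$, the expansion (\ref{developmentetildeordersimplify}) collapses to the four conditions
\begin{align*}
&\sigma_{1,A}\sqcap_{{ \mathfrak{S}}_{A}}\sigma_{2,A}\sqsubseteq_{{ \mathfrak{S}}_{A}}\alpha,\qquad \sigma_{1,B}\sqcap_{{ \mathfrak{S}}_{B}}\sigma_{2,B}\sqsubseteq_{{ \mathfrak{S}}_{B}}\beta,\\
&(\,\sigma_{1,A}\sqsubseteq_{{ \mathfrak{S}}_{A}}\alpha\;\textit{\rm or}\;\sigma_{2,B}\sqsubseteq_{{ \mathfrak{S}}_{B}}\beta\,),\qquad (\,\sigma_{2,A}\sqsubseteq_{{ \mathfrak{S}}_{A}}\alpha\;\textit{\rm or}\;\sigma_{1,B}\sqsubseteq_{{ \mathfrak{S}}_{B}}\beta\,).
\end{align*}

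Finally I would run the case analysis on the last two disjunctions, using maximality of $\alpha,\beta$ (and of $\sigma_{1,A},\sigma_{2,A},\sigma_{1,B},\sigma_{2,B}$) to replace each occurring relation $\sigma_{i,A}\sqsubseteq_{{ \mathfrak{S}}_{A}}\alpha$ by $\sigma_{i,A}=\alpha$ and each $\sigma_{j,B}\sqsubseteq_{{ \mathfrak{S}}_{B}}\beta$ by $\sigma_{j,B}=\beta$. The branch ``$\sigma_{1,A}=\alpha$ and $\sigma_{2,A}=\alpha$'' is excluded by $\sigma_{1,A}\neq\sigma_{2,A}$, and symmetrically the branch ``$\sigma_{2,B}=\beta$ and $\sigma_{1,B}=\beta$'' is excluded by $\sigma_{1,B}\neq\sigma_{2,B}$; the two remaining branches yield $(\alpha,\beta)=(\sigma_{1,A},\sigma_{1,B})$ or $(\alpha,\beta)=(\sigma_{2,A},\sigma_{2,B})$, which is the claim. (The strict inequality $\sqsupset$ in the statement is in fact automatic here: the meet $(\sigma_{1,A}\widetilde\otimes\sigma_{1,B})\sqcap_{{\widetilde S}_{AB}}(\sigma_{2,A}\widetilde\otimes\sigma_{2,B})$ sits below both pure tensors, so were it maximal it would equal both, forcing $\sigma_{1,A}\widetilde\otimes\sigma_{1,B}=\sigma_{2,A}\widetilde\otimes\sigma_{2,B}$ and hence $\sigma_{1,A}=\sigma_{2,A}$ by injectivity of $\iota^{\widecheck S_{AB}}$, a contradiction.) The step I expect to be the main obstacle is precisely the collapse ``$\sigma_{i,A}\sqsubseteq_{{ \mathfrak{S}}_{A}}\alpha\Rightarrow\sigma_{i,A}=\alpha$'': it is what upgrades the otherwise rather weak disjunctive conditions above into a full determination of $(\alpha,\beta)$, and it relies on the factors $\sigma_{i,A},\sigma_{i,B}$ being maximal (equivalently, $\sigma_{i,A}\widetilde\otimes\sigma_{i,B}\in Max(\widetilde S_{AB})$ via Theorem~\ref{theorempuretilde}); I would make sure this is the hypothesis genuinely in force before writing out the four cases in detail.
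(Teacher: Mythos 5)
Your proof is correct and is essentially the paper's own argument: the paper's proof consists of the single line ``direct consequence of the expansion (\ref{developmentetildeordersimplify}) with Theorem \ref{theorempuretilde}'', i.e.\ write $\Phi=\alpha\widetilde{\otimes}\beta$ with $\alpha,\beta$ maximal via Theorem \ref{theorempuretilde}, specialise Lemma \ref{Lemmadevelopetildeleqetilde} to $I=\{1,2\}$, and run the same four-branch case analysis you describe. Your closing worry is also well founded: the collapse $\sigma_{i,A}\sqsubseteq_{{}_{{\mathfrak{S}}_A}}\alpha\Rightarrow\sigma_{i,A}=\alpha$ genuinely requires the factors $\sigma_{i,A},\sigma_{i,B}$ to be maximal (pure), a hypothesis the statement does not literally contain, and without it the claim fails --- e.g.\ in ${\mathfrak{B}}\widetilde{\otimes}{\mathfrak{B}}$ take $\sigma_{1,A}=\sigma_{1,B}=\bot$, $\sigma_{2,A}=\sigma_{2,B}=\textit{\bf Y}$, so the meet is the bottom element and the maximal element $\Phi=\textit{\bf N}\widetilde{\otimes}\textit{\bf N}$ violates the conclusion. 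The paper's only application of this theorem (in the proof that ${\mathfrak{S}}_A\widetilde{\otimes}{\mathfrak{S}}_B\varsubsetneq{\mathfrak{S}}_A\widehat{\otimes}{\mathfrak{S}}_B$) takes $\sigma_1,\sigma_2,\tau_1,\tau_2$ to be pure states, so the purity hypothesis you would add is indeed the one in force there, and with it your write-up is complete.
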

\begin{proof}
Direct consequence of the expansion (\ref{developmentetildeordersimplify}) with Theorem \ref{theorempuretilde}.
\end{proof}

\subsection{From maximal to regular tensor product}\label{subsectionremarks}

In the present subsection we will assume the following supplementary condition satisfied by an element $\Phi\in \widecheck{S}_{AB}$ :
\begin{eqnarray}
{\Phi}( \overline{{ \mathfrak{Y}}_{{ \mathfrak{E}}_{A}}},\overline{{ \mathfrak{Y}}_{{ \mathfrak{E}}_{B}}}) = \textit{\bf N}. \label{furtherquotientNN}
\end{eqnarray}
We note that an element $\Phi\in \widetilde{S}_{AB}$ satisfies necessarily this condition.

\begin{lemma}
For any $\Phi\in \widecheck{S}_{AB}$, we have
\begin{eqnarray}
\forall { \mathfrak{l}}_B\in { \mathfrak{E}}_{B},&& \Phi( \overline{{ \mathfrak{Y}}_{{ \mathfrak{E}}_{A}}},{ \mathfrak{l}}_B) =\textit{\bf N}\label{phinl}\\
\forall { \mathfrak{l}}_A\in { \mathfrak{E}}_{A},&& \Phi({ \mathfrak{l}}_A,\overline{{ \mathfrak{Y}}_{{ \mathfrak{E}}_{B}}}) =\textit{\bf N}.\label{philn}
\end{eqnarray}
\end{lemma}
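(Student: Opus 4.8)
The plan is to carry out everything inside the three-element poset $\mathfrak{B}$, using the bi-linearity relations (\ref{bilinear1})--(\ref{bilinear2}) to propagate the value $\textit{\bf N}$ from the two corners where it is already known onto the whole relevant "axes". First I would record two elementary facts, read off directly from the tables defining $\wedge$ and the involution on $\mathfrak{B}$: for every $x\in\mathfrak{B}$ one has $x\wedge\overline{x}=\bot$, and for $u,v\in\mathfrak{B}$ the equality $u\wedge v=\textit{\bf N}$ can hold only if $u=v=\textit{\bf N}$. Combining the first fact with (\ref{axiomEinfsemilattice}), (\ref{etbar}) and the extensionality axiom (\ref{Chuextensional}), one gets that in any space of effects $\mathfrak{l}\sqcap_{\mathfrak{E}}\overline{\mathfrak{l}}=\bot_{\mathfrak{E}}$ for every $\mathfrak{l}$, since both sides evaluate under $\epsilon$ to the constant map with value $\bot$; this infimum exists because $\mathfrak{E}$ is a down-complete Inf semi-lattice.

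Next I would compute $\Phi$ at the two corners of interest. From (\ref{furtherquotient1}) with $\mathfrak{l}_A=\mathfrak{Y}_{\mathfrak{E}_A}$ together with (\ref{furtherquotientYY}) one obtains $\Phi(\overline{\mathfrak{Y}_{\mathfrak{E}_A}},\mathfrak{Y}_{\mathfrak{E}_B})=\overline{\Phi(\mathfrak{Y}_{\mathfrak{E}_A},\mathfrak{Y}_{\mathfrak{E}_B})}=\overline{\textit{\bf Y}}=\textit{\bf N}$, while the supplementary hypothesis (\ref{furtherquotientNN}) gives $\Phi(\overline{\mathfrak{Y}_{\mathfrak{E}_A}},\overline{\mathfrak{Y}_{\mathfrak{E}_B}})=\textit{\bf N}$. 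Applying (\ref{bilinear2}) to the two-element family $\{\mathfrak{Y}_{\mathfrak{E}_B},\overline{\mathfrak{Y}_{\mathfrak{E}_B}}\}$, whose infimum is $\bot_{\mathfrak{E}_B}$ by the preliminary remark, then yields $\Phi(\overline{\mathfrak{Y}_{\mathfrak{E}_A}},\bot_{\mathfrak{E}_B})=\textit{\bf N}\wedge\textit{\bf N}=\textit{\bf N}$.

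Then, for an arbitrary $\mathfrak{l}_B\in\mathfrak{E}_B$, I would use $\bot_{\mathfrak{E}_B}=\mathfrak{l}_B\sqcap_{\mathfrak{E}_B}\overline{\mathfrak{l}_B}$ and (\ref{bilinear2}) once more to write $\textit{\bf N}=\Phi(\overline{\mathfrak{Y}_{\mathfrak{E}_A}},\bot_{\mathfrak{E}_B})=\Phi(\overline{\mathfrak{Y}_{\mathfrak{E}_A}},\mathfrak{l}_B)\wedge\Phi(\overline{\mathfrak{Y}_{\mathfrak{E}_A}},\overline{\mathfrak{l}_B})$; the second elementary fact about $\mathfrak{B}$ then forces $\Phi(\overline{\mathfrak{Y}_{\mathfrak{E}_A}},\mathfrak{l}_B)=\textit{\bf N}$, which is (\ref{phinl}). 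Statement (\ref{philn}) follows from the mirror-image argument, with (\ref{furtherquotient2}) in place of (\ref{furtherquotient1}) and (\ref{bilinear1}) in place of (\ref{bilinear2}): first $\Phi(\mathfrak{Y}_{\mathfrak{E}_A},\overline{\mathfrak{Y}_{\mathfrak{E}_B}})=\textit{\bf N}$, then $\Phi(\bot_{\mathfrak{E}_A},\overline{\mathfrak{Y}_{\mathfrak{E}_B}})=\textit{\bf N}$, then $\Phi(\mathfrak{l}_A,\overline{\mathfrak{Y}_{\mathfrak{E}_B}})\wedge\Phi(\overline{\mathfrak{l}_A},\overline{\mathfrak{Y}_{\mathfrak{E}_B}})=\textit{\bf N}$, whence the claim.

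There is no genuine difficulty here; the whole argument is a short diagram chase through the defining relations of $\widecheck{S}_{AB}$. The only point deserving a word of care is the legitimacy of the two infimum decompositions $\bot_{\mathfrak{E}_B}=\mathfrak{Y}_{\mathfrak{E}_B}\sqcap_{\mathfrak{E}_B}\overline{\mathfrak{Y}_{\mathfrak{E}_B}}$ and $\bot_{\mathfrak{E}_B}=\mathfrak{l}_B\sqcap_{\mathfrak{E}_B}\overline{\mathfrak{l}_B}$ (both special cases of the preliminary computation, and well-defined thanks to down-completeness of $\mathfrak{E}_B$), since this is precisely what converts the "vertex" data (\ref{furtherquotientYY}) and (\ref{furtherquotientNN}) into the "edge" statements being claimed.
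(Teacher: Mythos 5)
Your proof is correct and follows essentially the same route as the paper's: evaluate $\Phi$ at $(\overline{\mathfrak{Y}_{\mathfrak{E}_A}},\mathfrak{Y}_{\mathfrak{E}_B})$ via (\ref{furtherquotient1}) and (\ref{furtherquotientYY}), combine with the supplementary hypothesis (\ref{furtherquotientNN}) and bilinearity to reach $\Phi(\overline{\mathfrak{Y}_{\mathfrak{E}_A}},\bot_{\mathfrak{E}_B})=\textit{\bf N}$, then use $\mathfrak{l}_B\sqcap_{\mathfrak{E}_B}\overline{\mathfrak{l}_B}=\bot_{\mathfrak{E}_B}$ and the fact that a meet equal to $\textit{\bf N}$ in $\mathfrak{B}$ forces both arguments to be $\textit{\bf N}$, with the mirror argument for (\ref{philn}). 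Your only departure is cosmetic: you justify the identity $\mathfrak{l}\sqcap_{\mathfrak{E}}\overline{\mathfrak{l}}=\bot_{\mathfrak{E}}$ explicitly via extensionality, where the paper simply calls it obvious, and you omit the auxiliary values ($\Phi(\mathfrak{Y}_{\mathfrak{E}_A},\bot_{\mathfrak{E}_B})=\bot$, etc.) that the paper records only for later use.
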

\begin{proof}
Using (\ref{furtherquotient1}) (\ref{furtherquotient2}) and (\ref{furtherquotientYY}), we deduce for any $\Phi\in \widecheck{S}_{AB}$ the following equations
\begin{eqnarray}
&&\Phi( \overline{{ \mathfrak{Y}}_{{ \mathfrak{E}}_{A}}},{ \mathfrak{Y}}_{{ \mathfrak{E}}_{B}}) = \textit{\bf N}\label{phiny}\\
&&\Phi( { \mathfrak{Y}}_{{ \mathfrak{E}}_{A}},\overline{{ \mathfrak{Y}}_{{ \mathfrak{E}}_{B}}}) = \textit{\bf N}\label{phiyn}
\end{eqnarray}

Using (\ref{phiny})(\ref{phiyn}) (\ref{furtherquotientYY})(\ref{furtherquotientNN}) and (\ref{bilinear1})(\ref{bilinear2}), we deduce for any $\Phi\in \widecheck{S}_{AB}$ the following equations
\begin{eqnarray}
&&\Phi( \overline{{ \mathfrak{Y}}_{{ \mathfrak{E}}_{A}}},\bot_{{ \mathfrak{E}}_{B}}) = \textit{\bf N}\label{phinbot}\\
&&\Phi( \bot_{{ \mathfrak{E}}_{B}},\overline{{ \mathfrak{Y}}_{{ \mathfrak{E}}_{A}}}) = \textit{\bf N}\label{phibotn}\\
&&\Phi( { \mathfrak{Y}}_{{ \mathfrak{E}}_{A}},\bot_{{ \mathfrak{E}}_{B}}) = \bot \label{phiybot}\\
&&\Phi( \bot_{{ \mathfrak{E}}_{B}},{ \mathfrak{Y}}_{{ \mathfrak{E}}_{A}}) = \bot. \label{phiboty}
\end{eqnarray}

We have also, using (\ref{phinbot}) (\ref{phiybot}) and (\ref{bilinear1}), for any $\Phi\in \widecheck{S}_{AB}$ the following equation
\begin{eqnarray}
&&\Phi( \bot_{{ \mathfrak{E}}_{A}},\bot_{{ \mathfrak{E}}_{B}}) = \bot.\label{phibotbot}
\end{eqnarray}

From (\ref{phinbot}) and (\ref{bilinear2}), we deduce that, for any ${ \mathfrak{l}}_B\in { \mathfrak{E}}_{B}$ we have $\Phi( \overline{{ \mathfrak{Y}}_{{ \mathfrak{E}}_{A}}},{ \mathfrak{l}}_B) \wedge  \Phi( \overline{{ \mathfrak{Y}}_{{ \mathfrak{E}}_{A}}},\overline{{ \mathfrak{l}}_B})=\textit{\bf N}$ and then $\Phi( \overline{{ \mathfrak{Y}}_{{ \mathfrak{E}}_{A}}},{ \mathfrak{l}}_B) =\textit{\bf N}$. Here we have used the obvious property ${ \mathfrak{l}}_B \sqcap_{{}_{{{ \mathfrak{E}}_{B}}}} \overline{{ \mathfrak{l}}_B}  = \bot_{{ \mathfrak{E}}_{B}}$ satisfied by any ${ \mathfrak{l}}_B\in { \mathfrak{E}}_{B}$. In the same way, using (\ref{phibotn}) and (\ref{bilinear1}),  we obtain the symmetric property.  As a result of our investigations of the consequences of (\ref{phinbot}) and (\ref{phibotn}), we have obtained for any $\Phi\in \widecheck{S}_{AB}$ the equations (\ref{phinl}) and (\ref{philn}).
\end{proof}

Let us now investigate the consequences of (\ref{phinl}) and (\ref{philn}). 

\begin{lemma}\label{threecasesScheck}
Let us consider any $\Phi$ in $\widecheck{S}_{AB}$ and any ${ \mathfrak{l}}_A\in { \mathfrak{E}}_{A}$. We are necessarily in one of the following three cases
\begin{enumerate}
\item 
\begin{eqnarray}
\left\{
\begin{array}{l}
 \Phi({ \mathfrak{l}}_A,{{ \mathfrak{Y}}_{{ \mathfrak{E}}_{B}}})=\textit{\bf N},\;\;\;\;\;\;\;\; \Phi(\overline{{ \mathfrak{l}}_A},{{ \mathfrak{Y}}_{{ \mathfrak{E}}_{B}}})=\textit{\bf Y},\\
 \forall { \mathfrak{l}}_B\in { \mathfrak{E}}_{B},\;\; \Phi({ \mathfrak{l}}_A,{ \mathfrak{l}}_B)=\textit{\bf N},\\
 \forall { \mathfrak{l}}_B,{ \mathfrak{l}}'_B\in { \mathfrak{E}}_{B}\;\vert\;{ \mathfrak{l}}_B \sqcap_{{}_{{{ \mathfrak{E}}_{B}}}} { \mathfrak{l}}'_B=\bot_{{ \mathfrak{E}}_{B}}, \;\; (\Phi(\overline{{ \mathfrak{l}}_A},{ \mathfrak{l}}_B)\,,\, \Phi(\overline{{ \mathfrak{l}}_A},{ \mathfrak{l}}'_B))\notin \{(\textit{\bf N},\textit{\bf N}), (\textit{\bf Y},\textit{\bf Y})\}.
\end{array} \right.
\label{condition1}
\end{eqnarray}
\item 
\begin{eqnarray}
\left\{
\begin{array}{l}
\Phi({ \mathfrak{l}}_A,{{ \mathfrak{Y}}_{{ \mathfrak{E}}_{B}}})=\textit{\bf Y},\;\;\;\;\;\;\;\; \Phi(\overline{{ \mathfrak{l}}_A},{{ \mathfrak{Y}}_{{ \mathfrak{E}}_{B}}})=\textit{\bf N},\\
\forall { \mathfrak{l}}_B\in { \mathfrak{E}}_{B},\;\; \Phi(\overline{{ \mathfrak{l}}_A},{ \mathfrak{l}}_B)=\textit{\bf N}, \\
\forall { \mathfrak{l}}_B,{ \mathfrak{l}}'_B\in { \mathfrak{E}}_{B}\;\vert\;{ \mathfrak{l}}_B \sqcap_{{}_{{{ \mathfrak{E}}_{B}}}} { \mathfrak{l}}'_B=\bot_{{ \mathfrak{E}}_{B}}, \;\; (\Phi({ \mathfrak{l}}_A,{ \mathfrak{l}}_B)\,,\, \Phi({ \mathfrak{l}}_A,{ \mathfrak{l}}'_B))\notin \{(\textit{\bf N},\textit{\bf N}), (\textit{\bf Y},\textit{\bf Y})\}.
\end{array} \right.
\label{condition2}
\end{eqnarray}
\item
\begin{eqnarray}
\left\{
\begin{array}{l}
\Phi({ \mathfrak{l}}_A,{{ \mathfrak{Y}}_{{ \mathfrak{E}}_{B}}})=\bot, \;\;\;\;\;\;\;\; \Phi(\overline{{ \mathfrak{l}}_A},{{ \mathfrak{Y}}_{{ \mathfrak{E}}_{B}}})=\bot,\\
\forall { \mathfrak{l}}_B,{ \mathfrak{l}}'_B\in { \mathfrak{E}}_{B}\;\vert\;{ \mathfrak{l}}_B \sqcap_{{}_{{{ \mathfrak{E}}_{B}}}} { \mathfrak{l}}'_B=\bot_{{ \mathfrak{E}}_{B}},  \;\; (\Phi({{ \mathfrak{l}}_A},{ \mathfrak{l}}_B)\,,\, \Phi({{ \mathfrak{l}}_A},{ \mathfrak{l}}'_B))\notin \{(\textit{\bf N},\textit{\bf N}), (\textit{\bf Y},\textit{\bf Y})\},\\
\forall { \mathfrak{l}}_B,{ \mathfrak{l}}'_B\in { \mathfrak{E}}_{B}\;\vert\;{ \mathfrak{l}}_B \sqcap_{{}_{{{ \mathfrak{E}}_{B}}}} { \mathfrak{l}}'_B=\bot_{{ \mathfrak{E}}_{B}}, \;\; (\Phi(\overline{{ \mathfrak{l}}_A},{ \mathfrak{l}}_B)\,,\, \Phi(\overline{{ \mathfrak{l}}_A},{ \mathfrak{l}}'_B))\notin \{(\textit{\bf N},\textit{\bf N}), (\textit{\bf Y},\textit{\bf Y})\}.
\end{array} \right.
\label{condition3}
\end{eqnarray}
\end{enumerate}
\end{lemma}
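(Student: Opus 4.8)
The plan is to reduce the statement to elementary computations in ${ \mathfrak{B}}$, using only the bilinearity relation (\ref{bilinear2}), the involution relation (\ref{furtherquotient1}), the identity (\ref{philn}) (valid under the standing assumption (\ref{furtherquotientNN})), and the elementary facts already invoked in the proof of the preceding lemma, namely ${ \mathfrak{l}} \sqcap_{{ \mathfrak{E}}_{B}} \overline{{ \mathfrak{l}}} = \bot_{{ \mathfrak{E}}_{B}}$ for every ${ \mathfrak{l}} \in { \mathfrak{E}}_{B}$, and in particular $\bot_{{ \mathfrak{E}}_{B}} = { \mathfrak{Y}}_{{ \mathfrak{E}}_{B}} \sqcap_{{ \mathfrak{E}}_{B}} \overline{{ \mathfrak{Y}}_{{ \mathfrak{E}}_{B}}}$.

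First I would fix $\Phi \in \widecheck{S}_{AB}$ and ${ \mathfrak{l}}_{A} \in { \mathfrak{E}}_{A}$ and use (\ref{furtherquotient1}) to write $\Phi(\overline{{ \mathfrak{l}}_{A}}, { \mathfrak{Y}}_{{ \mathfrak{E}}_{B}}) = \overline{\Phi({ \mathfrak{l}}_{A}, { \mathfrak{Y}}_{{ \mathfrak{E}}_{B}})}$. Since the involution of ${ \mathfrak{B}}$ fixes $\bot$ and exchanges $\textit{\bf Y}$ and $\textit{\bf N}$, the pair $(\Phi({ \mathfrak{l}}_{A}, { \mathfrak{Y}}_{{ \mathfrak{E}}_{B}}), \Phi(\overline{{ \mathfrak{l}}_{A}}, { \mathfrak{Y}}_{{ \mathfrak{E}}_{B}}))$ can only equal $(\textit{\bf N}, \textit{\bf Y})$, $(\textit{\bf Y}, \textit{\bf N})$ or $(\bot, \bot)$; these are exactly the first lines of (\ref{condition1}), (\ref{condition2}) and (\ref{condition3}), and I would take them as the definition of the three cases. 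Hence the trichotomy is exhaustive, and it remains to derive the remaining lines of each case.

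The one computation that does the work is the value of $\Phi$ at $\bot_{{ \mathfrak{E}}_{B}}$ in its second argument: for every ${ \mathfrak{m}}_{A} \in { \mathfrak{E}}_{A}$, writing $\bot_{{ \mathfrak{E}}_{B}} = { \mathfrak{Y}}_{{ \mathfrak{E}}_{B}} \sqcap_{{ \mathfrak{E}}_{B}} \overline{{ \mathfrak{Y}}_{{ \mathfrak{E}}_{B}}}$ and applying (\ref{bilinear2}) yields $\Phi({ \mathfrak{m}}_{A}, \bot_{{ \mathfrak{E}}_{B}}) = \Phi({ \mathfrak{m}}_{A}, { \mathfrak{Y}}_{{ \mathfrak{E}}_{B}}) \wedge \Phi({ \mathfrak{m}}_{A}, \overline{{ \mathfrak{Y}}_{{ \mathfrak{E}}_{B}}}) = \Phi({ \mathfrak{m}}_{A}, { \mathfrak{Y}}_{{ \mathfrak{E}}_{B}}) \wedge \textit{\bf N}$, the last equality by (\ref{philn}). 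Taking ${ \mathfrak{m}}_{A} = { \mathfrak{l}}_{A}$ and ${ \mathfrak{m}}_{A} = \overline{{ \mathfrak{l}}_{A}}$ and substituting the first line of the case under consideration, one gets $\Phi({ \mathfrak{l}}_{A}, \bot_{{ \mathfrak{E}}_{B}})$ equal to $\textit{\bf N}$, $\bot$, $\bot$ in cases 1, 2, 3 respectively, and $\Phi(\overline{{ \mathfrak{l}}_{A}}, \bot_{{ \mathfrak{E}}_{B}})$ equal to $\bot$, $\textit{\bf N}$, $\bot$ in cases 1, 2, 3 respectively. Finally, for any ${ \mathfrak{l}}_{B}, { \mathfrak{l}}'_{B} \in { \mathfrak{E}}_{B}$ with ${ \mathfrak{l}}_{B} \sqcap_{{ \mathfrak{E}}_{B}} { \mathfrak{l}}'_{B} = \bot_{{ \mathfrak{E}}_{B}}$, relation (\ref{bilinear2}) again gives $\Phi({ \mathfrak{m}}_{A}, { \mathfrak{l}}_{B}) \wedge \Phi({ \mathfrak{m}}_{A}, { \mathfrak{l}}'_{B}) = \Phi({ \mathfrak{m}}_{A}, \bot_{{ \mathfrak{E}}_{B}})$.

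To finish, I would invoke the two elementary facts about ${ \mathfrak{B}}$ that $x \wedge y = \textit{\bf N}$ forces $x = y = \textit{\bf N}$, and that $x \wedge y = \bot$ forbids $(x,y) \in \{(\textit{\bf N},\textit{\bf N}), (\textit{\bf Y},\textit{\bf Y})\}$. In case 1, $\Phi({ \mathfrak{l}}_{A}, \bot_{{ \mathfrak{E}}_{B}}) = \textit{\bf N}$ applied with ${ \mathfrak{l}}'_{B} = \overline{{ \mathfrak{l}}_{B}}$ (legitimate since ${ \mathfrak{l}}_{B} \sqcap_{{ \mathfrak{E}}_{B}} \overline{{ \mathfrak{l}}_{B}} = \bot_{{ \mathfrak{E}}_{B}}$) gives $\Phi({ \mathfrak{l}}_{A}, { \mathfrak{l}}_{B}) = \textit{\bf N}$ for every ${ \mathfrak{l}}_{B}$, which is the second line of (\ref{condition1}), while $\Phi(\overline{{ \mathfrak{l}}_{A}}, \bot_{{ \mathfrak{E}}_{B}}) = \bot$ gives the third line. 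Case 2 is deduced from case 1 by exchanging the roles of ${ \mathfrak{l}}_{A}$ and $\overline{{ \mathfrak{l}}_{A}}$. In case 3 both $\Phi({ \mathfrak{l}}_{A}, \bot_{{ \mathfrak{E}}_{B}}) = \bot$ and $\Phi(\overline{{ \mathfrak{l}}_{A}}, \bot_{{ \mathfrak{E}}_{B}}) = \bot$, which are exactly the two excluded-pair lines of (\ref{condition3}). I do not expect a genuine obstacle here; the only point requiring care is the bookkeeping of $\wedge$ and of the involution in ${ \mathfrak{B}}$, together with keeping the identities ${ \mathfrak{l}} \sqcap_{{ \mathfrak{E}}_{B}} \overline{{ \mathfrak{l}}} = \bot_{{ \mathfrak{E}}_{B}}$ and $\bot_{{ \mathfrak{E}}_{B}} = { \mathfrak{Y}}_{{ \mathfrak{E}}_{B}} \sqcap_{{ \mathfrak{E}}_{B}} \overline{{ \mathfrak{Y}}_{{ \mathfrak{E}}_{B}}}$ at hand.
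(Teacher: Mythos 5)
Your proposal is correct and follows essentially the same route as the paper: the trichotomy comes from (\ref{furtherquotient1}), and each case is settled by computing $\Phi(\cdot,\bot_{{ \mathfrak{E}}_{B}})$ via (\ref{bilinear2}) together with (\ref{philn}) (using ${ \mathfrak{l}}\sqcap_{{ \mathfrak{E}}_{B}}\overline{{ \mathfrak{l}}}=\bot_{{ \mathfrak{E}}_{B}}$), then reading off the constraints in ${ \mathfrak{B}}$. Your uniform identity $\Phi({ \mathfrak{m}}_{A},\bot_{{ \mathfrak{E}}_{B}})=\Phi({ \mathfrak{m}}_{A},{ \mathfrak{Y}}_{{ \mathfrak{E}}_{B}})\wedge\textit{\bf N}$ is just a tidier packaging of the same computations the paper performs case by case.
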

\begin{proof}
The distinction between the three cases is directly inherited from (\ref{furtherquotient1}).\\

Let us consider the first case : $\Phi({ \mathfrak{l}}_A,{{ \mathfrak{Y}}_{{ \mathfrak{E}}_{B}}})=\textit{\bf N}$ and $\Phi(\overline{{ \mathfrak{l}}_A},{{ \mathfrak{Y}}_{{ \mathfrak{E}}_{B}}})=\textit{\bf Y}$. Using $\Phi({ \mathfrak{l}}_A,{{ \mathfrak{Y}}_{{ \mathfrak{E}}_{B}}})=\textit{\bf N}$ and (\ref{philn}) and (\ref{bilinear2}), we obtain $\Phi({ \mathfrak{l}}_A,\bot_{{ \mathfrak{E}}_{B}})=\textit{\bf N}$ and then, 
$\forall { \mathfrak{l}}_B\in { \mathfrak{E}}_{B}, \Phi({ \mathfrak{l}}_A,{ \mathfrak{l}}_B)=\textit{\bf N}$. Secondly, using $\Phi(\overline{{ \mathfrak{l}}_A},{{ \mathfrak{Y}}_{{ \mathfrak{E}}_{B}}})=\textit{\bf Y}$ and (\ref{philn}) and (\ref{bilinear2}), we obtain $\Phi(\overline{{ \mathfrak{l}}_A},\bot_{{ \mathfrak{E}}_{B}})=\bot$, which means that for any ${ \mathfrak{l}}_B,{ \mathfrak{l}}'_B\in { \mathfrak{E}}_{B}$ such that ${ \mathfrak{l}}_B \sqcap_{{}_{{{ \mathfrak{E}}_{B}}}} { \mathfrak{l}}'_B=\bot_{{ \mathfrak{E}}_{B}}$ we have  $\Phi(\overline{{ \mathfrak{l}}_A},{ \mathfrak{l}}_B)\wedge \Phi(\overline{{ \mathfrak{l}}_A},{ \mathfrak{l}}'_B) = \bot$.\\

The second case ($\Phi({ \mathfrak{l}}_A,{{ \mathfrak{Y}}_{{ \mathfrak{E}}_{B}}})=\textit{\bf Y}$ and $\Phi(\overline{{ \mathfrak{l}}_A},{{ \mathfrak{Y}}_{{ \mathfrak{E}}_{B}}})=\textit{\bf N}$) is treated exactly in the same way as the first case.\\

Let us conclude with the third case : $\Phi({ \mathfrak{l}}_A,{{ \mathfrak{Y}}_{{ \mathfrak{E}}_{B}}})=\bot$ and $\Phi(\overline{{ \mathfrak{l}}_A},{{ \mathfrak{Y}}_{{ \mathfrak{E}}_{B}}})=\bot$. Using (\ref{philn}) and (\ref{bilinear2}), we obtain $\Phi({ \mathfrak{l}}_A,\bot_{{ \mathfrak{E}}_{B}})=\bot$ and $\Phi(\overline{{ \mathfrak{l}}_A},\bot_{{ \mathfrak{E}}_{B}})=\bot$, which means that, for any ${ \mathfrak{l}}_B,{ \mathfrak{l}}'_B\in { \mathfrak{E}}_{B}$ such that ${ \mathfrak{l}}_B \sqcap_{{}_{{{ \mathfrak{E}}_{B}}}} { \mathfrak{l}}'_B=\bot_{{ \mathfrak{E}}_{B}}$ we have  $\Phi(\overline{{ \mathfrak{l}}_A},{ \mathfrak{l}}_B)\wedge \Phi(\overline{{ \mathfrak{l}}_A},{ \mathfrak{l}}'_B) = \bot$ and $\Phi({ \mathfrak{l}}_A,{ \mathfrak{l}}_B)\wedge \Phi({ \mathfrak{l}}_A,{ \mathfrak{l}}'_B) = \bot$. This concludes the proof.
\end{proof}

If we restrict ourselves to the elements of ${\widetilde{S}}_{AB}$, the conditions are in fact more severe.

\begin{lemma}\label{restrictedconditions1}
Let us now fix $\Phi\in \widetilde{S}_{AB}$ and ${ \mathfrak{l}}_A\in { \mathfrak{E}}_{A}$, and let us suppose that $\Phi({ \mathfrak{l}}_A,{{ \mathfrak{Y}}_{{ \mathfrak{E}}_{B}}})=\textit{\bf Y}$. Then, for any ${ \mathfrak{l}}_B\in { \mathfrak{E}}_{B}$, we have 
\begin{eqnarray}
(\Phi({ \mathfrak{l}}_A,{ \mathfrak{l}}_B)\,,\, \Phi({ \mathfrak{l}}_A,\overline{{ \mathfrak{l}}_B}))\in \{(\textit{\bf Y},\textit{\bf N}), (\textit{\bf N},\textit{\bf Y}),(\bot,\bot)\}.
\end{eqnarray}
\end{lemma}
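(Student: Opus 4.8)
The plan is to exploit the concrete description of the elements of $\widetilde{S}_{AB}$ as images under $\Omega$. Fix $\Phi\in\widetilde{S}_{AB}$ with $\Phi(\mathfrak{l}_A,\mathfrak{Y}_{\mathfrak{E}_B})=\textit{\bf Y}$, and choose a (necessarily nonempty) family $\{(\sigma_{i,A},\sigma_{i,B})\;\vert\;i\in I\}\subseteq\mathfrak{S}_A\times\mathfrak{S}_B$ such that $\Phi(\mathfrak{k}_A,\mathfrak{k}_B)=\bigwedge_{i\in I}\epsilon^{\mathfrak{S}_A}_{\mathfrak{k}_A}(\sigma_{i,A})\bullet\epsilon^{\mathfrak{S}_B}_{\mathfrak{k}_B}(\sigma_{i,B})$ for every $(\mathfrak{k}_A,\mathfrak{k}_B)\in\mathfrak{E}_A\times\mathfrak{E}_B$. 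First I would specialise $\mathfrak{k}_B=\mathfrak{Y}_{\mathfrak{E}_B}$: by (\ref{ety}) and (\ref{expressionbullet}) this turns the hypothesis into $\bigwedge_{i\in I}\epsilon^{\mathfrak{S}_A}_{\mathfrak{l}_A}(\sigma_{i,A})=\textit{\bf Y}$. The key step is then to note that $\textit{\bf Y}$ is a maximal element of $\mathfrak{B}$, so an infimum in $\mathfrak{B}$ equals $\textit{\bf Y}$ only when every term does; hence $\epsilon^{\mathfrak{S}_A}_{\mathfrak{l}_A}(\sigma_{i,A})=\textit{\bf Y}$ for all $i\in I$.

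Feeding this back in, for an arbitrary $\mathfrak{l}_B\in\mathfrak{E}_B$ each term becomes $\textit{\bf Y}\bullet\epsilon^{\mathfrak{S}_B}_{\mathfrak{l}_B}(\sigma_{i,B})=\epsilon^{\mathfrak{S}_B}_{\mathfrak{l}_B}(\sigma_{i,B})$, so, setting $\tau_B:=\bigsqcap^{\mathfrak{S}_B}_{i\in I}\sigma_{i,B}$ (which exists by down-completeness of $\mathfrak{S}_B$) and using (\ref{axiomsigmainfsemilattice}), one gets $\Phi(\mathfrak{l}_A,\mathfrak{l}_B)=\bigwedge_{i\in I}\epsilon^{\mathfrak{S}_B}_{\mathfrak{l}_B}(\sigma_{i,B})=\epsilon^{\mathfrak{S}_B}_{\mathfrak{l}_B}(\tau_B)$. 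Applying the same identity to $\overline{\mathfrak{l}_B}$ and invoking (\ref{etbar}) gives $\Phi(\mathfrak{l}_A,\overline{\mathfrak{l}_B})=\epsilon^{\mathfrak{S}_B}_{\overline{\mathfrak{l}_B}}(\tau_B)=\overline{\epsilon^{\mathfrak{S}_B}_{\mathfrak{l}_B}(\tau_B)}$. Therefore $(\Phi(\mathfrak{l}_A,\mathfrak{l}_B),\Phi(\mathfrak{l}_A,\overline{\mathfrak{l}_B}))=(v,\overline{v})$ with $v=\epsilon^{\mathfrak{S}_B}_{\mathfrak{l}_B}(\tau_B)\in\mathfrak{B}$, and since the involution of $\mathfrak{B}$ fixes $\bot$ and swaps $\textit{\bf Y}$ and $\textit{\bf N}$, the pair necessarily lies in $\{(\textit{\bf Y},\textit{\bf N}),(\textit{\bf N},\textit{\bf Y}),(\bot,\bot)\}$, which is the assertion.

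I do not expect a real obstacle: the whole argument is a short computation, the only delicate point being the forcing step, which amounts to saying that restricting to an $\mathfrak{l}_A$ with $\Phi(\mathfrak{l}_A,\mathfrak{Y}_{\mathfrak{E}_B})=\textit{\bf Y}$ collapses $\Phi(\mathfrak{l}_A,\cdot)$ to the single state $\tau_B\in\mathfrak{S}_B$ seen through $\epsilon^{\mathfrak{S}_B}$. This is strictly sharper than what Lemma \ref{threecasesScheck}(2) provides for a general $\Phi\in\widecheck{S}_{AB}$: that lemma already forbids $(\textit{\bf Y},\textit{\bf Y})$ and $(\textit{\bf N},\textit{\bf N})$ — take $\mathfrak{l}'_B=\overline{\mathfrak{l}_B}$ there, using $\mathfrak{l}_B\sqcap_{\mathfrak{E}_B}\overline{\mathfrak{l}_B}=\bot_{\mathfrak{E}_B}$ — but not the four mixed pairs involving $\bot$, and it is exactly the product form of the elements of $\widetilde{S}_{AB}$ that eliminates those.
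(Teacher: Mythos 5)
Your proof is correct and follows essentially the same route as the paper's: write $\Phi=\bigsqcap^{\widetilde{S}_{AB}}_{i\in I}\sigma_{i,A}\widetilde{\otimes}\sigma_{i,B}$, specialise to ${\mathfrak{Y}}_{{\mathfrak{E}}_B}$ to force $\epsilon^{{\mathfrak{S}}_A}_{{\mathfrak{l}}_A}(\sigma_{i,A})=\textit{\bf Y}$ for every $i$, identify $\Phi({\mathfrak{l}}_A,\cdot)$ with $\epsilon^{{\mathfrak{S}}_B}_{\cdot}(\bigsqcap^{{\mathfrak{S}}_B}_{i\in I}\sigma_{i,B})$, and conclude via (\ref{etbar}) that the pair is of the form $(v,\overline{v})$. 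The only cosmetic difference is that you name the state $\tau_B$ and phrase the last step through the involution of ${\mathfrak{B}}$, while the paper states $\Phi({\mathfrak{l}}_A,\overline{{\mathfrak{l}}_B})=\overline{\Phi({\mathfrak{l}}_A,{\mathfrak{l}}_B)}$ directly.
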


\begin{proof}
Let us consider that $\Phi=\bigsqcap{}^{{}^{\widecheck{S}_{AB}}}_{i\in I}\, \iota^{\widecheck{S}_{AB}}(\sigma_{i,A},\sigma_{i,B})=\bigsqcap{}^{{}^{{ \widetilde{S}}_{AB}}}_{i\in I} \sigma_{i,A}\widetilde{\otimes} \sigma_{i,B}$. \\ 
We have then $\textit{\bf Y}=\Phi({ \mathfrak{l}}_A,{{ \mathfrak{Y}}_{{ \mathfrak{E}}_{B}}})=\bigwedge{}_{i\in I}\epsilon^{{ \mathfrak{S}}_A}_{{ \mathfrak{l}}_A}(\sigma_{i,A})\bullet \epsilon^{{ \mathfrak{S}}_B}_{{{ \mathfrak{Y}}_{{ \mathfrak{E}}_{B}}}}(\sigma_{i,B})=\bigwedge{}_{i\in I}\epsilon^{{ \mathfrak{S}}_A}_{{ \mathfrak{l}}_A}(\sigma_{i,A})\bullet \textit{\bf Y}=\bigwedge{}_{i\in I}\epsilon^{{ \mathfrak{S}}_A}_{{ \mathfrak{l}}_A}(\sigma_{i,A})=\epsilon^{{ \mathfrak{S}}_A}_{{ \mathfrak{l}}_A}(\bigsqcap{}^{{}^{{ \mathfrak{S}}_A}}_{i\in I}\sigma_{i,A})$. As a consequence, we obtain $\epsilon^{{ \mathfrak{S}}_A}_{{ \mathfrak{l}}_A}(\sigma_{i,A})=\textit{\bf Y}$ for any $i\in I$. As a result, we obtain $\Phi({ \mathfrak{l}}_A,{ \mathfrak{l}}_B)=\bigwedge{}_{i\in I}\epsilon^{{ \mathfrak{S}}_A}_{{ \mathfrak{l}}_A}(\sigma_{i,A})\bullet \epsilon^{{ \mathfrak{S}}_B}_{{ \mathfrak{l}}_B}(\sigma_{i,B})=\bigwedge{}_{i\in I}\textit{\bf Y}\bullet \epsilon^{{ \mathfrak{S}}_B}_{{ \mathfrak{l}}_B}(\sigma_{i,B})=\bigwedge{}_{i\in I} \epsilon^{{ \mathfrak{S}}_B}_{{ \mathfrak{l}}_B}(\sigma_{i,B})=\epsilon^{{ \mathfrak{S}}_B}_{{ \mathfrak{l}}_B}(\bigsqcap{}^{{}^{{ \mathfrak{S}}_B}}_{i\in I}\sigma_{i,B})$. We now observe that $\Phi({ \mathfrak{l}}_A,\overline{{ \mathfrak{l}}_B})=\epsilon^{{ \mathfrak{S}}_B}_{\overline{{ \mathfrak{l}}_B}}(\bigsqcap{}^{{}^{{ \mathfrak{S}}_B}}_{i\in I}\sigma_{i,B})=\overline{\epsilon^{{ \mathfrak{S}}_B}_{{ \mathfrak{l}}_B}(\bigsqcap{}^{{}^{{ \mathfrak{S}}_B}}_{i\in I}\sigma_{i,B})}=\overline{\Phi({ \mathfrak{l}}_A,{ \mathfrak{l}}_B)}$. This concludes the proof.
\end{proof}

\begin{lemma}\label{restrictedconditions2}
Let us now fix $\Phi\in \widetilde{S}_{AB}$ and ${ \mathfrak{l}}_A\in { \mathfrak{E}}_{A}$, and let us suppose that $\Phi({ \mathfrak{l}}_A,{{ \mathfrak{Y}}_{{ \mathfrak{E}}_{B}}})=\bot$. Then, for any ${ \mathfrak{l}}_B,{ \mathfrak{l}}'_B\in { \mathfrak{E}}_{B}$ such that ${ \mathfrak{l}}_B \sqcap_{{}_{{{ \mathfrak{E}}_{B}}}} { \mathfrak{l}}'_B=\bot_{{ \mathfrak{E}}_{B}}$, we have 
\begin{eqnarray}
(\Phi({ \mathfrak{l}}_A,{ \mathfrak{l}}_B)\,,\, \Phi({ \mathfrak{l}}_A,{{ \mathfrak{l}}'_B}))\in \{(\bot,\textit{\bf N}), (\textit{\bf N},\bot),(\bot,\bot)\}.
\end{eqnarray}
\end{lemma}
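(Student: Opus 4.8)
The plan is to follow the pattern of the proof of Lemma~\ref{restrictedconditions1}, except that here the hypothesis will force $\Phi({ \mathfrak{l}}_A,-)$ to avoid the value $\textit{\bf Y}$. Since $\Phi\in\widetilde{S}_{AB}$, I would first fix a representation $\Phi=\bigsqcap{}^{{}^{{ \widetilde{S}}_{AB}}}_{i\in I}\sigma_{i,A}\widetilde{\otimes}\sigma_{i,B}$, so that $\Phi({ \mathfrak{l}}_A,{ \mathfrak{l}}_B)=\bigwedge{}_{i\in I}\epsilon^{{ \mathfrak{S}}_A}_{{ \mathfrak{l}}_A}(\sigma_{i,A})\bullet\epsilon^{{ \mathfrak{S}}_B}_{{ \mathfrak{l}}_B}(\sigma_{i,B})$ for every $({ \mathfrak{l}}_A,{ \mathfrak{l}}_B)$. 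Evaluating at ${ \mathfrak{l}}_B={ \mathfrak{Y}}_{{ \mathfrak{E}}_{B}}$ and using $x\bullet\textit{\bf Y}=x$ together with $\epsilon^{{ \mathfrak{S}}_B}_{{ \mathfrak{Y}}_{{ \mathfrak{E}}_{B}}}(\sigma_{i,B})=\textit{\bf Y}$, the hypothesis $\Phi({ \mathfrak{l}}_A,{ \mathfrak{Y}}_{{ \mathfrak{E}}_{B}})=\bot$ reads $\bigwedge{}_{i\in I}\epsilon^{{ \mathfrak{S}}_A}_{{ \mathfrak{l}}_A}(\sigma_{i,A})=\bot$; since $\textit{\bf Y}$ is a maximal element of ${ \mathfrak{B}}$, this infimum being different from $\textit{\bf Y}$ entails that $\epsilon^{{ \mathfrak{S}}_A}_{{ \mathfrak{l}}_A}(\sigma_{i,A})\neq\textit{\bf Y}$ for at least one $i\in I$.

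The next step is to deduce that $\Phi({ \mathfrak{l}}_A,{ \mathfrak{l}}_B)\neq\textit{\bf Y}$ for \emph{every} ${ \mathfrak{l}}_B\in{ \mathfrak{E}}_{B}$: if $\Phi({ \mathfrak{l}}_A,{ \mathfrak{l}}_B)=\textit{\bf Y}$, then (again because $\textit{\bf Y}$ is maximal) each factor $\epsilon^{{ \mathfrak{S}}_A}_{{ \mathfrak{l}}_A}(\sigma_{i,A})\bullet\epsilon^{{ \mathfrak{S}}_B}_{{ \mathfrak{l}}_B}(\sigma_{i,B})$ equals $\textit{\bf Y}$, and from the table of $\bullet$ one has $u\bullet v=\textit{\bf Y}\Rightarrow u=\textit{\bf Y}$, so $\epsilon^{{ \mathfrak{S}}_A}_{{ \mathfrak{l}}_A}(\sigma_{i,A})=\textit{\bf Y}$ for all $i\in I$, contradicting the previous paragraph. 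Hence $\Phi({ \mathfrak{l}}_A,{ \mathfrak{l}}_B)\in\{\textit{\bf N},\bot\}$ for all ${ \mathfrak{l}}_B$. Independently, $\Phi({ \mathfrak{l}}_A,\bot_{{ \mathfrak{E}}_{B}})=\bot$: using ${ \mathfrak{Y}}_{{ \mathfrak{E}}_{B}}\sqcap_{{}_{{{ \mathfrak{E}}_{B}}}}\overline{{ \mathfrak{Y}}_{{ \mathfrak{E}}_{B}}}=\bot_{{ \mathfrak{E}}_{B}}$ together with (\ref{bilinear2}) and the relation (\ref{philn}), one gets $\Phi({ \mathfrak{l}}_A,\bot_{{ \mathfrak{E}}_{B}})=\Phi({ \mathfrak{l}}_A,{ \mathfrak{Y}}_{{ \mathfrak{E}}_{B}})\wedge\Phi({ \mathfrak{l}}_A,\overline{{ \mathfrak{Y}}_{{ \mathfrak{E}}_{B}}})=\bot\wedge\textit{\bf N}=\bot$. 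Finally, for ${ \mathfrak{l}}_B,{ \mathfrak{l}}'_B$ with ${ \mathfrak{l}}_B\sqcap_{{}_{{{ \mathfrak{E}}_{B}}}}{ \mathfrak{l}}'_B=\bot_{{ \mathfrak{E}}_{B}}$, relation (\ref{bilinear2}) gives $\Phi({ \mathfrak{l}}_A,{ \mathfrak{l}}_B)\wedge\Phi({ \mathfrak{l}}_A,{ \mathfrak{l}}'_B)=\Phi({ \mathfrak{l}}_A,\bot_{{ \mathfrak{E}}_{B}})=\bot$; since both values lie in $\{\textit{\bf N},\bot\}$ by the second step and $\textit{\bf N}\wedge\textit{\bf N}=\textit{\bf N}\neq\bot$, they cannot both equal $\textit{\bf N}$, and this leaves exactly $(\Phi({ \mathfrak{l}}_A,{ \mathfrak{l}}_B),\Phi({ \mathfrak{l}}_A,{ \mathfrak{l}}'_B))\in\{(\bot,\textit{\bf N}),(\textit{\bf N},\bot),(\bot,\bot)\}$.

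The only step carrying real content is the exclusion of the value $\textit{\bf Y}$ for $\Phi({ \mathfrak{l}}_A,{ \mathfrak{l}}_B)$, and it is here that membership in $\widetilde{S}_{AB}$ rather than merely in $\widecheck{S}_{AB}$ is essential: since ${ \mathfrak{Y}}_{{ \mathfrak{E}}_{B}}$ is not a top element of ${ \mathfrak{E}}_{B}$, order-preservation of $\Phi({ \mathfrak{l}}_A,-)$ alone does not propagate the value $\bot$ at ${ \mathfrak{Y}}_{{ \mathfrak{E}}_{B}}$ to arbitrary effects; one genuinely needs the explicit bilinear shape of the pure-tensor expansion combined with the multiplication table of $\bullet$. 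Everything else is routine bookkeeping inside the three-element domain ${ \mathfrak{B}}$, along the lines already used in Lemma~\ref{restrictedconditions1}.
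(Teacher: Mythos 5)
Your proof is correct and follows essentially the same route as the paper: you use the pure-tensor representation of $\Phi$ together with the multiplication table of $\bullet$ to rule out the value $\textit{\bf Y}$ (the paper's contradiction with $\Phi({ \mathfrak{l}}_A,{ \mathfrak{Y}}_{{ \mathfrak{E}}_{B}})=\bot$ is exactly your argument, stated in the contrapositive order), and you exclude the pair $(\textit{\bf N},\textit{\bf N})$ via $\Phi({ \mathfrak{l}}_A,\bot_{{ \mathfrak{E}}_{B}})=\bot$ and (\ref{bilinear2}), which is just an inline re-derivation of the third case of Lemma \ref{threecasesScheck} that the paper cites instead.
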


\begin{proof}
Let us consider that $\Phi=\bigsqcap{}^{{}^{\widecheck{S}_{AB}}}_{i\in I}\, \iota^{\widecheck{S}_{AB}}(\sigma_{i,A},\sigma_{i,B})=\bigsqcap{}^{{}^{{ \widetilde{S}}_{AB}}}_{i\in I} \sigma_{i,A}\widetilde{\otimes} \sigma_{i,B}$. \\ 
As it has been clarified in the third case of Lemma \ref{threecasesScheck}, we have then necessarily 
$ (\Phi({{ \mathfrak{l}}_A},{ \mathfrak{l}}_B)\,,\, \Phi({{ \mathfrak{l}}_A},{{ \mathfrak{l}}'_B}))\notin \{(\textit{\bf N},\textit{\bf N}), (\textit{\bf Y},\textit{\bf Y})\}$. Let us suppose that $\Phi({{ \mathfrak{l}}_A},{ \mathfrak{l}}_B)=\textit{\bf Y}$.  Due to the expression (\ref{expressionbullet}), we have then necessarily $\epsilon^{{ \mathfrak{S}}_A}_{{ \mathfrak{l}}_A}(\sigma_{i,A})=\textit{\bf Y}$ and $\epsilon^{{ \mathfrak{S}}_B}_{{ \mathfrak{l}}_B}(\sigma_{i,B})=\textit{\bf Y}$ for any $i\in I$ and then, in particular, $\textit{\bf Y}=\bigwedge{}_{i\in I}\epsilon^{{ \mathfrak{S}}_A}_{{ \mathfrak{l}}_A}(\sigma_{i,A})=\bigwedge{}_{i\in I}\epsilon^{{ \mathfrak{S}}_A}_{{ \mathfrak{l}}_A}(\sigma_{i,A})\bullet \textit{\bf Y}=\bigwedge{}_{i\in I}\epsilon^{{ \mathfrak{S}}_A}_{{ \mathfrak{l}}_A}(\sigma_{i,A})\bullet \epsilon^{{ \mathfrak{S}}_B}_{{{ \mathfrak{Y}}_{{ \mathfrak{E}}_{B}}}}(\sigma_{i,B})=\Phi({ \mathfrak{l}}_A,{{ \mathfrak{Y}}_{{ \mathfrak{E}}_{B}}})=\bot$, which is contradictory. As a conclusion, we cannot have $\Phi({{ \mathfrak{l}}_A},{ \mathfrak{l}}_B)=\textit{\bf Y}$.  In the same way, we cannot have $\Phi({{ \mathfrak{l}}_A},{{ \mathfrak{l}}'_B})=\textit{\bf Y}$.  This concludes the proof.
\end{proof}

\begin{definition}
We define {\em the regular tensor product} ${ \mathfrak{S}}_A \widehat{\otimes} { \mathfrak{S}}_B$ as follows
\begin{eqnarray}
{ \mathfrak{S}}_A \widehat{\otimes} { \mathfrak{S}}_B & \subseteq & { \mathfrak{S}}_A \widecheck{\otimes} { \mathfrak{S}}_B\\
\forall \Phi \in { \mathfrak{S}}_A \widehat{\otimes} { \mathfrak{S}}_B && \textit{\rm we require}\nonumber\\
\forall { \mathfrak{l}}_A\in { \mathfrak{E}}_{A}, \forall { \mathfrak{l}}_B\in { \mathfrak{E}}_{B},&&{\Phi}( { \mathfrak{l}}_A,\overline{{ \mathfrak{Y}}_{{ \mathfrak{E}}_{B}}}) ={\Phi}( \overline{{ \mathfrak{Y}}_{{ \mathfrak{E}}_{A}}},{ \mathfrak{l}}_B) = \textit{\bf N},\label{regularcase0}\\
\forall { \mathfrak{l}}_A\in { \mathfrak{E}}_{A},\;\;\;\;\;\;\Phi({ \mathfrak{l}}_A,{{ \mathfrak{Y}}_{{ \mathfrak{E}}_{B}}})=\textit{\bf Y} & \Rightarrow & \forall { \mathfrak{l}}_B\in { \mathfrak{E}}_{B},\nonumber\\
&& (\Phi({ \mathfrak{l}}_A,{ \mathfrak{l}}_B)\,,\, \Phi({ \mathfrak{l}}_A,\overline{{ \mathfrak{l}}_B}))\in \{(\textit{\bf Y},\textit{\bf N}), (\textit{\bf N},\textit{\bf Y}),(\bot,\bot)\},\;\;\;\;\;\;\;\;\;\;\;\;\;\label{regularcase1a}\\
\forall { \mathfrak{l}}_B\in { \mathfrak{E}}_{B},\;\;\;\;\;\;\Phi({{ \mathfrak{Y}}_{{ \mathfrak{E}}_{A}}},{ \mathfrak{l}}_B)=\textit{\bf Y} & \Rightarrow & \forall { \mathfrak{l}}_A\in { \mathfrak{E}}_{A},\nonumber\\
&& (\Phi({ \mathfrak{l}}_A,{ \mathfrak{l}}_B)\,,\, \Phi(\overline{{ \mathfrak{l}}_A},{{ \mathfrak{l}}_B}))\in \{(\textit{\bf Y},\textit{\bf N}), (\textit{\bf N},\textit{\bf Y}),(\bot,\bot)\},\label{regularcase1b}\\
 \forall { \mathfrak{l}}_A\in { \mathfrak{E}}_{A},\;\;\;\;\;\;\Phi({ \mathfrak{l}}_A,{{ \mathfrak{Y}}_{{ \mathfrak{E}}_{B}}})=\bot & \Rightarrow &\forall { \mathfrak{l}}_B,{ \mathfrak{l}}'_B\in { \mathfrak{E}}_{B}\,\vert\,{ \mathfrak{l}}_B\sqcap_{{}_{{ \mathfrak{E}}_B}}{ \mathfrak{l}}'_B=\bot_{{}_{{ \mathfrak{E}}_B}}, \nonumber\\
 &&\;\;(\Phi({ \mathfrak{l}}_A,{ \mathfrak{l}}_B)\,,\, \Phi({ \mathfrak{l}}_A,{{ \mathfrak{l}}'_B}))\in \{(\bot,\textit{\bf N}), (\textit{\bf N},\bot),(\bot,\bot)\},\label{regularcase2a}\\
\forall { \mathfrak{l}}_B\in { \mathfrak{E}}_{B},\;\;\;\;\;\;\Phi({{ \mathfrak{Y}}_{{ \mathfrak{E}}_{A}}},{ \mathfrak{l}}_B)=\bot & \Rightarrow &\forall { \mathfrak{l}}_A,{ \mathfrak{l}}'_A\in { \mathfrak{E}}_{A}\,\vert\,{ \mathfrak{l}}_A\sqcap_{{}_{{ \mathfrak{E}}_A}}{ \mathfrak{l}}'_A=\bot_{{}_{{ \mathfrak{E}}_A}}, \nonumber\\
&&\;\; (\Phi({ \mathfrak{l}}_A,{ \mathfrak{l}}_B)\,,\, \Phi({ \mathfrak{l}}'_A,{{ \mathfrak{l}}_B}))\in \{(\bot,\textit{\bf N}), (\textit{\bf N},\bot),(\bot,\bot)\}.\label{regularcase2b}
\end{eqnarray}
\end{definition}

\begin{lemma}
${ \mathfrak{S}}_A \widehat{\otimes} { \mathfrak{S}}_B$ is a sub Inf semi-lattice of the maximal tensor product ${ \mathfrak{S}}_A \widecheck{\otimes} { \mathfrak{S}}_B$ satisfying ${ \mathfrak{S}}_A \widetilde{\otimes} { \mathfrak{S}}_B \subseteq { \mathfrak{S}}_A \widehat{\otimes} { \mathfrak{S}}_B$.
\end{lemma}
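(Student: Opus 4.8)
The plan is to prove the two assertions in turn: first that $\widehat{S}_{AB}$ is closed under the infima of $\widecheck{S}_{AB}$, which are computed pointwise, so that it is a sub Inf semi-lattice of $\widecheck{S}_{AB}$; and then that $\widetilde{S}_{AB}\subseteq\widehat{S}_{AB}$. The inclusion is a bookkeeping step: an element $\Phi$ of $\widetilde{S}_{AB}$ satisfies (\ref{furtherquotientNN}), hence by (\ref{phinl}) and (\ref{philn}) it satisfies (\ref{regularcase0}); condition (\ref{regularcase1a}) is exactly Lemma \ref{restrictedconditions1}, condition (\ref{regularcase2a}) is exactly Lemma \ref{restrictedconditions2}, and conditions (\ref{regularcase1b}) and (\ref{regularcase2b}) are the $A\leftrightarrow B$ mirror images of these two lemmas, valid because the construction of $\widetilde{S}_{AB}$ is symmetric in $A$ and $B$.

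For closure, fix a family $\{\Phi_i\,\vert\, i\in I\}\subseteq\widehat{S}_{AB}$ and set $\Phi:=\bigsqcap{}^{{}^{\widecheck{S}_{AB}}}_{i\in I}\Phi_i$, so that $\Phi(\mathfrak{l}_A,\mathfrak{l}_B)=\bigwedge_{i\in I}\Phi_i(\mathfrak{l}_A,\mathfrak{l}_B)$ and $\Phi\in\widecheck{S}_{AB}$ by the theorem that $\widecheck{S}_{AB}$ is a down-complete Inf semi-lattice. Condition (\ref{regularcase0}) for $\Phi$ is immediate, since an infimum of copies of $\textit{\bf N}$ is $\textit{\bf N}$. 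For (\ref{regularcase1a}): the hypothesis $\Phi(\mathfrak{l}_A,\mathfrak{Y}_{\mathfrak{E}_B})=\textit{\bf Y}$ forces $\Phi_i(\mathfrak{l}_A,\mathfrak{Y}_{\mathfrak{E}_B})=\textit{\bf Y}$ for every $i$, because in $\mathfrak{B}$ an infimum equals $\textit{\bf Y}$ only when every term does; hence, by (\ref{regularcase1a}) applied to each $\Phi_i$, each pair $(\Phi_i(\mathfrak{l}_A,\mathfrak{l}_B),\Phi_i(\mathfrak{l}_A,\overline{\mathfrak{l}_B}))$ has the form $(x,\overline{x})$, and since the involution of $\mathfrak{B}$ is an order-automorphism it preserves infima, so $(\Phi(\mathfrak{l}_A,\mathfrak{l}_B),\Phi(\mathfrak{l}_A,\overline{\mathfrak{l}_B}))=(y,\overline{y})$ with $y\in\mathfrak{B}$, which lies in $\{(\textit{\bf Y},\textit{\bf N}),(\textit{\bf N},\textit{\bf Y}),(\bot,\bot)\}$. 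Condition (\ref{regularcase1b}) is symmetric.

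The only delicate step is (\ref{regularcase2a}) (and its mirror (\ref{regularcase2b})), because the hypothesis $\Phi(\mathfrak{l}_A,\mathfrak{Y}_{\mathfrak{E}_B})=\bot$ does not propagate to the individual $\Phi_i$. The idea is to sort the indices using Lemma \ref{threecasesScheck}: with $\mathfrak{l}_A$ fixed, write $I=I_1\sqcup I_2\sqcup I_3$ according to whether $\Phi_i$ falls under (\ref{condition1}), (\ref{condition2}) or (\ref{condition3}), equivalently according to whether $\Phi_i(\mathfrak{l}_A,\mathfrak{Y}_{\mathfrak{E}_B})$ is $\textit{\bf N}$, $\textit{\bf Y}$ or $\bot$. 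The hypothesis $\bigwedge_{i\in I}\Phi_i(\mathfrak{l}_A,\mathfrak{Y}_{\mathfrak{E}_B})=\bot$ rules out $I=I_1$ and $I=I_2$. Now fix $\mathfrak{l}_B,\mathfrak{l}'_B\in\mathfrak{E}_B$ with $\mathfrak{l}_B\sqcap_{\mathfrak{E}_B}\mathfrak{l}'_B=\bot_{\mathfrak{E}_B}$, and record: $\Phi_i(\mathfrak{l}_A,\mathfrak{l}_B)=\Phi_i(\mathfrak{l}_A,\mathfrak{l}'_B)=\textit{\bf N}$ for $i\in I_1$ by (\ref{condition1}); $(\Phi_i(\mathfrak{l}_A,\mathfrak{l}_B),\Phi_i(\mathfrak{l}_A,\mathfrak{l}'_B))\notin\{(\textit{\bf N},\textit{\bf N}),(\textit{\bf Y},\textit{\bf Y})\}$ for $i\in I_2$ by (\ref{condition2}); and $(\Phi_i(\mathfrak{l}_A,\mathfrak{l}_B),\Phi_i(\mathfrak{l}_A,\mathfrak{l}'_B))\in\{(\bot,\textit{\bf N}),(\textit{\bf N},\bot),(\bot,\bot)\}$ for $i\in I_3$, by (\ref{regularcase2a}) applied to $\Phi_i\in\widehat{S}_{AB}$.

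Combining these observations, one checks that $(\Phi(\mathfrak{l}_A,\mathfrak{l}_B),\Phi(\mathfrak{l}_A,\mathfrak{l}'_B))\in\{(\bot,\textit{\bf N}),(\textit{\bf N},\bot),(\bot,\bot)\}$: a component equals $\textit{\bf Y}$ only if the corresponding $\Phi_i$-value is $\textit{\bf Y}$ for every $i$, which fails on $I_1$ (value $\textit{\bf N}$) and on $I_3$ (value $\bot$ or $\textit{\bf N}$), so it would force $I=I_2$, excluded; and the pair equals $(\textit{\bf N},\textit{\bf N})$ only if every $\Phi_i$ takes value $\textit{\bf N}$ on both arguments, impossible on $I_2$ and on $I_3$, so it would force $I=I_1$, also excluded. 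Hence both components lie in $\{\textit{\bf N},\bot\}$ and are not simultaneously $\textit{\bf N}$, which is (\ref{regularcase2a}); (\ref{regularcase2b}) follows by the mirror argument. As $\Phi$ also satisfies (\ref{furtherquotientNN}) (a consequence of (\ref{regularcase0})), we conclude $\Phi\in\widehat{S}_{AB}$. Together with the first paragraph this shows that $\widehat{S}_{AB}$ is a sub Inf semi-lattice of $\widecheck{S}_{AB}$ containing $\widetilde{S}_{AB}$. The main obstacle is precisely this case analysis for (\ref{regularcase2a}): it is where Lemma \ref{threecasesScheck} must be combined with the membership of each $\Phi_i$ in $\widehat{S}_{AB}$ to recover enough structure after the pointwise infimum has erased the value $\Phi_i(\mathfrak{l}_A,\mathfrak{Y}_{\mathfrak{E}_B})$.
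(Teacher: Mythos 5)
Your proof is correct. The paper itself dismisses this lemma with the single word ``Trivial'', so there is nothing to compare against beyond the implicit expectation that one checks the defining conditions (\ref{regularcase0})--(\ref{regularcase2b}) under pointwise infima and observes that Lemmas \ref{restrictedconditions1} and \ref{restrictedconditions2} (together with (\ref{phinl})--(\ref{philn}) and the $A\leftrightarrow B$ symmetry) give the inclusion of $\widetilde{S}_{AB}$; your write-up does exactly this, and correctly isolates the only non-immediate point, namely closure of condition (\ref{regularcase2a}) under infima, which you handle properly by splitting the index set according to Lemma \ref{threecasesScheck} (applicable since (\ref{regularcase0}) yields (\ref{furtherquotientNN}), hence (\ref{phinl}) and (\ref{philn})) and excluding the cases $I=I_1$ and $I=I_2$.
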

\begin{proof}
Trivial.
\end{proof}

\subsection{Regular vs. minimal tensor product}\label{subsectionregularvsminimal}

After having defined the regular tensor product, we intent to show that ${ \mathfrak{S}}_A \widehat{\otimes} { \mathfrak{S}}_B$ is free from the spurious states present in ${ \mathfrak{S}}_A \widecheck{\otimes} { \mathfrak{S}}_B$.  To be precise, we intent to show that ${ \mathfrak{S}}_A \widehat{\otimes} { \mathfrak{S}}_B$ is identical to ${ \mathfrak{S}}_A \widetilde{\otimes} { \mathfrak{S}}_B$ and then to ${ \mathfrak{S}}_A {\otimes} { \mathfrak{S}}_B$ (due to Theorem \ref{theoremsqsubseteqPAB=SAB}) as soon as ${ \mathfrak{S}}_A$ or ${ \mathfrak{S}}_B$ is a simplex (i.e. is distributive).

\begin{theorem}
\begin{eqnarray}
\textit{\rm  (${ \mathfrak{S}}_A$ or $ { \mathfrak{S}}_B$ simplex)}&\Rightarrow & { \mathfrak{S}}_A\widehat{\otimes}{ \mathfrak{S}}_B  =  { \mathfrak{S}}_A\widetilde{\otimes}{ \mathfrak{S}}_B = { \mathfrak{S}}_A {\otimes}{ \mathfrak{S}}_B.
\end{eqnarray}
\end{theorem}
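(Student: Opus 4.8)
The plan is to strip off the parts that are already proved and reduce the statement to one new inclusion. The rightmost equality $\mathfrak{S}_A\widetilde{\otimes}\mathfrak{S}_B=\mathfrak{S}_A\otimes\mathfrak{S}_B$ is not new: Theorem~\ref{theoremsqsubseteqPAB=SAB} identifies the posets $\widetilde{S}{}^{fin}_{AB}$ and $S_{AB}$ whenever $\mathfrak{S}_A$ or $\mathfrak{S}_B$ is distributive, and, as observed inside the proof of Theorem~\ref{SASBditribandcup}, this upgrades to the identification $\widetilde{S}_{AB}=S_{AB}$ of Inf semi-lattices. Moreover the last lemma of the preceding subsection already gives $\mathfrak{S}_A\widetilde{\otimes}\mathfrak{S}_B\subseteq\mathfrak{S}_A\widehat{\otimes}\mathfrak{S}_B$. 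So the whole theorem reduces to the reverse inclusion $\mathfrak{S}_A\widehat{\otimes}\mathfrak{S}_B\subseteq\mathfrak{S}_A\widetilde{\otimes}\mathfrak{S}_B$, and from now on we assume that $\mathfrak{S}_A$ is a simplex.

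The heart of the argument is the following claim: \emph{for every $\Phi\in\widehat{S}_{AB}$ and every pair $(\mathfrak{l}_A^{0},\mathfrak{l}_B^{0})\in\mathfrak{E}_A\times\mathfrak{E}_B$ there exists $(\sigma_A,\sigma_B)\in\mathfrak{S}_A\times\mathfrak{S}_B$ with $\iota^{\widecheck{S}_{AB}}(\sigma_A,\sigma_B)\sqsupseteq_{\widecheck{S}_{AB}}\Phi$ and $\epsilon^{\mathfrak{S}_A}_{\mathfrak{l}_A^{0}}(\sigma_A)\bullet\epsilon^{\mathfrak{S}_B}_{\mathfrak{l}_B^{0}}(\sigma_B)=\Phi(\mathfrak{l}_A^{0},\mathfrak{l}_B^{0})$.} Granting this claim, the set $R_\Phi:=\{\,(\sigma_A,\sigma_B)\;\vert\;\iota^{\widecheck{S}_{AB}}(\sigma_A,\sigma_B)\sqsupseteq_{\widecheck{S}_{AB}}\Phi\,\}$ is non-empty (and is a bi-filter by Theorem~\ref{bifilterPcheck}), so $\Phi^{\flat}:=\Omega((R_\Phi)_\approx)=\bigsqcap^{\widetilde{S}_{AB}}_{(\sigma_A,\sigma_B)\in R_\Phi}\sigma_A\widetilde{\otimes}\sigma_B$ is a well-defined element of $\widetilde{S}_{AB}$; since $\Phi$ is a lower bound of the defining family we have $\Phi\sqsubseteq_{\widecheck{S}_{AB}}\Phi^{\flat}$, while the claim exhibits, at each $(\mathfrak{l}_A^{0},\mathfrak{l}_B^{0})$, a term of the meet equal to $\Phi(\mathfrak{l}_A^{0},\mathfrak{l}_B^{0})$, whence $\Phi^{\flat}(\mathfrak{l}_A^{0},\mathfrak{l}_B^{0})\leq\Phi(\mathfrak{l}_A^{0},\mathfrak{l}_B^{0})$ in $\mathfrak{B}$. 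Therefore $\Phi=\Phi^{\flat}\in\widetilde{S}_{AB}$, as required.

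To prove the claim I would fix $(\mathfrak{l}_A^{0},\mathfrak{l}_B^{0})$ and analyse $\Phi$ slice by slice. Lemma~\ref{threecasesScheck} trichotomizes the slice $\mathfrak{l}_B\mapsto\Phi(\mathfrak{l}_A^{0},\mathfrak{l}_B)$ according to $\Phi(\mathfrak{l}_A^{0},\mathfrak{Y}_{\mathfrak{E}_B})\in\{\textit{\bf Y},\textit{\bf N},\bot\}$; when this value is $\textit{\bf Y}$ or $\textit{\bf N}$ the regularity axioms (\ref{regularcase1a})--(\ref{regularcase2b}), used precisely as in Lemmas~\ref{restrictedconditions1} and~\ref{restrictedconditions2}, force that slice to coincide with $\mathfrak{l}_B\mapsto\epsilon^{\mathfrak{S}_A}_{\mathfrak{l}_A^{0}}(\xi)\bullet\epsilon^{\mathfrak{S}_B}_{\mathfrak{l}_B}(\tau)$ for a state $\tau\in\mathfrak{S}_B$ recovered from Theorem~\ref{blepsilonsigma} via the isomorphism $\mathfrak{E}_B^{\ast}\cong\mathfrak{S}_B$, and a pair $(\sigma_A,\tau)$ with $\sigma_A$ maximal (using (\ref{existsmaxabove})) above the $\mathfrak{S}_A$-marginal of $\Phi$ is then seen, by the three-case analysis and the monotonicity of $\epsilon^{\mathfrak{S}_A}_{\mathfrak{l}_A^{0}}$, to lie in $R_\Phi$ and to realise the required value. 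The residual and genuinely hard case is $\Phi(\mathfrak{l}_A^{0},\mathfrak{Y}_{\mathfrak{E}_B})=\bot$: here one uses that $\mathfrak{S}_A$ is a simplex — the $\mathfrak{S}_A$-marginal of $\Phi$ has a \emph{unique} decomposition into pure states (\ref{simplexdecompunique}), and the separating homomorphisms $\psi_\sigma$ of Lemma~\ref{particularpsi} let one replace $\mathfrak{l}_A^{0}$ by a finite meet of effects on which those pure-state components no longer evaluate to $\bot$, reducing this case to the previous two componentwise; the components are recombined with the expansion (\ref{developmentetildeordersimplify}) together with Theorem~\ref{bifilterPcheck}. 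This last reduction is the exact analogue, now at the level of $\widehat{S}_{AB}$, of the bi-filter and lattice-polynomial argument carried out in the proof of Theorem~\ref{theoremsqsubseteqPAB=SAB}.

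The main obstacle is exactly this $\bot$-slice step. The regularity conditions (\ref{regularcase1a})--(\ref{regularcase2b}) guarantee that each individual coordinate slice of $\Phi$ is ``honest'', i.e.\ of the form $\epsilon\bullet\epsilon$, but reconciling several mutually incomparable slices into a single infimum of pure tensors $\iota^{\widecheck{S}_{AB}}(\sigma_A,\sigma_B)$ — equivalently, forcing the extra identities (such as $\Phi(\mathfrak{l}_A,\mathfrak{Y}_{\mathfrak{E}_B})=\Phi(\mathfrak{Y}_{\mathfrak{E}_A},\mathfrak{l}_B)=\textit{\bf Y}\Rightarrow\Phi(\mathfrak{l}_A,\mathfrak{l}_B)=\textit{\bf Y}$) satisfied by elements of $\widetilde{S}_{AB}$ but not built into the definition of $\widehat{S}_{AB}$ — is possible only under the distributivity of one factor, and fails without it, in the spirit of Remark~\ref{remarkdistributivity}.
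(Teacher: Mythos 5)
Your overall plan (reduce to $\mathfrak{S}_A\widehat{\otimes}\mathfrak{S}_B\subseteq\mathfrak{S}_A\widetilde{\otimes}\mathfrak{S}_B$ and exhibit $\Phi$ as the infimum of the pure tensors above it) is reasonable, but the claim on which everything rests is both false as stated and, in the weaker form you actually need, not proved. False as stated: take $\mathfrak{S}_A=\mathfrak{S}_B=\mathfrak{B}$, write $\mathfrak{u}:=\mathfrak{l}_{(\textit{\bf Y},\textit{\bf N})}$, and let $\Phi:=\textit{\bf Y}\widetilde{\otimes}\textit{\bf Y}\sqcap_{{}_{\widetilde{S}_{AB}}}\textit{\bf N}\widetilde{\otimes}\textit{\bf N}\in\widetilde{S}_{AB}\subseteq\widehat{S}_{AB}$ and $(\mathfrak{l}_A^{0},\mathfrak{l}_B^{0}):=(\mathfrak{u},\mathfrak{Y}_{\mathfrak{E}_B})$, where $\Phi(\mathfrak{u},\mathfrak{Y}_{\mathfrak{E}_B})=\bot$. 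A pure tensor $\iota^{\widecheck{S}_{AB}}(\sigma_A,\sigma_B)$ takes the value $\bot$ at this pair only if $\sigma_A=\bot_{\mathfrak{B}}$, and then domination of $\Phi$ at $(\mathfrak{u},\overline{\mathfrak{u}})$ forces $\epsilon^{\mathfrak{S}_B}_{\overline{\mathfrak{u}}}(\sigma_B)=\textit{\bf N}$, i.e.\ $\sigma_B=\textit{\bf Y}$, while domination at $(\overline{\mathfrak{u}},\mathfrak{u})$ forces $\sigma_B=\textit{\bf N}$: no single pure tensor above $\Phi$ realises the value $\bot$ there. What your argument really needs is only that the pointwise infimum over all of $R_\Phi$ is $\leq\Phi(\mathfrak{l}_A^{0},\mathfrak{l}_B^{0})$ (here it is, because the two pure tensors above $\Phi$ disagree at that pair), but establishing that weaker inequality for an arbitrary $\Phi\in\widehat{S}_{AB}$ is precisely the content of the theorem, so the misstatement is not cosmetic.

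Even after this correction, the two steps that carry all the weight are not carried out. In the $\textit{\bf Y}$/$\textit{\bf N}$ cases, Theorem \ref{blepsilonsigma} applied to the single slice $\mathfrak{l}_B\mapsto\Phi(\mathfrak{l}_A^{0},\mathfrak{l}_B)$ does give a state $\tau$, but the assertion that some $(\sigma_A,\tau)$ lies in $R_\Phi$ requires $\epsilon^{\mathfrak{S}_A}_{\mathfrak{l}_A}(\sigma_A)\bullet\epsilon^{\mathfrak{S}_B}_{\mathfrak{l}_B}(\tau)\geq\Phi(\mathfrak{l}_A,\mathfrak{l}_B)$ for \emph{all} pairs, i.e.\ exactly the cross-slice coherence that (\ref{regularcase0})--(\ref{regularcase2b}) do not give directly; this is what the paper's proof spends almost all of its length establishing (the identity $\sigma_{\mathfrak{Y}_{\mathfrak{E}_A}}=\sigma_{\mathfrak{u}}$, in the model case $\mathfrak{S}_A=\mathfrak{B}$), and you only gesture at it. The $\bot$-slice case, the one place where the simplex hypothesis must enter, is merely announced (``replace $\mathfrak{l}_A^{0}$ by a finite meet \dots recombine with (\ref{developmentetildeordersimplify})'') and then flagged by yourself as ``the main obstacle''; no argument is given for how the $\psi_\sigma$ of Lemma \ref{particularpsi} and Theorem \ref{bifilterPcheck} would actually effect the recombination. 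Note that the paper sidesteps this case entirely by a different device: it works with \emph{maximal} elements of $\widehat{S}_{AB}$ and rules out $\Phi(\mathfrak{u},\mathfrak{Y}_{\mathfrak{E}_B})=\bot$ by constructing an explicit strictly larger $\Psi$, so that maximal elements land in the $\textit{\bf Y}$/$\textit{\bf N}$ cases and are shown to be pure tensors. As it stands, your text is a plausible programme whose two essential steps (coherence of slices, and the $\bot$-case where distributivity is used) are left open, so it does not yet constitute a proof.
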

\begin{proof}
In the following, ${ \mathfrak{S}}_A$ will be chosen to be equal to ${ \mathfrak{B}}$. The general case, where ${ \mathfrak{S}}_A$ is given as a generic simplex space of states, follows the same line of proof.\\
We recall that ${ \mathfrak{E}}_{ \mathfrak{B}}^{{}^{pure}}$ is simply given by $\{\, { \mathfrak{Y}}_{{}_{{ \mathfrak{E}}_{ \mathfrak{B}}}},\overline{{ \mathfrak{Y}}_{{}_{{ \mathfrak{E}}_{ \mathfrak{B}}}}},{ \mathfrak{l}}_{{}_{(\textit{\bf Y},\textit{\bf N})}},\overline{{ \mathfrak{l}}_{{}_{(\textit{\bf Y},\textit{\bf N})}}}\,\}$ and we will denote by ${ \mathfrak{u}}$ the pure effect ${ \mathfrak{l}}_{{}_{(\textit{\bf Y},\textit{\bf N})}}$.\\
We will consider an element $\Phi$ in ${ \mathfrak{S}}_A\widehat{\otimes}{ \mathfrak{S}}_B$ and we will assume that this element is maximal in ${ \mathfrak{S}}_A\widehat{\otimes}{ \mathfrak{S}}_B$.  \\
To begin, we note that, due to this maximality condition, we must necessarily have $\Phi({ \mathfrak{u}},{{ \mathfrak{Y}}_{{ \mathfrak{E}}_{B}}})\in \{\textit{\bf Y},\textit{\bf N}\}$. Indeed, let us suppose that $\Phi({ \mathfrak{u}},{{ \mathfrak{Y}}_{{ \mathfrak{E}}_{B}}})=\bot$ and let us exhibit a contradiction. \\
 
Let us define the element of ${ \mathfrak{S}}_A\widehat{\otimes}{ \mathfrak{S}}_B$ denoted $\Psi$ in terms of the following decomposition by pure effects (see Theorem \ref{decomppurestatesreducedeffectsspace}) 
\begin{eqnarray}
\Psi({ \mathfrak{l}}_A, { \mathfrak{l}}_B) &:= & \bigwedge{}_{{ \mathfrak{l}}'_A\in \underline{\;{ \mathfrak{l}}_A\;}_{{ \mathfrak{E}}_A}}\bigwedge{}_{{ \mathfrak{l}}'_B\in \underline{\;{ \mathfrak{l}}_B\;}_{{ \mathfrak{E}}_B}\;}\; \Psi({ \mathfrak{l}}'_A, { \mathfrak{l}}'_B)
\end{eqnarray}
with
\begin{eqnarray}
&&\Psi ({ \mathfrak{Y}}_{{}_{{ \mathfrak{E}}_{ \mathfrak{B}}}},{ \mathfrak{Y}}_{{}_{{ \mathfrak{E}}_{{B}}}}) := \textit{\bf Y}\\
&&\Psi (\overline{{ \mathfrak{Y}}_{{}_{{ \mathfrak{E}}_{ \mathfrak{B}}}}},{ \mathfrak{l}}_B) := \textit{\bf N}\\
\forall { \mathfrak{l}}_A\in { \mathfrak{S}}_A^{{}^{pure}},&&\Psi ({ \mathfrak{l}}_A,\overline{{ \mathfrak{Y}}_{{}_{{ \mathfrak{E}}_{{B}}}}}) := \textit{\bf N}\\
&&\Psi ({ \mathfrak{u}},{ \mathfrak{Y}}_{{}_{{ \mathfrak{E}}_{{B}}}}) := \textit{\bf Y}\\
&&\Psi (\overline{ \mathfrak{u}},{ \mathfrak{Y}}_{{}_{{ \mathfrak{E}}_{{B}}}}) := \textit{\bf N}\\
\forall { \mathfrak{l}}_B\in { \mathfrak{S}}_B^{{}^{pure}}, 
&& \Psi ({ \mathfrak{u}},{ \mathfrak{l}}_B):=\textit{\bf Y}\;\;\textit{\rm if}\;\; \Phi ({ \mathfrak{u}},\overline{{ \mathfrak{l}}_B})=\textit{\bf N}\;\;\;\;\;\;\;\;\;\;\;\;\;\;\;\;\;\;\;\;\;\;\;\;\;\;\;\;\;\;\\
\forall { \mathfrak{l}}_B\in { \mathfrak{S}}_B^{{}^{pure}}, && \Psi ({ \mathfrak{u}},{ \mathfrak{l}}_B):=\textit{\bf N}\;\;\textit{\rm if}\;\; \Phi ({ \mathfrak{u}},{ \mathfrak{l}}_B)=\textit{\bf N}\;\;\;\;\;\;\;\;\;\;\;\;\;\;\;\;\;\;\;\;\\
\forall { \mathfrak{l}}_B\in { \mathfrak{S}}_B^{{}^{pure}},  && \Psi ({ \mathfrak{u}},{ \mathfrak{l}}_B):=\bot \;\;\textit{\rm if}\;\; \Phi ({ \mathfrak{u}},{ \mathfrak{l}}_B)=\Phi ({ \mathfrak{u}},\overline{{ \mathfrak{l}}_B})=\bot\;\;\;\;\;\;\;\;\;\;\;\;\;\;\;\;\;\;\;\;\\
\forall { \mathfrak{l}}_B\in { \mathfrak{S}}_B^{{}^{pure}}, &&\Psi (\overline{ \mathfrak{u}},{ \mathfrak{l}}_B) := \textit{\bf N}\\
\forall { \mathfrak{l}}_B\in { \mathfrak{S}}_B^{{}^{pure}}, &&\Psi ({ \mathfrak{Y}}_{{}_{{ \mathfrak{E}}_{ \mathfrak{B}}}},{ \mathfrak{l}}_B) := \textit{\bf Y}\;\;\textit{\rm if}\;\; \Phi ({ \mathfrak{u}},\overline{{ \mathfrak{l}}_B})= \textit{\bf N}\\
\forall { \mathfrak{l}}_B\in { \mathfrak{S}}_B^{{}^{pure}}, &&\Psi ({ \mathfrak{Y}}_{{}_{{ \mathfrak{E}}_{ \mathfrak{B}}}},\overline{{ \mathfrak{l}}_B}) := \textit{\bf N}\;\;\textit{\rm if}\;\; \Phi ({ \mathfrak{u}},\overline{{ \mathfrak{l}}_B})= \textit{\bf N}\\
\forall { \mathfrak{l}}_B\in { \mathfrak{S}}_B^{{}^{pure}}, &&\Psi ({ \mathfrak{Y}}_{{}_{{ \mathfrak{E}}_{ \mathfrak{B}}}},{ \mathfrak{l}}_B) :=\Psi ({ \mathfrak{Y}}_{{}_{{ \mathfrak{E}}_{ \mathfrak{B}}}},\overline{{ \mathfrak{l}}_B}) := \bot\;\;\textit{\rm if}\;\; \Phi ({ \mathfrak{u}},{{ \mathfrak{l}}_B})=\Phi ({ \mathfrak{u}},\overline{{ \mathfrak{l}}_B})= \bot.\;\;\;\;\;\;\;\;\;\;\;\;\;\;\;
\end{eqnarray}
We remark immediately that 
\begin{eqnarray}
\Psi & \sqsupset_{{}_{{ \mathfrak{S}}_A\widehat{\otimes}{ \mathfrak{S}}_B}} & \Phi.
\end{eqnarray}
We have then obtained a contradiction as long as $\Phi$ was assumed to be maximal. As a conclusion, we have then obtained $\Phi({ \mathfrak{u}},{{ \mathfrak{Y}}_{{ \mathfrak{E}}_{B}}})\in \{\textit{\bf Y},\textit{\bf N}\}$. \\

Let us then consider that 
\begin{eqnarray}
\Phi({ \mathfrak{u}},{{ \mathfrak{Y}}_{{ \mathfrak{E}}_{B}}})=\textit{\bf Y}. \label{firstcasea}
\end{eqnarray}
From (\ref{firstcasea}) and using property (\ref{furtherquotient1}), we have then immediately 
\begin{eqnarray}
\Phi(\overline{ \mathfrak{u}},{{ \mathfrak{Y}}_{{ \mathfrak{E}}_{B}}})=\textit{\bf N}.
\end{eqnarray}
and then, using (\ref{regularcase0}) and  (\ref{bilinear2})
\begin{eqnarray}
\Phi(\overline{ \mathfrak{u}},{\bot_{{ \mathfrak{E}}_{B}}})=\textit{\bf N}.\label{secondconclusion}
\end{eqnarray}
and then, using once again (\ref{bilinear2})
\begin{eqnarray}
 \forall { \mathfrak{l}}_B\in { \mathfrak{E}}_{B}, &&\Phi(\overline{ \mathfrak{u}},{{ \mathfrak{l}}_{{B}}})=\Phi(\overline{ \mathfrak{u}},\overline{{ \mathfrak{l}}_{{B}}})=\textit{\bf N}.\label{firstcaseconclusionubar}
\end{eqnarray}
From requirement (\ref{regularcase1a}), we obtain 
\begin{eqnarray}
\forall { \mathfrak{l}}_B\in { \mathfrak{E}}_{B},\;\; (\Phi({ \mathfrak{u}},{ \mathfrak{l}}_B)\,,\, \Phi({ \mathfrak{u}},\overline{{ \mathfrak{l}}_B}))\in \{(\textit{\bf Y},\textit{\bf N}), (\textit{\bf N},\textit{\bf Y}),(\bot,\bot)\}. \label{firstcaserequirement}
\end{eqnarray}
From this result, we note that the map ${ \mathfrak{l}}_B \mapsto \Phi({ \mathfrak{u}},{ \mathfrak{l}}_B)$ satisfies the requirements of 
Theorem \ref{blepsilonsigma}, i.e.  
\begin{eqnarray}
\forall \{{ \mathfrak{l}}_{i,B}\;\vert\; i\in I\}\subseteq { \mathfrak{E}}_B,&&  \Phi({ \mathfrak{u}},{{\bigsqcap{}^{{}^{ \mathfrak{E}_B}}_{{}_{i\in i}}{ \mathfrak{l}}_{i,B}}}) =\bigwedge{}_{{i\in I}} \;\Phi({ \mathfrak{u}}, {{{ \mathfrak{l}}_{i,B}}}),\\
\forall { \mathfrak{l}}_B\in { \mathfrak{E}}_B,&& \Phi({ \mathfrak{u}}, {\overline{\; \mathfrak{l}_B\;}})=\overline{\Phi({ \mathfrak{u}}, {{ \mathfrak{l}_B}})},\\
& & \Phi({ \mathfrak{u}}, {{ \mathfrak{Y}}_{ \mathfrak{E}}})=\textit{\bf Y}.
\end{eqnarray}
Then, we have
\begin{eqnarray}
\existunique \; \sigma_{ \mathfrak{u}}\in { \mathfrak{S}}_B & \vert & \forall { \mathfrak{l}}_B\in { \mathfrak{E}_B}, \; { \epsilon}^{ \mathfrak{S}_B}_{{ \mathfrak{l}_B}}(\sigma_{ \mathfrak{u}})=\Phi({ \mathfrak{u}}, {{{ \mathfrak{l}}_{B}}}).
\end{eqnarray}
By the way, using the same theorem, we note also obviously that 
\begin{eqnarray}
\existunique \; \sigma_{{}_{{{ \mathfrak{Y}}_{{ \mathfrak{E}}_{\mathfrak{B}}}}}}\in { \mathfrak{S}}_B & \vert & \forall { \mathfrak{l}}_B\in { \mathfrak{E}_B}, \; { \epsilon}^{ \mathfrak{S}_B}_{{ \mathfrak{l}_B}}(\sigma_{{}_{{{ \mathfrak{Y}}_{{ \mathfrak{E}}_{\mathfrak{B}}}}}})=\Phi({{ \mathfrak{Y}}_{{ \mathfrak{E}}_{\mathfrak{B}}}}, {{{ \mathfrak{l}}_{B}}}).
\end{eqnarray}
Let us now prove that $\sigma_{{}_{{{ \mathfrak{Y}}_{{ \mathfrak{E}}_{\mathfrak{B}}}}}}=\sigma_{ \mathfrak{u}}$.\\
In (\ref{firstcaserequirement}), we distinguish three cases. In the case where
\begin{eqnarray}
(\Phi({ \mathfrak{u}},{ \mathfrak{l}}_B)\,,\, \Phi({ \mathfrak{u}},\overline{{ \mathfrak{l}}_B}))=(\textit{\bf Y},\textit{\bf N})
\end{eqnarray}
 we deduce, using (\ref{firstcaseconclusionubar}), that 
\begin{eqnarray}
(\Phi(\bot,{ \mathfrak{l}}_B)\,,\, \Phi(\bot,\overline{{ \mathfrak{l}}_B}))=(\bot,\textit{\bf N}).
\end{eqnarray}
 Using (\ref{bilinear1}), we first deduce that $(\Phi({{ \mathfrak{Y}}_{{ \mathfrak{E}}_{\mathfrak{B}}}},\overline{{ \mathfrak{l}}_B})\,,\, \Phi({\overline{ \mathfrak{Y}}_{{ \mathfrak{E}}_{\mathfrak{B}}}},\overline{{ \mathfrak{l}}_B}))=(\textit{\bf N},\textit{\bf N})$.  Now, using (\ref{furtherquotient2}),we conclude that 
 \begin{eqnarray}
 (\Phi({{ \mathfrak{Y}}_{{ \mathfrak{E}}_{\mathfrak{B}}}},{ \mathfrak{l}}_B)\,,\, \Phi({{ \mathfrak{Y}}_{{ \mathfrak{E}}_{\mathfrak{B}}}},\overline{{ \mathfrak{l}}_B}))=(\textit{\bf Y},\textit{\bf N}).
 \end{eqnarray}
Reciprocally, if $(\Phi({{ \mathfrak{Y}}_{{ \mathfrak{E}}_{\mathfrak{B}}}},{ \mathfrak{l}}_B)\,,\, \Phi({{ \mathfrak{Y}}_{{ \mathfrak{E}}_{\mathfrak{B}}}},\overline{{ \mathfrak{l}}_B}))=(\textit{\bf Y},\textit{\bf N})$,  we deduce, using (\ref{regularcase0}) and (\ref{bilinear1}) that 
\begin{eqnarray}
(\Phi(\bot,{ \mathfrak{l}}_B)\,,\, \Phi(\bot,\overline{{ \mathfrak{l}}_B}))=(\bot,\textit{\bf N}).
\end{eqnarray}
Using (\ref{bilinear1}), we first deduce that $(\Phi({ \mathfrak{u}},\overline{{ \mathfrak{l}}_B})\,,\, \Phi(\overline{ \mathfrak{u}},\overline{{ \mathfrak{l}}_B}))=(\textit{\bf N},\textit{\bf N})$.  Now, using $\Phi({{ \mathfrak{Y}}_{{ \mathfrak{E}}_{\mathfrak{B}}}},{ \mathfrak{l}}_B)=\textit{\bf Y}$ and requirement (\ref{regularcase1b}), we deduce that 
\begin{eqnarray}
 (\Phi({ \mathfrak{u}},{ \mathfrak{l}}_B)\,,\, \Phi(\overline{{ \mathfrak{u}}},{{ \mathfrak{l}}_B}))\in \{(\textit{\bf Y},\textit{\bf N}), (\textit{\bf N},\textit{\bf Y}),(\bot,\bot)\}.\label{requirementagain}
\end{eqnarray}
However, from (\ref{secondconclusion}) and $\Phi(\overline{ \mathfrak{u}},\overline{{ \mathfrak{l}}_B})=\textit{\bf N}$, we deduce, using (\ref{bilinear2}), that  $\Phi(\overline{{ \mathfrak{u}}},{{ \mathfrak{l}}_B})=\textit{\bf N}$, and then, from (\ref{requirementagain}), necessarily $\Phi({ \mathfrak{u}},{ \mathfrak{l}}_B)=\textit{\bf Y}$. As a conclusion, we have obtained the equivalence between $(\Phi({ \mathfrak{u}},{ \mathfrak{l}}_B)\,,\, \Phi({ \mathfrak{u}},\overline{{ \mathfrak{l}}_B}))=(\textit{\bf Y},\textit{\bf N})$ and $(\Phi({{ \mathfrak{Y}}_{{ \mathfrak{E}}_{\mathfrak{B}}}},{ \mathfrak{l}}_B)\,,\, \Phi({{ \mathfrak{Y}}_{{ \mathfrak{E}}_{\mathfrak{B}}}},\overline{{ \mathfrak{l}}_B}))=(\textit{\bf Y},\textit{\bf N})$.\\
In the same way, we obtain that $(\Phi({{ \mathfrak{Y}}_{{ \mathfrak{E}}_{\mathfrak{B}}}},{ \mathfrak{l}}_B)\,,\, \Phi({{ \mathfrak{Y}}_{{ \mathfrak{E}}_{\mathfrak{B}}}},\overline{{ \mathfrak{l}}_B}))=(\textit{\bf N},\textit{\bf Y})$ is equivalent to $(\Phi({ \mathfrak{u}},{ \mathfrak{l}}_B)\,,\, \Phi({ \mathfrak{u}},\overline{{ \mathfrak{l}}_B}))=(\textit{\bf N},\textit{\bf Y})$. And obviously the only remaining case is obtained, i.e. $(\Phi({{ \mathfrak{Y}}_{{ \mathfrak{E}}_{\mathfrak{B}}}},{ \mathfrak{l}}_B)\,,\, \Phi({{ \mathfrak{Y}}_{{ \mathfrak{E}}_{\mathfrak{B}}}},\overline{{ \mathfrak{l}}_B}))=(\bot,\bot)$ is equivalent to $(\Phi({ \mathfrak{u}},{ \mathfrak{l}}_B)\,,\, \Phi({ \mathfrak{u}},\overline{{ \mathfrak{l}}_B}))=(\bot,\bot)$. \\
As a conclusion, we have obtained $(\Phi({{ \mathfrak{Y}}_{{ \mathfrak{E}}_{\mathfrak{B}}}},{ \mathfrak{l}}_B)\,,\, \Phi({{ \mathfrak{Y}}_{{ \mathfrak{E}}_{\mathfrak{B}}}},\overline{{ \mathfrak{l}}_B}))=(\Phi({ \mathfrak{u}},{ \mathfrak{l}}_B)\,,\, \Phi({ \mathfrak{u}},\overline{{ \mathfrak{l}}_B}))$ for any ${ \mathfrak{l}}_B\in { \mathfrak{E}}_B$, i.e. 
\begin{eqnarray}
\forall { \mathfrak{l}}_B\in { \mathfrak{E}_B}, && { \epsilon}^{ \mathfrak{S}_B}_{{ \mathfrak{l}_B}}(\sigma_{{}_{{{ \mathfrak{Y}}_{{ \mathfrak{E}}_{\mathfrak{B}}}}}})={ \epsilon}^{ \mathfrak{S}_B}_{{ \mathfrak{l}_B}}(\sigma_{ \mathfrak{u}}).
\end{eqnarray}
In other words, 
\begin{eqnarray}
\sigma_{{}_{{{ \mathfrak{Y}}_{{ \mathfrak{E}}_{\mathfrak{B}}}}}}=\sigma_{ \mathfrak{u}}.
\end{eqnarray}
It is clear that, as long as $\Phi$ is chosen to be maximal, $\sigma_{ \mathfrak{u}}$ must be a maximal state.\\
We can then summarize previous results :
\begin{eqnarray}
\Phi({ \mathfrak{u}},{{ \mathfrak{Y}}_{{ \mathfrak{E}}_{B}}})=\textit{\bf Y} & \Rightarrow &\existunique \sigma\in { \mathfrak{S}}^{{}^{pure}}_B\;\vert\; \forall { \mathfrak{l}}_B\in { \mathfrak{E}_B}, \;\; \left\{
\begin{array}{l} \Phi({{ \mathfrak{Y}}_{{ \mathfrak{E}}_{\mathfrak{B}}}},{ \mathfrak{l}}_B)= \epsilon_{{ \mathfrak{Y}}_{{ \mathfrak{E}}_{\mathfrak{B}}}}^{\mathfrak{B}}(\textit{\bf Y}) \bullet { \epsilon}^{ \mathfrak{S}_B}_{{ \mathfrak{l}_B}}(\sigma)\\
\Phi({ \mathfrak{u}},{ \mathfrak{l}}_B)= \epsilon_{{ \mathfrak{u}}}^{\mathfrak{B}}(\textit{\bf Y}) \bullet { \epsilon}^{ \mathfrak{S}_B}_{{ \mathfrak{l}_B}}(\sigma)\\
\Phi(\overline{ \mathfrak{u}},{ \mathfrak{l}}_B)= \epsilon_{\overline{ \mathfrak{u}}}^{\mathfrak{B}}(\textit{\bf Y}) \bullet { \epsilon}^{ \mathfrak{S}_B}_{{ \mathfrak{l}_B}}(\sigma)\\
\Phi(\overline{{{ \mathfrak{Y}}_{{ \mathfrak{E}}_{\mathfrak{B}}}}},{ \mathfrak{l}}_B)=  \epsilon_{\overline{{ \mathfrak{Y}}_{{ \mathfrak{E}}_{\mathfrak{B}}}}}^{\mathfrak{B}}(\textit{\bf Y}) \bullet { \epsilon}^{ \mathfrak{S}_B}_{{ \mathfrak{l}_B}}(\sigma)
\end{array}\right.\;\;\;\;\;\;\;\;\;
\end{eqnarray}
Following an analogous way, we obtain 
\begin{eqnarray}
\Phi({ \mathfrak{u}},{{ \mathfrak{Y}}_{{ \mathfrak{E}}_{B}}})=\textit{\bf N} & \Rightarrow &\existunique \sigma\in { \mathfrak{S}}^{{}^{pure}}_B\;\vert\; \forall { \mathfrak{l}}_B\in { \mathfrak{E}_B}, \;\; \left\{
\begin{array}{l} \Phi({{ \mathfrak{Y}}_{{ \mathfrak{E}}_{\mathfrak{B}}}},{ \mathfrak{l}}_B)= \epsilon_{{ \mathfrak{Y}}_{{ \mathfrak{E}}_{\mathfrak{B}}}}^{\mathfrak{B}}(\textit{\bf N}) \bullet { \epsilon}^{ \mathfrak{S}_B}_{{ \mathfrak{l}_B}}(\sigma)\\
\Phi({ \mathfrak{u}},{ \mathfrak{l}}_B)= \epsilon_{{ \mathfrak{u}}}^{\mathfrak{B}}(\textit{\bf N}) \bullet { \epsilon}^{ \mathfrak{S}_B}_{{ \mathfrak{l}_B}}(\sigma)\\
\Phi(\overline{ \mathfrak{u}},{ \mathfrak{l}}_B)= \epsilon_{\overline{ \mathfrak{u}}}^{\mathfrak{B}}(\textit{\bf N}) \bullet { \epsilon}^{ \mathfrak{S}_B}_{{ \mathfrak{l}_B}}(\sigma)\\
\Phi(\overline{{{ \mathfrak{Y}}_{{ \mathfrak{E}}_{\mathfrak{B}}}}},{ \mathfrak{l}}_B)=  \epsilon_{\overline{{ \mathfrak{Y}}_{{ \mathfrak{E}}_{\mathfrak{B}}}}}^{\mathfrak{B}}(\textit{\bf N}) \bullet { \epsilon}^{ \mathfrak{S}_B}_{{ \mathfrak{l}_B}}(\sigma)
\end{array}\right.\;\;\;\;\;\;\;\;\;
\end{eqnarray}
As a final conclusion, as long as $\Phi$ is maximal, $\Phi$ is necessarily a maximal element in ${ \mathfrak{B}}\widetilde{\otimes} { \mathfrak{S}}_B$ (see Theorem \ref{theorempuretilde}). Then,  we obtain the final conclusion :
\begin{eqnarray}
{ \mathfrak{B}}\widehat{\otimes} { \mathfrak{S}}_B & = & { \mathfrak{B}}\widetilde{\otimes} { \mathfrak{S}}_B.
\end{eqnarray}
The equality ${ \mathfrak{B}}{\otimes} { \mathfrak{S}}_B  =  { \mathfrak{B}}\widetilde{\otimes} { \mathfrak{S}}_B$ had already been obtained in Theorem \ref{theoremsqsubseteqPAB=SAB}.
\end{proof}

\begin{theorem}
$({ \mathfrak{S}}_{A},{ \mathfrak{E}}_{A},\epsilon^{{ \mathfrak{S}}_{A}})$ and $({ \mathfrak{S}}_{B},{ \mathfrak{E}}_{B},\epsilon^{{ \mathfrak{S}}_{B}})$ are states-effects Chu spaces where ${ \mathfrak{S}}_A$ and ${ \mathfrak{S}}_B$ are chosen to be non-simplex orthocomplemented spaces of states and where ${ \mathfrak{E}}_{A}$ and ${ \mathfrak{E}}_{B}$ are chosen respectively to be the reduced effects spaces $\overline{ \mathfrak{E}}_{{ \mathfrak{S}}_A}$ and $\overline{ \mathfrak{E}}_{{ \mathfrak{S}}_B}$. The tensor products ${ \mathfrak{S}}_{A} \widecheck{\otimes} { \mathfrak{S}}_{B}$ and ${ \mathfrak{S}}_{A} \widetilde{\otimes} { \mathfrak{S}}_{B}$ are defined respectively according to subsection \ref{subsectionmaximaltensorproduct} and subsection \ref{subsectionminimaltensorproduct} and the regular tensor product ${ \mathfrak{S}}_{A} \widehat{\otimes} { \mathfrak{S}}_{B}$ is defined according to subsection \ref{subsectionremarks} with these choices.  As a result, we have
\begin{eqnarray}
{ \mathfrak{S}}_A\widetilde{\otimes}{ \mathfrak{S}}_B & \varsubsetneq & { \mathfrak{S}}_A\widehat{\otimes}{ \mathfrak{S}}_B.
\end{eqnarray}
\end{theorem}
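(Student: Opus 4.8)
The plan is to exhibit a single $\Phi\in { \mathfrak{S}}_A\widehat{\otimes}{ \mathfrak{S}}_B$ that does not lie in $\widetilde{S}_{AB}={ \mathfrak{S}}_A\widetilde{\otimes}{ \mathfrak{S}}_B$, the inclusion $\widetilde{S}_{AB}\subseteq { \mathfrak{S}}_A\widehat{\otimes}{ \mathfrak{S}}_B$ being already available. The detection tool is a slicing criterion for membership in $\widetilde{S}_{AB}$: if $\Phi\in\widetilde{S}_{AB}$ then $\Phi=\bigsqcap{}^{{}^{\widecheck{S}_{AB}}}_{k\in K}\iota^{\widecheck{S}_{AB}}(\alpha_k,\beta_k)$ pointwise for some family $\{(\alpha_k,\beta_k)\}_{k\in K}$ (this is exactly the image of $\Omega$), so that for every ${ \mathfrak{l}}_A\in { \mathfrak{E}}_{A}$ with $\Phi({ \mathfrak{l}}_A,{ \mathfrak{Y}}_{{ \mathfrak{E}}_{B}})=\textit{\bf Y}$ the definition (\ref{expressionbullet}) of $\bullet$ forces $\epsilon^{{ \mathfrak{S}}_A}_{{ \mathfrak{l}}_A}(\alpha_k)=\textit{\bf Y}$ for all $k$, whence, by (\ref{axiomsigmainfsemilattice}), the ``$A$-slice'' ${ \mathfrak{l}}_B\mapsto\Phi({ \mathfrak{l}}_A,{ \mathfrak{l}}_B)$ equals $\epsilon^{{ \mathfrak{S}}_B}_{\cdot}(\bigsqcap{}^{{}^{{ \mathfrak{S}}_B}}_{k\in K}\beta_k)$ --- \emph{the same} evaluation functional for every such ${ \mathfrak{l}}_A$ (and there is always at least one, namely ${ \mathfrak{Y}}_{{ \mathfrak{E}}_{A}}$, by (\ref{furtherquotientYY})). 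Hence it is enough to produce $\Phi\in { \mathfrak{S}}_A\widehat{\otimes}{ \mathfrak{S}}_B$ and two effects ${ \mathfrak{p}},{ \mathfrak{q}}\in { \mathfrak{E}}_{A}$ with $\Phi({ \mathfrak{p}},{ \mathfrak{Y}}_{{ \mathfrak{E}}_{B}})=\Phi({ \mathfrak{q}},{ \mathfrak{Y}}_{{ \mathfrak{E}}_{B}})=\textit{\bf Y}$ whose $A$-slices $\Phi({ \mathfrak{p}},\cdot)$ and $\Phi({ \mathfrak{q}},\cdot)$ differ.

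Such a $\Phi$ is built from the non-simplexity of both factors. By Lemma~\ref{lemmathreestatesnotsimplex} choose $\sigma_1,\sigma_2,\sigma_3\in { \mathfrak{S}}_A$ with $\sigma_1\sqcap_{{}_{{ \mathfrak{S}}_A}}\sigma_2\sqsubseteq_{{}_{{ \mathfrak{S}}_A}}\sigma_3$, $\neg\widehat{\sigma_3\sigma_1}{}^{{}^{{ \mathfrak{S}}_A}}$, $\neg\widehat{\sigma_3\sigma_2}{}^{{}^{{ \mathfrak{S}}_A}}$, $\sigma_1\parallel_{{}_{{ \mathfrak{S}}_A}}\sigma_2$ (these already force $\sigma_1,\sigma_2,\sigma_3\ne \bot_{{ \mathfrak{S}}_A}$, so the pure effects ${ \mathfrak{l}}_{(\sigma_i,\sigma_i^\star)}$ exist), and analogously $\tau_1,\tau_2,\tau_3\in { \mathfrak{S}}_B$; using (\ref{existsmaxabove}) one may moreover replace $\sigma_3,\tau_3$ by pure states above them, as a maximal state above $\sigma_3$ still has no common upper bound with $\sigma_1$ or $\sigma_2$. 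Since ${ \mathfrak{S}}_A,{ \mathfrak{S}}_B$ are orthocomplemented and ${ \mathfrak{E}}_{A}=\overline{{ \mathfrak{E}}}_{{ \mathfrak{S}}_A}$, ${ \mathfrak{E}}_{B}=\overline{{ \mathfrak{E}}}_{{ \mathfrak{S}}_B}$, Theorem~\ref{decomppurestatesreducedeffectsspace} reduces a candidate $\Phi\in\widecheck{S}_{AB}$ to its values on pairs of pure effects ${ \mathfrak{l}}_{(\sigma,\sigma^\star)}$, so it suffices to prescribe it there. The prescription glues the three pure-tensor functionals $\iota^{\widecheck{S}_{AB}}(\sigma_i,\tau_i)$, $i=1,2,3$: on ${ \mathfrak{p}}_i:={ \mathfrak{l}}_{(\sigma_i,\sigma_i^\star)}$ the $A$-slice of $\Phi$ is $\epsilon^{{ \mathfrak{S}}_B}_{\cdot}(\tau_i)$, on ${ \mathfrak{Y}}_{{ \mathfrak{E}}_{A}}$ it is $\epsilon^{{ \mathfrak{S}}_B}_{\cdot}(\tau_1\sqcap_{{}_{{ \mathfrak{S}}_B}}\tau_2\sqcap_{{}_{{ \mathfrak{S}}_B}}\tau_3)$ (forcing (\ref{furtherquotientYY}) and monotonicity), and all remaining values are those dictated by bi-$\sqcap$-preservation. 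The crucial point is that, by the meet formula (\ref{defcapES}) and the incomparabilities $\neg\widehat{\sigma_3\sigma_i}{}^{{}^{{ \mathfrak{S}}_A}}$, $\sigma_1\parallel_{{}_{{ \mathfrak{S}}_A}}\sigma_2$, the infima of the relevant pure effects of ${ \mathfrak{E}}_{A}$ collapse to $\bot_{{ \mathfrak{E}}_{A}}$, so the three readings $\tau_1,\tau_2,\tau_3$ never have to be reconciled on an effect where they disagree; the symmetric relation $\tau_1\sqcap_{{}_{{ \mathfrak{S}}_B}}\tau_2\sqsubseteq_{{}_{{ \mathfrak{S}}_B}}\tau_3$ is what makes the prescription coherent in the ${ \mathfrak{E}}_{B}$-variable and arranges the regularity conditions.

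The verification then has four parts. (i) $\Phi\in\widecheck{S}_{AB}$: bi-distributivity (\ref{bilinear1})--(\ref{bilinear2}) follows from the $\sqcap$-collapse above together with (\ref{axiomEinfsemilattice}) and (\ref{distributivitybullet}), and the identities (\ref{furtherquotient1})--(\ref{furtherquotientYY}) are read off on the finitely many pure effects involved. (ii) $\Phi$ satisfies the regularity requirements (\ref{regularcase0})--(\ref{regularcase2b}): each of these only links an ${ \mathfrak{l}}_A$ with $\overline{{ \mathfrak{l}}_A}$, or a pair of effects with infimum $\bot_{{ \mathfrak{E}}_{B}}$ (resp. $\bot_{{ \mathfrak{E}}_{A}}$), and by the collapse each such pair already lies inside a single reading, so the requirement reduces to the (true) requirement for one of the pure tensors $\iota^{\widecheck{S}_{AB}}(\sigma_i,\tau_i)$. (iii) $\Phi({ \mathfrak{p}}_1,{ \mathfrak{Y}}_{{ \mathfrak{E}}_{B}})=\Phi({ \mathfrak{p}}_2,{ \mathfrak{Y}}_{{ \mathfrak{E}}_{B}})=\textit{\bf Y}$ (since $\sigma_i\sqsubseteq_{{}_{{ \mathfrak{S}}_A}}\sigma_i$), while $\Phi({ \mathfrak{p}}_1,\cdot)=\epsilon^{{ \mathfrak{S}}_B}_{\cdot}(\tau_1)\ne\epsilon^{{ \mathfrak{S}}_B}_{\cdot}(\tau_2)=\Phi({ \mathfrak{p}}_2,\cdot)$ because $\tau_1\parallel_{{}_{{ \mathfrak{S}}_B}}\tau_2$ together with (\ref{Chuseparated}). (iv) The slicing criterion of the first paragraph then gives $\Phi\notin\widetilde{S}_{AB}$, so that ${ \mathfrak{S}}_A\widetilde{\otimes}{ \mathfrak{S}}_B\varsubsetneq { \mathfrak{S}}_A\widehat{\otimes}{ \mathfrak{S}}_B$. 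I expect step (ii) to be the main obstacle: by the preceding theorem the regularity conditions are in fact strong enough to force multiplicativity as soon as one factor is a simplex, so one must check with care that the non-simplex relations on both sides genuinely let the glued $\Phi$ slip between them; inseparable from this is the routine but lengthy task of choosing the witnesses so that all the needed infima of pure effects in ${ \mathfrak{E}}_{A}$ (and ${ \mathfrak{E}}_{B}$) really do collapse, and of checking bi-$\sqcap$-consistency on all of ${ \mathfrak{E}}_{A}\times { \mathfrak{E}}_{B}$ rather than only on pure effects.
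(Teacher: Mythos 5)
Your overall strategy (exhibit an explicit element of ${\mathfrak{S}}_A\widehat{\otimes}{\mathfrak{S}}_B$ outside ${\mathfrak{S}}_A\widetilde{\otimes}{\mathfrak{S}}_B$) is the same as the paper's, and your slicing criterion is sound: it is essentially Lemma \ref{restrictedconditions1}. But the witness you prescribe cannot exist, so the proof has a genuine gap at steps (i)--(ii). For \emph{any} $\Phi\in\widecheck{S}_{AB}$, the column ${\mathfrak{l}}_A\mapsto\Phi({\mathfrak{l}}_A,{\mathfrak{Y}}_{{\mathfrak{E}}_B})$ preserves arbitrary infima by (\ref{bilinear1}), commutes with the bar by (\ref{furtherquotient1}), and sends ${\mathfrak{Y}}_{{\mathfrak{E}}_A}$ to $\textit{\bf Y}$ by (\ref{furtherquotientYY}); hence by Theorem \ref{blepsilonsigma} it equals $\epsilon^{{\mathfrak{S}}_A}(\alpha_0)$ for a single state $\alpha_0$. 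Your prescription $\Phi({\mathfrak{p}}_i,{\mathfrak{Y}}_{{\mathfrak{E}}_B})=\textit{\bf Y}$ for ${\mathfrak{p}}_i={\mathfrak{l}}_{(\sigma_i,\sigma_i^\star)}$, $i=1,2,3$, would force $\sigma_1,\sigma_2,\sigma_3\sqsubseteq_{{}_{{\mathfrak{S}}_A}}\alpha_0$, i.e. a common upper bound, contradicting $\neg\widehat{\sigma_3\sigma_1}{}^{{}^{{\mathfrak{S}}_A}}$ from Lemma \ref{lemmathreestatesnotsimplex} (and in the basic example, the four-atom orthocomplemented ``flower'', even $\sigma_1$ and $\sigma_2$ are incompatible, and some pair $\{\sigma_i,\sigma_j\}$ is necessarily a star pair, so ${\mathfrak{p}}_j=\overline{{\mathfrak{p}}_i}$ and (\ref{furtherquotient1}) is violated outright). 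The ``collapse to $\bot_{{\mathfrak{E}}_A}$'' you invoke to avoid reconciling the readings is precisely what kills the construction: if ${\mathfrak{p}}_1\sqcap_{{}_{{\mathfrak{E}}_A}}{\mathfrak{p}}_2=\bot_{{\mathfrak{E}}_A}$, then (\ref{bilinear1}) applied to $\{{\mathfrak{p}}_1,{\mathfrak{p}}_2\}$ gives $\Phi(\bot_{{\mathfrak{E}}_A},{\mathfrak{Y}}_{{\mathfrak{E}}_B})=\textit{\bf Y}$, while applied to $\{{\mathfrak{Y}}_{{\mathfrak{E}}_A},\overline{{\mathfrak{Y}}_{{\mathfrak{E}}_A}}\}$ (whose meet is also $\bot_{{\mathfrak{E}}_A}$) it gives $\textit{\bf Y}\wedge\textit{\bf N}=\bot$. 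So no element of $\widecheck{S}_{AB}$, a fortiori of ${\mathfrak{S}}_A\widehat{\otimes}{\mathfrak{S}}_B$, realizes your gluing: the extra room of $\widehat{\otimes}$ over $\widetilde{\otimes}$ cannot be located in the $\textit{\bf Y}$-pattern of slices, which is rigid.

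For contrast, the paper's witness is built so that the dangerous columns are $\bot$-valued: it takes $\Sigma:=(\sigma_1\widetilde{\otimes}\tau_1\sqcap_{{}_{\widetilde{S}_{AB}}}\sigma_2\widetilde{\otimes}\tau_2)\sqcup_{{}_{\widecheck{S}_{AB}}}(\sigma_1^\star\widetilde{\otimes}\bot_{{}_{{\mathfrak{S}}_B}}\sqcap_{{}_{\widetilde{S}_{AB}}}\bot_{{}_{{\mathfrak{S}}_A}}\widetilde{\otimes}\tau_1^\star)$, with $\sigma_1,\sigma_2,\tau_1,\tau_2$ pure and the orthocomplement used to guarantee the incompatibilities; all values $\Sigma({\mathfrak{l}}_A,{\mathfrak{Y}}_{{\mathfrak{E}}_B})$ and $\Sigma({\mathfrak{Y}}_{{\mathfrak{E}}_A},{\mathfrak{l}}_B)$ at pure effects are $\bot$, so only the mild conditions (\ref{regularcase2a})--(\ref{regularcase2b}) are in play and membership in $\widehat{\otimes}$ is cheap, and non-membership in $\widetilde{\otimes}$ is detected not by slices but by the order structure: by Theorems \ref{theorempuretilde}, \ref{theoremA4Ptilde} and \ref{Stildereducible}, any element of $\widetilde{S}_{AB}$ above $\sigma_1\widetilde{\otimes}\tau_1\sqcap\sigma_2\widetilde{\otimes}\tau_2$ is a meet of the two pure tensors alone, whereas $\Sigma$ is strictly above the meet and below neither. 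If you want to salvage your approach, you would have to abandon the requirement that two distinct pure effects evaluate to $\textit{\bf Y}$ against ${\mathfrak{Y}}_{{\mathfrak{E}}_B}$ and instead, like the paper, encode the discrepancy in the $\textit{\bf N}/\bot$ pattern of the slices.
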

\begin{proof}
${ \mathfrak{S}}_A$ being a non-simplex orthocomplemented space of states, there exists a pair of states $\alpha_1,\alpha_2$ in ${ \mathfrak{S}}_A$ such that $\alpha_1^\star\not\sqsubseteq_{{}_{{ \mathfrak{S}}_A}} \alpha_2$ and $\alpha_1\not\sqsubseteq_{{}_{{ \mathfrak{S}}_A}} \alpha_2$. Indeed, if it was wrong that such a pair exist, we could, for any $\alpha$, define a morphism $\psi_\alpha$ defined by $\forall \sigma\in { \mathfrak{S}}_A,\;\psi_\alpha(\sigma):=\epsilon^{{ \mathfrak{S}}_A}_{{ \mathfrak{l}}_{(\alpha,\alpha^\star)}}(\sigma)$, which would satisfy $\psi_\alpha(\alpha)=\textit{\bf Y}$ and $\psi_\alpha({ \mathfrak{S}}^{{}^{pure}}_A)\subseteq \{\textit{\bf Y},\textit{\bf N}\}$ In other words, this would impose ${ \mathfrak{S}}_A$ to be a simplex (this result is a consequence of Lemma \ref{particularpsi}), which is false by assumption. Let us then fix such a pair $(\alpha_1,\alpha_2)$. Let us consider the subset $I_{\alpha_1}$ of $\underline{\alpha_1}_{{}_{{ \mathfrak{S}}_A}}$ defined by $I_{\alpha_1}:=\{\,\alpha\in \underline{\alpha_1}_{{}_{{ \mathfrak{S}}_A}}\;\vert\; \alpha^\star \sqsubseteq_{{}_{{ \mathfrak{S}}_A}} \alpha_2\}$. We  have necessarily $I_{\alpha_1} \varsubsetneq \underline{\alpha_1}_{{}_{{ \mathfrak{S}}_A}}$, because $I_{\alpha_1} =\underline{\alpha_1}_{{}_{{ \mathfrak{S}}_A}}$ would imply $\alpha_1^\star=(\bigsqcap{}^{{}^{{ \mathfrak{S}}_A}}I_{\alpha_1} )^\star \sqsubseteq_{{}_{{ \mathfrak{S}}_A}} \alpha_2$ which is false by assumption. Then, we can always choose $\sigma_1\in (\underline{\alpha_1}_{{}_{{ \mathfrak{S}}_A}}\!\!\!\!\!\!\!\smallsetminus I_{\alpha_1}) \subseteq { \mathfrak{S}}^{{}^{pure}}_A$ and $\sigma_2 \in \underline{\alpha_2}_{{}_{{ \mathfrak{S}}_A}}\subseteq { \mathfrak{S}}^{{}^{pure}}_A$ such that $\sigma_1^\star\not\sqsubseteq_{{}_{{ \mathfrak{S}}_A}} \sigma_2$ and $\sigma_1\not= \sigma_2$.  We note that $\sigma_1^\star$ is an atom of ${ \mathfrak{S}}_A$. 
Analogously, we can choose a pair of states $\tau_1,\tau_2$ in ${ \mathfrak{S}}^{{}^{pure}}_B$ such that $\tau_1^\star\not\sqsubseteq_{{}_{{ \mathfrak{S}}_B}} \tau_2$ and $\tau_1\not= \tau_2$.  We note also that $\tau_1^\star$ is an atom of ${ \mathfrak{S}}_B$.\\

It is now rather easy to check that the following supremum exists in $\widecheck{S}_{AB}$
\begin{eqnarray}
\Sigma & := & (\sigma_1 \widetilde{\otimes} \tau_1 \sqcap_{{}_{\widetilde{S}_{AB}}} \sigma_2 \widetilde{\otimes} \tau_2) \sqcup_{{}_{\widecheck{S}_{AB}}} (\sigma_1^\star  \widetilde{\otimes} \bot_{{}_{ \mathfrak{S}_B}} \sqcap_{{}_{\widetilde{S}_{AB}}} \bot_{{}_{ \mathfrak{S}_A}} \widetilde{\otimes} \tau_1^\star)\label{expressformalSigma}
\end{eqnarray}
the explicit expression is indeed simply given in terms of the following decomposition by pure effects (see Theorem \ref{decomppurestatesreducedeffectsspace}) :
\begin{eqnarray}
&&\Sigma({ \mathfrak{l}}_A, { \mathfrak{l}}_B) :=  \bigwedge{}_{{ \mathfrak{l}}'_A\in \underline{\;{ \mathfrak{l}}_A\;}_{{ \mathfrak{E}}_A}}\bigwedge{}_{{ \mathfrak{l}}'_B\in \underline{\;{ \mathfrak{l}}_B\;}_{{ \mathfrak{E}}_B}\;}\; \Sigma({ \mathfrak{l}}'_A, { \mathfrak{l}}'_B)\label{expressexplicitSigma1}
\\
&&\left\{
\begin{array}{ll}
\Sigma ({ \mathfrak{Y}}_{{ \mathfrak{E}}_{A}},{ \mathfrak{Y}}_{{ \mathfrak{E}}_{B}}) = \textit{\bf Y}&\\
 \Sigma (\overline{{ \mathfrak{Y}}_{{ \mathfrak{E}}_{A}}},{ \mathfrak{l}}_{{B}}) =  \Sigma ({ \mathfrak{l}}_{{A}},\overline{{ \mathfrak{Y}}_{{ \mathfrak{E}}_{B}}}) = \textit{\bf N}, & \forall { \mathfrak{l}}_{{A}}\in { \mathfrak{E}}^{{}^{pure}}_{A},\forall { \mathfrak{l}}_{{B}}\in { \mathfrak{E}}^{{}^{pure}}_{B},\\
 \Sigma ({{ \mathfrak{Y}}_{{ \mathfrak{E}}_{A}}},{ \mathfrak{l}}_{{B}}) = \Sigma ({ \mathfrak{l}}_{{A}},{{ \mathfrak{Y}}_{{ \mathfrak{E}}_{B}}}) = \bot, & \forall { \mathfrak{l}}_{{A}}\in { \mathfrak{E}}^{{}^{pure}}_{A},\forall { \mathfrak{l}}_{{B}}\in { \mathfrak{E}}^{{}^{pure}}_{B},\\
\Sigma ({ \mathfrak{l}}_{(\sigma_1,\sigma_1^\star)},{ \mathfrak{l}}_{(\tau_1,\tau_1^\star)})= \textit{\bf N}\\
 \Sigma ({ \mathfrak{l}}_{(\sigma_1^\star,\sigma_1)},{ \mathfrak{l}}_{(\tau_2^\star,\tau_2)})= \textit{\bf N}\\
\Sigma ({ \mathfrak{l}}_{(\sigma_2^\star,\sigma_2)},{ \mathfrak{l}}_{(\tau_1^\star,\tau_1)})= \textit{\bf N}\\
 \Sigma ({ \mathfrak{l}}_{{A}},{ \mathfrak{l}}_{{B}}) = \bot & \textit{\rm for any other pair $({ \mathfrak{l}}_{{A}},{ \mathfrak{l}}_{{B}})$}\in { \mathfrak{E}}^{{}^{pure}}_{A}\times { \mathfrak{E}}^{{}^{pure}}_{B}.
\end{array}
\right.\label{expressexplicitSigma2}
\end{eqnarray}
Using (\ref{developmentetildeordersimplify}),  $\sigma_1^\star\not\sqsubseteq_{{}_{{ \mathfrak{S}}_A}} \sigma_1$, $\sigma_1^\star\not\sqsubseteq_{{}_{{ \mathfrak{S}}_A}} \sigma_2$,  $\sigma_1\not=\sigma_2$,  $\tau_1^\star\not\sqsubseteq_{{}_{{ \mathfrak{S}}_B}} \tau_1$, $\tau_1^\star\not\sqsubseteq_{{}_{{ \mathfrak{S}}_B}} \tau_2$ and $\tau_1\not= \tau_2$, we deduce 
\begin{eqnarray}
(\sigma_1^\star  \widetilde{\otimes} \bot_{{}_{ \mathfrak{S}_B}} \sqcap_{{}_{\widetilde{S}_{AB}}} \bot_{{}_{ \mathfrak{S}_A}} \widetilde{\otimes} \tau_1^\star) & \not\sqsubseteq_{{}_{\widetilde{S}_{AB}}} & \sigma_1 \widetilde{\otimes} \tau_1, \\
(\sigma_1^\star  \widetilde{\otimes} \bot_{{}_{ \mathfrak{S}_B}} \sqcap_{{}_{\widetilde{S}_{AB}}} \bot_{{}_{ \mathfrak{S}_A}} \widetilde{\otimes} \tau_1^\star) & \not\sqsubseteq_{{}_{\widetilde{S}_{AB}}} & \sigma_2 \widetilde{\otimes} \tau_2,
\end{eqnarray}
and $\underline{(\sigma_1 \widetilde{\otimes} \tau_1 \sqcap_{{}_{\widetilde{S}_{AB}}} \sigma_2 \widetilde{\otimes} \tau_2)}_{{}_{\widetilde{S}_{AB}}}  =  \{\, \sigma_1 \widetilde{\otimes} \tau_1\,,\, \sigma_2 \widetilde{\otimes} \tau_2\,\}$ (because of Theorem \ref{Stildereducible}).\\ 
As a consequence, we conclude that 
\begin{eqnarray}
\Sigma & \in & { \mathfrak{S}}_A\widehat{\otimes}{ \mathfrak{S}}_B \smallsetminus { \mathfrak{S}}_A\widetilde{\otimes}{ \mathfrak{S}}_B.
\end{eqnarray}
\end{proof}
\begin{remark}
For the same reasons, we note that $\Sigma$ is not an element of the canonical tensor product ${ \mathfrak{S}}_A{\otimes}{ \mathfrak{S}}_B$ either. This is a strong argument in favor of the adoption of ${ \mathfrak{S}}_A\widehat{\otimes}{ \mathfrak{S}}_B$ as the fundamental tensor product of generic ${ \mathfrak{S}}_A$ and ${ \mathfrak{S}}_B$.
\end{remark}

\subsection{Channels of the bipartite experiments}\label{subsectionsymmetriesbipartite}

Let us consider a channel $(f, f^{\ast})$ from a States/Effects Chu space $({ \mathfrak{S}}_{A_1},{ \mathfrak{E}}_{{A_1}},\epsilon^{{ \mathfrak{S}}_{A_1}})$ to another States/Effects Chu space $({ \mathfrak{S}}_{A_2},{ \mathfrak{E}}_{{A_2}},\epsilon^{{ \mathfrak{S}}_{A_2}})$. Let us also consider a channel $(g, g^{\ast})$ from the Chu space $({ \mathfrak{S}}_{B_1},{ \mathfrak{E}}_{{B_1}},\epsilon^{{ \mathfrak{S}}_{B_1}})$ to the Chu space $({ \mathfrak{S}}_{B_2},{ \mathfrak{E}}_{{B_2}},\epsilon^{{ \mathfrak{S}}_{B_2}})$. \\ 

We define the channel $((f\widehat{\otimes} g), (f\widehat{\otimes} g)^{\ast})$ from the States/Effects Chu space $({\widehat{S}}_{A_1B_1},{\widehat{E}}_{{A_1B_1}},\epsilon^{{\widehat{S}}_{A_1B_1}})$ to the States/Effects Chu space $({\widehat{S}}_{A_2B_2},{\widehat{E}}_{{A_2B_2}},\epsilon^{{\widehat{S}}_{A_2B_2}})$ by
\begin{eqnarray}
((f\widehat{\otimes} g)({\Sigma}))({ \mathfrak{l}}_A,{ \mathfrak{l}}_B) & := & {\Sigma}(f^{\ast}({ \mathfrak{l}}_{A}),g^{\ast}({ \mathfrak{l}}_{B}))
\end{eqnarray}

We define the channel $((f\widetilde{\otimes} g), (f\widetilde{\otimes} g)^{\ast})$ from the States/Effects Chu space $({\widetilde{S}}_{A_1B_1},{\widetilde{E}}_{{A_1B_1}},\epsilon^{{\widetilde{S}}_{A_1B_1}})$ to the States/Effects Chu space $({\widetilde{S}}_{A_2B_2},{\widetilde{E}}_{{A_2B_2}},\epsilon^{{\widetilde{S}}_{A_2B_2}})$ by
\begin{eqnarray}
(f\widetilde{\otimes} g)(\bigsqcap{}^{{}^{{ \widetilde{S}}_{A_1B_1}}}_{i\in I} \sigma_{i,A_1}\widetilde{\otimes} \sigma_{i,B_1}) & := & \bigsqcap{}^{{}^{{ \widetilde{S}}_{A_2B_2}}}_{i\in I} f(\sigma_{i,A_1})\widetilde{\otimes} g(\sigma_{i,B_1}).
\end{eqnarray}

\section{Conclusion}

The study of the tensor product of semi-lattices is an old story which has debuted by G.A.Fraser's proposal \cite{Fraser1976}\cite{Fraser1978}. Although this construction is particularly interesting for its universal nature, it is disappointing in many respects. \\
In the present paper, we are interested by Inf semi-lattices with a bottom element. These spaces are called "spaces of states". We plug any of these spaces of states as the carrier space of a bi-extensional Chu space. This Chu space is associated to a target space given by the three elements boolean Inf semi-lattice denoted ${ \mathfrak{B}}$ and defined in our preamble. The co-carrier of this Chu space is called "space of effects".  After a general introduction of these elements (section \ref{sectiongeneral}), we proceed to the description of our tensor products constructions and to the analysis of their properties (Section \ref{sectionnewperspective}). 
Hence, we naturally define the tensor product of two spaces of states as the set of bimorphic maps defined from the cartesian product of the associated spaces of effects to the target space ${ \mathfrak{B}}$. By defining on the target space ${ \mathfrak{B}}$ a commutative monoid law, which is distributive with respect to the conjunction law,  we show that the pure tensors can be embedded into this tensor product space. This construction leads to the definition of two extreme tensor products called minimal and maximal tensor products (subsections \ref{subsectionminimaltensorproduct} and \ref{subsectionmaximaltensorproduct}). After having explored the properties of the minimal tensor product (subsection \ref{subsectionpropertiesminimal}), and in particular analized its relation with canonical tensor product, we proceed to a careful analysis of the maximal tensor product and we are led to define a regularized tensor product in subsection \ref{subsectionremarks}.  The subsection \ref{subsectionregularvsminimal} is dedicated to the study of this regular tensor product. In particular, it is proved that the regular tensor product of two spaces of states is equal to their canonical (and minimal) tensor product as soon as one of these spaces of states is distributive.   It is also proved that, when both spaces of states are non-distributive, the regular tensor product is strictly larger than the minimal tensor product and is different from the canonical tensor product. This is a strong argument for the adoption of our regular tensor product as the fundamental tensor product of a pair of Inf semi-lattices with bottom element.

\section*{Statements and Declarations}
The author did not receive any support from any organization for the present work. The author declares that there is no conflict of interest. There is no associated data to this article.



\end{document}